\numberwithin{equation}{section}
\def\N{{\mathbb N}}
\date{}
\definecolor{sah}{rgb}{0.66,0.33, 0.04}
\definecolor{adel4}{cmyk}{1,0,0,0}
\definecolor{adel3}{rgb}{0.66,0.33, 0.04}
\definecolor{adel1}{cmyk}{0,0.20,1,0}
\definecolor{adel2}{cmyk}{0,0.40,1,0.30}
\definecolor{adel0}{rgb}{0.99,0.60, 0.30}
\definecolor{trut}{rgb}{0.99,0.80, 0.00}
\definecolor{trus}{rgb}{0.00, 0.50, 0.00}
 \definecolor{trust}{rgb}{0.99, 0.99, 0.80}
\definecolor{MaCouleur}{rgb}{0,0.9,0.3}
\newcommand{\NN}{\mathbb{N}}
\newcommand{\RR}{\mathbb{R}}
\newcommand{\pxl}{\partial_{X_\lambda}}
\newcommand{\pxt}{\partial_{X_{t}}}
\newcommand{\pxz}{\partial_{X_{0}}}
\newcommand{\Div}{\textnormal{div }}
\newcommand{\Card}{\textnormal{card}}
\newcommand{\supp}{\textnormal{supp}}
\newcommand{\EE}{\varepsilon}
\newcommand{\Sg}{\textnormal{sign}}
\theoremstyle{plain}
\newtheorem{definition}{Definition}[section]
\newtheorem{theorem}{Theorem}[section]
\newtheorem{proposition}{Proposition}[section]
\newtheorem{lemma}{Lemma}[section]
\newtheorem{remark}{Remark}[section]
\newtheorem{coro}{Corollary}[section]
\def\virgp{\raise 2pt\hbox{,}}
\def\Xint#1{\mathchoice
 {\XXint\displaystyle\textstyle{#1}}%
 {\XXint\textstyle\scriptstyle{#1}}%
 {\XXint\scriptstyle\scriptscriptstyle{#1}}%
 {\XXint\scriptscriptstyle\scriptscriptstyle{#1}}%
 \!\int}
\def\XXint#1#2#3{{\setbox0=\hbox{$#1{#2#3}{\int}$}
 \vcenter{\hbox{$#2#3$}}\kern-.5\wd0}}
\def\av_#1{\Xint-_{#1}}
 \title{Rigidity aspects of  singular patches in  stratified flows }
\author[T. Hmidi]{Taoufik Hmidi}
\address[T. Hmidi]{NYUAD Research Institute, New York University in Abu Dhabi, Saadiyat Island\\
P.O. Box 129188, Abu Dhabi, United Arab Emirates }
\address{Univ Rennes, CNRS, IRMAR--UMR 6625, F-35000 Rennes, France}
\email{thmidi@univ-rennes1.fr.}
\author[H. Houamed]{Haroune Houamed}
\address[H. Houamed]{Department of Mathematics, New York University in Abu Dhabi, Saadiyat Island\\
P.O. Box 129188, Abu Dhabi, United Arab Emirates }
\email{haroune.houamed@nyu.edu}
\author[M. Zerguine]{Mohamed Zerguine}
\address[M. Zerguine]{LEDPA, Universit\'e Batna --2--\\ Facult\'e des Math\'ematiques et d'Informatique\\ D\'epartement de Math\'ematiques\\ 05000 Batna Alg\'erie}
\email{m.zerguine@univ-batna2.dz}
\subjclass{35Q35; 35Q31.}
\begin{document}

\begin{abstract}
We explore the local well-posedness theory for the 2d inviscid Boussinesq system when the vorticity is given by a singular patch. We give a significant improvement of \cite{Hassainia-Hmidi} by replacing their compatibility assumption on the density with a constraint on its platitude degree on the singular set. The second main contribution focuses on the same issue for the partial viscous Boussinesq system. We establish a uniform LWP theory with respect to the vanishing conductivity. This issue is much more delicate than the inviscid case and one should carefully deal with various difficulties related to the diffusion effects which tend to alter some local structures.  The  weak a priori estimates are not  trivial  and  refined analysis on  transport-diffusion equation subject to a logarithmic singular potential is required. Another difficulty stems from some commutators arising in the control of the co-normal regularity that we counterbalance in part by the maximal smoothing effects of transport-diffusion equation advected by a velocity field which scales slightly below the Lipschitz class.
%
\end{abstract}
\maketitle
\tableofcontents

\section{Introduction}
 The  atmosphere motion and   the ocean  circulation  is an area of intensive activity offering challenging problems to explore. The governing equations stem from the thermodynamics and mechanics laws, and can be expressed in general as a system of nonlinear partial differential equations with higher degree of complexity. To capture the prominent feature of large scale motion, some approximations around equilibrium states are used in order to simplify slightly the equations. Notice that the aforementioned  equilibrium states are generally obtained from the hydrostatic or the geostrophic balances driven by the stratification, the gravitation and earth rotation mechanisms. There are several models used in  the literature and in what follows we shall be concerned with the Boussinesq system. In this case, the fluid motion is mainly influenced by the velocity field and the density with the assumption that in the momentum equation the density effect is only sensitive for the gravitational  force, see for instance  \cite{Pedlosky}. The outcome of this analysis in the plane is the following full viscous Boussinesq system 
\begin{equation} \label{B-mu-kappa}
\left\{ \begin{array}{ll}
  \partial_t v + v \cdot \nabla v-\mu\Delta v+\nabla p = \rho\begin{pmatrix}0\\
  1 \end{pmatrix},\quad (t,x)\in (0,\infty)\times \mathbb{R}^2,&\\ 
   \partial_t\rho+v\cdot\nabla \rho-\kappa\Delta\rho=0,&\vspace{1mm}\\
     \Div v=0,&\vspace{1mm}\\
  (v,\rho)|_{t=0}=(v_0,\rho_0),\tag{B$_{\mu,\kappa}$}
  \end{array}\right.
\end{equation}
where $v$ is the velocity field in the plane, $\rho$ is a scalar function that  stands for the fluid density, and $p$ is the pressure which is a scalar potential that can be connected to the velocity and the density through the incompressibility assumption  $\Div v=0.$ The parameters $\mu,\kappa$ are positive real numbers  called the viscosity and the conductivity, respectively.
For fluids with constant density,  the system \eqref{B-mu-kappa} boils down to the classical Navier-Stokes equations, \begin{equation}\label{NS-mu}
\left\{ \begin{array}{ll}
\partial_{t}v+v\cdot\nabla v-\mu\Delta v+\nabla p=0, &\vspace{1mm}\\
\Div v=0, &\vspace{1mm}\\
v_{| t=0}={v}_0,
\end{array} \right.\tag{NS$_\mu$}
\end{equation}
which in turn reduces to the incompressible Euler equation (E) in the absence of the viscosity.
It is convenient in the planar case  to take advantage of the vorticity structure and  write down the equations in terms of vorticity and density. Actually, the vorticity associated to the velocity field is determined by the scalar function
\begin{equation*}
\omega\triangleq\partial_1 v^2-\partial_2 v^1=\nabla^{\perp}\cdot v,\quad \nabla^{\perp}=(-\partial_2,\partial_1).
\end{equation*}
Then the system \eqref{B-mu-kappa} can be transformed into the equivalent reformulation
\begin{equation}\label{VD-mu-kappa}
\left\{ \begin{array}{ll}
  \partial_t \omega + v \cdot \nabla\omega-\mu\Delta \omega  = \partial_1\rho,&\vspace{1mm}\\ 
   \partial_t\rho+v\cdot\nabla \rho-\kappa\Delta\rho=0,&\vspace{1mm}\\
   v=\Delta^{-1}\nabla^\perp\omega,&\vspace{1mm}\\
  (\omega,\rho)|_{t=0}=(\omega_0,\rho_0).\tag{VD$_{\mu,\kappa}$}
  \end{array}\right.
\end{equation}
%
%
Now, the main task  is to overview the status of the art with respect to the well-posedness theory for these models.  There are a lot of important results achieved during  the past  decades and we shall restrict the discussion to some of them.\\
\ding{202} {\it Global well-posedness: from Euler to N-S equations}. The construction of global unique solutions in the H\"older class $C^{k,\alpha}$ with $k\ge1$ and $\alpha\in(0,1)$ was performed many years ago in the prominent works of H\"older \cite{Holder} and W\"olibner \cite{Wolibner}. Later, similar results were obtained by Kato \cite{Kato} in Sobolev spaces $H^s(\RR^2)$, with $s>2.$ The main ingredients for getting global solutions are the conservation of the $L^\infty$ norm of the vorticity which is simply advected by the flow and a logarithmic type estimate. For more results and expository texts we refer to \cite{Bourgain-Li-1,Bourgain-Li-2,Chae-1,Chemin-1,Elgindi-Masmoudi,Kato-Ponce,Majda-Bertozzi,Pak-Park,Vishik}. By taking advantage of the vorticity advection, Yudovich succeeded in \cite{Yudovich} to relax the classical theory and generate global unique solutions when the initial vorticity is bounded and integrable. In this setting the velocity field is not necessary Lipschitz but belongs to the class of log-Lipschitz functions. As a by-product, we obtain the global persistence and uniqueness with  the vortex patch structure. More precisely, if the initial vorticity $\omega_0={\bf 1}_{\Omega_0}$ is a patch, that is, the characteristic function of a bounded domain then the transported vorticity being also a patch $\omega(t) = {\bf 1}_{\Omega_t}$ such that $\Omega_t\triangleq\Psi(t,\Omega_0)$, where $\Psi$ is the flow associated to the velocity field. In this specific case  we reduce the dynamics to the boundary motion in the plane, which is subject to an integro-differential equation. The global in time persistence of the boundary regularity was accomplished in a remarkable work of Chemin \cite{Chemin} for a large class of data covering in particular simply connected patches with $C^{1+\EE}-$ boundary. The proof was slightly simplified for the patches by Bertozzi and Constantin in \cite{Bertozzi-Canstantin}. We point out that Chemin's toolbox is robust and flexible and it allows him to tackle through suitable adaptations the case of singular patches. He obtained under suitable assumptions the regularity persistence of the smooth part of the boundary. For more discussion and extension on this subject we may refer to the papers \cite{Bae-Kelliher,Bertozzi-Canstantin,Berto-Verdera,Danchin-0,Danchin-1,Depauw,Dutrifoy,Gamblin-Raymond,Serfati,Sueur-1, Xian-Zhang, LJSFZ, Paicu-Zhang-123} and the references therein. 

\hspace{0.5cm}As regards to the Navier-Stokes system \eqref{NS-mu}, the local/global well-posedness theory in any dimension space is one of the most studied subject in fluid dynamics and we may find a complete review on the recent developments on this topic in the textbooks \cite{Lemarie-Rieusset,Robinson-Rodrigo-Sadowski}. Here, we shall only go over some results that fit with our main scope related to the smooth and singular patch problem. The first result was achieved by Danchin \cite{Danchin-2}, who asserts that if $\omega_{0}={\bf1}_{\Omega_0}$ with  $\partial\Omega_0$ is a Jordan curve with $C^{1+\varepsilon}$ regularity then the velocity field of \eqref{NS-mu}  remains  Lipschitz  for any time uniformly with respect to the viscosity. In addition, the image of the initial boundary by the viscous flow is  $C^{1+\varepsilon^\prime}$ for all $\EE^\prime<\EE$. This regularity  loss is attributed to the use of Besov spaces $B^{s}_{p,\infty}$ with $p\in[1,\infty)$ where some maximal  smoothing effects of transport-diffusion equations are established using energy estimates. Unfortunately, with this techniques,  H\"{o}lder spaces seem to be out of reach due to the amplification of the estimates when $p$ grows to $\infty$. To remedy to this defect and meanwhile show the regularity persistence, the first author  extended in  \cite{Hmidi} the smoothing effects to this borderline case performing a new method based on Lagrangian coordinates and a frequency double localization lemma established by Vishik in \cite{Vishik}. Notice that in both papers, and compared to the inviscid case, the regularity persistence is a delicate issue where one should carefully deal with two opposite facets of the viscosity. The worst one manifests when we want to track  the co-normal regularity of the vorticity and in this case new commutators between suitable vector-fields and  the Laplacian operator emerge, generating a loss of regularity.  To counterbalance this side effect and repair the damage caused by the Laplacian, we  use its  maximal smoothing effects. As to the singular viscous patches, the literature is slightly poor and we refer to  the result of the first author  \cite{Hmidi}.  It was shown that starting with  $\omega_{0}={\bf1}_{\Omega_0}$ where the boundary $\partial\Omega_0$ is assumed to be  $C^{1+\varepsilon}$ outside a  singular set $\Sigma_0$,  we find that  the velocity field  is Lipschitz away the transported singular set by the flow, uniformly in the vanishing viscosity. It was also proved that the image  by the viscous flow of the smooth part is in  $C^{1+\varepsilon^\prime}$ for any $\EE^\prime<\EE$. It is worthy to note  that the velocity scales  slightly below the  Lipschitz class leading to serious   difficult technical problems. For instance, we cannot reach the maximal smoothing effect in transport-diffusion equation and logarithmic loss of regularity sounds to be unavoidable. This is the main technical factor behind the loss of regularity of the boundary and we will come back later with more explanations when we shall comment our main results and explore the proofs. 

\ding{203} {\it Well-posedness for partial viscous Boussinesq system.} There is a lot of activities around this subject depending on whether the viscosities are vanishing or not. In the full viscous case $\mu, \kappa>0$, global well-posed results were established in \cite{Cannon-Dibenedetto,Guo} using the energy estimates method. For the partial viscous case  $\mu>0, \kappa=0,$ global well-posedness was analyzed by Hou and Li \cite{Hou-Li} and Chae \cite{Chae} in the framework of Sobolev spaces $H^s, s>2$. Later, Abidi and Hmidi \cite{Abidi-Hmidi} succeeded to improve the previous results for $(v_0, \rho_0)\in B^{-1}_{\infty,1}\cap L^2\times B^{0}_{2,1}$. The case $v^0, \rho_0\in L^2$ was solved by Danchin and Paicu in \cite{Danchin-Paicu-2}. In the partial viscous case $\mu=0, \kappa>0$, similar studies have been carried out through various papers and we shall only cite some of them. In \cite{Chae}, Chae proved the global well-posedness for $v_0, \rho_0\in H^s$ for $s>2$ which was improved by Hmidi and Keraani in \cite{Hmidi-Keraani-1} for the critical Besov spaces, that is $(v_0,\rho_0)\in B^{1+2/p}_{p,1}\times B^{-1+{2}/{p}}_{p,1}\cap L^r,\;r>2$. In the Yudovich setting, Danchin and Paicu \cite{Danchin-Paicu-1} obtained the global existence and uniqueness when $(v^0, \rho^0)\in L^2\times L^2\cap B_{\infty, 1}^{-1}$ and $\omega^0\in L^r\cap L^\infty$ with $r\geqslant  2$. We emphasize that a lot of important developments regarding closely connected models have been conducted over the past few decades. For more details, we refer to the papers \cite{Berestycki-Constantin-Ryzhik,Chae-Kim-Nam-1,Chae-Kim-Nam-2,Feireisl-Novotny,Hmidi-Zerguine,Larious-Lusin-Titi,Liu-Wang-Zhang,Miao-Zheng,Taniuchi,Tao-Zhang,Wu-Xue} and the references therein.

\ding{204} {\it Inviscid Boussinesq system}. It corresponds to the system  \eqref{B-mu-kappa} with   $\mu=\kappa=0$, and it  will simply be denoted by  \eqref{B0}. Its vorticity-density reformulation reads as follows
\begin{equation} \label{B0}
\left\{ \begin{array}{ll}
  \partial_t \omega + v \cdot \nabla \omega = \partial_1\rho,&\vspace{1mm}\\ 
   \partial_t\rho+v\cdot\nabla \rho=0,\,
v=\Delta^{-1}\nabla^\perp\omega\vspace{1mm}\\
  (\omega,\rho)|_{t=0}=(\omega_0,\rho_0).\tag{B$_0$}
  \end{array}\right.
\end{equation}
Local well-posedness theory was established in the setting of sub-critical Sobolev  spaces  $v_0,\rho_0\in H^{s}, s>2$ or H\"older spaces $v_0,\rho_0\in C^{k+\epsilon}, k\in\mathbb{N}^\star, \varepsilon\in(0,1)$ by Chae and Nam in \cite{Chae-Kim-Nam-1,Chae-Kim-Nam-2}. Furthermore, they formulated a blow-up criterion related only to  the density in the following sense:   if $T^\star$ is the maximal lifespan of the solution of \eqref{B0} then
\begin{equation*}
 T^{\star}<\infty\Longrightarrow\int_{0}^{T^\star}\|\nabla\rho(\tau)\|_{L^\infty}d\tau=\infty.
\end{equation*}
This criterion remains valid for \eqref{B-mu-kappa} for different values of $\mu, \kappa$. The blow-up of classics solutions  has been addressed very recently  in two relevant papers \cite{Elgindi-1,Chen}. In the first one, Elgindi proved  that the blow-up  in finite time occurs for smooth solutions  when the fluid is evolving in a singular domain, typically an  infinite sector. However, in \cite{Chen} Chen and Hou  were able to show the finite time singularity formation  in a half-plane domain  for slightly rough velocity belonging to the H\"older class $C^{1+\epsilon}$, with $\epsilon$ sufficiently small.\\
\hspace{0.5cm} Concerning the construction of Yudovich type solutions, the situation turns out  to be more intricate and highly critical compared to Euler equations.   Actually, from the first equation of \eqref{B0} we infer 
$$
\|\omega(t)\|_{L^\infty}\leqslant \|\omega_0\|_{L^\infty}+\int_0^t\|\nabla \rho(\tau)\|_{L^\infty} d\tau.
$$ 
 Then, differentiating the second equation of \eqref{B0}  \begin{equation*}
(\partial_{t}+v\cdot\nabla)\partial_j\rho =\partial_j v\cdot\nabla\rho.
\end{equation*}
Consequently, the estimate of $\|\nabla\rho(t)\|_{L^\infty} $ requires the source term to be bounded, which is not the case in general because the velocity field is not Lipschitz in the framework of Yudovich solutions. This suggests that all the scenarios of well/ill-posedness are likely possible for this model, a fact that was confirmed by some recent works. Indeed, on one hand, according to Elgindi and Masmoudi contribution \cite{Elgindi-Masmoudi} an ill-posed result connected to some instability estimates in the Yudovich setting can be achieved with special initial data. On the second hand, a well-posedness result in the smooth/singular patch class was obtained by Hassainia and Hmidi \cite{Hassainia-Hmidi}. Although the patch structure is destroyed during the dynamics, one can still take advantage of the flexible vortex patch formalism developed by Chemin \cite{Chemin} and prove that in the smooth case, the velocity remains Lipschitz for a short time. However, when the patch is singular, a compatibility condition was required in \cite{Hassainia-Hmidi} stating that the density should be constant around the singular set of the boundary. This property is conserved along  the dynamics due to the transport equation, and this was a key observation in their proof. It was left open the question of whether local well-posedness result can be performed for the partial viscous case (B$_{0,\kappa}$) uniformly on the vanishing conductivity when the initial patch is singular. The presence of the dissipation induces a smoothing effect which unfortunately alters the compatibility condition affecting seriously the main ingredients in their arguments. In the smooth case, it turns out in view of the recent work of Meddour \cite{Meddour}, that a positive answer for the local well-posedness problem can be obtained using the same lines of the proof. More results and extensions to different models can be found in \cite{Danchin-Zhang,Danchin-Fanelli,Fanelli,Hmidi-Zerguine-1,Paicu-Zhu,Zerguine}. \\

\quad \ding{70}{{\bf Aims and main results.}}
The main task of the current work is to see whether we can relax the compatibility assumption needed  in  \cite{Hassainia-Hmidi} and meanwhile obtain a uniform result for the partial viscous system (B$_{0,\kappa}$). This system will be  denoted in short \eqref{B-0-kappa} and takes in the vorticity-density formulation the form 
\begin{equation}\label{B-0-kappa}
\left\{ \begin{array}{ll}
  \partial_t \omega + v \cdot \nabla\omega = \partial_1\rho,&\vspace{1mm}\\ 
   \partial_t\rho+v\cdot\nabla \rho-\kappa\Delta\rho=0,\vspace{1mm}\\
   v=\Delta^{-1}\nabla^\perp\omega,&\vspace{1mm}\\
  (\omega,\rho)|_{t=0}=(\omega_0,\rho_0).\tag{B$_{\kappa}$}
  \end{array}\right.
\end{equation}
As we have mentioned before, singular patches framework offers a delicate subject to analyze with regard to the local well-posedness problem for the system \eqref{B0}. In \cite{Hassainia-Hmidi}, this was achieved only when the initial density is constant around  the singular part of the domain. By doing that we suppress the singularity effects during the motion and prevent them to interact with the density  to avoid any possibility for the norm inflation. It is likely possible to conceive a well-prepared configuration where the singularity of the patch could breakdown the local well posedness through some sort of  instabilities. This is a relevant and important  issue which sounds to be out of reach of the techniques used in the vortex patch problem. Here we fix another connected target aiming to cover more class of initial data on the density, still with singular patches, leading to a well-posedness result.\\

Our first contribution concerns the inviscid system \eqref{B0} and we shall extend the result of \cite{Hassainia-Hmidi}. More precisely, we will prove the following theorem whose general statement is detailed in Theorem \ref{Th1:general:version}.   
\begin{theorem}\label{THEO:1:soft}
Let $s\in (0,1)$ and $\Omega_0$ be a bounded domain of the plane whose boundary $\partial\Omega_0$ is a Jordan curve of class $C^{1+s}$ outside a closed countable set $\Sigma_0\subset \partial \Omega_0$.
 Let   $\omega_0=1_{\Omega_0}$ and  \mbox{$  \rho_0\in  C^{1+s}\cap W^{1,a}$}, for some  $a\in (1,\infty)$. 
Suppose that $\nabla \rho_0$ is identically zero on $\Sigma_0$. 
 Then, there exists $T>0$ such that the  system  \eqref{B0} admits a unique local solution $(\omega,\rho)$ satisfying 
 $$\omega,\rho,\nabla \rho \in L^\infty\big([0,T], L^a\cap L^\infty ) .
 $$  
Furthermore, the velocity field $v$ satisfies
$$
\sup_{h\in(0,e^{-1}]}\frac{\Vert \nabla v(t)\Vert_{L^\infty((\Sigma_t)_h^c)}}{-\log h}\in L^\infty([0,T]),
$$
where $\Sigma_t\triangleq \Psi(t,\Sigma_0)$ and
$$
(\Sigma_t)_h^c\triangleq \big\{x\in \RR^2;\,d\big(x,\Sigma_t\big)\geqslant  h\big\}.
$$ 
In addition, the boundary of $\Psi(t,\Omega_0)$ is locally $C^{1+s}$ outside the set $\Sigma_t$.  
\end{theorem}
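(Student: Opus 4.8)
The plan is to adapt Chemin's anisotropic (co-normal) vortex patch formalism to the Boussinesq coupling, the essential new ingredient being a quantitative propagation of the flatness of $\rho$ along the singular set that counterbalances the logarithmic blow-up of $\nabla v$ near $\Sigma_t$. First I would regularize the data into $(\omega_0^n,\rho_0^n)$, produce smooth solutions $(\omega^n,\rho^n)$ by the Chae--Nam theory, and aim to close uniform-in-$n$ a priori bounds on a time interval $[0,T]$ depending only on the data. The geometry is carried by a tangential vector field $X_0$ generating the $C^{1+s}$ part of $\partial\Omega_0$ off $\Sigma_0$; it is transported by the flow $\Psi$ into $X_t$ solving $(\partial_t+v\cdot\nabla)X_t=X_t\cdot\nabla v$, and the co-normal regularity of the vorticity is measured through $\partial_{X_t}\omega=X_t\cdot\nabla\omega$.

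\textbf{The coupled bounds.} From the vorticity equation one has at once $\|\omega(t)\|_{L^\infty}\le \|\omega_0\|_{L^\infty}+\int_0^t\|\nabla\rho(\tau)\|_{L^\infty}\,d\tau$, so everything hinges on controlling $\|\nabla\rho\|_{L^\infty}$. Differentiating the density equation produces for $\nabla\rho$ a transport equation whose source is bilinear of the schematic form $\nabla v\,\nabla\rho$, a priori out of control since $v$ is only log-Lipschitz near $\Sigma_t$. I would resolve this by propagating flatness: the hypothesis $\nabla\rho_0|_{\Sigma_0}=0$ together with $\rho_0\in C^{1+s}$ yields $|\nabla\rho_0(x)|\lesssim d(x,\Sigma_0)^{s}$, and integrating the transport relation along characteristics, using the Yudovich lower bound $d(\Psi(t,x),\Sigma_t)\gtrsim d(x,\Sigma_0)^{e^{Ct}}$ on how fast the flow can contract distances to $\Sigma_t$, one gets $|\nabla\rho(t,x)|\lesssim d(x,\Sigma_t)^{s'(t)}$ with a degraded but still positive exponent $s'(t)$ for $t$ small. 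Matching this polynomial flatness against $|\nabla v(t,x)|\lesssim -\log d(x,\Sigma_t)$ makes $\nabla v\,\nabla\rho$ bounded up to $\Sigma_t$, closing the Gronwall estimate for $\|\nabla\rho\|_{L^\infty}$ and hence for $\|\omega\|_{L^\infty}$.

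\textbf{Velocity regularity and the logarithmic estimate.} To establish the advertised bound for $\nabla v$ off $\Sigma_t$, I would write $\nabla v=\nabla\nabla^\perp(-\Delta)^{-1}\omega$ and invoke a Chemin-type logarithmic inequality controlling $\|\nabla v\|_{L^\infty((\Sigma_t)_h^c)}$ by $\|\omega\|_{L^\infty}$, the co-normal regularity $\|\partial_{X_t}\omega\|_{C^{s-1}}$, and the logarithm of the regularity ratio, via a para-differential splitting of the singular integral. This forces me to propagate simultaneously the co-normal regularity: applying $\partial_{X_t}$ to the vorticity equation gives $(\partial_t+v\cdot\nabla)\partial_{X_t}\omega=\partial_{X_t}\partial_1\rho$, so one also needs the tangential regularity of $\nabla\rho$, itself inherited from $\rho_0\in C^{1+s}$, together with a bound on $\|X_t\|_{C^{s}}$ from its own transport equation. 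Altogether this is a single nonlinear Gronwall loop among $\|\omega\|_{L^\infty}$, $\|\nabla\rho\|_{L^\infty}$, $\|X_t\|_{C^{s}}$, $\|\partial_{X_t}\omega\|_{C^{s-1}}$ and $\inf|X_t|$, solvable on a short interval $[0,T]$.

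\textbf{Limit, uniqueness and boundary regularity.} With uniform bounds in hand I would pass to the limit $n\to\infty$ by compactness to obtain a solution with $\omega,\rho,\nabla\rho\in L^\infty([0,T],L^a\cap L^\infty)$, and prove uniqueness by an $L^2$ energy estimate between two solutions closed through an Osgood argument against the log-Lipschitz modulus. The persistence of the co-normal regularity of $\omega$ and the $C^{s}$ bound on $X_t$ then give that $\partial\Psi(t,\Omega_0)$ is locally $C^{1+s}$ away from $\Sigma_t$, the smooth arcs being integral curves of the transported tangential field. \textbf{The main obstacle} is precisely the flatness propagation: since $v$ scales slightly below Lipschitz, distances to $\Sigma_t$ degrade and the exponent $s'(t)$ must be tracked carefully and kept positive, which is what lets the polynomial decay of $\nabla\rho$ defeat the logarithmic singularity of $\nabla v$ and replaces the rigid compatibility assumption of \cite{Hassainia-Hmidi}.
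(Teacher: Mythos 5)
Your overall strategy is the paper's: the quantitative propagation of the flatness degree of $\nabla\rho$ along $\Sigma_t$ is exactly Proposition \ref{Proposition inviscid}, where the weighted quantity $\varrho=\varphi_t^{-2m(t)}|\nabla\rho|^2$ is shown to be non-increasing along the flow after choosing $m(t)=m-\int_0^t\|v(\tau)\|_{L(\Sigma_\tau)}e^{\int_0^\tau\|v\|_{LL}}d\tau$; your characteristics computation, combined with the Yudovich contraction estimate converting $-\log d(\cdot,\Sigma_\tau)$ into $e^{\int\|v\|_{LL}}\bigl(-\log d(\Psi^{-1}(\tau,\cdot),\Sigma_0)\bigr)$, reproduces precisely this choice of exponent. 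The co-normal estimates for $\partial_{X_t}\omega$ and $\partial_{X_t}\rho$, the Chemin logarithmic inequality, the nonlinear Gronwall loop fixing $T$ by the condition $m(T)\geqslant 0$, the $L^2$--Osgood uniqueness, and the boundary parametrization by integral curves of $X_{t,0,h}$ all coincide with Propositions \ref{prop22}, \ref{prop220} and Section \ref{Existence-uniq-inviscid}.

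There is, however, one step that would fail as written: ``regularize the data into $(\omega_0^n,\rho_0^n)$'' and then ``close uniform-in-$n$ a priori bounds.'' The uniform bounds require the platitude hypothesis $\sup_x\varphi_0(x)^{-s}|\nabla\rho_0^n(x)|<\infty$ \emph{uniformly in $n$}, and a standard mollification or frequency cut-off destroys it: from $|\nabla\rho_0(z)|\lesssim d(z,\Sigma_0)^s$ one only gets $|\nabla S_n\rho_0(x)|\lesssim d(x,\Sigma_0)^s+2^{-ns}$, so $\nabla S_n\rho_0$ no longer vanishes on $\Sigma_0$ and the weighted supremum is infinite for every $n$. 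The paper circumvents this with the mixed regularization $\widetilde{S}_n\rho_0=\chi_r\rho_0+(1-\chi_r)S_n\rho_0$, which leaves $\rho_0$ untouched on a neighbourhood of $\Sigma_0$ (legitimate because $\rho_0\in C^{1+s}$ there, so the Chae--Nam local theory still applies to the approximate data) and smooths only away from the singular set. You need this or an equivalent device; without it the approximation scheme does not inherit the flatness condition uniformly and the whole Gronwall loop cannot be closed independently of $n$.
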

We first  remark that our theorem improves considerably the result of \cite{Hassainia-Hmidi} where it was assumed that $\nabla \rho_0$ vanishes in a small neighborhood of the singular set $\Sigma_0$. In our result, we need just this function to vanish exactly at the singular set. Let us now turn to the proof, sketch the main ideas and provide some details on the major difficulties that we have encountered and how to solve them. Below we shall explore the different steps implemented during the proof.\\
\foreach \x in {\bf a} {%
  \textcircled{\x}
} {\it $L^p$-estimates.} In this step, we establish weak a priori estimates for $\omega$ and $\nabla\rho$ in $L^p$ with $p\in[a,\infty]$. The full statement is described in Proposition \ref{Proposition inviscid} where the regularity and the vanishing condition imposed to $\nabla\rho$ are simply replaced by a slightly weaker assumption connected to the platitude degree of $\nabla \rho_0$ near the singular set as stated in \eqref{hypothesis1}. From this proposition, we succeeded to follow the time evolution of the platitude degree which is decaying and remains strictly positive during a short time. This fact is crucial to derive the required a priori estimates at the $L^p$ level following an energy method. Let us give a quick insight into how one could follow the platitude degree of $\nabla\rho(t)$. For this aim we simply proceed intuitively with a weighted transport localization around the singular set $\Sigma_t\triangleq \Psi(t,\Sigma_0)$ combined with energy estimates where the transport structure plays a central role to close the estimates. We use in a particular way a fundamental fact ensuring that the velocity is Lipschitz when we cut at some distance $h$ from the singular set and the maximum growth of the gradient behaves like $-\log h.$
\\
\hspace{0.3cm}
\foreach \x in {\bf b} {%
  \textcircled{\x}
} {\it Co-normal regularity.} In this part we perform suitable  tangential regularity estimates of the vorticity $\partial_X\omega$ using suitable transported vector-fields capturing the main feature of the boundary regularity. We follow basically the same arguments developed in \cite{Hassainia-Hmidi}. The main estimates related to this discussion will be explored in Proposition \ref{prop22} and Proposition \ref{prop220}.  

\foreach \x in {\bf c} {%
  \textcircled{\x}
} {\it Existence and uniqueness .} The construction of the solutions and the  checking of the uniqueness part   will be analyzed along Section \ref{Existence-uniq-inviscid}. It will be conducted in a classical way by smoothing out the initial data and solving the inviscid system \eqref{B0}. Afterwards, we should check  that this procedure generates a family of approximated solutions  which is  bounded uniformly in the resolution space  for a short uniform  time. As to the strong  convergence towards an exact solution to the initial value problem and the uniqueness follow in a straightforward way using similar arguments to \cite{Hassainia-Hmidi}.\\ {
 
  The second part of this paper seeks to address the same problem for the partial viscous system \eqref{B-0-kappa} and especially  we intend  to investigate whether the results of   Theorem \ref{THEO:1:soft} occur with  uniform bounds  for vanishing conductivity $\kappa.$ As we shall see, this is a delicate task due to several technical points among which  the compatibility assumption imposed for the density that could be scrambled by the diffusion effect. In addition, the approach developed  to track the platitude degree is only conceived to  work  with  the transport equations.  Our main result  related to this issue reads as follows  and  whose  general statement can be found in Theorem \ref{th2}.
\begin{theorem}\label{THEO:2:soft}
Let $(s,a)\in(0,1)\times[2,\infty)$. Consider a bounded domain $\Omega_0$ of the plane whose boundary $\partial\Omega_0$ is a Jordan curve of class $C^{1+s}$ outside a closed countable set $\Sigma_0$. Let $\omega_0=1_{\Omega_0},   \rho_0\in W^{2,a}\cap C^{2+s }$ and we  assume   that $\nabla \rho_0$ and $\nabla ^2 \rho_0$ are  identically zero on the singular set $\Sigma_0$. Then, there exists $T>0$ independent of $\kappa\in[0,1]$  for which the  system \eqref{B-0-kappa} admits a unique local solution $(\omega,\rho)$ such that 
 $$
\omega,\rho, \nabla \rho \in L^\infty\big([0,T], L^a\cap L^\infty ) ,
 $$  
 with uniform bounds with respect to $\kappa\in[0,1].$
Moreover, the velocity field $v$ satisfies the estimate 
$$
\sup_{h\in(0,e^{-1}]\atop \kappa\in[0,1]}\frac{\Vert \nabla v(t)\Vert_{L^\infty((\Sigma_t)_h^c)}}{-\log h}\in L^\infty([0,T]),
$$
where  $\Sigma_t\triangleq \Psi(t,\Sigma_0)$ and $\Psi$ denotes the viscous  flow associated to $v$.
  In addition, the boundary of $\Psi(t,\Omega_0)
$  
 is locally in $C^{1+s'}$ outside the set $\Sigma_t$, for all $s'<s$.  

\end{theorem}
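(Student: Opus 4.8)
The plan is to prove Theorem \ref{THEO:2:soft} by adapting the inviscid strategy of Theorem \ref{THEO:1:soft}, with the decisive new ingredient being that all a priori bounds must be obtained \emph{uniformly} in the conductivity $\kappa\in[0,1]$, so that the limit $\kappa\to 0$ is harmless and the partial viscous solution inherits the same local structure. First I would establish the uniform $L^p$-estimates for $\omega$ and $\nabla\rho$ (and now also for $\nabla^2\rho$, which explains the stronger hypothesis $\rho_0\in W^{2,a}\cap C^{2+s}$ and the vanishing of both $\nabla\rho_0$ and $\nabla^2\rho_0$ on $\Sigma_0$). The analogue of the transport-localization argument sketched in step \textcircled{\bf a} must here be carried out on the transport-diffusion equation for $\rho$. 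The key observation is that the velocity is Lipschitz once we cut at distance $h$ from the singular set, with a gradient growth of order $-\log h$; inserting this into a weighted energy estimate around $\Sigma_t$, and treating the Laplacian term $\kappa\Delta\rho$ carefully so that its contribution is controlled uniformly in $\kappa$, one follows the time evolution of the platitude degree of $\nabla\rho(t)$. As emphasized in the introduction, the diffusion tends to instantaneously destroy the exact vanishing on $\Sigma_t$, so rather than a pointwise compatibility condition one tracks a quantitative platitude degree that stays strictly positive on a short time interval.

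The main obstacle, which I expect to dominate the technical work, is exactly this interplay between the diffusion and the singular structure. The term $\kappa\Delta\rho$ does not respect the transport geometry on which the inviscid proof rests, so the weighted estimates no longer close by the transport structure alone. The remedy, foreshadowed in the introduction, is to exploit the \emph{maximal smoothing effect} of the transport-diffusion equation $\partial_t\rho+v\cdot\nabla\rho-\kappa\Delta\rho=0$ when the advecting velocity scales slightly below Lipschitz (with the $-\log h$ loss). Concretely I would invoke the smoothing-effect machinery (Lagrangian coordinates and the frequency double-localization of Vishik, as in \cite{Hmidi}) to gain one derivative on $\rho$ uniformly in $\kappa$, thereby counterbalancing the commutators $[\,\partial_X,\kappa\Delta\,]$ that arise when one propagates the co-normal regularity. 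This is where the restriction $a\in[2,\infty)$ is used: the $L^2$-based energy estimates for the diffusion term require $p\ge 2$. The logarithmic loss is unavoidable here and is precisely the reason the boundary regularity degrades from $C^{1+s}$ to $C^{1+s'}$ for every $s'<s$, rather than being preserved.

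Next I would treat the co-normal regularity, following step \textcircled{\bf b} but now accounting for the diffusion. Writing the equation for $\partial_X\omega$, where $X$ is a transported vector field adapted to the boundary, one encounters the additional commutator between $X$ and $\Delta$ generated by the viscosity-like handling of $\rho$ through $\partial_1\rho$ in the vorticity equation. The plan is to absorb the regularity-losing part of this commutator into the smoothing gain from the previous step, exactly as in the viscous patch analysis of \cite{Hmidi}: the Laplacian is simultaneously an enemy (creating the commutator) and an ally (providing the smoothing), and the two effects are balanced so that $\partial_X\omega$ stays bounded in the relevant space uniformly in $\kappa$. These uniform bounds are what yield the stated Lipschitz-type control of $\nabla v$ away from $\Sigma_t$ with the $-\log h$ modulus, uniformly over both $h\in(0,e^{-1}]$ and $\kappa\in[0,1]$.

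Finally, for existence and uniqueness I would proceed as in step \textcircled{\bf c}: smooth out the initial data, solve the regularized system \eqref{B-0-kappa} for fixed $\kappa$, and check that the family of approximate solutions is bounded in the resolution space on a time interval $T>0$ \emph{independent of $\kappa$}, using the uniform a priori estimates above. Strong convergence to an exact solution and uniqueness then follow by the same compactness-and-stability arguments as in the inviscid case and in \cite{Hassainia-Hmidi}. The uniform-in-$\kappa$ bounds additionally allow one to pass to the limit $\kappa\to 0$ and recover the inviscid statement of Theorem \ref{THEO:1:soft} as a corollary, while the strictly-positive-on-$[0,T]$ platitude degree guarantees that the argument does not degenerate before time $T$. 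The persistence of the $C^{1+s'}$ regularity of $\partial\Psi(t,\Omega_0)$ outside $\Sigma_t$ is read off from the boundedness of $\partial_X\omega$ together with the regularity of the flow $\Psi$ obtained from the $-\log h$ velocity bound.
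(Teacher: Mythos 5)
Your outline reproduces the overall architecture (uniform a priori estimates, co-normal regularity, approximation/compactness), but it misses the central new idea of the viscous case, and the substitute you propose for it would not work. For the uniform $L^p$-bounds on $\nabla\rho$ you suggest running the inviscid weighted-platitude argument on the transport-diffusion equation and ``treating the Laplacian term carefully,'' falling back on the maximal smoothing effect to close the estimate. The paper is explicit that the transported-weight argument is tied to the transport structure and does not survive the diffusion: the weight $\varphi_t^{-\beta(t)}$ is carried by the flow, and $\kappa\Delta$ acting on the product produces terms that the weighted energy estimate cannot absorb. Moreover, the maximal smoothing estimates (Proposition \ref{Max-reg}, Corollary \ref{cor-Max-reg}) come with negative powers of $\kappa$ — they control quantities like $(\kappa 2^{2q})^{1/r}\|\Delta_q f\|_{L^r_tL^p}$ — so they can only be used where they are multiplied by a compensating factor of $\kappa$ (as in the commutator $\kappa[\Delta,\partial_{X_t}]\rho$ in the co-normal step); they cannot by themselves yield the platitude control uniformly as $\kappa\to0$. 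What the paper actually does (Proposition \ref{Proposition visous}) is the ansatz $|\nabla\rho|^2=(\bar\varphi+\psi)\,\eta$, where $\eta\le 1$ by a maximum principle for a drift-diffusion inequality with drift $v-2\kappa\nabla\log\phi$, $\bar\varphi$ solves the pure transport equation with the logarithmic potential (handled exactly as in the inviscid case), and $\psi$ solves a transport-diffusion equation with the singular potential $\|v\|_{L(\Sigma_t)}\ln^+(d(x,\Sigma_t))$ and the small source $\kappa\Delta\bar\varphi$. The estimate of $\psi$ splits the singular region $d(x,\Sigma_t)\le\epsilon$ from its complement, uses the integrability hypothesis \eqref{int:cond0} on $\Sigma_0$ (this is where the countability of $\Sigma_0$ enters — your proposal never uses it), optimizes in $\epsilon\sim\kappa^{1/\sigma}$, and for the endpoint $p=\infty$ invokes De Giorgi--Nash bounds (Lemma \ref{Nashlem}) after a further three-fold splitting of $\psi$. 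None of this is present or replaceable by your smoothing-effect argument.

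Two further points you gloss over. First, the viscous smoothing effects are only available in Besov spaces $B^s_{p,\infty}$ with $p<\infty$ (the constant $C_p$ in Proposition \ref{p11} blows up as $p\to\infty$), which forces the whole co-normal analysis into $L^p$-based spaces and requires rebuilding the admissible family: the constant vector field $X_{0,1,h}=\theta_h(1-\widetilde\theta)\vec e_1$ is not in $B^s_{p,\infty}$, and the paper repairs this with the partition of unity of Lemma \ref{lemma:partition of unit}, producing a countable admissible family. Second, the loss from $C^{1+s}$ to $C^{1+s'}$ is not just ``the unavoidable logarithmic loss'': it is realized through a time-decreasing regularity index $\sigma_t=s(1-\varepsilon t/(sT))-\varepsilon$ in Proposition \ref{prop2222} together with the embedding $B^{\sigma_t}_{p,\infty}\hookrightarrow C^{\sigma_t-2/p}$, and one recovers every $s'<s$ by taking $\varepsilon$ small and $p$ large. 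Your sketch would need all of these ingredients made precise before it could be considered a proof.
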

Before going over the main ideas of the proof, we shall make some comments.
 \begin{remark}\label{remark-1}
 \begin{enumerate}
 \item Compared to the inviscid case discussed in Theorem $\ref{THEO:1:soft},$ here we need   to reinforce the  regularity on  the density and to strengthen  the compatibility condition in order to balance some undesirable viscosity effects that we cannot overlook. 
 \item Contrary to the inviscid case, where no assumptions are assumed for the singular set $\Sigma_0$, here we require  that this set is  composed with a countable family of points. This restriction can be significantly relaxed and replaced by the integrability assumption \eqref{int:cond0} which allows to cover  more pathological singular sets.
\item In the preceding theorem, we strengthen the regularity and the compatibility condition seen in Theorem $\ref{THEO:1:soft}$ by assuming that $\rho_0\in C^{2+s}$ and $\nabla^2 \rho_0 \equiv \nabla \rho_0 \equiv 0$ on $\Sigma_0.$ These constraints are needed to ensure the platitude assumption \eqref{hypothesis2} required in \mbox{Proposition $\ref{Proposition visous}.$} The degeneracy of the second derivative is one of the side effects brought by the dissipation which tends to diffuse the platitude information far away from the singular set. As to the regularity $\rho_0\in C^{2+s}$ \footnote{In fact, we need only to assume the $C^{2+s}$ regularity on a small neighborhood of the singular set as it is more clear in the statement of Theorem \ref{th2}.},
 we believe that it is quite possible to keep the final results of Theorem $\ref{THEO:2:soft}$ with the less restrictive assumption \eqref{hypothesis2}. It appears that with this latter condition we can derive all the a priori estimates, but in the existence part we need to reinforce the regularity assumption to generate a suitable approximation of the initial data that preserves the platitude condition \eqref{hypothesis2}. For more clarifications, we refer to Paragraph $\ref{sec:xx}.$
  \item To complete our study, we shall prove the inviscid limit of the system \eqref{B-0-kappa} towards \eqref{B0} when the diffusivity parameter goes to zero. In addition, we establish the rate of convergence of the velocities and the densities in Lebesgue spaces, as well as the convergence of the corresponding flows and the boundary parametrization of the patches outside the singular set.  
\end{enumerate}
\end{remark}
Let us now outline the main ideas of the proof and sketch the major difficulties encountered in particular at  the  a priori estimates step. \\
\foreach \x in {\bf a} {%
  \textcircled{\x}
} {\it $L^p$-estimates for $\omega, \nabla\rho$.} This will handled in the proof of Proposition \ref{Proposition visous} through a suitable ansatz and appropriate estimates. The main issue concerns the compatibility condition; that is  $\nabla\rho_0$ and $  \nabla^2\rho_0$ vanish at the singular set $\Sigma_0,$ which is not easy to  maintain during the motion  due to the diffusion effects of the heat equation. When $\kappa=0,$ we have already seen that the transport structure is  helpful and one can track   the platitude degree. In the current context, we should  find another alternative to  compile  the advection and diffusion mechanisms which are in competition. This is considered as  the main technical novelty   of the paper where we shall use different tricks to deal with uniform estimates in transport-diffusion equation with singular potential.    As we shall see, the key ingredients  are  a suitable decomposition for the quantity $\varrho\triangleq |\nabla\rho|^2$ combined with an  optimization arguments supplemented with  De Giorgi-Nash estimates type. Let us briefly discuss the main lines   to establish the results of Proposition  \ref{Proposition visous}.   First, by virtue of  the PDE  inequality \eqref {Eq-varrho} we infer
\begin{align*}
 \big(\partial_t+ v\cdot\nabla-\kappa\Delta \big)\varrho(t,x)\leqslant 2\|{v}(t)\|_{L(\Sigma_t)} \ln^+(d(x,\Sigma_t))\,\varrho(t,x),
\end{align*}
where  the norm  $\|{v}(t)\|_{L(\Sigma_t)}$ is defined in \eqref{Lsigma} and the function $\ln^+$ is stated in \eqref{log+}. Next we make  the splitting  
$$
\varrho\triangleq (\bar\varphi+\psi)\eta,
$$
where $\eta$ is a positive function satisfying the PDE's inequality
\begin{equation*}
\left\{ \begin{array}{ll}
   \big(\partial_t+ \big(v-2\kappa\nabla \log(\bar\varphi+\psi)\big)\cdot\nabla-\kappa\Delta \big)\eta\leqslant 0,&\vspace{1mm}\\ 
  \eta|_{t=0}=1,
  \end{array}\right.
\end{equation*}
the function $\bar\varphi$ satisfies the transport equation
 \begin{equation*}
\left\{ \begin{array}{ll}
   \big(\partial_t+ v\cdot\nabla\big)\bar\varphi(t,x)= 2\|{v}(t)\|_{L(\Sigma_t)} \ln^+(d(x,\Sigma_t))\,\bar\varphi(t,x),&\vspace{2mm}\\ 
 \bar \varphi|_{t=0}=|\nabla\rho_0|^2
  \end{array}\right.
\end{equation*}
and $\psi$ obeys  the following transport-diffusion equation with a singular logarithmic  potential, and subject to a vanishing initial data and a small forcing term 
\begin{equation} \label{Eq04}
\left\{ \begin{array}{ll}
   \big(\partial_t+ v\cdot\nabla-\kappa\Delta\big)\psi(t,x)= 2\|{v}(t)\|_{L(\Sigma_t)} \ln^+(d(x,\Sigma_t))\,\psi(t,x)+\kappa\Delta\bar\varphi,&\vspace{2mm}\\ 
  \psi|_{t=0}=0.
  \end{array}\right.
\end{equation}
Applying the maximum principle we obtain that $\|\eta(t)\|_{L^\infty}\leqslant 1$. The estimate of $\bar\varphi$ can be handled as in the inviscid case and one may track the platitude degree.  Actually, according to \eqref{estimate-nabla phi} one gets for an implicit  short time $T>0$ 
\begin{equation*}
\forall\, t\in[0,T],\quad \|  \bar\varphi(t,\cdot) \|_{L^p}+ \|\nabla \bar\varphi(t,\cdot) \|_{L^p}\leqslant    C_0. 
\end{equation*}  
However, the estimate of $\psi$ turns out to be more intricate due to the presence of the logarithmic potential. To counterbalance its short-range effect we use the smoothing effect of the heat equation together with the smallness of the source term $\kappa\Delta\bar\varphi$. Notice that the arguments vary depending on whether we are controlling $L^p$ with $p$ finite or infinite. In the first case, we implement an energy method based on a suitable integral decomposition related to the potential effect where we distinguish two regimes $d(x,\Sigma_t)\leqslant \varepsilon$ and $d(x,\Sigma_t)\geqslant \varepsilon$. Roughly speaking, in the first region we use a smoothing effect leading to a control of type $\varepsilon^\sigma \kappa^{-1}\|\psi(t)\|_{L^p}^p,$ for some $\sigma>0$ related to the structure of the singular set $\Sigma_0$. Nevertheless, in the second region, we simply get a control of type $\ln(1/\varepsilon)\|\psi(t)\|_{L^p}^p$. Then an optimization argument on $\varepsilon$ leads to the desired estimate. The final estimates do not enable  to conclude about the end limit case $p=\infty$ due to the  fast growth in $p$. To deal with this critical case we proceed with a second splitting of $\psi$ into three parts reflecting different regimes, see \eqref{psi-decompos}. For the worst regime described by $\psi_2$ and connected to the strong effect of the logarithmic  potential, we use De Giorgi-Nash estimates stated in Lemma \ref{Nashlem}. 

Before achieving this  discussion on  the a priori estimates, let us point out the following observation related to the point raised before in Remark \ref{remark-1}-(iii). We do not claim any sort of optimality of the additional constraints on the regularity and the compatibility condition for the density as stated in Theorem \ref{THEO:2:soft}. We expect the result to hold with only the compatibility assumption of \mbox{Theorem \ref{THEO:1:soft},} and we do believe that more refined analysis based on the maximal smoothing effects could be implemented. One of the main difficulties that we have encountered is connected to the $L^p$-estimates of $\psi$. The method that we have performed in the proof of Proposition \ref{Proposition visous} requires some extra-regularity on $\bar{\varphi}$ acting as a source term in \eqref{Eq04}. As a matter of fact, according to the energy estimates, the control of $\nabla \bar{\varphi}$ in $L^p$, for all $p\in[2,\infty]$ seems to be mandatory to get the suitable estimates.

\foreach \x in {\bf b} {%
  \textcircled{\x}
} {\it Co-normal regularity and  loss estimates.} This part is technical and more involved than the inviscid case due to some strong viscosity effects in measuring the tangential regularity of the vorticity and the density. It will be performed following the strategy developed in \cite{Hmidi-0} that we shall overview and isolate the main difficulties. First, the tangential regularity is associated with a suitable transport vector-field denoted here for the sake of simplicity by $X_t$; In principle, we should introduce a cut-off  with a parameter $h>0$ far away the singular set forcing to work with parametrized vector-fields. Thus from \eqref{X-t-omega} we get
\begin{equation*}
\big(\partial_t+v\cdot\nabla\big)\partial_{X_t}\omega=\partial_{ X_t}\partial_1\rho.
\end{equation*} 
Even though the velocity is not expected to be Lipschitz everywhere, the fact that the vector fields $X_t$ are supported far away the singular set at a distance $h$ with some degeneracy behavior, one can get a regularity persistence according to Proposition \ref{p11}. Then
to control $\partial_X \omega$ in the negative H\"older space $C^{s-1}$, with $s\in(0,1)$, it is enough to get an estimate of $\|\partial_X\rho(t)\|_{C^{s}}.$ This latter quantity introduces several technical issues. First, under Proposition \ref{Proposition visous} we can control at most $\|\nabla\rho\|_{L^\infty}$ and there is no hope to improve it in general due to the lack of regularity of the velocity. The alternative for estimating $\|\partial_X\rho(t)\|_{C^{s}}$ is to write down the dynamics governing the tangential regularity of the density stated in \eqref{TG}\begin{equation}\label{T-G}
\left\{\begin{array}{ll}
\big(\partial_t+v\cdot\nabla-\kappa \Delta\big)\partial_{X_t}\rho=-\kappa[\Delta,\partial_{X_t}]\rho &\vspace{1mm}\\
\partial_{X_t}\rho_{|_{t=0}}=\partial_{X_0}\rho_0.
\end{array}
\right.
\end{equation}
At this stage, the delicate point is to estimate the commutator which induces a sensitive loss of regularity and the main scope is to explore whether the heat equation can remedy to this defect through its maximal smoothing effect. For this aim, we proceed with a suitable decomposition of the commutator using para-differential calculus and perform two sorts of maximal smoothing effects: the first one is inherited from the transport-diffusion part in \eqref{T-G} and the second one stems from the density equation. We point out that the smoothing effects are  established only  with  Besov spaces constructed over  Lebesgue spaces $L^p$ with $p<\infty,$ see Proposition \ref{Max-reg}. Notice that there is an apparent frequency logarithmic loss of regularity connected to the low regularity of the velocity. This has a direct impact on the regularity persistence as stated in Proposition \ref{prop2222}. Actually, we are only able to propagate estimates  with some time loss and this explains in part the loss of regularity observed for the boundary in Theorem \ref{THEO:2:soft}. Another remark to state concerns the endpoint case $p=\infty$ which seems to be completely out of reach and it  is left open. To cover this case, we could be tempted to use the Lagrangian coordinates method developed successfully in \cite{Hmidi}, but unfortunately, it seems to work only when the velocity is Lipschitz, which is not the case here.\\

 \quad \ding{70} {{\bf{Outline of the paper}}. In Section \ref{Useful-Tec}, we shall introduce  a package of various notations freely used throughout this work. Afterward, we state some results on Littlewood-Paley theory, singular vortex patches, function spaces related to  the co-normal regularity of the vorticity, logarithmic estimate  and we end with a discussion on the maximal smoothing effects for   transport-diffusion equations governed by a $\log$-Lipschitz} velocity. In Section \ref{Inviscid-case}, we study the inviscid case governed by \eqref{B0} where we prove a  general version of Theorem \ref{THEO:1:soft}, see Theorem \ref{Th1:general:version}.  The proof will be performed into several steps, from the   $L^p$ a priori estimates for the vorticity and the gradient of the density, see Proposition \ref{Proposition inviscid}, to  the propagation of anisotropic regularity. Section \ref{Viscous-case} deals with the viscous  system \eqref{B-0-kappa} and aims at proving the  general statement  of  Theorem  \ref{THEO:2:soft},  described through Theorem \ref{th2}.  In section \ref{Inviscid-limit}, we study the inviscid limit of \eqref{B-0-kappa} towards \eqref{B0} when the diffusivity parameter goes to zero. In the final Section \ref{Appendix}, we collect some technical results used along the paper.

 \section{Techniques related to  the vortex patch problem}\label{Useful-Tec}
The main concern of this introductory section is to collect some concepts and discuss useful tools required in studying the vortex patch problem. We distinguish two important parts. The first one covers some results on the functional material centered around some aspects of the para-differential calculus, classical Besov spaces, and anisotropic Besov spaces. The second one focuses on the regularity persistence and the maximal smoothing effects for transport-diffusion equations when the drift is given by a velocity field that scales slightly below the Lipschitz class. This is typically the case for  vortex patches whose singular points are of corners type.
\subsection{Notation} 
For the convenience of the reader, we begin with introducing the main notation and conventions that will be used frequently in the sequel of this paper.
\begin{itemize}
\item We shall use the notation $A\lesssim B$ to denote $A\leqslant CB$, for some constant $C>0$ that is independent of the main quantities of the problem, namely, which is independent of $A$ and $B$. This constant is also allowed to differ from line to another.
\item We use $C_0$ (resp. $C_{0,T}$) to  denote a constant depending on the initial data of the problem (resp. and on $T$). This constant   may  vary from line to line.
\item  The notation $f\triangleq g$ means that $f$ equals to $g$ by definition.
\item The commutator between two operators $A$ and $B$ is defined by 
$$[A,B]\triangleq AB - BA .$$
\item For any non--empty subset $F\subset\RR^2$, the characteristic function of $F$, denoted by ${\bf 1}_F$, is defined by
\begin{equation*}
{\bf 1}_F(x)=\left\{\begin{array}{ll}
1 & \textrm{if $x\in F$,}\vspace{2mm} \\ 
0 & \textrm{if $x\in\RR^2\setminus F$.}
\end{array}
\right.
\end{equation*} 
\item For any $h\geqslant 0$ and for any non--empty subset $F\subset \mathbb{R}^2$, we denote 
\begin{equation}\label{distance-h:def}
 F_h \triangleq \left\{ x\in \mathbb{R}^2 :\; d(x,F)\leqslant h\right\}, \quad F_h^c \triangleq \left\{ x\in \mathbb{R}^2 :\; d(x,F)> h\right\},
\end{equation} 
with $d(x,F)$ is the distance of $x$ to $F$ given by
$$
d(x,F)=\inf_{y\in F}d(x,y).
$$
We remark that this function is $1$-Lipschitz, which  implies that its  derivative in the distribution sense is bounded, with 
$$
\sup_{x\in\RR^2}|\nabla d(\cdot,F)|\leqslant 1.
$$
\item For any non-empty subset $F\subset \mathbb{R}^2$ and for any  vector-field $v:\mathbb{R}^2\to\mathbb{R}^2$  we define  
\begin{equation}\label{Lsigma}
\|v(t)\|_{L(F)}\triangleq\sup_{h\in(0,e^{-1}]}\frac{\Vert  \nabla v(t)\Vert_{L^\infty(F_h^c)}}{-\log h}.
\end{equation}

\item  We define the function $\ln^+: (0,\infty) \longrightarrow [1,\infty)$ through
\begin{equation}\label{log+} 
\ln^+(t)=\left\{ \begin{array}{ll}
  -\ln(t),\quad\hbox{if} \quad 0<t\leqslant e^{-1}\vspace{2mm}\\
 1,\quad\hbox{if}\quad  t\geqslant e^{-1}.
  \end{array}\right.
\end{equation} 
\item For any non--empty set $\Sigma_0\subset\RR^2$, we define the functions $\chi$ and $\varphi_0$ by
\begin{equation} \label{tool:varphi0}
\chi(\tau)\triangleq \left\{ \begin{array}{ll}
 \tau,\quad\hbox{if} \quad \tau\leqslant e^{-1}\vspace{2mm}\\
  e^{-1},\quad\hbox{if}\quad  \tau\geqslant e^{-1}
  \end{array}\right.\quad\hbox{and}\quad 
\varphi_0(x)\triangleq\chi\big(d(x,\Sigma_0)\big), \; \text{ for } x\in \mathbb{R}^2.
\end{equation}
\end{itemize}

\subsection{Functional toolbox}
In this section, we  intend to gather some useful tools and state classical results related to frequency localization and functional spaces. Some aspects of para-differential calculus will be explored at the end. \\
Consider  a radial  function decreasing function $\chi\in\mathscr{D}(\RR^2)$ be a reference cut-off function, monotonically decaying along rays and so that
\begin{equation*}
\left\{\begin{array}{ll}
\chi\equiv1 & \textrm{if $ |\xi |\leqslant\frac12$,}\vspace{3mm}\\ 
0\leqslant \chi\leqslant1 & \textrm{if $\frac12\leqslant |\xi |\leqslant1$,}\vspace{3mm}\\
\chi\equiv0 & \textrm{if $ |\xi |\geqslant 1.$}
\end{array}
\right.
\end{equation*}
Define $\varphi(\xi)\triangleq\chi(\tfrac{\xi}{2})-\chi(\xi),$ then we have  $\varphi\geqslant 0,\, \supp\varphi\subset\mathcal{C}\triangleq\{\xi\in\RR^2:\frac12\leqslant |\xi |\leqslant 1\}$, and 
$$
\forall\, \xi\in\RR^2,\quad \chi(\xi)+\sum_{q\in\NN}\varphi(2^{-q}\xi)=1.
$$
The Littlewood-Paley  operators are defined as follows: for every $v\in\mathcal{S}'(\RR^2)$, 

\begin{equation*}
\Delta_{-1}v\triangleq\chi(\sqrt{-\Delta})v,\quad \forall\, q\in\NN,\quad  \Delta_{q}v\triangleq\varphi(2^{-q}\sqrt{-\Delta})v\quad \hbox{and}\quad  S_{q}v\triangleq\sum_{j\leqslant q-1}\Delta_{j}v.
\end{equation*}
The first useful application  is the decomposition
$$
v=\sum_{q\ge-1}\Delta_q v , \quad \text{in } \mathcal{S}'(\RR^2).
$$
Moreover, the operators $\Delta_q$ and $S_q$ enjoy the following properties, valid for all $u,v$ in $\mathcal{S}'(\mathbb{R}^2)$
\begin{enumerate}
\item[(i)] $\vert p-q\vert\ge2\Longrightarrow\Delta_p\Delta_q v\equiv0$,\vspace{2mm}
\item[(ii)] $\vert p-q\vert\ge4\Longrightarrow\Delta_q(S_{p-1}u\Delta_p v)\equiv0$,\vspace{2mm}
\item[(iii)] $\Delta_q, S_q: L^m \longrightarrow L^m$ uniformly with respect to  $q\geqslant -1$ and $m\in [1,\infty]$.
\end{enumerate}

Next, we recall the well-known Bernstein inequalities, see for instance \cite{Bahouri-Chemin-Danchin}.
\begin{lemma}\label{Bernstein} There exists a constant $C>0$ such that for $1\leqslant a\leqslant b\leqslant\infty$, for every function $v$ and every $q,k\in\NN $, we have
\begin{equation*}
\sup_{\vert\alpha\vert=k}\Vert\partial^{\alpha}S_{q}v\Vert_{L^{b}}\leqslant C^{k}2^{q\big(k+2\big(\frac{1}{a}-\frac{1}{b}\big)\big)}\Vert S_{q}v\Vert_{L^{a}},\\
\end{equation*}
\begin{equation*}
C^{-k}2^{qk}\Vert\Delta_{q}v\Vert_{L^{a}}\leqslant\sup_{\vert\alpha\vert=k}\Vert\partial^{\alpha}\Delta_{q}v\Vert_{L^{a}}\leqslant C^{k}2^{qk}\Vert\Delta_{q}v\Vert_{L^{a}}.
\end{equation*}
\end{lemma}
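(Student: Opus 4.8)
The plan is to reduce both inequalities to the model situation of a tempered distribution whose Fourier support is confined either to a ball (the case of $S_q v$) or to a dyadic annulus (the case of $\Delta_q v$), and then to combine the reproducing property of the Littlewood--Paley truncations with Young's convolution inequality. The power $2^{qk}$ and the exponent $2(\tfrac1a-\tfrac1b)$ will come out of a pure dimensional analysis of the convolution kernel under dilation, while the geometric constant $C^k$ will require separate bookkeeping of how the order $k$ enters the kernels.

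First I would fix $\phi\in\mathscr{D}(\RR^2)$ with $\phi\equiv1$ on the unit ball, set $g\triangleq\check\phi$, and write $g_\lambda(x)\triangleq\lambda^2 g(\lambda x)$ so that $\widehat{g_\lambda}(\xi)=\phi(\xi/\lambda)$. Since the spectrum of $S_q v$ lies in a ball of radius $\lambda\sim2^q$, one has $S_q v=g_\lambda * S_q v$, hence $\partial^\alpha S_q v=(\partial^\alpha g_\lambda)* S_q v$. Young's inequality with the exponent $r$ defined by $1+\tfrac1b=\tfrac1r+\tfrac1a$ (which satisfies $r\geqslant1$ precisely because $a\leqslant b$) then gives
$$\|\partial^\alpha S_q v\|_{L^b}\leqslant \|\partial^\alpha g_\lambda\|_{L^r}\,\|S_q v\|_{L^a},$$
and the elementary scaling identity $\|\partial^\alpha g_\lambda\|_{L^r}=\lambda^{|\alpha|+2(\frac1a-\frac1b)}\|\partial^\alpha g\|_{L^r}$ produces exactly the claimed power $2^{q(k+2(\frac1a-\frac1b))}$. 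The upper bound in the second inequality is the special case $a=b$ applied to $\Delta_q v$, whose spectrum sits in an annulus and a fortiori in a ball of radius $\sim2^q$.

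For the reverse bound I would use the annular localization in an essential way. Choosing $\widetilde\varphi\in\mathscr{D}(\RR^2\setminus\{0\})$ equal to $1$ on $\supp\varphi=\mathcal{C}$, I would exploit, away from the origin, the algebraic identity $1=|\xi|^{-2k}(\xi_1^2+\xi_2^2)^k=\sum_{|\alpha|=k}\binom{k}{\alpha}\,\xi^{2\alpha}|\xi|^{-2k}$ and factor one copy of $\xi^\alpha$ out of each $\xi^{2\alpha}$ to identify it, up to a unimodular constant, with the symbol of $\partial^\alpha$. After rescaling $\widetilde\varphi$ to the annulus of radius $\lambda\sim2^q$ where $\widehat{\Delta_q v}$ is supported, this yields the representation $\Delta_q v=\sum_{|\alpha|=k}2^{-qk}\,h_{\alpha,\lambda}* \partial^\alpha\Delta_q v$, where $h_{\alpha,\lambda}$ is the dilation at scale $\lambda$ of the fixed Schwartz kernel associated with the compactly supported multiplier $\binom{k}{\alpha}\,\xi^\alpha|\xi|^{-2k}\widetilde\varphi(\xi)$. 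A further application of Young's inequality with $\|h_{\alpha,\lambda}\|_{L^1}=\|h_\alpha\|_{L^1}$ bounds $\|\Delta_q v\|_{L^a}$ by $2^{-qk}\sum_{|\alpha|=k}\|h_\alpha\|_{L^1}\,\|\partial^\alpha\Delta_q v\|_{L^a}$, which is the desired lower bound.

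The hard part will not be the scaling, which is routine, but the control of the $k$-dependence of the constants: one must show $\|\partial^\alpha g\|_{L^r}\leqslant C^{k}$ uniformly in $r\in[1,\infty]$ and in the direction of $\alpha$ for the direct estimate, and $\sum_{|\alpha|=k}\|h_\alpha\|_{L^1}\leqslant C^{k}$ for the reverse one. For the former I would interpolate between $L^1$ and $L^\infty$ to eliminate the $r$-dependence and then use that each differentiation of the fixed Schwartz function $g$ costs only a controlled geometric factor. For the latter the factor $C^k$ arises from the number $k+1$ of multi-indices of length $k$ in the plane together with a uniform $L^1$ bound on each associated kernel. This tracking of the exponential-in-$k$ constants is the only genuinely delicate point, the remainder being a dimensional analysis of convolution operators.
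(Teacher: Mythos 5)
Your argument is correct and is essentially the standard proof: the paper does not prove this lemma but simply cites \cite{Bahouri-Chemin-Danchin}, and the proof given there proceeds exactly as you describe (reproducing the spectrally localized function by convolution with a dilated kernel, Young's inequality, and, for the reverse bound, division by the symbol $\sum_{|\alpha|=k}\binom{k}{\alpha}\xi^{2\alpha}|\xi|^{-2k}$ on the annulus). Your bookkeeping of the $C^{k}$ constants, via interpolation of the $L^r$ norms of $\partial^{\alpha}g$ and the count of the $k+1$ multi-indices of length $k$ in the plane, is also the standard way this is handled.
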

Besov spaces are defined below.
\begin{definition} For $(s,p,r)\in\RR\times[1,  \infty]^2$. The non-homogeneous Besov space $B_{p,r}^s$   is the set of all the tempered distributions $v\in\mathcal{S}^{'}$   such that
\begin{eqnarray*}
&&\Vert v\Vert_{{B}_{p, r}^{s}}\triangleq\|2^{qs}\Vert \Delta_{q} v\Vert_{L^{p}}\|_{\ell^r(q\geqslant -1)}<\infty.  
\end{eqnarray*}
\end{definition}

\begin{remark} In the  case $p=q=\infty,\; k\in\NN$ and $ \alpha\in (0,1)$ the Besov space  $B_{\infty,\infty}^{k+\alpha}$ coincides with the H\"older space $C^{k+\alpha}$. More precisely, we have
\begin{equation}\label{N-equivalent}
\Vert v\Vert_{{B}_{\infty, \infty}^{k+\alpha}}\lesssim \|v\|_{C^{k+\alpha}}\triangleq \Vert v\Vert_{L^\infty}+\sup_{\underset{|\alpha|=k}{x\neq y}}\tfrac{\vert \partial^{\alpha} v(x)-\partial^{\alpha}v(y)\vert}{\vert x-y\vert^s}\lesssim\Vert v\Vert_{{B}_{\infty, \infty}^{k+\alpha}}.
\end{equation}
To simplify the notations in some estimates, we will also use the notation
$$  \|v\|_{{s}} \triangleq  \|v\|_{C^{s}} ,\quad \forall s\in \mathbb{R}.$$
 \end{remark}
Let us now recall the definition of the Log-Lipschitz space. 
\begin{definition}
We denote  by ${LL}$ the space of log-Lipschitz functions, that is the set of functions  $v:\RR^2\to\RR^2$  satisfying
$$
\Vert v \Vert_{LL}\triangleq  \underset {0<\vert x- y\vert<1}{\sup}\frac{\vert v(x)-v(y)\vert}{\vert x-y\vert  \log \frac{e}{\vert x-y \vert} }<\infty.
$$
\end{definition}
The following classical estimate is very useful. It is a simple consequence of the embedding $B_{\infty,\infty} ^1\subset LL$ combined with Bernstein inequality and the  Biot--Savart law
\begin{equation}\label{b-s}
v(x)=\frac{1}{2\pi}\int_{\RR^2}\frac{(x-y)^\perp}{\vert x-y\vert^2}\omega(y)dy, x^\perp = (-x_2,x_1).
\end{equation}
\begin{lemma}\label{lem3}\cite[Chapter 7]{Bahouri-Chemin-Danchin} 
Let $v$ be a vector field  given by \eqref{b-s}. Then, for any   $a\in [1, \infty)$, we have
$$
\Vert v\Vert_{LL}\leqslant C \Vert \omega\Vert_{L^a\cap L^\infty},
$$
with $C$ depending only on $a$.
\end{lemma}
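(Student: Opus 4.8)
The plan is to prove the slightly more precise pointwise statement that for every $a\in[1,\infty)$ one has $\Vert v\Vert_{LL}\leqslant C_a\Vert\omega\Vert_{L^a\cap L^\infty}$ by running the classical embedding $B_{\infty,\infty}^1\subset LL$ at the level of dyadic blocks, feeding in estimates for the $\Delta_q v$ coming from the Biot--Savart law \eqref{b-s} and the Bernstein inequalities of Lemma \ref{Bernstein}. First I would fix $x,y$ with $0<|x-y|<1$, decompose $v=\sum_{q\geqslant -1}\Delta_q v$ and split the telescoping sum
$$v(x)-v(y)=\sum_{q\geqslant -1}\big(\Delta_q v(x)-\Delta_q v(y)\big)$$
at a threshold $N\in\NN$ to be chosen as $N\sim\log_2(1/|x-y|)$. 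On the low block $-1\leqslant q\leqslant N$ I bound each term by $|x-y|\,\Vert\nabla\Delta_q v\Vert_{L^\infty}$ via the mean value inequality, while on the high block $q>N$ I use the crude bound $2\Vert\Delta_q v\Vert_{L^\infty}$. The logarithm in the definition of $\Vert\cdot\Vert_{LL}$ will appear precisely as the number $N+1$ of low-frequency terms once they are summed.

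The second step supplies the two frequency estimates. For $q\geqslant 0$ I would write $\nabla v$ as a matrix of zeroth-order Fourier multipliers applied to $\omega$, namely the differentiated Biot--Savart law $\nabla v=\nabla\nabla^\perp\Delta^{-1}\omega$; since on the annulus $|\xi|\sim 2^q$ the associated symbol is smooth and homogeneous of degree $0$, a rescaled kernel/Young argument gives $\Vert\nabla\Delta_q v\Vert_{L^\infty}\leqslant C\Vert\omega\Vert_{L^\infty}$ uniformly in $q\geqslant 0$, whence $\Vert\Delta_q v\Vert_{L^\infty}\leqslant C2^{-q}\Vert\omega\Vert_{L^\infty}$ by the lower Bernstein inequality of Lemma \ref{Bernstein} (valid for $q\geqslant 0$). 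Summing over the two blocks yields a contribution $C(N+1)|x-y|\Vert\omega\Vert_{L^\infty}$ from the low frequencies and $C2^{-N}\Vert\omega\Vert_{L^\infty}$ from the high ones; the choice $2^{-N}\sim|x-y|$ converts both into $C|x-y|\log\frac{e}{|x-y|}\Vert\omega\Vert_{L^\infty}$.

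The delicate point, and the step I expect to be the main obstacle, is the lowest block $q=-1$, where the symbol of the Biot--Savart kernel is singular at the origin and $\Vert\Delta_{-1}v\Vert_{L^\infty}$ need not even be finite. The crucial observation is that $\Vert\cdot\Vert_{LL}$ is a seminorm that only records differences, so only $\nabla\Delta_{-1}v$ enters, and its Fourier symbol is, up to a harmless constant, $m(\xi)=\chi(\xi)\,\xi\otimes\xi^{\perp}/|\xi|^{2}$, which is bounded and compactly supported. Consequently its convolution kernel $\check m$ is smooth, and the degree-$0$ homogeneity of $m$ near $\xi=0$ forces the decay $|\check m(x)|\lesssim|x|^{-2}$, so that $\check m\in L^{a'}$ for every $a'>1$. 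Young's inequality then gives $\Vert\nabla\Delta_{-1}v\Vert_{L^\infty}\leqslant\Vert\check m\Vert_{L^{a'}}\Vert\omega\Vert_{L^a}\leqslant C_a\Vert\omega\Vert_{L^a}$ with $1/a+1/a'=1$; this is exactly where the integrability hypothesis $\omega\in L^a$ is used, and the borderline $a'=1$ (i.e.\ $a=\infty$), for which the $|x|^{-2}$ tail fails to be integrable in the plane, accounts for the restriction $a<\infty$.

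Finally I would collect the three contributions into the telescoping sum, divide by $|x-y|\log\frac{e}{|x-y|}\geqslant|x-y|$, and take the supremum over $0<|x-y|<1$ to obtain $\Vert v\Vert_{LL}\leqslant C_a\Vert\omega\Vert_{L^a\cap L^\infty}$, with $C_a$ controlled through $\Vert\check m\Vert_{L^{a'}}$ and the uniform Bernstein constants. Apart from the low-frequency kernel-decay estimate, every step is routine bookkeeping of the dyadic split and the optimization in $N$.
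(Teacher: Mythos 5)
Your proof is correct and follows exactly the route the paper indicates: the paper gives no proof of its own, citing \cite[Chapter 7]{Bahouri-Chemin-Danchin} and noting only that the lemma follows from the embedding $B^1_{\infty,\infty}\subset LL$, Bernstein's inequality and the Biot--Savart law, which is precisely the dyadic splitting, the zeroth-order multiplier bound for $q\geqslant 0$, and the $L^{a'}$ kernel estimate for the block $q=-1$ that you carry out. The only point worth tightening is the decay $|\check m(x)|\lesssim |x|^{-2}$ for the low-frequency kernel, which does not follow from a naive double integration by parts (since $\Delta_\xi m$ is borderline non-integrable at $\xi=0$) but is the standard Calder\'on--Zygmund kernel estimate, provable by a further dyadic decomposition near the frequency origin.
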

The characterization below of the space  $LL$ is  useful. 
\begin{proposition}\label{C-LL}\cite[Proposition 2.111]{Bahouri-Chemin-Danchin} For any function $v\in LL$, we have 
\begin{eqnarray*}
 \|v\|_{LL}&\lesssim& \sup_{q\in \mathbb{N}}\frac{\|\nabla S_{q}v\|_{L^\infty}}{2+q}\lesssim\|v\|_{LL}
 \end{eqnarray*}
 and for all $q\in \mathbb{N}$
 \begin{eqnarray*}
 \|\Delta_q v\|_{L^\infty}&\lesssim&  \|v\|_{LL}(2+q)2^{-q}.
\end{eqnarray*}
\end{proposition}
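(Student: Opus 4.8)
The proposition bundles three bounds: the equivalence $\|v\|_{LL}\simeq \sup_{q\in\NN}\frac{\|\nabla S_q v\|_{L^\infty}}{2+q}$, which splits into two inequalities, together with the decay estimate on $\|\Delta_q v\|_{L^\infty}$. The plan is to dispatch the two upper bounds on the frequency-localized quantities by a direct kernel computation, and then to recover the $LL$ seminorm from $\sup_{q}\frac{\|\nabla S_q v\|_{L^\infty}}{2+q}$ by a dyadic decomposition at the scale of $|x-y|$.

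\emph{Upper bounds on $\nabla S_q v$ and $\Delta_q v$.} I would write the operators as convolutions $S_q v=K_q*v$ and $\Delta_q v=\widetilde K_q*v$, where $K_q(z)=2^{2q}h(2^q z)$ and $\widetilde K_q(z)=2^{2q}g(2^q z)$ with $h,g$ the Schwartz inverse Fourier transforms of $\chi$ and $\varphi$. The decisive structural point is that both $\nabla K_q$ and $\widetilde K_q$ integrate to zero — the former because it is a total derivative, the latter because $\varphi$ vanishes at the origin. Hence I may replace $v(x-z)$ by the difference $v(x-z)-v(x)$ under the integral, so that only the modulus of continuity of $v$ enters. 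Rescaling by $w=2^q z$ gives, for instance, $\nabla S_q v(x)=2^q\int (\nabla h)(w)\,\big(v(x-2^{-q}w)-v(x)\big)\,dw$; on the ball $|w|\le 2^q$ the log-Lipschitz bound turns $|v(x-2^{-q}w)-v(x)|$ into $\|v\|_{LL}\,2^{-q}|w|\log\frac{e}{2^{-q}|w|}\lesssim \|v\|_{LL}\,2^{-q}|w|\,(2+q+|\log|w||)$, and integrating against the rapidly decaying $|\nabla h|$ produces the factor $(2+q)$. The same computation with $g$ in place of $\nabla h$ and without the extra power $2^q$ yields $\|\Delta_q v\|_{L^\infty}\lesssim (2+q)2^{-q}\|v\|_{LL}$.

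\emph{Reconstruction of $\|v\|_{LL}$.} Setting $M:=\sup_{q}\frac{\|\nabla S_q v\|_{L^\infty}}{2+q}$ and fixing $x\neq y$ with $|x-y|<1$, I choose $N\in\NN$ with $2^{-N-1}\le|x-y|<2^{-N}$ and split $v=S_N v+\sum_{q\ge N}\Delta_q v$. The low frequencies are handled by the mean value inequality, $|S_N v(x)-S_N v(y)|\le \|\nabla S_N v\|_{L^\infty}|x-y|\le M(2+N)|x-y|$. For the tail I convert the control on $\nabla S_q$ into one on $\Delta_q v$ via Bernstein (Lemma \ref{Bernstein}): since $\nabla\Delta_q v=\nabla S_{q+1}v-\nabla S_q v$, one gets $\|\Delta_q v\|_{L^\infty}\lesssim 2^{-q}\|\nabla\Delta_q v\|_{L^\infty}\lesssim 2^{-q}M(2+q)$, hence $\sum_{q\ge N}\|\Delta_q v\|_{L^\infty}\lesssim M(2+N)2^{-N}$. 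Adding the two contributions and using $N\simeq\log_2(1/|x-y|)$, so that $2+N\lesssim \log\frac{e}{|x-y|}$ and $2^{-N}\simeq|x-y|$, gives $|v(x)-v(y)|\lesssim M\,|x-y|\log\frac{e}{|x-y|}$, which is precisely $\|v\|_{LL}\lesssim M$.

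The main obstacle I anticipate lies in the far region $|w|\ge 2^q$ (equivalently $|z|\ge 1$) of the convolution integrals, where the log-Lipschitz bound does not apply directly; I would dispose of it using the elementary chaining consequence $|v(x)-v(y)|\lesssim \|v\|_{LL}(1+|x-y|)$ together with the Schwartz decay of the kernels, which also makes clear that no pointwise boundedness of $v$ is needed since only differences of $v$ appear. The remaining delicate bookkeeping is to track the logarithmic losses uniformly in $q$ — checking that the scale factor $2^{-q}|w|$ times the slowly growing $\log\frac{e}{2^{-q}|w|}$ still integrates to a clean $(2+q)$, and that the geometric tail $\sum_{q\ge N}(2+q)2^{-q}$ collapses to $(2+N)2^{-N}$ — so that every logarithm is correctly converted into $\log\frac{e}{|x-y|}$ at the end.
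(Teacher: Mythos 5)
Your proof is correct. The paper does not prove this proposition itself --- it simply cites \cite[Proposition 2.111]{Bahouri-Chemin-Danchin} --- and your argument (exploiting the vanishing integrals of $\nabla K_q$ and of the annulus kernel to insert the difference $v(x-z)-v(x)$ and harvest the factor $(2+q)$, then recovering $\|v\|_{LL}$ by splitting $v=S_Nv+\sum_{q\geqslant N}\Delta_q v$ at the dyadic scale of $|x-y|$ with the reverse Bernstein inequality) is precisely the standard proof given in that reference.
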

The well-known  {\it Bony's} decomposition \cite{Bahouri-Chemin-Danchin} enables us to split formally the product of two tempered distributions $u$ and $v$ into three pieces. In
what follows, we shall adopt the following definition 
\begin{definition}\label{Deff-Bony} For a given $u, v\in\mathcal{S}'$ we have
 $$
uv=T_u v+T_v u+R(u,v),
$$
with
$$T_u v=\sum_{q}S_{q-1}u\Delta_q v,\quad  R(u,v)=\sum_{q}\Delta_qu\widetilde\Delta_{q}v  \quad\hbox{and}\quad \widetilde\Delta_{q}=\Delta_{q-1}+\Delta_{q}+\Delta_{q+1}.
$$
\end{definition}
The following lemma provides some useful product laws in Besov spaces. 
\begin{lemma}\label{para--products}
Let  $(s,s_1, s_2,p)\in\mathbb{R}\times(-\infty,0) \times (0,\infty)\times[1, \infty].$ Let $T$ and $R$ be the operators given by Definition $\ref{Deff-Bony}$. Then, the following estimates hold.
\begin{equation*}
\|T_u v \|_{B^{s_1+ s}_{p,\infty}}\lesssim \| u\|_{B^{s_1}_{p,\infty}}\|v \|_{B^s_{\infty,\infty}}.\vspace{2mm}
\end{equation*} 
\begin{equation*}
\|T_u v \|_{B^{s}_{p,\infty}}\lesssim \| u\|_{L^\infty}\|v \|_{B^s_{p,\infty}}.\vspace{2mm}
\end{equation*} 
\begin{equation*}
\|R(u,v)\|_{B^{s_2}_{p,\infty}}\lesssim \| u\|_{L^\infty}\|v \|_{B^{s_2}_{p,\infty}}.
\end{equation*}
\end{lemma}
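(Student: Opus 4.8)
The plan is to prove the three inequalities separately by the standard dyadic strategy: localize each paraproduct or remainder with $\Delta_j$, use the almost-orthogonality properties (i)--(ii) to restrict the inner frequency sum to a controlled range, estimate each block via Bernstein's inequality (Lemma \ref{Bernstein}) together with the uniform $L^m$ boundedness of $\Delta_q$ and $S_q$ (property (iii)), and finally sum a geometric series whose convergence is ensured precisely by the sign restrictions $s_1<0$ and $s_2>0$.

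For the first two estimates I would start from $\Delta_j(T_u v)=\sum_q \Delta_j(S_{q-1}u\,\Delta_q v)$. Since $S_{q-1}u\,\Delta_q v$ is frequency-localized in an annulus of size $2^q$, property (ii) forces $\Delta_j(S_{q-1}u\,\Delta_q v)\equiv 0$ unless $|j-q|\le N_0$ for some fixed integer $N_0$, so only finitely many blocks contribute to each $j$. For the first bound I would then write $\|\Delta_j(T_uv)\|_{L^p}\lesssim \sum_{|q-j|\le N_0}\|S_{q-1}u\|_{L^p}\|\Delta_q v\|_{L^\infty}$; bounding $\|S_{q-1}u\|_{L^p}\lesssim \sum_{j'\le q-2}2^{-j's_1}\|u\|_{B^{s_1}_{p,\infty}}\lesssim 2^{-qs_1}\|u\|_{B^{s_1}_{p,\infty}}$, where the geometric sum converges because $s_1<0$, together with $\|\Delta_q v\|_{L^\infty}\lesssim 2^{-qs}\|v\|_{B^s_{\infty,\infty}}$, yields the weight $2^{-j(s_1+s)}$ after taking the supremum over $j$. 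The second bound is identical except that I would place $u$ in $L^\infty$, using $\|S_{q-1}u\|_{L^\infty}\lesssim\|u\|_{L^\infty}$ and $\|\Delta_q v\|_{L^p}\lesssim 2^{-qs}\|v\|_{B^s_{p,\infty}}$; no sign condition on $s$ is needed here, since no summation over the low frequencies of $u$ is carried out.

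The third estimate is where the geometric series runs in the opposite direction. Writing $R(u,v)=\sum_q \Delta_q u\,\widetilde\Delta_q v$ and observing that each summand is frequency-localized in a \emph{ball} of size $2^q$ rather than an annulus, the relevant constraint becomes $\Delta_j(\Delta_q u\,\widetilde\Delta_q v)\equiv 0$ unless $q\ge j-N_0$, which produces a tail sum. Thus $\|\Delta_j R(u,v)\|_{L^p}\lesssim\sum_{q\ge j-N_0}\|\Delta_q u\|_{L^\infty}\|\widetilde\Delta_q v\|_{L^p}\lesssim\|u\|_{L^\infty}\,\|v\|_{B^{s_2}_{p,\infty}}\sum_{q\ge j-N_0}2^{-qs_2}$, and the tail $\sum_{q\ge j-N_0}2^{-qs_2}\lesssim 2^{-js_2}$ converges precisely because $s_2>0$, giving the desired weight.

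I do not expect a genuine obstacle here, as the whole argument is careful bookkeeping of frequency supports. The one point worth emphasizing—and which I regard as the conceptual heart of the proof—is that the paraproduct and the remainder localize in annuli and balls respectively, so the two summations run in opposite directions; the hypotheses $s_1<0$ and $s_2>0$ are exactly what make each geometric series summable at its relevant endpoint.
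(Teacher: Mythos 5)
Your proof is correct and is exactly the standard dyadic argument that the paper itself does not reproduce but simply cites from \cite[Chapter 2]{Bahouri-Chemin-Danchin}: annulus localization and property (ii) for the paraproducts, ball localization and a tail sum for the remainder, with the conditions $s_1<0$ and $s_2>0$ ensuring convergence of the respective geometric series. No gaps.
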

The proof of Lemma \ref{para--products}  is elementary, see for instance \cite[Chapter 2]{Bahouri-Chemin-Danchin}. As a corollary, one obtains the following useful estimate.
 \begin{coro}\label{ppxr0}
Let $(s,p)\in(0,1)\times[1,\infty]$ and $X$ be a vector field belonging to $B^s_{p,\infty}$ as well as its divergence. Let $f$  be a Lipschitz scalar function. Then, for $j\in\{1,2\}$, we have
$$
\Vert (\partial_{x_j}X)\cdot\nabla f\Vert_{B^{s-1}_{p,\infty}}\leqslant C\Vert \nabla f\Vert_{L^\infty}\big(\Vert \textnormal{div}X\Vert_{B^{s }_{p,\infty}}+\Vert X\Vert_{B^{s}_{p,\infty}}\big).
$$ 
\end{coro}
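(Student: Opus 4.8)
The plan is to reduce everything to Bony's decomposition (Definition \ref{Deff-Bony}) of the scalar product, writing, for fixed $j$,
\[
(\partial_{x_j}X)\cdot\nabla f=\sum_{k=1}^{2}\partial_j X^k\,\partial_k f=\sum_{k=1}^2\Big(T_{\partial_j X^k}\partial_k f+T_{\partial_k f}\partial_j X^k+R(\partial_j X^k,\partial_k f)\Big),
\]
and then to observe that the two paraproducts are harmless while the remainder is the only genuinely critical piece. Indeed, since $X\in B^s_{p,\infty}$ we have $\partial_j X^k\in B^{s-1}_{p,\infty}$ with $s-1\in(-1,0)$, and $\partial_k f\in L^\infty\hookrightarrow B^0_{\infty,\infty}$ because $f$ is Lipschitz. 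Hence the first estimate of Lemma \ref{para--products} (applied with $s_1=s-1<0$ and with regularity index $0$ for $\partial_k f$) gives $\|T_{\partial_j X^k}\partial_k f\|_{B^{s-1}_{p,\infty}}\lesssim\|\partial_j X^k\|_{B^{s-1}_{p,\infty}}\|\partial_k f\|_{L^\infty}\lesssim\|X\|_{B^s_{p,\infty}}\|\nabla f\|_{L^\infty}$, while the second estimate (with $\partial_k f\in L^\infty$) yields $\|T_{\partial_k f}\partial_j X^k\|_{B^{s-1}_{p,\infty}}\lesssim\|\nabla f\|_{L^\infty}\|X\|_{B^s_{p,\infty}}$. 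Both are already of the desired form, and both bounds are uniform in $p\in[1,\infty]$.

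The heart of the matter is the remainder $\sum_k R(\partial_j X^k,\partial_k f)$. Here the two factors carry regularities $s-1<0$ and $0$, so their sum is non-positive and the remainder law is out of reach: a direct dyadic estimate produces, on the block $\Delta_{q'}$, the series $\sum_{q\gtrsim q'}2^{q(1-s)}\|X\|_{B^s_{p,\infty}}\|\nabla f\|_{L^\infty}$, which diverges since $1-s>0$. This is precisely where the hypothesis on $\mathrm{div}\,X$ enters. The plan is to integrate by parts at the level of the dyadic blocks of the remainder, transferring the derivative $\partial_k$ off $f$ onto $X^k$ and summing in $k$ so as to reconstruct the divergence. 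Using $\widetilde\Delta_q(\partial_k f)=\partial_k\widetilde\Delta_q f$ and $\sum_k\partial_j\partial_k X^k=\partial_j\,\mathrm{div}\,X$, this gives the algebraic identity
\[
\sum_{k}R(\partial_j X^k,\partial_k f)=\sum_{k}\partial_k R(\partial_j X^k, f)-R\big(\partial_j\,\mathrm{div}\,X,\, f\big).
\]
In each remainder on the right the block $\widetilde\Delta_q f$ now appears undifferentiated, so that for $q\geq0$ one may invoke $\|\widetilde\Delta_q f\|_{L^\infty}\lesssim 2^{-q}\|\nabla f\|_{L^\infty}$ (Lemma \ref{Bernstein}). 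This extra factor $2^{-q}$ is exactly the decay missing above: it turns the divergent sum into $\sum_{q\gtrsim q'}2^{-qs}\|X\|_{B^s_{p,\infty}}\|\nabla f\|_{L^\infty}$, convergent for $s>0$, and shows $R(\partial_j X^k,f),\,R(\partial_j\,\mathrm{div}\,X,f)\in B^s_{p,\infty}$ with norms controlled by $\|\nabla f\|_{L^\infty}\|X\|_{B^s_{p,\infty}}$ and $\|\nabla f\|_{L^\infty}\|\mathrm{div}\,X\|_{B^s_{p,\infty}}$ respectively. Applying $\partial_k$ (which maps $B^s_{p,\infty}\to B^{s-1}_{p,\infty}$) to the first term, and using the embedding $B^s_{p,\infty}\hookrightarrow B^{s-1}_{p,\infty}$ for the second, then closes the estimate. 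The finitely many low-frequency blocks $q=-1$, where only $\|f\|_{L^\infty}$ is a priori available, are harmless because they recombine across the two terms of the identity into $\sum_k\Delta_{-1}\partial_j X^k\,\widetilde\Delta_{-1}\partial_k f$, which again carries the derivative $\partial_k f$; equivalently, the whole combination is invariant under $f\mapsto f+\mathrm{const}$, so only $\nabla f$ can survive.

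The main obstacle is thus the remainder term: its total regularity is critical, so a naive appeal to the product laws of Lemma \ref{para--products} is impossible and a direct frequency-by-frequency bound diverges. The crucial idea is to trade the offending derivative $\partial_j$ carried by $X$ against the divergence structure, which simultaneously supplies the compensating decay $2^{-q}$ needed for summability and explains why the norm $\|\mathrm{div}\,X\|_{B^s_{p,\infty}}$ necessarily appears on the right-hand side. Once this rearrangement is in place, the remaining computations are routine applications of Bernstein's inequality and the elementary remainder estimates.
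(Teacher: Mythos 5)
Your argument is correct and is essentially the proof the paper intends: the paper only points to Lemma \ref{para--products} and to \cite{Hassainia-Hmidi} for details, and that proof proceeds exactly as you do, via Bony's decomposition, the two paraproduct laws, and the integration-by-parts identity $\sum_{k}R(\partial_j X^k,\partial_k f)=\sum_{k}\partial_k R(\partial_j X^k,f)-R(\partial_j\,\mathrm{div}\,X,f)$ to rescue the critical remainder. Your identification of the remainder as the only delicate term, the role of $\mathrm{div}\,X$, and the treatment of the low-frequency blocks are all as in the reference.
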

The proof of Corollary \ref{ppxr0} is based on Lemma \ref{para--products}. One may see also \cite{Hassainia-Hmidi} for a detailed proof in the case $p=\infty$, thereby, the general case $p\in [1,\infty]$ can be treated similarly.\\

We conclude this section by recalling two lemmas that will be helpful in the proof of Theorem \ref{THEO:2:soft}. For more details on the  proof of the first one, we refer to \cite[Lemma 5]{Hmidi-0}. Meanwhile, the second one is based on \cite[Lemma 7]{Hmidi-0}.
\begin{lemma}\label{inverse-embedding:lemma}
Let $(s,p)\in\RR_{+}\times[1,\infty]$ and $K$ be  a compact in $\mathbb{R}^d$, with $d\geqslant 1$. There exists a constant $C_K$ such that, for all tempered distribution $v$ in $C^s$, supported in $K$, we have 
\begin{equation*}
\|v\|_{B^s_{p,\infty}}\leqslant C_K \|v\|_{s}.
\end{equation*}
\end{lemma}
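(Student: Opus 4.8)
The plan is to estimate directly the quantity $\|v\|_{B^s_{p,\infty}}=\sup_{q\geqslant -1}2^{qs}\|\Delta_q v\|_{L^p}$ by decomposing physical space into the fixed compact collar $K_1=\{x:d(x,K)\leqslant 1\}$ around $\supp v\subset K$ and its unbounded complement $K_1^c$, estimating each $L^p$ piece by $\|v\|_{B^s_{\infty,\infty}}\simeq\|v\|_s$ (recall \eqref{N-equivalent}). Writing $\Delta_q v=h_q\ast v$ for $q\geqslant 0$ with $h_q(z)=2^{qd}h(2^q z)$ and $h\triangleq\mathcal F^{-1}\varphi\in\mathcal S$ (and the analogous kernel for $\Delta_{-1}$), the two regions call for genuinely different arguments: on $K_1$ one only loses a constant depending on $K$ when passing from $L^\infty$ to $L^p$, whereas on $K_1^c$ one must exploit the rapid spatial decay of $h_q$ together with the compact support of $v$.

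On the near region the estimate is soft. Since $|K_1|<\infty$, Hölder's inequality yields $\|\Delta_q v\|_{L^p(K_1)}\leqslant |K_1|^{1/p}\|\Delta_q v\|_{L^\infty(\RR^d)}$, so that $2^{qs}\|\Delta_q v\|_{L^p(K_1)}\leqslant |K_1|^{1/p}\sup_{q}2^{qs}\|\Delta_q v\|_{L^\infty}=|K_1|^{1/p}\|v\|_{B^s_{\infty,\infty}}$, uniformly in $q\geqslant -1$. This is the whole content of the near piece, and no cancellation is needed.

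The heart of the argument is the far region $K_1^c$, where $\Delta_q v$ is no longer controlled by its values on $K_1$. For $x\in K_1^c$ and $y\in K$ one has $|x-y|\geqslant d(x,K)>1$, and writing $\Delta_q v(x)=\langle v,h_q(x-\cdot)\rangle$ together with the Schwartz bound $|h_q(z)|\leqslant C_N\,2^{qd}(1+2^q|z|)^{-N}$ leads, after integrating over $K$ and using $1+2^q d(x,K)\geqslant 2^q d(x,K)$, to a pointwise estimate of the form $|\Delta_q v(x)|\lesssim_{K,N}\|v\|_{B^s_{\infty,\infty}}\,2^{q(d+s-N)}\,d(x,K)^{-N}$ for every large $N$; when $v$ is merely a distribution (the case $s=0$) the same bound is obtained by replacing the integral representation with the duality $|\langle v,\phi\rangle|\lesssim\|v\|_{B^s_{\infty,\infty}}\|\theta\phi\|_{B^{-s}_{1,1}}$ against a fixed cut-off $\theta\equiv1$ near $K$, and estimating $\|\theta\,h_q(x-\cdot)\|_{B^{-s}_{1,1}}$ through the Schwartz decay of $h_q$. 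Since $d(x,K)\sim|x|$ as $|x|\to\infty$, choosing $N>\max(d/p,\,d+s)$ makes $d(\cdot,K)^{-N}\in L^p(K_1^c)$ and renders the exponent $d+s-N$ negative, whence $2^{qs}\|\Delta_q v\|_{L^p(K_1^c)}\lesssim_{K}\|v\|_{B^s_{\infty,\infty}}\,2^{q(d+s-N)}$ stays bounded uniformly over $q\geqslant -1$ (the supremum being attained at $q=-1$).

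Summing the two regions gives $\|v\|_{B^s_{p,\infty}}\lesssim_K\|v\|_{B^s_{\infty,\infty}}\simeq\|v\|_s$, which is the assertion with $C_K$ depending on $|K_1|$, the chosen decay order $N$, and $d,p,s$. I expect the main obstacle to be precisely the far-region analysis: one must extract enough kernel decay to absorb the polynomial-in-$2^q$ growth coming from the $L^1$-normalization of $h_q$ (and, in the distributional endpoint $s=0$, from the Besov duality), while at the same time securing integrability of the tail over the unbounded set $K_1^c$; both are reconciled by a single choice of $N$ large in terms of $d$, $p$ and $s$.
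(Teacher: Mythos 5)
Your argument is correct: the near/far splitting relative to the support, with H\"older's inequality on the bounded collar $K_1$ and the rapid decay of the Littlewood--Paley kernels $h_q$ on $K_1^c$ (choosing $N$ large enough to make $d(\cdot,K)^{-N}$ integrable in $L^p$ and to kill the $2^{q(d+s)}$ growth), is exactly the standard proof of this embedding. The paper does not reproduce a proof but refers to \cite[Lemma 5]{Hmidi-0}, whose argument is essentially the one you give, so there is nothing to flag beyond the (acceptably sketched) duality treatment of the endpoint where $v$ is merely a distribution.
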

\begin{lemma}\label{lemma:partition of unit}
  There exists a partition $(\mathscr{O}_n)_{n\in\NN^{\star}}$ of $\RR^d$, for $d\geqslant 1$, and a sequence of positive smooth functions $(\psi_{n})_{n\in\NN^\star}$ satisfying
\begin{enumerate}
\item $\supp\;\psi_n\subset\mathscr{O}_n,$ with $|\mathscr{O}_n|\lesssim 1$,
\item $\displaystyle\sum_{n\in\mathbb{N}^\star}\psi_n(x)=1$, for all $  x\in \mathbb{R}^d$,
\item there exists $  N_d>0$ such that, for all $   x\in \mathbb{R}^d $, $\Card\Big\{n\in\NN^\star: x\in\mathscr{O}_n\Big\}\leqslant N_d$, 
\item for all $\alpha\in   \mathbb{N}^d$ and $p\in [1,\infty]$, we have that
\begin{equation}\label{psin-EST}
\|\nabla ^{\alpha} \psi_n\|_{L^\infty} \leqslant C_{\alpha,p},\quad \forall n\in \mathbb{N}^{\star}.
\end{equation}
\end{enumerate}
\end{lemma}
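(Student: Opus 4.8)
The plan is to build the partition by periodizing a single fixed bump function over the integer lattice and then normalizing. First I would fix a smooth function $\phi\in\mathscr{D}(\RR^d)$ with $\phi\geqslant 0$, $\phi\equiv 1$ on the unit cube $[0,1]^d$ and $\supp\phi\subset (-\tfrac12,\tfrac32)^d$. For each $k\in\Z^d$ set $\phi_k(x)\triangleq\phi(x-k)$; by translation invariance one has $\|\nabla^\alpha\phi_k\|_{L^\infty}=\|\nabla^\alpha\phi\|_{L^\infty}$ for every multi-index $\alpha$, so all derivatives are bounded uniformly in $k$, and each $|\supp\phi_k|=|\supp\phi|\lesssim 1$.

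Next I would record the bounded-overlap property. Because $\supp\phi$ is contained in a cube of side $2$, a point $x\in\RR^d$ lies in $\supp\phi_k$ only for those $k$ in a fixed finite neighborhood of $x$ in $\Z^d$; hence $\Card\{k\in\Z^d:\ x\in\supp\phi_k\}\leqslant N_d$ for a constant $N_d$ depending only on the dimension, which is exactly point (iii). This guarantees that the periodized sum
\[
\Phi(x)\triangleq\sum_{k\in\Z^d}\phi_k(x)
\]
is locally a finite sum, hence a well-defined $\Z^d$-periodic function of class $C^\infty$, with $\|\nabla^\alpha\Phi\|_{L^\infty}\leqslant N_d\|\nabla^\alpha\phi\|_{L^\infty}$. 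Moreover, since the cubes $\{k+[0,1]^d\}_{k\in\Z^d}$ tile $\RR^d$ and $\phi\equiv 1$ on $[0,1]^d$, for every $x$ at least one term equals $1$, so $\Phi\geqslant 1>0$ everywhere.

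Then I would define the partition functions by normalization, $\psi_k\triangleq\phi_k/\Phi$. Since $\Phi$ is smooth and bounded below by $1$, the quotient is smooth and positive, and $\sum_{k}\psi_k\equiv 1$, giving (ii). The uniform derivative bounds follow from the Leibniz rule and the Fa\`a di Bruno formula: each $\nabla^\alpha\psi_k$ is a finite sum of products of derivatives of $\phi_k$ with derivatives of $1/\Phi$, the latter being controlled in terms of the uniformly bounded derivatives of $\Phi$ together with the lower bound $\Phi\geqslant 1$. As these estimates are independent of $k$, we obtain $\|\nabla^\alpha\psi_k\|_{L^\infty}\leqslant C_\alpha$ uniformly; and because $\supp\psi_k=\supp\phi_k$ has measure $\lesssim 1$, the same bound upgrades to $\|\nabla^\alpha\psi_k\|_{L^p}\leqslant C_{\alpha,p}$ for every $p\in[1,\infty]$, which is \eqref{psin-EST}. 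Choosing $\mathscr{O}_k$ to be a fixed bounded open neighborhood of $\supp\phi_k$ (for instance the open cube of side $2$ centered at $k$) yields (i) and, through the overlap count above, (iii). Finally I would fix any bijection $\Z^d\simeq\NN^\star$ to re-index the families $(\mathscr{O}_k)$ and $(\psi_k)$ by $\NN^\star$.

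The only point requiring genuine care is the uniform control of all derivatives in (iv): the translation invariance trivializes the numerator, but one must verify that the reciprocal $1/\Phi$ and all of its derivatives are bounded uniformly on $\RR^d$. This is precisely where the strict positive lower bound $\Phi\geqslant 1$ and the $\Z^d$-periodicity of $\Phi$ are essential, the latter reducing each $\sup_{x}|\nabla^\alpha(1/\Phi)(x)|$ to a supremum over a single period cube and hence to a finite quantity independent of $k$.
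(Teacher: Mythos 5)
Your proof is correct and follows essentially the same route as the paper: translate a fixed bump over the integer lattice, sum to get a periodic function bounded below by $1$ with bounded overlap, and normalize; the only cosmetic difference is that you use cubes where the paper uses balls centered at lattice points. One trivial slip: your support $(-\tfrac12,\tfrac32)^d+k$ is not contained in the open cube of side $2$ centered at $k$, so $\mathscr{O}_k$ should be taken as the cube centered at $k+(\tfrac12,\dots,\tfrac12)$ (or any slightly larger neighborhood), which changes nothing in the argument.
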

For the convenience of the reader, we present below a short self-contained proof of Lemma \ref{lemma:partition of unit}.
\begin{proof}
For simplicity, we only outline the idea of the proof in the two dimensional case $d=2.$ Let us introduce the following notations, for $r>0$ and $n\in \mathbb{N}^\star$, we denote
\begin{equation*}
\widetilde{B}_r\triangleq B(0,r) , \quad  
\widetilde{B}_{r,n}\triangleq \widetilde{B}_r + x_{n }. 
\end{equation*}
Above, the sequence $(x_{n})_{n\in \mathbb{N^\star}}$ is such that $\cup_{n\in \mathbb{N}^\star}\{x_n\} = \mathbb{Z}\times \mathbb{Z}.$ Moreover, we have 
\begin{itemize}
\item The collection $(\widetilde{B}_{2,n })_{n\in \mathbb{N}^*}$ is a recovery of $\mathbb{R}^2$ satisfying 
$$|\widetilde{B}_{2,n }| = |\widetilde{B}_{2  }|=4\pi.$$
\item There exists $N >0$ such that, for all $x\in \mathbb{R}^2$,  we have that $\Card\big\{ n\in\mathbb{N}^\star : x\in \widetilde{B}_{2,n } \big\} \leqslant N .$
\end{itemize}
Now, consider a smooth function $\varphi$ in $\mathcal{C}^\infty_c(\mathbb{R}^2)$ with values in $[0,1]$ such that 
\begin{equation*}
\varphi(x)=\left\{ \begin{array}{ll}
1, & \text{if } x\in \widetilde{B}_1,\vspace{2.5mm} \\
0, & \text{if } x\notin \widetilde{B}_2.
\end{array}
\right.
\end{equation*}
Thereby, let us define the sequence of functions $\varphi_n$ as 
$$\varphi_n(x) \triangleq \varphi(x-x_n). $$
 Remark that, by construction, $\varphi_n$ takes its values in $[0,1]$, for all $n\in \mathbb{N}^*$, and moreover we have that 
  $$\supp\, \varphi_n \subset \widetilde{B}_{2,n }, \quad \forall n\in \mathbb{N}^* $$
  and 
\begin{equation*}
\|\nabla ^{\alpha}\varphi_n \|_{L^p}\lesssim C_{\alpha,p}  , \quad \forall (p,\alpha,n)\in [1,\infty]\times \mathbb{N}^2\times \mathbb{N}^\star.
\end{equation*}
Thereafter, we define the function $\phi$ as
\begin{equation*}
\phi(x) \triangleq \sum_{n\in \mathbb{N}^\star} \varphi_n(x) .
\end{equation*} 
Observe that $\phi$ is well-defined and smooth since the above sum is in fact a finite sum containing at most $N$ terms. Furthermore, we have, for all $x\in \mathbb{R}^2$
\begin{equation*}
1 \leqslant \phi(x) \leqslant N.
\end{equation*}
Now, we define the partition of unity $\psi_n$ as 
\begin{equation*}
\psi_n \triangleq \frac{\varphi_n}{\phi}.
\end{equation*} 
Hence, it is easy to check that $\psi_n$ is smooth  for all $n\in \mathbb{N}^*$ and satisfies all the required claims of Lemma \ref{lemma:partition of unit}.
\end{proof}
\subsection{Vortex patch formalism}
The main goal in this subsection is twofold. First, we collect  some valuable  results on the set dynamics inclusion for a flow associated with a log-Lipschitz velocity. Second, we recall some definitions related to the anisotropic Besov and H\"older spaces frequently used to track the co-normal regularity of the patch. We end this short discussion with a logarithmic estimate which plays a crucial role, in particular in the vortex patch formalism. More details can be found in \cite{Chemin, Hassainia-Hmidi, Hmidi-0}.

\hspace{0.5cm}The flow associated to a vector field $v$ is defined by the mapping $\Psi:\RR_{+}\times\RR^2\longrightarrow\RR^2$ which satisfies the following integral equation
$$
\forall\, t\geqslant 0,\, \forall x\in\RR^2,\quad \Psi(t,x)=x+\int_{0}^{t}v\big(\tau,\Psi(\tau,x)\big)d\tau.
$$

First, we embark with the following result which describes the dynamics of a given set through the flow associated with a $LL$ vector field. The complete proof can be found in \cite{Chemin}.
\begin{lemma}\label{s2lem1}
 Let $A_0$ be a subset of $\RR^2, v$ be a vector field belonging  to $L^1_{loc}(\RR_+ ;{LL})$ and  $\Psi$  its flow. Then,  by setting $A(t) \triangleq\Psi(t,A_0)$ we get in view of notation \eqref{distance-h:def}
\begin{align}\label{def-delta}
\Psi\big(t,(A_0)_h^c\big) \subset \big(A(t)\big)^c_{\delta_t(h)},\quad \textnormal{with}\quad \delta_t(h) \triangleq h^{\exp\int_0^t\Vert v(\tau)\Vert_{LL}d\tau}.
\end{align}
For all $0 \leqslant \tau \leqslant t$,
$$
\Psi\big(\tau,\Psi^{-1}\big(t,(A_t)_h^c)\big) \subset \big(A(\tau)\big)^c_{\delta_{\tau,t}(h)},\quad \textnormal{with}\quad \delta_{\tau,t}(h) \triangleq h^{\exp\int_\tau^t\Vert v(\tau')\Vert_{LL}d\tau'}.
$$
\end{lemma}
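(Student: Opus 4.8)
The plan is to reduce both inclusions to a single two-sided modulus-of-continuity estimate for the trajectories of $\Psi$, obtained by integrating an Osgood-type differential inequality. First I recall that since $v\in L^1_{loc}(\RR_+;LL)$, the log-Lipschitz modulus guarantees (via Osgood's lemma) existence and uniqueness of the flow, so that for each fixed $t$ the map $\Psi(t,\cdot)$ is a homeomorphism of $\RR^2$ whose inverse $\Psi^{-1}(t,\cdot)$ is the backward flow; moreover $\|v\|_{LL}$ is integrable in time and I set $V(t)\triangleq\int_0^t\|v(\tau)\|_{LL}\,d\tau$. Both inclusions will follow once $|\Psi(s,x)-\Psi(s,y)|$ is controlled from above and below in terms of $|x-y|$.

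The heart of the matter is the following ODE estimate. Fix $x\neq y$ and set $\rho(s)\triangleq|\Psi(s,x)-\Psi(s,y)|$. Because $v\in L^1_tLL$, the curves $s\mapsto\Psi(s,x)$ are absolutely continuous, hence so is $\rho$, and differentiating the integral equation for $\Psi$ gives, for a.e. $s$ at which $\rho(s)<1$,
\[
|\dot\rho(s)|\leqslant |v(s,\Psi(s,x))-v(s,\Psi(s,y))|\leqslant \|v(s)\|_{LL}\,\rho(s)\log\tfrac{e}{\rho(s)}.
\]
Introducing $\Phi(r)\triangleq\log\log(e/r)$, which is smooth and decreasing on $(0,1)$, the chain rule turns this into $\big|\tfrac{d}{ds}\Phi(\rho(s))\big|\leqslant\|v(s)\|_{LL}$, so that integrating between two times $\tau\leqslant t$ (while $\rho<1$) yields $|\Phi(\rho(t))-\Phi(\rho(\tau))|\leqslant\int_\tau^t\|v\|_{LL}$. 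Unravelling $\Phi$ gives the two-sided control
\[
c\,\rho(\tau)^{\exp\int_\tau^t\|v\|_{LL}}\leqslant \rho(t)\leqslant C\,\rho(\tau)^{\exp(-\int_\tau^t\|v\|_{LL})},
\]
i.e. trajectories neither coalesce nor spread faster than these H\"older rates; this is exactly the mechanism producing the exponent $\exp\int\|v\|_{LL}$ in $\delta_t$.

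With the lower bound in hand the first inclusion is immediate. Let $x\in(A_0)_h^c$, so that $|x-y|\geqslant d(x,A_0)>h$ for every $y\in A_0$. Applying the lower bound on $[0,t]$ gives $|\Psi(t,x)-\Psi(t,y)|\gtrsim|x-y|^{\exp V(t)}>h^{\exp V(t)}=\delta_t(h)$, where I used that $r\mapsto r^{\exp V(t)}$ is increasing and that $\delta_t$ is increasing in $h$. Taking the infimum over $y\in A_0$ and recalling $A(t)=\Psi(t,A_0)$ produces $d\big(\Psi(t,x),A(t)\big)\geqslant\delta_t(h)$, that is $\Psi(t,x)\in(A(t))^c_{\delta_t(h)}$; points with $d(x,A_0)\geqslant 1$ are handled trivially since then the image stays at distance bounded below by a fixed constant $\geqslant\delta_t(h)$. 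The second inclusion follows the same scheme applied to the two-parameter flow $w\mapsto\Psi(\tau,\Psi^{-1}(t,w))$, which transports a configuration at time $t$ back to time $\tau$: starting from $w\in(A_t)_h^c$ and any $w'\in A_t=\Psi(t,A_0)$ with $|w-w'|>h$, the lower bound over the interval $[\tau,t]$ gives $\big|\Psi(\tau,\Psi^{-1}(t,w))-\Psi(\tau,\Psi^{-1}(t,w'))\big|\gtrsim h^{\exp\int_\tau^t\|v\|_{LL}}=\delta_{\tau,t}(h)$, and taking the infimum over $w'$ (equivalently over $A_0$) yields the claim.

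The delicate point, and the step I expect to require the most care, is the Osgood integration together with the bookkeeping it entails: the inequality $|v(a)-v(b)|\leqslant\|v\|_{LL}|a-b|\log(e/|a-b|)$ holds only for $|a-b|<1$, so one must argue separately in the regime where trajectories are far apart (where distances stay bounded below and the conclusion is trivial) and must track the first time $\rho$ reaches $1$; one must also justify every manipulation a.e. in $s$, since $v$ is only $L^1$ in time, so that $\rho$ is merely absolutely continuous and the chain rule for $\Phi(\rho)$ is used in its integrated form. Finally, the passage from the two-sided power estimate to the precise function $\delta_t(h)=h^{\exp\int_0^t\|v\|_{LL}}$ absorbs the multiplicative constants $c,C$ into the exponent; this is the computation carried out in detail in \cite{Chemin}, which we follow.
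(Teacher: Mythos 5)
Your proof is correct and follows essentially the same route as the paper, which gives no independent argument for this lemma but defers entirely to Chemin's Osgood-type flow estimate: the Gronwall inequality for $\log\log(e/\rho(s))$ with $\rho(s)=|\Psi(s,x)-\Psi(s,y)|$, and the resulting two-sided separation bound, are exactly the mechanism producing $\delta_t(h)$ and $\delta_{\tau,t}(h)$. The one point to watch is that with the normalization $\Vert v\Vert_{LL}=\sup|v(x)-v(y)|/\big(|x-y|\log\tfrac{e}{|x-y|}\big)$ the integration yields $\rho(t)\geqslant e^{1-\exp V(t)}\,\rho(0)^{\exp V(t)}$ rather than a clean power, so the multiplicative constant must indeed be absorbed by slightly enlarging the exponent (using $h\leqslant e^{-1}$), which you correctly flag and which is harmless for every subsequent use of the lemma, where the exponent only ever enters as $h^{-C\int W}$ with an unspecified constant.
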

 As a consequence,  we state the following elementary lemma which will be of a constant use in different situations that will be developed in this paper. 
\begin{lemma}\label{elementary propertiy LN 2}
Let $\Psi$  denote  the flow associated with a log-Lipschitz vector field $v\in L^1([0,T];LL)$, for some $T>0$. Let $\Sigma_0$ be a closed subset of $\mathbb{R}^2$  and denote 
\begin{align}\label{Sigma-def}
\Sigma_t\triangleq \Psi(t,\Sigma_0). 
\end{align}
Then for all $(t,x)\in [0,T]\times\mathbb{R}^2$, there holds
$$\ln^+ \left(d(x,\Sigma_t) \right)\leqslant e^{\int_0^t\|v(\tau)\|_{LL}d\tau} \ln^+ \left[ d(\Psi^{-1}(t,x),\Sigma_0)\right]. $$
\end{lemma}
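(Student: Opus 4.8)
The plan is to reduce the logarithmic estimate to the purely geometric set–dynamics inclusion of Lemma \ref{s2lem1}, followed by an elementary scalar inequality for $\ln^+$. First I would fix $(t,x)\in[0,T]\times\RR^2$, abbreviate $\lambda\triangleq\exp\int_0^t\|v(\tau)\|_{LL}d\tau\geqslant 1$ and $y\triangleq\Psi^{-1}(t,x)$, and set $r\triangleq d(y,\Sigma_0)$. Since $\Psi(t,\cdot)$ is a homeomorphism carrying $\Sigma_0$ onto $\Sigma_t$, the case $y\in\Sigma_0$ forces $x\in\Sigma_t$, so $r=0=d(x,\Sigma_t)$ and both sides of the claimed inequality are infinite; thus I may assume $r>0$ in what follows.

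Next I would extract a lower bound on $d(x,\Sigma_t)$. For every $h\in(0,r)$ one has $d(y,\Sigma_0)>h$, that is $y\in(\Sigma_0)_h^c$ in the notation \eqref{distance-h:def}. Applying the inclusion of Lemma \ref{s2lem1} with $A_0=\Sigma_0$ and $\delta_t(h)=h^{\lambda}$ gives $x=\Psi(t,y)\in\Psi\big(t,(\Sigma_0)_h^c\big)\subset(\Sigma_t)^c_{h^{\lambda}}$, hence $d(x,\Sigma_t)>h^{\lambda}$. Letting $h\nearrow r$ then yields the key geometric estimate $d(x,\Sigma_t)\geqslant r^{\lambda}$.

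It then remains to transfer this to $\ln^+$. I would first record that $\ln^+$ is non-increasing on $(0,\infty)$ and admits the uniform representation $\ln^+(s)=\max\big(1,-\ln s\big)$, which is immediate from \eqref{log+} by comparing $-\ln s$ with $1$ according to whether $s\leqslant e^{-1}$ or $s\geqslant e^{-1}$. Monotonicity and the bound $d(x,\Sigma_t)\geqslant r^{\lambda}$ give $\ln^+\big(d(x,\Sigma_t)\big)\leqslant\ln^+\big(r^{\lambda}\big)=\max\big(1,-\lambda\ln r\big)$, so the proof is complete once I establish the scalar inequality $\max(1,-\lambda\ln r)\leqslant\lambda\,\max(1,-\ln r)$ for all $r>0$ and $\lambda\geqslant 1$. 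This I would check by splitting at the threshold $e^{-1}$: if $r<e^{-1}$ then $-\ln r>1$, both sides equal $-\lambda\ln r$ and the inequality is an equality; if $r\geqslant e^{-1}$ then $-\ln r\leqslant 1$, the right-hand side equals $\lambda$, while the left-hand side is $\max(1,-\lambda\ln r)\leqslant\max(1,\lambda)=\lambda$. Chaining these bounds gives exactly $\ln^+\big(d(x,\Sigma_t)\big)\leqslant\lambda\,\ln^+\big(d(\Psi^{-1}(t,x),\Sigma_0)\big)$, which is the asserted estimate.

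The argument is short, and I do not anticipate a genuine obstacle. The only two points demanding care are the limiting argument $h\nearrow r$, which is needed precisely because Lemma \ref{s2lem1} is phrased in terms of the strict-inequality sets $F_h^c$, and the bookkeeping in the case split for the scalar $\ln^+$ inequality at the value $e^{-1}$; the representation $\ln^+=\max(1,-\ln)$ is what makes the latter transparent and avoids a cumbersome enumeration of subcases.
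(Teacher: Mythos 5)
Your proof is correct and follows essentially the same route as the paper's: both arguments feed Lemma \ref{s2lem1} into the elementary monotonicity and scaling properties of $\ln^+$. The only point worth flagging is that by letting $h\nearrow r$ without a cap you invoke Lemma \ref{s2lem1} for $h>e^{-1}$ when $d(\Psi^{-1}(t,x),\Sigma_0)>e^{-1}$; the paper instead takes the single choice $h=\min\{d(\Psi^{-1}(t,x),\Sigma_0),e^{-1}\}$, which stays in the regime where that inclusion is actually used elsewhere in the text, and your argument closes just as well under this cap since in that case $d(x,\Sigma_t)\geqslant e^{-\lambda}$ already gives $\ln^+(d(x,\Sigma_t))\leqslant\lambda=\lambda\,\ln^+(d(\Psi^{-1}(t,x),\Sigma_0))$.
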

\begin{proof}
From Lemma \ref{s2lem1} we find for $h\leqslant e^{-1}$
$$
d\big(\Psi^{-1}(t,x),\Sigma_0\big)\geqslant h\Longrightarrow d\big(x,\Sigma_t\big)\geqslant h^{\exp\int_0^t\Vert v(\tau)\Vert_{LL}d\tau}.
$$ 
Then using  the decreasing property of $t\mapsto \ln^+ t,$ combined with 
$$\ln^+(t^m) \leqslant m \ln^+ (t),\quad \forall t>0, \quad \forall m\geqslant 1,
\vspace{1mm}
$$
$$\ln^+\left(\min\{ t, e^{-1}\}\right) = \ln^+(t), \quad \forall t>0,$$
 and the particular choice  $h= \min \{ d\big(\Psi^{-1}(t,x),\Sigma_0\big), e^{-1} \}$ we obtain  the desired inequality.  This achieves the proof of the lemma.
\end{proof}
Now, let us setup the main tools required in  the  study the vortex patch problems. For more details about this subject, we refer to \cite{Chemin} and the references therein. 

\begin{definition} \label{Defintion-2.4}Let $(s,p)\in (0,1)\times  [1 ,\infty]$, then for any closed set $\Sigma$ of the plane  and any family of vector fields $\mathcal{X}=(X_\lambda)_{\lambda\in \Lambda}$ belonging to $B^s_{p,\infty}$ as well as their divergence,  we define the quantities
$$
I(\Sigma,X)\triangleq \inf_{x\notin\Sigma}\sup_{\lambda\in\Lambda}\vert X_\lambda(x)\vert ,
$$
\vspace{1mm}
$$
\widetilde{\Vert} X_\lambda\Vert_{B^s_{p,\infty}}\triangleq\Vert X_\lambda\Vert_{B^s_{p,\infty}}+\Vert \textnormal{div}X_\lambda\Vert_{B^s_{p,\infty} }
$$
and 
$$
N_{s,p}(\Sigma,X)\triangleq\sup_{\lambda\in\Lambda}\frac{\widetilde{\Vert} X_\lambda\Vert_{B^s_{p,\infty}}}{I(\Sigma,X)}.
$$
For each element  $X_\lambda$ of the previous family we define its action on bounded  real-valued functions $u$ in the weak sense as 
$$
\partial_{X_\lambda}u\triangleq \textnormal{div}(u\, X_\lambda)-u\, \textnormal{div}X_\lambda.
$$ 
Furthermore, for all $k\in \mathbb{N},$ we set 
 \begin{equation}\label{defre}
\Vert u\Vert^{s+k,p}_{\Sigma ,\mathcal{X} }\triangleq  N_{s,p}(\Sigma ,\mathcal{X} )\sum_{\vert \alpha\vert\leqslant   k} \Vert  \partial^\alpha u\Vert_{L^\infty}+\displaystyle{\sup_{\lambda\in\Lambda}}\, \frac{\Vert \pxl u\Vert_{B^{s+k-1}_{p,\infty}}}{I(\Sigma ,\mathcal{X} )}.
\end{equation} 
In the particular case $p=\infty$, we simplify the notation by removing  the exponent $p$ from the definitions.  That is for instance, we use the notation 
\begin{equation}\label{defre2}
\Vert u\Vert^{s+k }_{\Sigma ,\mathcal{X} }\triangleq\Vert u\Vert^{s+k,\infty }_{\Sigma ,\mathcal{X} }.  
\end{equation} 
The Besov space  $B_{\Sigma,\mathcal{X} }^{s+k,p}$ (respectively, $C_{\Sigma,\mathcal{X} }^{s+k}$)  is then defined as the set of  functions $u$ in $W^{k,\infty}$ such that
 $   \Vert u\Vert^{s+k,p }_{\Sigma ,\mathcal{X} }  
$ (respectively, $ \Vert u\Vert^{s+k  }_{\Sigma ,\mathcal{X} }$) is finite.
\end{definition} 
Let us now introduce a more accurate definition of the admissible vector fields that we will use in the sequel.
\begin{definition}\label{def11}
Let $\Sigma$  be a closed subset of $\RR^2$ and $\Xi = (\alpha, \beta, \gamma)$ be a triplet of non-negative real numbers. We consider a family $\mathcal{X} = (X_{\lambda,h})_{(\lambda,h)\in \Lambda\times (0,e^{-1}]}$ of vector fields  belonging to $B^s_{p,\infty}$ as well as their divergences, with $s\in(0,1)$ and we denote by $\mathcal{X}_h =(X_{\lambda,h})_{\lambda\in\Lambda}$. The  family $\mathcal{X}$ will be said  $\Sigma-$admissible of order $\Xi=(\alpha,\beta,\gamma)$ if and only if the following properties are satisfied:
\begin{equation*}
\forall (\lambda, h)\in\Lambda\times (0,e^{-1}],\ \textnormal{supp}X_{\lambda,h}\subset\Sigma_{h^\alpha}^c,
\end{equation*}
\vspace{0.5mm}
\begin{equation*}
 \inf_{ h\in(0,e^{-1}]}h^{-\gamma} I(\Sigma_h,\mathcal{X}_h)>0
\end{equation*}
and
\begin{equation*}
 \sup_{ h\in(0,e^{-1}]}h^{\beta}N_{s,p}(\Sigma_h,\mathcal{X}_h)<\infty.
\end{equation*}

\end{definition}
Some important remarks are in order.
\begin{remark}
Notice that in the pioneering  work \cite{Chemin}, Chemin sets up the above formalism only in the H\"older case $p=\infty$ which fits well  with the regularity persistence  for  the transported singular vortex patch associated to Euler equations. However, in the viscous case    \cite{Hmidi-0} additional technical issues emerge forcing to consider a family of vector fields $(X_\lambda)_{\lambda\in\Lambda}$ belonging to Besov classes $B^s_{p,\infty}$, with $p<\infty$. The main difficulty behind this restriction stems from  the maximal smoothing effects in the density equation  advected by a velocity field which scales below the Lipschitz class, see also \mbox{Remark $\ref{remark:p<infini}.$}
\end{remark}

 
\begin{remark}
Concretely, the family of vector fields $\mathcal{X}$ that we shall work with vanishes near the singular set and therefore we should assume that   $\gamma, \beta>0$. Albeit the parameter $\alpha>1$. 
\end{remark} 

The next result deals with a logarithmic estimate established  in \cite{Chemin} which is the main  key in the study of the generalized vortex patches. More precisely, we have the following estimate.

\begin{theorem}\label{propoo1} 
There exists an absolute constant $C $ such that for any $a\in [1,\infty)$ and $ s\in (0,1)$ the following holds. 
Let  $\Sigma$ be a closed set of the plane and $X$ be a  family  of vector fields. Consider a function $\omega \in C^s _{\Sigma,X}\cap L^a$ and let $v$ be the divergence-free vector field with vorticity $\omega$, then we have 
\begin{equation*}\label{t1}
\Vert\nabla v\Vert_{L^\infty(\Sigma^c)}\leqslant Ca\Vert\omega\Vert_{L^a}+\frac{C}{s}\Vert\omega\Vert_{L^\infty}\log\Big(e+\frac{\Vert\omega\Vert^{s}_{\Sigma,X}}{\Vert\omega\Vert_{L^\infty}}\Big).
\end{equation*}
 Above, the norm $ \Vert\cdot \Vert^{s}_{\Sigma,X}$ is given by \eqref{defre2}. 
\end{theorem}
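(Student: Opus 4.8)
The plan is to recover $\nabla v$ from the vorticity through the Biot--Savart law \eqref{b-s}, writing $\nabla v=\mathcal R\,\omega$ with $\mathcal R\triangleq\nabla\nabla^\perp\Delta^{-1}$ a matrix of zeroth order Fourier multipliers (a Calder\'on--Zygmund operator), and then to decompose $\nabla v$ dyadically around a frequency threshold $2^{N}$ that will be optimized only at the very end. Since $\mathcal R$ commutes with the Littlewood--Paley blocks and its symbol restricted to a dyadic annulus is, after rescaling, a fixed smooth function, each block obeys $\|\Delta_q\nabla v\|_{L^m}\lesssim\|\Delta_q\omega\|_{L^m}$ with a universal constant, uniformly in $q$ and $m\in[1,\infty]$. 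I would then split (reading $\sum_{q\leqslant 0}$ as the homogeneous low-frequency part)
$$
\|\nabla v\|_{L^\infty(\Sigma^c)}\leqslant \sum_{q\leqslant 0}\|\Delta_q\nabla v\|_{L^\infty}+\sum_{0<q\leqslant N}\|\Delta_q\nabla v\|_{L^\infty}+\sum_{q> N}\|\Delta_q\nabla v\|_{L^\infty(\Sigma^c)}.
$$
For the low frequencies I would use Bernstein's $L^a\to L^\infty$ embedding (Lemma~\ref{Bernstein}), $\|\Delta_q\nabla v\|_{L^\infty}\lesssim 2^{2q/a}\|\Delta_q\omega\|_{L^a}\lesssim 2^{2q/a}\|\omega\|_{L^a}$, and sum the geometric series $\sum_{q\leqslant 0}2^{2q/a}=(1-2^{-2/a})^{-1}\lesssim a$, which is precisely the source of the term $Ca\|\omega\|_{L^a}$. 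For the middle band $0<q\leqslant N$ the uniform block bound gives $\|\Delta_q\nabla v\|_{L^\infty}\lesssim\|\omega\|_{L^\infty}$, hence a contribution $\lesssim N\,\|\omega\|_{L^\infty}$; this linear-in-$N$ term will become the logarithm after optimization.

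The heart of the argument is the high band $q>N$, the only place where both the restriction to $\Sigma^c$ and the co-normal norm are needed. Here I would establish the dyadic gain
$$
\|\Delta_q\nabla v\|_{L^\infty(\Sigma^c)}\lesssim 2^{-qs}\,\|\omega\|^s_{\Sigma,X}.
$$
Following Chemin~\cite{Chemin}, the mechanism is that away from $\Sigma$ the full matrix $\nabla v$ is reconstructed from a single directional derivative: the two algebraic constraints $\mathrm{div}\,v=0$ and $\mathrm{curl}\,v=\omega$ together with $\partial_{X_\lambda}v=(\nabla v)X_\lambda$ determine $\nabla v$ pointwise wherever $|X_\lambda|$ is bounded below, and by the very definition of $I(\Sigma,X)$ such a $\lambda$ exists at every point of $\Sigma^c$. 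It then suffices to gain regularity on the tangential quantity $\partial_{X_\lambda}v$: expanding $\partial_{X_\lambda}v=\mathrm{div}(v\otimes X_\lambda)-v\,\mathrm{div}X_\lambda$ and the analogous identity for $\omega$ through Bony's decomposition, the para-product and commutator bounds of Lemma~\ref{para--products} and Corollary~\ref{ppxr0} transfer the information $\partial_{X_\lambda}\omega\in B^{s-1}_{\infty,\infty}$ (which the norm $\|\omega\|^s_{\Sigma,X}$ controls) into $\partial_{X_\lambda}v\in C^s$, whence the factor $2^{-qs}$. Summing, $\sum_{q>N}2^{-qs}=2^{-(N+1)s}(1-2^{-s})^{-1}\lesssim \tfrac1s\,2^{-Ns}$, which is exactly where the prefactor $1/s$ in the statement originates.

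Collecting the three pieces yields
$$
\|\nabla v\|_{L^\infty(\Sigma^c)}\lesssim a\,\|\omega\|_{L^a}+N\,\|\omega\|_{L^\infty}+\tfrac1s\,2^{-Ns}\,\|\omega\|^s_{\Sigma,X},
$$
and it remains to optimize over the integer $N\geqslant 0$. I would take $N$ to be the nearest nonnegative integer to $\tfrac{1}{s\log 2}\log\!\big(\|\omega\|^s_{\Sigma,X}/\|\omega\|_{L^\infty}\big)$, balancing the last two terms; the truncation at $N\geqslant 0$ together with the shift $e+\,\cdot\,$ inside the logarithm absorbs the r\'egime where the ratio is $O(1)$, and the announced inequality follows. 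The genuine difficulty is concentrated in the high-frequency estimate: because the Biot--Savart operator is nonlocal, the behaviour of $\omega$ near $\Sigma$---where no regularity whatsoever is assumed---feeds into $\nabla v$ on $\Sigma^c$, so one cannot simply localize and differentiate. Controlling these long-range contributions is what forces the use of the off-diagonal decay of the kernels of $\Delta_q\mathcal R$ together with the admissibility and non-degeneracy of the family $X$, and it is this commutator analysis, rather than the elementary low- and middle-frequency bounds, that carries the weight of the proof.
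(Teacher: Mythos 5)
The paper does not actually prove this theorem: it is quoted from Chemin's book \cite{Chemin}, so there is no in-paper argument to compare against. Your sketch reproduces the standard proof from that reference --- the three-band dyadic splitting, Bernstein at low frequencies as the source of $Ca\Vert\omega\Vert_{L^a}$, the uniform Calder\'on--Zygmund block bound $\Vert\Delta_q\nabla v\Vert_{L^\infty}\lesssim\Vert\omega\Vert_{L^\infty}$ in the middle band, the striated gain $2^{-qs}$ at high frequencies, and the optimization of the cutoff $N$ --- and your accounting of where the factors $a$ and $1/s$ come from is correct. The one caveat is in the high band: as written, the bound $\Vert\Delta_q\nabla v\Vert_{L^\infty(\Sigma^c)}\lesssim 2^{-qs}\Vert\omega\Vert^{s}_{\Sigma,X}$ places the frequency localization before the algebraic reconstruction, whereas the reconstruction of $\nabla v(x)$ from $\partial_{X_{\lambda}}v(x)$, $\omega(x)$ and $\mathrm{div}\,v=0$ requires dividing by $|X_{\lambda}(x)|^2$ with $\lambda=\lambda(x)$ chosen pointwise so that $|X_{\lambda(x)}(x)|\geqslant \tfrac12 I(\Sigma,X)$, and this does not commute with $\Delta_q$. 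In Chemin's argument one therefore reconstructs $\nabla v(x)$ pointwise on $\Sigma^c$ first and only then applies the dyadic decomposition and the cutoff optimization to $\partial_{X_\lambda}v$ and to $\omega$ separately; this is a reorganization of your bookkeeping rather than a gap in the idea, and the rest of your outline (paraproduct transfer of $\partial_{X_\lambda}\omega\in C^{s-1}$ into control of $\partial_{X_\lambda}v$, via the analogue of Lemma \ref{para--products} and Corollary \ref{ppxr0}) is exactly the mechanism used there.
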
 
Now we discuss some important results for solutions to transport equations. 
\begin{proposition}\label{prop:transport:es}
Let  $\Sigma_0$ be a closed set in the plane, $(p,h)\in[1,\infty]\times (0,e^{-1}]$  and $v$ be a smooth divergence-free vector field with vorticity $\omega\triangleq \nabla \times v$. Then, using the notations \eqref{def-delta} and  \eqref{Sigma-def}  the following assertions hold.
Let  $f$ be a solution of the transport equation on $[0,T]$
\begin{equation}\label{T}
\left\{ \begin{array}{ll}
\partial_{t}f+v\cdot\nabla f =g, &\vspace{1mm}\\
f_{| t=0}=f_{0},\tag{T}
\end{array} \right.
\end{equation} 
where, we assume that $\supp \, f_0\subset (\Sigma_0)_h^c$. Let $t\in[0,T]\mapsto s_t\in (-1,1)$ be  a decreasing function, $\delta_t(h) $ be defined as in \eqref{def-delta} and let $g=g_1+g_2$ be given such that $\supp \, g(t)\subset (\Sigma_t)_{\delta_t(h)}^c$ for any $t\in[0,T]$ and we suppose  that $g_2$ satisfies 
$$
\Vert g_2(t)\Vert_{B^{s_t}_{p,\infty}}\leqslant -C\log h\;W (t)\Vert f(t)\Vert_{B^{s_t}_{p,\infty}}, \quad \forall t\in [0,T],
$$
for some universal constant $C>0$, where $W(t)$ is given by 
\begin{equation}\label{W:def}
 W  (t)\triangleq\big(\Vert  v(t)\Vert_{L(\Sigma_t)}+\| \omega (t)\|_{L^a\cap L^\infty} \big)\exp\bigg(\int_0^t\Vert v(\tau)\Vert_{LL}d\tau\bigg).
\end{equation}
Then, we have
$$
\Vert f(t)\Vert_{B^{s_t}_{p,\infty}}\leqslant \Vert f_0\Vert_{B^{s_0}_{p,\infty}} h^{-C\int_0^t W (\tau)d\tau}+\int_0^t h^{-C\int_\tau^t W (\tau')d\tau'}\Vert g_1(\tau)\Vert_{B^{s_\tau}_{p,\infty}} d\tau, \quad \forall t\in [0,T].
$$ 
\end{proposition}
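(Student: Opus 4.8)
The plan is to propagate the support of $f$ away from the singular set, and then to run a frequency-localized energy estimate in which the genuinely non-Lipschitz drift $v$ behaves, \emph{on the support of $f(t)$}, like a Lipschitz field with constant of size $-\log h\,W(t)$. The decreasing index $s_t$ will serve only to reorder times in the final Gr\"onwall loop, not to absorb a derivative loss.

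First I would record the support information. Since $\supp f_0\subset(\Sigma_0)_h^c$ and $\supp g(\tau)\subset(\Sigma_\tau)_{\delta_\tau(h)}^c$, writing $f(t,\Psi(t,x))=f_0(x)+\int_0^t g(\tau,\Psi(\tau,x))\,d\tau$ along the characteristics of $v$ and using the flow-invariance of Lemma \ref{s2lem1} (with the exponent identity $\delta_\tau(h)^{\exp\int_\tau^t\|v(\sigma)\|_{LL}d\sigma}=\delta_t(h)$) shows that $\supp f(t)\subset(\Sigma_t)_{\delta_t(h)}^c$ for every $t\in[0,T]$. On this set the definition \eqref{Lsigma}, together with $\delta_t(h)=h^{\exp\int_0^t\|v(\tau)\|_{LL}d\tau}\leqslant e^{-1}$, yields the pointwise bound
$$\|\nabla v(t)\|_{L^\infty((\Sigma_t)_{\delta_t(h)}^c)}\leqslant -\log\big(\delta_t(h)\big)\,\|v(t)\|_{L(\Sigma_t)}\leqslant -\log h\,W(t),$$
where I also used Lemma \ref{lem3} to absorb $\|v\|_{LL}$ into $W$. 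This is the central mechanism: although $v$ is only log-Lipschitz globally, its gradient is bounded by $-\log h\,W(t)$ on the support of $f(t)$.

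Next I would localize in frequency. Applying $\Delta_q$ to \eqref{T}, writing $v\cdot\nabla=\dv(v\,\cdot)$ since $\dv v=0$, and performing the standard $L^p$ energy estimate (the transport term drops by incompressibility) gives, for $f_q\triangleq\|\Delta_q f\|_{L^p}$,
$$\frac{d}{dt}f_q\leqslant \|\Delta_q g\|_{L^p}+\big\|[\Delta_q,v\cdot\nabla]f\big\|_{L^p}.$$
The heart of the argument is the commutator bound. Using the representation $[\Delta_q,v\cdot\nabla]f(x)=\int\nabla K_q(x-y)\cdot\big(v(y)-v(x)\big)f(y)\,dy$, where $K_q$ is the kernel of $\Delta_q$, only pairs with $y\in\supp f(t)$ contribute, so the relevant separation from $\Sigma_t$ at frequency $q$ is $\max\big(2^{-q},\delta_t(h)\big)$. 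Hence the effective Lipschitz constant felt at frequency $q$ is at most $\min\big(q\log 2,\,-\log\delta_t(h)\big)\,\|v(t)\|_{L(\Sigma_t)}\lesssim -\log h\,W(t)$, uniformly in $q$; the rapid decay of $K_q$ and the global log-Lipschitz bound handle the remaining long-range and near-$\Sigma_t$ contributions. The outcome is the \emph{loss-free} estimate
$$\big\|[\Delta_q,v\cdot\nabla]f(t)\big\|_{L^p}\lesssim -\log h\,W(t)\,c_q(t)\,2^{-q s_t}\,\|f(t)\|_{B^{s_t}_{p,\infty}},\qquad \|c_q(t)\|_{\ell^\infty(q)}\lesssim 1.$$

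Finally I would assemble and run Gr\"onwall. Integrating the $L^p$ inequality in time and multiplying by $2^{q s_t}$, the monotonicity of $t\mapsto s_t$ lets me replace $2^{q s_t}$ by the larger $2^{q s_\tau}$ inside the integrals and by $2^{q s_0}$ on the datum (for $q\geqslant 0$; the case $q=-1$ is trivial as $s_t$ stays in a bounded range). Taking the supremum over $q$, using the commutator bound and the hypothesis $\|g_2(t)\|_{B^{s_t}_{p,\infty}}\leqslant -C\log h\,W(t)\|f(t)\|_{B^{s_t}_{p,\infty}}$ to fold both the commutator and $g_2$ into one $-\log h\,W\,\|f\|$ term, I reach
$$\|f(t)\|_{B^{s_t}_{p,\infty}}\leqslant \|f_0\|_{B^{s_0}_{p,\infty}}+\int_0^t\Big(\|g_1(\tau)\|_{B^{s_\tau}_{p,\infty}}+C(-\log h)\,W(\tau)\,\|f(\tau)\|_{B^{s_\tau}_{p,\infty}}\Big)d\tau.$$
Gr\"onwall's lemma then produces the integrating factor $e^{C(-\log h)\int_\tau^t W}=h^{-C\int_\tau^t W}$ and gives exactly the claimed inequality. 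The main obstacle is the commutator step: one must verify that the logarithmic blow-up of $\|\nabla v\|_{L^\infty}$ as one approaches $\Sigma_t$ is capped precisely by the support distance $\delta_t(h)$, so that the factor $\min(q,-\log\delta_t(h))\,\|v\|_{L(\Sigma_t)}$ stays $\lesssim -\log h\,W$ with no residual frequency-logarithmic loss, leaving the decreasing index $s_t$ free to do only the time reordering.
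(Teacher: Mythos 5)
Your overall architecture is exactly the one the paper relies on (the paper does not reprove this proposition but defers to Chemin and to Hmidi's singular-patch work): propagate the support of $f$ at distance $\delta_t(h)$ from $\Sigma_t$ via Lemma \ref{s2lem1}, use this to replace the log-Lipschitz drift by an effectively Lipschitz one with constant $-\log(\delta_t(h))\,\Vert v\Vert_{L(\Sigma_t)}\leqslant -\log h\,W(t)$ on that support, fold $g_2$ and the commutator into a single $-\log h\,W\,\Vert f\Vert$ term, and close with Gr\"onwall, the monotonicity of $t\mapsto s_t$ serving only to replace $2^{qs_t}$ by $2^{qs_\tau}$ under the time integral. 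That skeleton is correct, including the exponent identity $\delta_\tau(h)^{\exp\int_\tau^t\Vert v\Vert_{LL}\,d\sigma}=\delta_t(h)$ needed for the support of the forced solution, and the harmless treatment of the block $q=-1$.

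The one place where the write-up under-delivers is the commutator bound, which you rightly identify as the heart of the matter. The representation $[\Delta_q,v\cdot\nabla]f(x)=\int\nabla K_q(x-y)\cdot\big(v(y)-v(x)\big)f(y)\,dy$, estimated as you describe, produces a bound by $-\log h\,W(t)\,\Vert f\Vert_{L^p}$, i.e. the case $s_t=0$; it does not by itself yield the factor $2^{-qs_t}\Vert f(t)\Vert_{B^{s_t}_{p,\infty}}$ for $s_t\in(-1,1)\setminus\{0\}$ (for $s_t<0$ the $L^p$ bound is not even comparable to the required one). To obtain the stated estimate one must run Bony's decomposition as in Lemma \ref{Tec-lem-1} and apply your support/dichotomy argument to each paraproduct piece, and there a genuine difficulty appears that the phrase ``only pairs with $y\in\supp f(t)$ contribute'' hides: the dyadic blocks $\Delta_j f$ and $S_{j-1}v$ are no longer compactly supported, so the effective-Lipschitz bound on $(\Sigma_t)^c_{\delta_t(h)/2}$ is only available up to the tails of $\Delta_jf$ outside that set. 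Those tails are controlled by the rapid decay of the kernel at scale $2^{-j}$ against the distance $\delta_t(h)$ --- which is precisely where the dichotomy $2^{-j}\lessgtr\delta_t(h)$, hence the factor $\min(j+2,\,-\log_2\delta_t(h))\lesssim -\log h\, e^{\int_0^t\Vert v\Vert_{LL}}$, is actually implemented in the cited proofs. So the strategy and the final assertion are right, but the justification as written establishes a weaker ($s=0$) estimate; the $B^{s_t}_{p,\infty}$ version requires the paraproduct splitting together with the off-support decay of the blocks to be spelled out.
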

The proof of Proposition \ref{prop:transport:es} is straightforward, we refer to \cite{Chemin,Hmidi} for more details. As a consequence, we get  the following corollary that will useful in this paper.
\begin{coro}\label{coro-transport:es}
Let $(s,h,a,p)\in(0,1)\times (0,e^{-1}]\times(1,\infty) \times [1,\infty]$. Consider  a non-increasing function $t\in[0,T]\mapsto\sigma_t\in (0,s]$.  Let $\Sigma_0$ be a closed set of the plane and $X_0$ be a vector field of class $B^{s}_{p,\infty}$ as well as its divergence and whose support  is embedded in $(\Sigma_0)_h^c$. Let $v$ be a divergence-free vector field with vorticity $\omega$ belonging to $L^a\cap L^\infty.$ If $ X_t$ is the solution of 
\begin{equation}\label{31}
\left\{ \begin{array}{ll}
\big(\partial_t +v\cdot\nabla\big)X_t=\pxt v, & \vspace{2mm}\\
X_{| t=0}=X_{0},
\end{array} \right.
\end{equation} 
then, the following assertions hold
\begin{equation*}
\supp X_t\subset \big(\Sigma_t\big)_{\delta_t(h)}^c,
\end{equation*} 
\begin{equation}\label{div:1}
\Vert \Div X_t\Vert_{B^{s}_{p,\infty}}\leqslant \Vert \Div X_{0}\Vert_{B^{s}_{p,\infty}}h^{-C\int_0^t W (\tau)d\tau},
\end{equation}
\begin{eqnarray}\label{38-E}
\widetilde{\Vert} X_t\Vert_{B^{\sigma_{t}}_{p,\infty}}&\leqslant & h^{-C\int_{0}^{t}W(\tau)d\tau}\Big(\widetilde{\Vert} X_0\Vert_{B^{s}_{p,\infty}} +C\int_0^t h^{C\int_{0}^{\tau}W(\tau')d\tau'}\|\partial_{ X_{\tau}}\omega(\tau)\|_{B^{\sigma_{\tau}-1}_{p,\infty}}d\tau\Big),
\end{eqnarray}
where $W(t)$ and  $\delta_t(h)$ are, respectively, defined by \eqref{W:def} and \eqref{def-delta}.
\end{coro}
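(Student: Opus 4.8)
The plan is to derive all three assertions from the transport estimate of Proposition \ref{prop:transport:es}, after observing that the system \eqref{31} characterizes $X_t$ as the pushforward of $X_0$ along the flow $\Psi$ of $v$. Indeed, writing the equation in Lagrangian coordinates shows $X_t(\Psi(t,x)) = D\Psi(t,x)\,X_0(x)$, so that $\supp X_t = \Psi(t,\supp X_0) \subset \Psi\big(t,(\Sigma_0)_h^c\big)$, and the first assertion $\supp X_t\subset(\Sigma_t)_{\delta_t(h)}^c$ follows at once from Lemma \ref{s2lem1} and the definition \eqref{def-delta} of $\delta_t(h)$. This localization is precisely what will later allow one to invoke, at each time, the fact that $v$ is Lipschitz away from $\Sigma_t$ with the quantitative growth encoded in $\Vert v(t)\Vert_{L(\Sigma_t)}$.

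Next I would establish the divergence bound \eqref{div:1}. Taking the divergence of \eqref{31} and using $\Div v = 0$ together with the pointwise identity $\Div(\partial_{X_t}v) = \Div(v\cdot\nabla X_t)-v\cdot\nabla\Div X_t$ produces the clean cancellation
\begin{equation*}
\big(\partial_t + v\cdot\nabla\big)\Div X_t = 0.
\end{equation*}
Thus $\Div X_t$ solves a pure transport equation with vanishing source and with $\supp \Div X_0\subset(\Sigma_0)_h^c$. Applying Proposition \ref{prop:transport:es} at the fixed regularity level $s$ with $g\equiv 0$ (so that $g_1=g_2=0$) yields exactly \eqref{div:1}; since $\sigma_t\leqslant s$ and $B^s_{p,\infty}\hookrightarrow B^{\sigma_t}_{p,\infty}$, the same bound controls $\Vert\Div X_t\Vert_{B^{\sigma_t}_{p,\infty}}$, which is the divergence contribution to $\widetilde{\Vert} X_t\Vert_{B^{\sigma_t}_{p,\infty}}$.

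It then remains to estimate $\Vert X_t\Vert_{B^{\sigma_t}_{p,\infty}}$, which I would obtain by applying Proposition \ref{prop:transport:es} componentwise to \eqref{31} at the non-increasing regularity $\sigma_t$, with source $g = \partial_{X_t}v$. The crux is to split $g = g_1+g_2$ compatibly with the hypotheses of the Proposition. To this end I would commute $\partial_{X_t}$ through the Biot--Savart operator $v = \Delta^{-1}\nabla^\perp\omega$: since this operator gains one derivative, the leading contribution to $\partial_{X_t}v$ at regularity $\sigma_t$ is governed by $\partial_{X_t}\omega$ at regularity $\sigma_t-1$, and this is taken as $g_1$, accounting for the source term $\Vert\partial_{X_\tau}\omega(\tau)\Vert_{B^{\sigma_\tau-1}_{p,\infty}}$ in \eqref{38-E}. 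The remaining commutator terms, arising between $\partial_{X_t}$, the singular integral, and the Bony paraproducts, form $g_2$; using the product laws of Lemma \ref{para--products} and Corollary \ref{ppxr0} together with the logarithmic estimate of Theorem \ref{propoo1}, they are controlled by $-C\log h\,W(t)\,\Vert X_t\Vert_{B^{\sigma_t}_{p,\infty}}$, which is exactly the structure required for $g_2$. Feeding this decomposition into Proposition \ref{prop:transport:es} and adding the divergence bound \eqref{div:1} then gives \eqref{38-E}.

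The main obstacle is the commutator estimate in this last step: one must verify that, after commuting $\partial_{X_t}$ with the Biot--Savart kernel, every remainder term genuinely carries the weight $\Vert v(t)\Vert_{L(\Sigma_t)}$ and the factor $\exp\big(\int_0^t\Vert v\Vert_{LL}\big)$ bundled into $W(t)$ in \eqref{W:def}, and that the support localization $\supp X_t\subset(\Sigma_t)^c_{\delta_t(h)}$ is what converts the non-Lipschitz behaviour of $v$ near $\Sigma_t$ into the admissible logarithmic loss $-C\log h$. This is precisely where the sub-Lipschitz scaling of the velocity makes the bookkeeping delicate, and where Lemma \ref{s2lem1}, Theorem \ref{propoo1}, and the para-differential calculus must be combined with care.
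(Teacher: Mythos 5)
Your proposal follows essentially the same route as the paper: the support statement via Lemma \ref{s2lem1}, the divergence bound by observing that $\Div X_t$ solves a pure transport equation and applying Proposition \ref{prop:transport:es}, and the main estimate \eqref{38-E} via the decomposition $\partial_{X_t}v=g_1+g_2$ (which the paper admits, citing Chemin and \cite{Hmidi-0}) fed back into Proposition \ref{prop:transport:es}. The only small imprecision is that in the admitted decomposition the bound on $g_1$ also carries the term $\Vert\Div X_t\Vert_{B^{\sigma_t}_{p,\infty}}\Vert\omega(t)\Vert_{L^\infty}$, which is then absorbed using \eqref{div:1} and $\int_0^t\Vert\omega(\tau)\Vert_{L^\infty}d\tau\leqslant h^{-C\int_0^tW(\tau)d\tau}$; your closing remark about ``adding the divergence bound'' is where this bookkeeping actually happens.
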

\begin{proof}
The result of the support of $X_t$  can be deduced from  Lemma \ref{s2lem1} and the complete proof can be found in \cite{Chemin}. The estimate \eqref{div:1} follows by applying Proposition \ref{prop:transport:es} to the equation 
$$
\big(\partial_t +v\cdot\nabla\big)\Div X_{t}=0.
$$ 
 For the last estimate \eqref{38-E}, we shall  admit the following assertion and we refer to \cite[Chapter 9]{Chemin} and \cite[Lemma 12]{Hmidi-0} for more details. For all $t\in [0,T]$, we have that
 \begin{equation}\label{decompo:DXV}
  \partial_{X_{t}} v(t)= g_1(t)+g_2(t),
 \end{equation}
  with
\begin{eqnarray}\label{decompo:DXV:es}
\| g_1(t)\|_{B^{\sigma_t}_{p,\infty}} &\leqslant & C\Vert \partial_{X_{t}} \omega(t)\Vert_{B^{\sigma_t -1}_{p,\infty}}+C\Vert\textnormal{div}X_{t}\Vert_{B^{\sigma_t }_{p,\infty}}\Vert\omega(t)\Vert_{L^\infty}
\end{eqnarray}
and
\begin{eqnarray*}
\| g_2(t)\|_{B^{\sigma_t}_{p,\infty}} &\leqslant & -C\log h\; W(t)\Vert X_{t}\Vert_{B^{\sigma_t }_{p,\infty}}.
\end{eqnarray*}
Then, applying once again Proposition \ref{prop:transport:es} to the equation \eqref{31}, implies
\begin{eqnarray*}
\Vert X_{t}\Vert_{B^{\sigma_t }_{p,\infty}}\lesssim \Vert X_{0}\Vert_{B^{s}_{p,\infty}}h^{-C\int_0^t W(\tau)d\tau}&+&\int_0^t\Vert\Div X_{\tau}\Vert_{B^{\sigma_\tau }_{p,\infty}}\Vert\omega(\tau)\Vert_{L^\infty} h^{-C\int_\tau^t W(\tau')d\tau'}d\tau\\ & +&\int_0^t\Vert \partial_{X_{\tau}}\omega(\tau)\Vert_{B^{\sigma_t -1}_{p,\infty}}  h^{-C\int_{\tau}^t W(\tau')d\tau'}d\tau.
\end{eqnarray*}
According to the definition of $W(t)$, we can write
$$
\int_0^t\Vert\omega(\tau)\Vert_{L^\infty}d\tau \leqslant h^{-C\int_0^t W(\tau)d\tau},
$$
together with \eqref{div:1} and the embedding $B^{s}_{p,\infty}\hookrightarrow B^{\sigma_t }_{p,\infty}$ for all $t\in [0,T]$, 
 yield to \eqref{38-E}. Therefore, Corollary \ref{coro-transport:es} is  proved.
\end{proof}

\begin{remark}\label{RMK:GR:es}
A localized version of \eqref{38-E} is given by 
\begin{eqnarray} \label{X-B-sigma-0:XXX}
\|\Delta_{j}X_t \|_{L^p}&\leqslant & \|\Delta_{j}X_0\|_{L^p}+C\int_0^t 2^{-j\sigma_\tau} \|\textnormal{div} X_\tau\|_{B^{\sigma_\tau}_{p,\infty}}  W (\tau)d\tau  +  C \int_0^t 2^{-j\sigma_\tau}\| \partial_{X_{\tau}}\omega\|_{B^{\sigma_\tau-1}_{p,\infty}} d\tau \nonumber\\&&+C(-\log h)\int_0^t 2^{-j\sigma_\tau}\|X_{\tau}\|_{B^{\sigma_{\tau}}_{p,\infty}}W(\tau)d\tau
\end{eqnarray}  
which comes from the decomposition \eqref{decompo:DXV} and the estimate \eqref{decompo:DXV:es}. Estimate \eqref{X-B-sigma-0:XXX} will be used later  in the proof of Proposition \ref{prop2222} related to the viscous case.
\end{remark}
We conclude this part by the following proposition which aims to study the dynamic of an admissible family of vector fields away from the singular set. For the convenience of the reader, we also recall the details of the proof from \cite{Chemin,Hassainia-Hmidi}. 
\begin{proposition}\label{prop:admissible family}
  Let $v$ be a smooth divergence-free vector field and let $\mathcal{X}_0=\big(X_{0,\lambda,h}\big)_{\lambda\in\Lambda,h\in(0,e^{-1}]}$ be a  $\Sigma_0-$admissible family of vector fields. We define the  time dependent  vector fields   $ \big(X_{t,\lambda,h}\big)_{\lambda\in\Lambda,h\in(0,e^{-1}]}$   by 
$$
  X_{t,\lambda,h}(x) \triangleq \big(\partial_{X_{0,\lambda,h}} \Psi\big) \big( t,\Psi^{-1}(t,x)\big), \quad \forall\, t\geqslant 0, \quad\forall \,  x\in\RR^2,
$$  
where $\Psi$ is the flow associated with the vector field $v$. Then we have 
\begin{equation} \label{ixtd}
I\big((\Sigma_t)_{\delta_t^{-1}(h)},(\mathcal{X}_t)_h\big)
 \geqslant  I\big((\Sigma_0)_{h},(\mathcal{X}_0)_{h}\big)h^{\int_0^t W(\tau)d\tau},
\end{equation}
where 
\begin{equation}\label{delta:inverse}
 \delta_t^{-1} (h)\triangleq h^{\exp(-\int_0^t\Vert v(\tau)\Vert_{LL}d\tau)}
\end{equation}
and $W(t)$ is given by \eqref{W:def} and $I$ is introduced in Definition \ref{Defintion-2.4}.
\end{proposition}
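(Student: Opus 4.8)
The plan is to reduce \eqref{ixtd} to a pointwise lower bound on the transported vector fields, by viewing the directional derivative $\partial_{X_{0,\lambda,h}}\Psi$ as the solution of a linear ODE driven by $\nabla v$ along the trajectories of the flow. I would first fix $\lambda$ and $h$, write $X_0 \triangleq X_{0,\lambda,h}$, and set $Z(t,y) \triangleq \partial_{X_0}\Psi(t,y)$, so that by construction $X_{t,\lambda,h}(x) = Z\big(t,\Psi^{-1}(t,x)\big)$. Differentiating the flow identity $\partial_t\Psi(t,y)=v\big(t,\Psi(t,y)\big)$ in the direction $X_0$ and exchanging $\partial_t$ with $\partial_{X_0}$ produces, for each fixed $y$, the linear system
\begin{equation*}
\partial_t Z(t,y)=(\nabla v)\big(t,\Psi(t,y)\big)\,Z(t,y),\qquad Z(0,y)=X_0(y).
\end{equation*}
In particular $Z(t,y)$ vanishes whenever $X_0(y)$ does, so the transported fields keep their support away from the singular set.

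Next I would run a Gronwall argument on $t\mapsto|Z(t,y)|$ to get the lower bound
\begin{equation*}
|Z(t,y)|\geqslant |X_0(y)|\,\exp\Big(-\int_0^t\big|(\nabla v)(\tau,\Psi(\tau,y))\big|\,d\tau\Big).
\end{equation*}
The crux is to control the exponent when $X_0(y)\neq0$, in which case the admissibility of $\mathcal{X}_0$ and, a fortiori, $d(y,\Sigma_0)>h$, let me invoke Lemma \ref{s2lem1} to get $d\big(\Psi(\tau,y),\Sigma_\tau\big)>\delta_\tau(h)=h^{\exp\int_0^\tau\Vert v\Vert_{LL}}$. The definition \eqref{Lsigma} of $\Vert v(\tau)\Vert_{L(\Sigma_\tau)}$ then forces
\begin{equation*}
\big|(\nabla v)(\tau,\Psi(\tau,y))\big|\leqslant \big(-\log\delta_\tau(h)\big)\,\Vert v(\tau)\Vert_{L(\Sigma_\tau)}=(-\log h)\,e^{\int_0^\tau\Vert v\Vert_{LL}}\,\Vert v(\tau)\Vert_{L(\Sigma_\tau)}\leqslant (-\log h)\,W(\tau),
\end{equation*}
by the definition \eqref{W:def} of $W$. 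Integrating in time upgrades the Gronwall bound into $|Z(t,y)|\geqslant |X_0(y)|\,h^{\int_0^tW(\tau)d\tau}$.

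Finally I would match the distance scales. Given $x\notin(\Sigma_t)_{\delta_t^{-1}(h)}$ and $y\triangleq\Psi^{-1}(t,x)$, applying the second inclusion of Lemma \ref{s2lem1} at time $\tau=0$ with parameter $\delta_t^{-1}(h)$ and using the inversion identity $\delta_{0,t}\big(\delta_t^{-1}(h)\big)=h$, which is immediate from \eqref{delta:inverse}, yields $d(y,\Sigma_0)>h$, i.e. $y\notin(\Sigma_0)_h$. The pointwise bound then applies for every $\lambda$; taking $\sup_\lambda$ and then $\inf$ over such $x$ gives
\begin{equation*}
I\big((\Sigma_t)_{\delta_t^{-1}(h)},(\mathcal{X}_t)_h\big)\geqslant h^{\int_0^tW(\tau)d\tau}\,\inf_{y\notin(\Sigma_0)_h}\sup_{\lambda}|X_{0,\lambda,h}(y)|=h^{\int_0^tW(\tau)d\tau}\,I\big((\Sigma_0)_h,(\mathcal{X}_0)_h\big),
\end{equation*}
which is \eqref{ixtd}. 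I expect the main obstacle to lie in the bookkeeping of the second display: one must convert the flow-distance $\delta_\tau(h)$ into a logarithmic weight through \eqref{Lsigma} and recognize that the factor $e^{\int_0^\tau\Vert v\Vert_{LL}}$ generated by the flow is exactly the one built into $W$, so that the two exponents collapse precisely to $h^{\int_0^tW}$; the distance comparison via $\delta_t^{-1}$ is the other delicate point but follows cleanly from the inversion identity.
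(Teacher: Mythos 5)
Your proof is correct and follows essentially the same route as the paper's: linearize the flow equation along $X_{0,\lambda,h}$ to get the ODE $\partial_t Z=(\nabla v)(\tau,\Psi(\tau,\cdot))Z$, apply Gronwall, convert the exponent into $(-\log h)\int_0^tW$ via Lemma \ref{s2lem1} and the definition \eqref{Lsigma}, and match the distance scales with the inversion identity $\delta_{0,t}(\delta_t^{-1}(h))=h$ (the paper runs the Gronwall step backward in time to bound $|X_{0,\lambda,h}(\Psi^{-1}(t,x))|$ from above rather than $|Z(t,y)|$ from below, which is equivalent). One small inaccuracy: admissibility only guarantees $\supp X_{0,\lambda,h}\subset(\Sigma_0)_{h^\alpha}^c$ with $\alpha>1$, hence $d(y,\Sigma_0)>h^{\alpha}<h$, not $d(y,\Sigma_0)>h$ as you assert in the middle paragraph; this is harmless because your final paragraph only invokes the pointwise bound at points $y=\Psi^{-1}(t,x)$ for which $d(y,\Sigma_0)\geqslant h$ is independently derived from Lemma \ref{s2lem1}.
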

\begin{proof}
From the definition of $X_{t,\lambda,h} $, one gets
$$
Y_{t,\lambda,h}(x)\triangleq X_{t,\lambda,h}\big(\Psi(t,x)\big)=\partial_{X_{0,\lambda,h}} \Psi(t,x),
$$
and clearly we have
$$
\partial_t Y_{t,\lambda,h}(x)=\{\nabla v(t,\Psi(t,x))\}\cdot Y_{t,\lambda,h}(x).
$$
Fix $t>0$ and set for $\tau\in [0,t]$, \, $Z(\tau,x)=Y_{t-\tau,\lambda,h}(x)$, then 
$$
\partial_\tau Z(\tau,x)=-\{\nabla v(t-\tau,\Psi(t-\tau,x))\}\cdot Z(\tau,x).
$$
Gronwall lemma provides us
\begin{eqnarray*}
|Z(t,x)|&\leqslant & |Z(0,x)|\, e^{\int_0^t|\nabla v(t-\tau,\Psi(t-\tau,x))|d\tau}\\
&\leqslant &|Z(0,x)|\, e^{\int_0^t|\nabla v(\tau,\Psi(\tau,x))|d\tau},
\end{eqnarray*}
which is equivalent to
$$
|Y_{0,\lambda,h}(x)|\leqslant |Y_{t,\lambda,h}(x)|e^{\int_0^t|\nabla v(\tau,\Psi(\tau,x))|d\tau}.
$$
This gives in turn, 
\begin{equation*}
\vert X_{0,\lambda,h}(\Psi^{-1}(t,x))\vert \leqslant \vert X_{t,\lambda,h}(x)\vert\, e^{\int_0^t\vert \nabla v(\tau,\Psi(\tau,\Psi^{-1}(t,x)))\vert d\tau}.
\end{equation*}
 Hence, we infer that
\begin{eqnarray}\label{ii}
\inf_{x\in(\Sigma_t)_{\delta_t^{-1}(h)}^{c}}\sup_{\lambda\in\Lambda}\vert X_{0,\lambda,h}(\Psi^{-1}(t,x))\vert &\leqslant &\inf_{x\in(\Sigma_t)_{\delta_t^{-1}(h)}^{c}}\sup_{\lambda\in\Lambda}\vert X_{t,\lambda,h}(x)\vert\\ &\times \nonumber&\exp\Big(\int_0^t\big\Vert \nabla v\big(\tau,\Psi(\tau,\Psi^{-1}(t,\cdot))\big)\big\Vert_{L^\infty((\Sigma_t)_{\delta_t^{-1}(h)}^c)} d\tau\Big),
\end{eqnarray}
where we recall that  $\Sigma_t=\Psi(t, \Sigma_0)$ and $(\Sigma_t)^c_{\eta}=\big\{x\in \RR^2;\, d\big(x,\Sigma_t\big)\geqslant  \eta\big\}$. According  to the second point in Lemma  \ref{s2lem1} one deduces that
$$\Psi^{-1}\big(t,\big(\Sigma_t\big)_{\delta_t^{-1}(h)}^c\big)\subset \big(\Sigma_0\big)_{\delta_t(\delta_t^{-1}(h))}^c=(\Sigma_0)_{h}^c.
$$
Then, we immediately find in accordance with \eqref{ii}
\begin{eqnarray}\label{lipsi}
\inf_{x\in(\Sigma_t)_{\delta_t^{-1}(h)}^c}\sup_{\lambda\in\Lambda}\vert X_{0,\lambda,h}(\Psi^{-1}(t,x))\vert &= &\inf_{y\in\Psi^{-1}(t,(\Sigma_t)_{\delta_t^{-1}(h)}^c)}\sup_{\lambda\in\Lambda}\vert X_{0,\lambda,h}(y)\vert\notag\\ 
&\geq& \inf_{y\in(\Sigma_0)_{h}^c}\sup_{\lambda\in\Lambda}\vert X_{0,\lambda,h}(y)\vert\notag\\ 
&\geqslant  & I\big((\Sigma_0)_{h},(\mathcal{X}_0)_{h}\big).
 \end{eqnarray}
Moreover, in view of the same lemma we have
$$
\Psi\Big(\tau,\Psi^{-1}\big(t,(\Sigma_t)_{\delta_t^{-1}(h)}^c\big)\Big)\subset\big(\Sigma_\tau \big)_{\delta_{\tau,t}(\delta_t^{-1}(h))}^c=\big(\Sigma_\tau\big)_{\delta_\tau^{-1}(h)}^c\subset\big(\Sigma_\tau\big)_{h}^c.
$$
Consequently,  we may write
\begin{eqnarray*}
\big\Vert \nabla v\big(\tau,\Psi(\tau,\Psi^{-1}(t,\cdot))\big)\big\Vert_{L^\infty\big(({\Sigma_t})_{\delta_t^{-1}(h)}^c\big)}&\leqslant & \Vert \nabla v(\tau)\Vert_{L^\infty((\Sigma_\tau)_{h}^c)}\notag\\ &\leq& -(\log h)\Vert \nabla v(\tau)\Vert_{L(\Sigma_\tau)}\notag\\ &\leq&  -(\log h)W(\tau).
\end{eqnarray*}
Combining the last estimate with \eqref{ii} and\eqref{lipsi} yields 
\begin{equation*}
I\big((\Sigma_t)_{\delta_t^{-1}(h)},(\mathcal{X}_t)_h\big)
 \geqslant   I\big((\Sigma_0)_{h},(\mathcal{X}_0)_{h}\big)h^{\int_0^t W(\tau)d\tau}.
\end{equation*}
 This  ends the proof of the estimate \eqref{ixtd}.
\end{proof}

\subsection{Maximal smoothing effects with rough drift} 
In this paragraph, we shall explore some a priori estimates for transport-diffusion equations governed by a $\log-$Lipschitz velocity. They are considered as one of the cornerstone tools in tracking the co-normal regularity of the vorticity and the density with respect to a judicious transported vector-fields, see in particular Proposition \ref{prop22} and Proposition \ref{prop2222}. We first start with recalling in Proposition \ref{p11} below some important results related to the regularity persistence in the transport-diffusion equation as well as the maximal smoothing effects. Then, in Proposition \ref{Max-reg} we shall provide specific smoothing effects with a source term, extending a previous result established in \cite{Hmidi-0}. We make a slight adaptation of the same method using Lagrangian coordinates and para-differential calculus. Finally, in Corollary \ref{cor-Max-reg} we formulate our main result of this section related to the maximal smoothing effects with a frequency logarithmic loss. This apparent loss seems to be unavoidable due to the lack of regularity of the velocity field which scales slightly  below the Lipschitz class. For a detailed proof, we refer to \cite{Chemin,Hmidi-0}.

\begin{proposition}\label{p11}
Let $(s ,r,p,a)\in(-1,1) \times[1,\infty]\times(1,\infty)\times(1,\infty)$ and $v$ be a smooth divergence-free vector field. Then, with the  notations \eqref{def-delta}, \eqref{Sigma-def} and \eqref{W:def}, the following assertions hold true. 
Let $f\in L^\infty \big([0,T],B^{s}_{p,\infty}\big)$ be a solution of the transport-diffusion model 
\begin{equation}\label{TD-kappa}
\left\{ \begin{array}{ll}
\partial_{t}f+v\cdot\nabla f-\kappa\Delta f ={\bf F}+\kappa {\bf G}, & \vspace{2,5mm}\\
f_{| t=0}=f_{0}.\tag{TD$_{\kappa}$}
\end{array} \right.
\end{equation} 
Assume that $\supp f(t)\subset (\Sigma_t)^{c}_{\delta_t(h)}$ with $h\in(0,e^{-1}]$. Then, for every $t\in[0,T]$, we have
\begin{eqnarray*}
 \|f(t)\|_{B^{s}_{p,\infty}}\lesssim C_p h^{{-C_s}\int_{0}^{t}A(\tau)d\tau}\Big(\|f_0\|_{B^{s}_{p,\infty}}+\big(\kappa^{-\frac{1}{r'}}+t^{\frac{1}{r'}}\big)\|{ \bf F}\|_{\widetilde{L}^{r}_{t}B^{s-\frac{2}{r'}}_{p,\infty}}+\big(1+\kappa t\big)\|{  {\bf G}}\|_{\widetilde{L}^{\infty}_{t}B^{s-2}_{p,\infty}}\Big), 
\end{eqnarray*}
where, $\delta_t(h)$ is defined in \eqref{def-delta} and  
$$
A(t)=\| v(t)\|_{L  (\Sigma_t)}e^{\int_{0}^{t}\|v(\tau)\|_{LL} d\tau}, 
 $$
$$C_p = \frac{p^2}{p-1}, \quad C_s = \frac{1}{1-s^2}, \quad r'= \frac{r}{r-1}. $$
 
More precisely, we have for all $q\geqslant -1$
\begin{eqnarray}\label{ES:before Gronwal}
 \|\Delta_q f(t)\|_{L^p}&\lesssim & C_p\|\Delta_q f_0\|_{L^p}+C_p\big(\kappa^{-\frac{1}{r'}}+t^{\frac{1}{r'}}\big)2^{-\frac{2q}{r'}}\|{\Delta_q \bf F}\|_{L^{r}_{t}L^p}+C_p\big(1+\kappa t\big)2^{-2q}\|{\Delta_q  {\bf G}}\|_{L^{\infty}_{t}L^p} \nonumber\\ && -C_p\log h \int_0^t C_s2^{-qs} \| f(\tau) \|_{B^{s}_{p,\infty}} A(\tau) d\tau.
\end{eqnarray}
\end{proposition}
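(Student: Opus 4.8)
The plan is to run the classical frequency-localized $L^p$ energy method for transport-diffusion equations, treating the commutator between the drift and the Littlewood--Paley blocks as the only mechanism producing the singular amplification $h^{-C_s\int_0^tA}$. First I would localize \eqref{TD-kappa} in frequency by applying $\Delta_q$, obtaining for $f_q\triangleq\Delta_q f$ the equation
$$\partial_t f_q+v\cdot\nabla f_q-\kappa\Delta f_q=\Delta_q{\bf F}+\kappa\Delta_q{\bf G}+R_q,\qquad R_q\triangleq[v\cdot\nabla,\Delta_q]f.$$
Testing against $|f_q|^{p-2}f_q$ and integrating over $\RR^2$, the drift term disappears because $\dv v=0$, while the dissipative term is bounded from below through a generalized Bernstein (positivity) inequality by $c\,C_p^{-1}\kappa2^{2q}\|f_q\|_{L^p}^p$, the constant $C_p=\frac{p^2}{p-1}$ recording the degeneration as $p\to\infty$. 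After dividing by $\|f_q\|_{L^p}^{p-1}$ this gives
$$\frac{d}{dt}\|f_q\|_{L^p}+\tfrac{c}{C_p}\kappa2^{2q}\|f_q\|_{L^p}\leqslant\|\Delta_q{\bf F}\|_{L^p}+\kappa\|\Delta_q{\bf G}\|_{L^p}+\|R_q\|_{L^p},$$
whose Duhamel representation carries the decaying heat factor $e^{-\frac{c}{C_p}\kappa2^{2q}(t-\tau)}$.

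From the Duhamel formula I would read off the three inhomogeneous contributions. The initial datum enters simply as $\|\Delta_qf_0\|_{L^p}$ since the semigroup is a contraction. For the ${\bf F}$ term, a Hölder inequality in time with exponents $(r,r')$ against $\int_0^te^{-\frac{cr'}{C_p}\kappa2^{2q}(t-\tau)}d\tau\lesssim C_p(\kappa2^{2q})^{-1}$ produces the order $C_p(\kappa^{-1/r'}+t^{1/r'})2^{-2q/r'}$, the $t^{1/r'}$ alternative coming from bounding the exponential by $1$ when $\kappa2^{2q}t$ is small. The same computation applied to the $\kappa{\bf G}$ term yields $C_p(1+\kappa t)2^{-2q}$. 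Multiplying by $2^{qs}$ and taking $\ell^\infty$ in $q$ then reconstitutes the Besov norms $\|{\bf F}\|_{\widetilde L^r_tB^{s-2/r'}_{p,\infty}}$ and $\|{\bf G}\|_{\widetilde L^\infty_tB^{s-2}_{p,\infty}}$ appearing in the statement.

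The heart of the matter, and the step I expect to be the main obstacle, is the commutator $R_q$. Bounding the heat factor by $1$, it remains to prove
$$2^{qs}\|R_q(\tau)\|_{L^p}\lesssim C_s\,(-\log h)\,A(\tau)\,\|f(\tau)\|_{B^s_{p,\infty}}.$$
Here the support hypothesis $\supp f(\tau)\subset(\Sigma_\tau)^c_{\delta_\tau(h)}$ is indispensable: although $v$ is only log-Lipschitz globally, on a neighbourhood of this support it is genuinely Lipschitz, and by the very definition \eqref{Lsigma} of the norm $\|v\|_{L(\Sigma_\tau)}$ its gradient there is dominated by $-\log(\delta_\tau(h))\,\|v\|_{L(\Sigma_\tau)}$. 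Using the definition \eqref{def-delta} of $\delta_\tau(h)$, namely $-\log\delta_\tau(h)=(-\log h)\,e^{\int_0^\tau\|v(\tau')\|_{LL}d\tau'}$, this effective Lipschitz constant equals exactly $(-\log h)\,A(\tau)$. I would then run the usual Bony decomposition of $[v\cdot\nabla,\Delta_q]f$ and estimate each paraproduct and remainder piece by this effective bound; the quasi-locality (rapid kernel decay) of $\Delta_q$ is what lets one discard the far region, where $v$ may be singular, at the price of remainders decaying in $q$, while the constant $C_s=\frac1{1-s^2}$ reflects the customary blow-up of the commutator estimate as $s\to\pm1$. This careful separation of scales is precisely the point handled by Vishik's double frequency localization and by the arguments of \cite{Chemin,Hmidi-0}.

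Collecting these four bounds yields exactly the frequency-localized estimate \eqref{ES:before Gronwal}. To conclude I would multiply by $2^{qs}$ and take the supremum over $q\geqslant-1$; the data and source contributions factor out with the common prefactor $C_p$, whereas the commutator term, entering the Duhamel integral against the contractive semigroup, contributes $C_s(-\log h)\int_0^tA(\tau)\|f(\tau)\|_{B^s_{p,\infty}}d\tau$, with $-\log h\geqslant1$ since $h\leqslant e^{-1}$. The resulting inequality is of Gronwall type, linear in $\|f(t)\|_{B^s_{p,\infty}}$, and solving it converts the commutator term into the amplification $\exp\big(-C_s\log h\int_0^tA(\tau)d\tau\big)=h^{-C_s\int_0^tA(\tau)d\tau}$, which is the loss claimed in the statement.
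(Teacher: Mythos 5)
Your plan follows the same route as the proof the paper points to in \cite{Chemin,Hmidi-0} (the paper itself does not reprove Proposition \ref{p11} but defers to those references): frequency localization, the generalized Bernstein inequality producing the $C_p^{-1}\kappa 2^{2q}$ dissipation on each block, Duhamel plus H\"older in time for the two source terms giving the $\big(\kappa^{-1/r'}+t^{1/r'}\big)2^{-2q/r'}$ and $(1+\kappa t)2^{-2q}$ prefactors, and the commutator controlled by the effective Lipschitz bound $(-\log h)A(\tau)$ extracted from the support assumption, closed by Gronwall. The one genuinely delicate step is the commutator localization --- converting the $(q+2)$ log-Lipschitz loss of Lemma \ref{Tec-lem-1} into the fixed $-\log h$ factor requires combining the support hypothesis with the quasi-locality of the dyadic blocks rather than a naive spatial cutoff of $v$ --- and you have correctly identified both the mechanism and the fact that it is the main obstacle.
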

\begin{remark}\label{remark:p<infini}  
Proposition \ref{p11} requires  $p<\infty$ because of the constant $C_p$ which is unbounded as  $p $ tends to infinity. This technical issue is relevant to the viscous case which  forces us to study the vortex patch problem in Besov spaces with integrability index $p<\infty$ contrary to the  inviscid case given by Proposition \ref{prop:transport:es} where we have no obstruction on the index $p$. 
\end{remark}
As already mentioned, we state the pivotal result of this part which focuses on another smoothing effect type for a transport-diffusion equation with a source term. In contrast with Proposition \ref{p11}, the  proposition stated below  can be extended to  the borderline case $p=\infty$ using the techniques of Lagrangian coordinates in the spirit of the work \cite{Hmidi-0} developed for the 2d Navier-Stokes equations. More precisely, we will prove.\begin{proposition}\label{Max-reg}
Let  $p,m\in[1,\infty], r\in  [m, \infty] $ and $T>0$ . Consider  a divergence-free vector field $v$  belonging to $L^1([0,T]; LL ), f_0\in L^p(\RR^2)$ and $  \mathtt{F}\in L^m([0,T];L^p)$. Let  $f$ be a solution to   
\begin{equation}\label{TD-max}
\left\{\begin{array}{ll}
\partial_t f+v\cdot\nabla f-\kappa\Delta f=  \mathtt{F} &\vspace{2,5mm}\\
f_{|t=0}=f_0
\end{array}
\right.
\end{equation}
that  satisfies
\begin{equation}\label{Cond-f}
\|f(t)\|_{L^p}\leqslant G(t),\quad\forall t\in[0,T],
\end{equation}
for some increasing    function  $G:[0,T]\to[0,\infty)$.
Then, for any $t\in[0,T]$ and for any $ q\geqslant -1$ 
\begin{equation*}
(\kappa2^{2q})^{\frac{1}{r}}\Vert \Delta_q f \Vert_{L_t^r L^p}\lesssim  G(t) \Big(1+\kappa t+(q+2)\int_0^t\Vert v(\tau)\Vert_{LL}d\tau \Big)^{\frac{1}{r}}+(\kappa 2^{2q})^{ \frac{1}{m}-1}    \|{\Delta_q\mathtt{F}}  \|_{L^m_tL^p} .
\end{equation*}
\end{proposition}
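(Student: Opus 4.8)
The plan is to remove the transport term by passing to Lagrangian coordinates and then to exploit the $L^p$--independent smoothing of the heat semigroup, which is precisely what allows one to reach the endpoint $p=\infty$ that is out of reach for the energy method behind Proposition \ref{p11}. First I would introduce the flow $\Psi$ of $v$ and set $\bar f(t,y)\triangleq f(t,\Psi(t,y))$ and $\bar{\mathtt{F}}(t,y)\triangleq \mathtt{F}(t,\Psi(t,y))$. Since $v$ is divergence free, $\Psi(t,\cdot)$ preserves Lebesgue measure, so that $\Vert \bar f(t)\Vert_{L^p}=\Vert f(t)\Vert_{L^p}\leqslant G(t)$ and $\Vert \bar{\mathtt{F}}(t)\Vert_{L^p}=\Vert \mathtt{F}(t)\Vert_{L^p}$ for every $p\in[1,\infty]$; this identity is crucial because it transfers the a priori bound \eqref{Cond-f} to the Lagrangian side at no cost. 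In these coordinates the convection is absorbed and $\bar f$ solves $\partial_t\bar f=\kappa\,(\Delta f)\circ\Psi+\bar{\mathtt{F}}$.

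Next I would split the Lagrangian Laplacian as $(\Delta f)\circ\Psi=\Delta\bar f+\mathcal{R}$, where $\mathcal{R}$ gathers all the discrepancy created by the non-trivial Jacobian $\nabla\Psi$, and treat $\kappa\mathcal{R}+\bar{\mathtt{F}}$ as a source in the \emph{constant coefficient} heat equation. Duhamel's formula then gives \[ \bar f(t)=e^{\kappa t\Delta}f_0+\int_0^t e^{\kappa(t-\tau)\Delta}\big(\kappa\,\mathcal{R}(\tau)+\bar{\mathtt{F}}(\tau)\big)\,d\tau. \] Localizing in frequency and using the convolution estimate $\Vert e^{\kappa s\Delta}\Delta_q g\Vert_{L^p}\lesssim e^{-c\kappa 2^{2q}s}\Vert \Delta_q g\Vert_{L^p}$, which holds for all $p\in[1,\infty]$, I would take the $L^r$ norm in time and apply Young's inequality in the time variable. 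With the exponent $\rho$ defined by $1+\tfrac1r=\tfrac1\rho+\tfrac1m$ (so that $\rho\geqslant1$ precisely because $r\geqslant m$), the kernel bound $\Vert e^{-c\kappa 2^{2q}\cdot}\Vert_{L^\rho}\lesssim (\kappa 2^{2q})^{-1/\rho}$ converts the contribution of $\bar{\mathtt{F}}$ into $(\kappa 2^{2q})^{1/m-1}\Vert \Delta_q\mathtt{F}\Vert_{L^m_tL^p}$ after multiplying by the prefactor $(\kappa 2^{2q})^{1/r}$, which is exactly the last term of the claim; the same computation applied to $e^{\kappa t\Delta}f_0$ yields the $G(t)$ term.

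The delicate point, and the main obstacle, is the control of $\mathcal{R}$. For a merely log-Lipschitz velocity the Jacobian $\nabla\Psi$ is \emph{not} bounded, so the pointwise comparison of the two Laplacians degenerates and $\mathcal{R}$ cannot be estimated in a naive way. Here I would invoke Vishik's frequency double localization lemma from \cite{Vishik}, in the form used in \cite{Hmidi,Hmidi-0}: pulling a function spectrally localized near $2^q$ through the log-Lipschitz flow spreads its spectrum by a factor at most $e^{\pm cV(t)(q+2)}$, where $V(t)\triangleq\int_0^t\Vert v(\tau)\Vert_{LL}\,d\tau$, and symmetrically for the inverse flow. Combining this with a para-differential decomposition of the coefficients entering $\mathcal{R}$ and with the bound $\Vert\Delta_q v\Vert_{L^\infty}\lesssim \Vert v\Vert_{LL}(q+2)2^{-q}$ of Proposition \ref{C-LL}, each dyadic block of $\mathcal{R}$ is seen to lose at most a factor $(q+2)\Vert v\Vert_{LL}$ relative to the corresponding block of $\bar f$. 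After the heat smoothing and the time integration this degeneracy is precisely what produces the summand $(q+2)V(t)$ inside the parenthesis, while the crude zeroth-order part of the remainder accounts for the $\kappa t$ term.

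Finally I would reassemble the three contributions, transfer the Lagrangian estimate back to the Eulerian frame using once more the measure-preserving property and the double localization (now applied to $\Psi^{-1}$), and use that $G$ is increasing to bound all the intermediate times by $G(t)$. Collecting everything yields \[ (\kappa 2^{2q})^{\frac1r}\Vert \Delta_q f\Vert_{L^r_tL^p}\lesssim G(t)\big(1+\kappa t+(q+2)V(t)\big)^{\frac1r}+(\kappa 2^{2q})^{\frac1m-1}\Vert \Delta_q\mathtt{F}\Vert_{L^m_tL^p}, \] which is the desired inequality. The only genuine work is the quantitative double-localization bookkeeping of the third step; the heat-kernel and Young steps are routine, and it is exactly the $L^1$-convolution nature of the heat smoothing that makes the argument insensitive to $p$ and therefore valid up to $p=\infty$.
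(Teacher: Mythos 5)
Your overall strategy (Lagrangian coordinates, heat-kernel smoothing, Young in time, Vishik's double localization) is the right family of ideas, but the order in which you deploy them creates a genuine gap. You propose to compose $f$ with the flow $\Psi$ of $v$ itself and then to control the remainder $\mathcal{R}=(\Delta f)\circ\Psi-\Delta(f\circ\Psi)$. For a merely log-Lipschitz velocity, $\nabla\Psi$ and $\nabla^2\Psi$ are not bounded (the flow is only H\"older of exponent $e^{-V(t)}$), so $\mathcal{R}$ is not a controllable object, and Lemma \ref{Tec-lem-V} cannot be applied either: its bound is expressed in terms of $\Vert\nabla^{l}\Psi^{\pm1}\Vert_{L^\infty}$, which is infinite here. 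The paper's proof avoids this by reversing the order: it first applies $\Delta_q$ to the equation, replaces $v$ by $S_{q-1}v$ in the transport term at the cost of the commutator ${\bf R}_q$ (estimated by Lemma \ref{Tec-lem-1}), and only then passes to the Lagrangian coordinates of the \emph{regularized} flow $\Psi_q$, for which $\Vert\nabla\Psi_q^{\pm1}\Vert_{L^\infty}\leqslant e^{V_q(t)}$ with $V_q(t)=C(q+2)\int_0^t\Vert v\Vert_{LL}d\tau$, see \eqref{Pr-flow}. This $q$-dependent regularization is what makes the Laplacian remainder and the double localization meaningful.

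A second, related gap concerns the provenance of the factor $(q+2)\int_0^t\Vert v(\tau)\Vert_{LL}d\tau$ in the conclusion. You attribute it to a block-by-block loss of size $(q+2)\Vert v\Vert_{LL}$ in the remainder, but the flow estimates actually produce \emph{exponential} factors $e^{V_q(t)}=e^{C(q+2)\int_0^t\Vert v\Vert_{LL}}$, not polynomial ones; taken at face value your scheme would yield a bound exponential in $(q+2)$, which is useless for Corollary \ref{cor-Max-reg}. The linear dependence in the statement comes from an absorption-plus-iteration argument that your sketch omits entirely: one first proves the estimate under the smallness condition \eqref{small-t}, i.e. $(q+2)\int_0^t\Vert v\Vert_{LL}d\tau\leqslant C_0$, so that the exponentials can be absorbed into the left-hand side, and then subdivides $[0,t]$ into $N\leqslant C\big(1+(q+2)\int_0^t\Vert v\Vert_{LL}d\tau\big)$ subintervals on each of which the smallness holds; summing the $N$ local estimates (using $m\leqslant r$ and $\ell^1\hookrightarrow\ell^{r/m}$ for the source term) is precisely what places $(q+2)\int_0^t\Vert v\Vert_{LL}d\tau$ inside the parenthesis raised to the power $1/r$. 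Without this step the argument does not close.
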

\begin{proof}
We localize $f$ by applying the operator $\Delta_q$ to \eqref{TD-max}, then by setting $f_q=\Delta_q f$ and $\mathtt{F}_q=\Delta_q \mathtt{F}$ one gets
\begin{equation}\label{f-q}
\partial_t f_q+S_{q-1}v\cdot\nabla f_q-\kappa\Delta f_q=S_{q-1}v\cdot\nabla f_q-\Delta_q(v\cdot\nabla f)+\mathtt{F}_q\triangleq  {\bf R}_q+\mathtt{F}_q.
\end{equation}
Let us denote by $\Psi_q(t)$ the flow associated to the regularized velocity $S_{q-1}v$, which satisfies an integral equation of type
$$
\Psi_q(t,x)=x+\int_0^tS_{q-1}v(\tau,\Psi_q(\tau,x))d\tau.
$$
Thanks to the Proposition \ref{C-LL}, we write
$$
\int_0^t\Vert \nabla S_{q-1}v(\tau)\Vert_{L^{\infty}}d\tau \leqslant C(q+2)\int_0^t\Vert v(\tau) \Vert_{LL}d\tau \triangleq V_q(t).
$$
Thus, we can deduce from straightforward computations that  the flow  $\Psi^1_q\triangleq\Psi_q$ and its inverse $\Psi^{-1}_q$ satisfy 
\begin{equation}\label{Pr-flow}
\Vert\nabla\Psi_q^{\pm1}(t)\Vert_{L^{\infty}}\leqslant e^{V_q(t)}\quad \textnormal{and}\quad  \Vert\nabla^2\Psi_q^{\pm1}(t)\Vert_{L^{\infty}}\leqslant C2^qV_q(t)e^{V_q(t)},
\end{equation}
Let us consider the functions
$$ 
\bar{f}_q(t,x) =f_q(t,\Psi_q(t,x)), \quad \bar{\mathtt{F}}_q(t,x) =\mathtt{F}_q(t,\Psi_q(t,x))\;\;\textnormal{and}\;\; \bar{\bf R}_q={\bf R}_q(t,\Psi_q(t,x)).   
$$
Then from direct computations  we infer
\begin{eqnarray}\label{aq-1}
\Delta f_q(t)\circ \Psi_q(t,x)&=&\nabla\bar{f}(t,x)\cdot (\Delta\Psi_q^{-1})(t,\Psi_q(t,x))\\
\nonumber&&+
\sum_{i=1}^d \Big\langle \nabla^2\bar{f}(t,x)\cdot(\partial_i\Psi_q^{-1})(t,\Psi_q(t,x)),(\partial_i\Psi_q^{-1})(t,\Psi_q(t,x))\big\rangle.
\end{eqnarray}
In addition, differentiating the  integral equation of $\Psi_q^{-1}$ leads after straightforward computations to the following result
\begin{equation}\label{aq-2}
(\partial_i\Psi_q^{-1})(t,\Psi_q(t,x))=e_i+g_q^i(t,x),
\end{equation}
where the functions  $g^i_q$ satisfy
\begin{equation*}
\Vert g_q^i \Vert_{L^{\infty}}\leqslant V_q(t)e^{V_q(t)}\triangleq g_q(t).
\end{equation*}
Collecting \eqref{aq-1} and \eqref{aq-2} allows to check that $\bar{f}_q$ satisfies the following parabolic equation 
\begin{eqnarray*}
\big(\partial_t-\kappa\Delta\big)\bar{f}_q(t,x)&=&\kappa \sum_{i=1}^d\big\langle\nabla^2\bar{f}_q(t,x)g^i_q(t,x),g^i_q(t,x)\big\rangle+2\kappa \sum_{i=1}^d\big\langle\nabla^2\bar{f}_q(t,x)e_i,g^i_q(t,x)\big\rangle\\&&+\kappa\nabla \bar{f}_q(t,x)\cdot(\Delta\Psi_q^{-1})(t,\Psi_q(t,x))
+ \bar{\bf R}_q +\bar{\mathtt{F}}_q(t,x).
\end{eqnarray*}
One of the main difficulties that we should face is that the frequency spectrum of $\bar{f}_q$ is not necessarily localized in a compact set. To deal with this, we shall first localize it through the operators $\Delta_j$, with $j\in \NN$. Hence, classical smoothing effects (see for instance \cite[Lemma 2.4]{Bahouri-Chemin-Danchin}) of the heat semigroup allow to get
\begin{align*}
\Vert \Delta_j\bar{f}_q(t)\|_{L^p} & \leqslant Ce^{-c\kappa t2^{2j}}\Vert \Delta_j f_q(0) \Vert_{L^p}+C\int_0^t e^{-c\kappa(t-\tau)2^{2j}}\Vert \Delta_j \bar{\bf R}_q(\tau)\Vert_{L^p}d\tau\\
& +C\kappa(g_q+g_q^2(t))\int_0^t e^{-c\kappa(t-\tau)2^{2j}}\Vert \nabla^2\bar{f}_q(\tau)\Vert_{L^p}d\tau\\
& + C\kappa2^qg_q(t)\int_0^t e^{-c\kappa(t-\tau)2^{2j}}\Vert \nabla\bar{f}_q(\tau)\Vert_{L^p}d\tau+\int_0^t e^{-c\kappa(t-\tau)2^{2j}}\Vert \Delta_j\bar{\mathtt{F}}_q(\tau)\Vert_{L^p}d\tau.
\end{align*}
To bound the second term of the r.h.s. of the above estimate, Bernstein's inequality helps to write 
$$ 
\|\Delta_j \bar{\bf R}_q(t)\|_{L^p}\leqslant C2^{-j}\|\nabla({\bf R}_q\circ\Psi_q(t))\|_{L^p}\leqslant C 2^{q-j}\|\nabla\Psi_q(t)\|_{L^\infty}\|{\bf R}_q(t)\|_{L^p}. 
$$
Consequently, the combination of the last two estimates, the condition \eqref{Cond-f} and  Lemma \ref{Tec-lem-1} applied with  $s=0$
enable to have 
\begin{eqnarray*} 
\\
\nonumber\|\Delta_j \bar{f}_{q}(t)\|_{L^p}&\leqslant & Ce^{-c\kappa t 2^{2j}}\|\Delta_j \bar{f}_{q}(0)\|_{L^p}+(q+2)2^{q-j}\int_{0}^{t}e^{-c\kappa (t-\tau) 2^{2j}}\|v(\tau)\|_{LL}e^{V_q(\tau)}\| f(\tau)\|_{ B^0_{p,\infty}}d\tau\\\nonumber
&&+C\kappa\big(g_{q}(t)+g_{q}^{2}(t)\big)\int_{0}^{t}e^{-c\kappa (t-\tau) 2^{2j}}\|\nabla^2 \bar{f}_q(\tau)\|_{L^p}d\tau\\\nonumber
&&+C\kappa 2^q g_q(t)\int_{0}^{t}e^{-c\kappa (t-\tau) 2^{2j}}\|\nabla \bar{f}_q(\tau)\|_{L^p}d\tau+C\int_{0}^{t}e^{-c\kappa (t-\tau) 2^{2j}}\| \Delta_j \bar{\mathtt{F}}_q(\tau)\|_{L^p}d\tau.\nonumber
\end{eqnarray*}
Now, owing to the fact 
$$ \| f(\tau)\|_{ B^0_{p,\infty}} \leqslant \| f(\tau)\|_{L^p} \leqslant G(t),\quad \forall\tau\leqslant  t,$$
we obtain 
\begin{align}\label{Delta-j-bar-a}
\nonumber\|\Delta_j \bar{f}_{q}(t)\|_{L^p}&\leqslant Ce^{-c\kappa t 2^{2j}}\|\Delta_j \bar{f}_{q}(0)\|_{L^p}+(q+2)2^{q-j} G(t)\int_{0}^{t}e^{-c\kappa (t-\tau) 2^{2j}}\|v(\tau)\|_{LL}e^{V_q(\tau)} d\tau\\
\nonumber
&+C\kappa\big(g_{q}(t)+g_{q}^{2}(t)\big)\int_{0}^{t}e^{-c\kappa (t-\tau) 2^{2j}}\|\nabla^2 \bar{f}_q(\tau)\|_{L^p}d\tau\\
&+C\kappa 2^q g_q(t)\int_{0}^{t}e^{-c\kappa (t-\tau) 2^{2j}}\|\nabla \bar{f}_q(\tau)\|_{L^p}d\tau+C\int_{0}^{t}e^{-c\kappa (t-\tau) 2^{2j}}\| \Delta_j \bar{\mathtt{F}}_q(\tau)\|_{L^p}d\tau.
\end{align}
Taking the $L^{r}_{t}-$norm in  both sides of \eqref{Delta-j-bar-a} and employing Young's inequality we obtain, due to \eqref{Cond-f} and the time monotonicity of $g_q$ 
\begin{align}\label{Delta-j-bar-a-1}
\nonumber\|\Delta_j \bar{f}_{q}(t)\|_{L^{r}_{t}L^p}\leqslant & C(\kappa 2^{2j})^{-\frac{1}{r}}\|\Delta_j \bar{f}_{q}(0)\|_{L^p}+C2^{q-j}G(t)(\kappa 2^{2j})^{-\frac{1}{r}}\big(e^{V_q(t)}-1\big)\\
\nonumber&+C\big(g_{q}(t)+g_{q}^{2}(t)\big)2^{-2j}\|\nabla^2 \bar{f}_q\|_{L^{r}_{t}L^p}+Cg_{q}(t)2^{q}2^{-2j}\|\nabla \bar{f}_q\|_{L^{r}_t L^p}\\
&+C(\kappa2^{2j})^{-\frac1r- \frac{1}{m'}}\|\Delta_j\bar{\mathtt{F}}_q\|_{L^m_t L^p},
\end{align}
for all $m\in [1,r]$. To estimate  $\|\Delta_j\bar{\mathtt{F}}_q\|_{L^m_t L^p}$ we use  the boundedness of $\Delta_j$  together  with  the fact that $\Psi_q$ preserves Lebesgue measure, 
\begin{equation*}
\|\Delta_j \bar{\mathtt{F}}_q(t)\|_{L^p}\leqslant \|  \bar{\mathtt{F}}_q(t)\|_{L^p} \leqslant \| \mathtt{F} _q(t)\|_{L^p}.
\end{equation*}
By making use of \eqref{Pr-flow} and applying once again  the invariance of Lebesgue measure by the flow   $\Psi_q$, one deduces
\begin{equation}\label{Pr-1}
\|\nabla\bar{f}_q(t)\|_{L^p}\leqslant C2^{q}e^{V_q(t)}\|f_q(t)\|_{L^p}
\end{equation}
and
\begin{equation}\label{Pr-2}
\|\nabla^2\bar{f}_q(t)\|_{L^p}\leqslant C2^{2q}e^{V_q(t)}\|f_q(t)\|_{L^p}.
\end{equation}
Hence, plugging these estimates into \eqref{Delta-j-bar-a-1} and summing over $j\geqslant  q-N_0$, we end up with 
\begin{align}\label{Delta-j-bar-a-2}
 (\kappa 2^{2q})^{\frac{1}{r}}\sum_{j\geqslant q-N_0}\|\Delta_{j}\bar{f}_q\|_{L^{r}_{t}L^p}\leqslant & 2^\frac{2N_0}{r}\|f_q(0)\|_{L^p}+2^{2N_0}g_q(t) (\kappa 2^{2q})^{\frac1r}\|f_q\|_{L^r_t L^p}\\
\nonumber&+C2^{N_0(1+\frac2r)}G(t)\big(e^{V_q(t)}-1\big)+2^{2N_0( \frac{1}{r}+\frac{1}{m^\prime})  }(\kappa 2^{2q})^{ -\frac{1}{m'}} \|{\mathtt{F}}_q \|_{L^m_tL^p}.
\end{align}   Concerning the lower frequencies, we exploit Lemma \ref{Tec-lem-V} in order to get 
\begin{equation}\label{sum-1}
\sum_{j\leqslant q-N_0}\|\Delta_j \bar{f}_{q}\|_{L^r_t L^p}\leqslant C 2^{-N_0}e^{V_q(t)}\|f_q\|_{L^r_t L^p}.
\end{equation}
Putting together \eqref{Delta-j-bar-a-2},  \eqref{sum-1} and  the estimate
$$\|f_q(0)\|_{L^p} \leqslant \|f \|_{L^\infty_t L^p}\leqslant G(t), $$
yields in view of $ \frac{1}{r}+\frac{1}{m^\prime} \leqslant 1$
\begin{eqnarray}\label{Delta-j-bar-a-3}
(\kappa 2^{2q})^{\frac{1}{r}} \|  f _q\|_{L^{r}_{t}L^p}&\leqslant 2^{{2N_0}}G(t)&+2^{2N_0}g_q(t) (\kappa 2^{2q})^{\frac1r}\|f_q\|_{L^r_t L^p}\\
&&+C2^{-N_0}e^{V_q(t)}(\kappa 2^{2q})^{\frac1r}\|f_q\|_{L^r_t L^p}\nonumber\\
&&+C2^{3N_0}G(t)\big(e^{V_q(t)}-1\big)\nonumber\\&&+2^{ {2N_0}  }(\kappa 2^{2q})^{ -\frac{1}{m'}} \|{\mathtt{F}}_q \|_{L^m_tL^p}.\nonumber
\end{eqnarray}  
Accordingly, if we impose to $t$ and $N_0$ the following two conditions
\begin{equation}\label{Vis-C-1}
2^{2N_0}g_q(t)\leqslant\frac14\quad\mbox{and}\quad C2^{-N_0}e^{V_q(t)}\leqslant\frac14,
\end{equation}
then we will have for every $q\geqslant  N_0$
\begin{equation}\label{Vis-R-1}
(\kappa 2^{2q})^{\frac{1}{r}}\|f_q\|_{L^{r}_{t}L^p}\leqslant C2^{4N_0} \big(G(t) + (\kappa 2^{2q})^{ -\frac{1}{m'}} \|{\mathtt{F}}_q \|_{L^m_tL^p}\big).
\end{equation}
In fact, the conditions given \eqref{Vis-C-1} are satisfied as soos as one takes
\begin{equation}\label{small-t}
(q+2)\int_{0}^{t}\|v(\tau)\|_{LL}d\tau\leqslant C_0.
\end{equation}
The constant $C_0$ is absolute, and, a fortiori $N_0$, whose choice does not depend on $q$. To be clear, we first take $V_q(t)\leqslant 1$ and we select  $N_0$ so that $2^{-N_0}\leqslant \frac{1}{4eC}$, then we to get the  first condition in \eqref{Vis-C-1} it suffices to take  $V_q(t)$ small enough such that  $C2^{2N_0}g_q(t)\leqslant\frac14$. This latter point  is feasible because the function $xe^{x}$ tends to zero when  $x$ approaches  zero. Finally, we establish the existence of a fixed  integer $N_0$ and a small constant $C_0$  such that, for any $t$ satisfying  \eqref{small-t} and for any $q\geqslant  N_0$ we have   
$$
(\kappa 2^{2q})^{\frac{1}{r}}\|{f}_q\|_{L^{r}_{t}L^p}\leqslant C\big(G(t) +(\kappa 2^{2q})^{ -\frac{1}{m'}} \|{\mathtt{F}}_q \|_{L^m_tL^p}\big).
$$
In what follows, we will remove the smallness condition \eqref{small-t} and  extend the above estimate for  any arbitrary time $t$. For this aim, we take any integer $q\geqslant  N_0$ and  split the time interval  $[0,t]$ in the following way
$$
 0 =t_0<t_1<\ldots<  t_N=t , \quad (q+2)\int_{t_i}^{t_{i+1}}\|v(\tau)\|_{LL}d\tau\approx C_0, \quad\forall i\in\{1,2,.., N-1  \}.
$$
Notice that by summing up the last condition, we find the estimate
$$
N\leqslant  C\Big(1+(q+2)\int_{0}^{t}\|v(\tau)\|_{LL}d\tau\Big),
$$
Now, reproducing the previous analysis in the interval $[t_i,t_{i+1}]$ with the suitable adaptations in particular for the flow estimates, we succeed to establish 
$$
(\kappa 2^{2q})^{\frac{1}{r}}\|{f}_q\|_{L^{r}([t_i,t_{i+1}],L^p)}\leqslant C \big(G( t_{i} ) +(\kappa 2^{2q})^{ -\frac{1}{m'}}   \|{\mathtt{F}}_q \|_{L^m([t_i,t_{i+1}];L^p)}\big).
$$
Therefore, since $G$ is increasing we get for any  $q\geqslant  N_0$
\begin{equation*} 
(\kappa 2^{2q})\|{f}_q\|_{L^{r}_t L^p}^r\leqslant C^rN \left(G(t)\right)^r + C^r (\kappa 2^{2q})^{ -\frac{r}{m'}}  \sum_{i=0}^{N-1} \|{\mathtt{F}}_q \|_{L^m([t_i,t_{i+1}];L^p)}^r.
\end{equation*}
Remind that  $m\leqslant r$ and then by  the embedding $\ell^1 \hookrightarrow \ell^{\frac{r}{m}} $ we infer 
\begin{equation}\label{F-1}
(\kappa 2^{2q})\|{f}_q\|_{L^{r}_t L^p}^r\leqslant C^rN \left(G(t)\right)^r+ C^r (\kappa 2^{2q})^{ -\frac{r}{m'}}    \|{\mathtt{F}}_q \|_{L^m([0,t];L^p)}^r.
\end{equation}
Concerning the lower frequencies  $q\leqslant N_0$, we write
\begin{equation}\label{F-2}
(\kappa 2^{2q})^{\frac1r}\|{f}_q\|_{L^{r}_t L^p}\leqslant C(\kappa t)^{\frac1r}G(t).
\end{equation}
Finally, combining \eqref{F-1} and \eqref{F-2} we reach the desired estimate. 
\end{proof}
An important application  of the above proposition is the establishment of maximal smoothing effects for transport-diffusion equations drifted by a  log-Lipschitz vector field.  Consider a  solution of the equation.
\begin{equation}\label{TD-Equa}
\left\{\begin{array}{ll}
\partial_t\rho + v\cdot \nabla \rho -\kappa \Delta \rho = 0,\vspace{2,5mm}\\
\rho_{|_{t=0}}=\rho_0.
\end{array}\right.
\end{equation}
then the following result holds.
\begin{coro}\label{cor-Max-reg} Let $T>0$ and  $v$ be a divergence-free vector field belonging to $L^1([0,T]; LL )$ whose vorticity satisfies  $\omega\in L^1([0,T]; L^\infty)$. If $\rho$ is a solution of \eqref{TD-Equa}   such that
\begin{equation}\label{assumption:000}
 \|\nabla\rho(t)\|_{L^\infty}\leqslant  G(t),\quad \forall t\in[0,T],  
\end{equation}
for some increasing   function $G$, then,  for any  $q\geqslant -1$, $i\in \{1,2\}$ and $ t\in[0,T]$, there holds that
\begin{eqnarray}\label{equa:corollary}
 \kappa2^{2q} \Vert \Delta_q (\partial_i\rho) \Vert_{L_t^1 L^\infty}\lesssim G(t)  \Big(1+\kappa t   + (q+2)\int_0^t\Vert v(\tau)\Vert_{LL}d\tau \Big)     .
\end{eqnarray}
\end{coro}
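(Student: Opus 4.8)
The plan is to derive the statement from Proposition \ref{Max-reg} applied to the unknown $g\triangleq\partial_i\rho$. Since $v$ is divergence free, differentiating \eqref{TD-Equa} with respect to $x_i$ and using $[\partial_i,v\cdot\nabla]\rho=\partial_i v\cdot\nabla\rho$ produces the transport--diffusion equation
\begin{equation*}
\partial_t g+v\cdot\nabla g-\kappa\Delta g=-\,\partial_i v\cdot\nabla\rho\triangleq\mathtt{F},\qquad g_{|t=0}=\partial_i\rho_0 ,
\end{equation*}
so $g$ solves \eqref{TD-max} with a source coming purely from the transport commutator (for the pure heat flow $\partial_i$ would commute and there would be no source at all). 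The hypothesis \eqref{assumption:000} gives $\|g(t)\|_{L^\infty}\leqslant\|\nabla\rho(t)\|_{L^\infty}\leqslant G(t)$, which is exactly the control \eqref{Cond-f} demanded by Proposition \ref{Max-reg} in the case $p=\infty$. Choosing $p=\infty$ and $r=m=1$, so that the source prefactor $(\kappa 2^{2q})^{\frac1m-1}$ collapses to $1$, the conclusion of Proposition \ref{Max-reg} becomes
\begin{equation*}
\kappa 2^{2q}\,\|\Delta_q g\|_{L^1_t L^\infty}\lesssim G(t)\Big(1+\kappa t+(q+2)\int_0^t\|v(\tau)\|_{LL}d\tau\Big)+\|\Delta_q\mathtt{F}\|_{L^1_t L^\infty}.
\end{equation*}
The first term on the right is precisely the bound asserted in \eqref{equa:corollary}, so the whole matter reduces to absorbing the contribution of the source $\mathtt{F}$ into it.

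To treat the source I would expand $\partial_i v\cdot\nabla\rho=\sum_j\partial_i v_j\,\partial_j\rho$ by Bony's decomposition (Definition \ref{Deff-Bony}) into the two paraproducts $T_{\partial_i v_j}\partial_j\rho$ and $T_{\partial_j\rho}\partial_i v_j$ together with the remainder $R(\partial_i v_j,\partial_j\rho)$. For the paraproducts the key input is the low-frequency bound on the velocity gradient supplied by Proposition \ref{C-LL}, namely $\|\nabla S_{q-1}v\|_{L^\infty}\lesssim(q+2)\|v\|_{LL}$, paired with $\|\Delta_q\nabla\rho\|_{L^\infty}\lesssim\|\nabla\rho\|_{L^\infty}\leqslant G$. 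These two paraproducts therefore contribute a block bound of size $(q+2)\|v(t)\|_{LL}\,G(t)$, and after integration in time this is exactly the factor $(q+2)\int_0^t\|v\|_{LL}$ that already appears in \eqref{equa:corollary}.

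The delicate point, and the main obstacle, is the remainder $R(\partial_i v_j,\partial_j\rho)$, in which $\partial_i v$ and $\nabla\rho$ interact at comparable frequencies while both live only at the critical level (the velocity gradient being merely $\log$-Lipschitz, and $\nabla\rho$ merely bounded): a naive block-by-block estimate of this term fails to converge. This is exactly where the parabolic gain must be used, i.e. the factor $\kappa 2^{2q}$ on the left-hand side---the maximal smoothing effect carried by Proposition \ref{Max-reg}---has to be balanced against the high-frequency cascade generated in the remainder. Following the para-differential strategy of \cite{Hmidi-0}, one reorganizes the rough part of $\mathtt{F}$ so that it is measured jointly with the diffusion rather than as a bare forcing term, leaving a residue that is again controlled by $G(t)\big(1+\kappa t+(q+2)\int_0^t\|v\|_{LL}\big)$. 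Collecting the paraproduct and remainder estimates and summing the time-integrated block inequality then yields \eqref{equa:corollary} for every $q\geqslant-1$ and $i\in\{1,2\}$, which completes the proof.
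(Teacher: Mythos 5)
Your setup coincides with the paper's: differentiate \eqref{TD-Equa} in $x_i$, apply Proposition \ref{Max-reg} to $f=\partial_i\rho$ with $\mathtt{F}=\partial_i v\cdot\nabla\rho$, $p=\infty$ and $r=m=1$, and then estimate $\|\Delta_q\mathtt{F}\|_{L^1_tL^\infty}$ by Bony's decomposition, the two paraproducts being controlled exactly as you describe via Proposition \ref{C-LL}. The gap is in the remainder. You correctly observe that the naive bound $\sum_{j\geqslant q-4}\|\widetilde\Delta_j\partial_i v\|_{L^\infty}\|\Delta_j\nabla\rho\|_{L^\infty}\lesssim\sum_{j\geqslant q-4}(j+2)\|v\|_{LL}\|\nabla\rho\|_{L^\infty}$ diverges, but the fix you propose --- absorbing the bad part of the remainder into the parabolic smoothing --- is not what works and is not carried out. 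With $m=1$ the source enters Proposition \ref{Max-reg} with prefactor $(\kappa 2^{2q})^{\frac1m-1}=1$, so there is no spare power of $\kappa 2^{2q}$ to trade against the remainder; any attempt to borrow from the dissipation would introduce negative powers of $\kappa$ and destroy precisely the uniformity in $\kappa\in(0,1)$ that the corollary is meant to deliver.

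The paper's resolution is purely structural and uses no parabolic gain. Since $\Div(\partial_i v)=\partial_i\Div v=0$, each remainder block can be written in divergence form, $\widetilde\Delta_j\partial_i v\cdot\Delta_j\nabla\rho=\Div\big(\Delta_j\rho\;\widetilde\Delta_j\partial_i v\big)$, so that Bernstein's inequality places the extra derivative on the output frequency, giving a factor $2^q$, while the derivative is removed from $\rho$:
\begin{equation*}
\textnormal{II}_3\;\lesssim\;2^q\sum_{j\geqslant q-4}\|\widetilde\Delta_j\nabla v\|_{L^1_tL^\infty}\,\|\Delta_j\rho\|_{L^\infty_tL^\infty}
\;\lesssim\;\|\nabla\rho\|_{L^\infty_tL^\infty}\int_0^t\|v(\tau)\|_{LL}\,d\tau\sum_{j\geqslant q-4}(j+2)\,2^{q-j},
\end{equation*}
using $\|\Delta_j\rho\|_{L^\infty}\lesssim 2^{-j}\|\nabla\rho\|_{L^\infty}$. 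The geometric factor $2^{q-j}$ makes the sum converge with only a $(q+2)$ loss, which is exactly the term already present in \eqref{equa:corollary}. You should replace your remainder argument by this divergence-form manipulation; as written, that step of your proof does not close.
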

\begin{proof}Applying  the partial derivative $\partial_i$ to \eqref{TD-Equa} yields
 \begin{equation}\label{TD-max-1}
\partial_t \partial_i\rho+v\cdot\nabla\partial_i\rho-\kappa\Delta\partial_i\rho=\partial_i v\cdot\nabla\rho.
\end{equation}
Hence, by applying Proposition \ref{Max-reg} with $f\equiv\partial_i\rho$ and  $\mathtt{F}\equiv\partial_i v\cdot\nabla\rho$ yields
\begin{equation}\label{equa:corollary:00}
 \kappa2^{2q} \Vert \Delta_q (\partial_i\rho) \Vert_{L_t^1 L^\infty}\lesssim G(t) \Big(1+\kappa t+(q+2)\int_0^t\Vert v(\tau)\Vert_{LL}d\tau \Big) +\|\Delta_q(\partial_i v\cdot\nabla\rho)\|_{L^1_t L^\infty}.
\end{equation}
It remains to estimate the term $\|\Delta_q(\partial_i v\cdot\nabla\rho)\|_{L^1_t L^\infty}$. By virtue of Bony's decomposition, we write
\begin{eqnarray*}
\|\Delta_q(\partial_i v\cdot\nabla\rho)\|_{L^1_t L^\infty}&\leqslant&\sum_{|j-q|\le4}\|\Delta_q\big(S_{j-1}\partial_i v\cdot\Delta_j \nabla\rho)\|_{L^1_t L^\infty}+\sum_{|j-q|\le4}\|\Delta_q\big(\Delta_j \partial_i v\cdot S_{j-1}\nabla\rho)\|_{L^1_t L^\infty}\\
&&+\sum_{{j\geqslant  q-4}}\|\Delta_q\big(\widetilde{\Delta}_{j}\partial_i v\cdot\Delta_j\nabla\rho\big)\|_{L^1_t L^\infty}\\
&=&\textnormal{II}_1+\textnormal{II}_2+\textnormal{II}_3.
\end{eqnarray*}
We treat each term separately. For $\textnormal{II}_1$, Proposition \ref{C-LL} ensures that
\begin{eqnarray*}
\textnormal{II}_1&\leqslant&\sum_{|j-q|\le4}\|S_{j-1}\partial_i v\|_{L^1_t L^\infty}\|\Delta_j\nabla\rho \|_{L^\infty_t L^\infty} \\
&\leqslant&C(q+2)\|\nabla\rho \|_{L^\infty_t L^\infty} \int_{0}^{t}\|v(\tau)\|_{LL}d\tau.
\end{eqnarray*}
 To estimate  the term $\textnormal{II}_2$, we use again  Proposition \ref{C-LL} leading to
\begin{eqnarray*}
\textnormal{II}_2&\leqslant&\sum_{|j-q|\le4}\|S_{j-1}\nabla\rho\|_{L^\infty_t L^\infty} \|\Delta_j \nabla v \|_{L^1_t L^\infty} \\
&\leqslant &C(q+2)\|\nabla\rho \|_{L^\infty_t L^\infty} \int_{0}^{t}\|v(\tau)\|_{LL}d\tau.
\end{eqnarray*}
Finally, in order to estimate the remainder term $\textnormal{II}_3$, we need to use the divergence-free condition on $v$ to obtain 
\begin{eqnarray*}
\textnormal{II}_3&\leqslant & C 2^q\sum_{j\geqslant  q-4} \| \widetilde\Delta_j \nabla v\|_{L^1_t L^\infty}\|{\Delta}_{j}\rho \|_{L^\infty_t L^\infty} \\
&\leqslant &C\|\nabla\rho \|_{L^\infty_t L^\infty}\int_0^t \|v(\tau)\|_{LL}d\tau \sum_{j\geqslant  q-4} (j+2)2^{q-j}  \\
&\leqslant &C\|\nabla\rho \|_{L^\infty_t L^\infty}\int_0^t \|v(\tau)\|_{LL}d\tau \Big( (q+2)\sum_{j\geqslant  q-4}  2^{q-j} + \sum_{j\geqslant  q-4} (j-q )2^{q-j}  \Big).
\end{eqnarray*}
Now, using the fact that 
$$\sup_{q\in\NN}\sum_{j\geqslant  q-4}  2^{q-j} + \sum_{j\geqslant  q-4} (j-q )2^{q-j} <\infty $$
yields 
\begin{equation*}
\textnormal{II}_3 \leqslant C(q+2)\|\nabla\rho \|_{L^\infty_t L^\infty} \int_{0}^{t}\|v(\tau)\|_{LL}d\tau. 
\end{equation*}
Consequently, plugging the preceding estimates into  \eqref{equa:corollary:00} and using the assumption \eqref{assumption:000} achieves   the proof of Corollary \ref{cor-Max-reg}.   
\end{proof}
\section{Inviscid case}\label{Inviscid-case}
This section is devoted to the study of the inviscid Boussinesq system written in the vorticity-density reformulation as
\begin{equation} \label{eqn:omega}
\left\{ \begin{array}{ll}
  \partial_t \omega + v \cdot \nabla \omega = \partial_1\rho,\quad (t,x)\in \mathbb{R}_+\times \mathbb{R}^2,&\vspace{2mm}\\ 
   \partial_t\rho+v\cdot\nabla \rho=0,&\vspace{2mm}\\
      v= \nabla^\perp \Delta^{-1}\omega,&\vspace{2mm}\\
  (\omega,\rho)|_{t=0}=(\omega_0,\rho_0).
  \end{array}\right.
\end{equation}
We shall state a local well-posedness result for \eqref{eqn:omega} with general initial data covering the result of Theorem \ref{THEO:1:soft}. Thereby, we show first how we deduce the proof of Theorem \ref{THEO:1:soft} from the general statement of Theorem \ref{Th1:general:version} below. Afterward, we shall move to  the proof of the general version. We recall  that all the notation and definitions used in this section are borrowed from Section \ref{Useful-Tec}.

\subsection{Generalized singular patches}
We intend to describe a full statement on the well-posedness issue for \eqref{eqn:omega} with a general framework covering a large class containing in particular    singular patches. Our main result reads as follows.
\begin{theorem}\label{Th1:general:version}
Let $(s,m,a)\in(0,1)^2\times(1,\infty)$ and let $\Sigma_0$ be a compact negligible subset of the plane. Consider a divergence-free vector field  $v_0$ with  vorticity  $\omega_ 0$ in  $ L^{a}\cap L^\infty$ and let $\rho_0$ be a real-valued function in $W^{1,a}(\mathbb{R}^2)\cap W^{1,\infty}(\mathbb{R}^2)\cap C^{1+m}((\Sigma_0) _r) $,\footnote{We recall that the definition of an $h$-enlargement of a set $F$ in $\mathbb{R}^2$ is given by \eqref{distance-h:def}.} for some  $r>0$, such that $\nabla \rho_0$ vanishes on $\Sigma_0.$ Let  $\mathcal{X}_0=(X_{0,\lambda,h})_{(\lambda,h)\in \Lambda\times (0,e^{-1}]}$ be a  family of vector fields of class $C^s$ as well as their divergences and suppose that this family is  $\Sigma_0$-admissible of order $\Xi  = (\alpha,\beta,\gamma )$  such that
 \begin{equation*} 
\sup_{h\in (0,e^{-1}]} h^{\beta }\Vert\omega_0\Vert_{(\Sigma_0)_{h},(\mathcal{X}_ {0,h})}^{s}+ \sup_{h\in (0,e^{-1}]} h^{\beta }\Vert\rho_0\Vert_{(\Sigma_0)_{h},(\mathcal{X}_ {0,h})}^{s+1}<\infty.
\end{equation*}  
 Then, there exists $T>0$ such that the Boussinesq system \eqref{eqn:omega} admits a unique solution
 $$(\omega,\rho)\in L^\infty\big([0,T],L^a\cap L^\infty\big)\times L^\infty\big([0,T], W^{1,a}\cap W^{1,\infty}\big).
 $$  
 In addition, we have
$$
\sup_{h\in(0,e^{-1}]}\frac{\Vert \nabla v(t)\Vert_{L^\infty((\Sigma_t)_h^c)}}{-\log h}\in L^\infty([0,T]),$$
where $\Sigma_t=\Psi(t,\Sigma_0)$ and $\Psi$ is the flow associated to $v$. Moreover, there exists $C_{0,T}>0$, depending only on the initial data and $T$, and there exists a  triplet  $\Xi_1=(\alpha_1,\beta_1,\gamma_1)$ of non-negative numbers, such that 

\begin{equation} \label{T1:persistence of reg:1}
\sup_{h\in (0,e^{-1}]\atop t\in[0,T]} h^{\beta_1}\Vert\omega(t)\Vert_{(\Sigma_t)_{h^{\alpha_1}},(\mathcal{X}_{t,h})}^{s}+ \sup_{h\in (0,e^{-1}]} h^{\beta_1}\Vert\rho(t)\Vert_{(\Sigma_t)_{h^{\alpha_1}},(\mathcal{X}_{t,h}) }^{s+1}    \leqslant  C_{0,T},
\end{equation}  
 where the transported $\mathcal{X} _t$ of $\mathcal{X} _0$ by the flow $\Psi$, defined by 
$$
X_{t,\lambda,h}(\Psi (t,\cdot)) \triangleq \partial_{X_{0,\lambda,h}} \Psi(t, \cdot), 
$$
is $\Sigma_t$--admissible of order $\Xi_1$ and belongs to $L^\infty ([0,T]; C^s )$, for all $(\lambda,h) \in \Lambda\times (0,e^{-1}]$.\\ Furthermore, there exists a continuous non-increasing function $z:[0,T]\longrightarrow (0,m]$ such that for \mbox{any  $p\in [a,\infty]$,} we have
\begin{equation}\label{platitude:propagation}
\forall t\in[0,T],\quad \| (d(\cdot,\Sigma_t))^{-z(t)} \nabla \rho(t,\cdot) \|_{L^p} <\infty,
\end{equation}
\end{theorem}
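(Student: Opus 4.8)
The proof will follow the three-step architecture announced in the introduction: weak $L^p$ a priori estimates for $(\omega,\nabla\rho)$ tied to the propagation of the platitude degree, then the co-normal regularity of the vorticity, and finally the construction and uniqueness of the solution. All the estimates will be derived on a sequence of smooth solutions obtained by mollifying the data and invoking the classical local theory for \eqref{eqn:omega}, so that every manipulation below is licit and the conclusions are recovered by a limiting argument.

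\emph{$L^p$ estimates and platitude.} Since $v$ is divergence-free, the vorticity equation gives at once $\|\omega(t)\|_{L^p}\leqslant\|\omega_0\|_{L^p}+\int_0^t\|\nabla\rho(\tau)\|_{L^p}\,d\tau$ for all $p\in[a,\infty]$, so everything reduces to controlling $\nabla\rho$. Differentiating the density equation yields $(\partial_t+v\cdot\nabla)\nabla\rho=-(\nabla v)^{T}\nabla\rho$, and bounding $|\nabla v(x)|$ by $\|v(t)\|_{L(\Sigma_t)}\ln^+(d(x,\Sigma_t))$ away from the singular set, via \eqref{Lsigma}, gives the scalar inequality
\[
(\partial_t+v\cdot\nabla)|\nabla\rho|^2\leqslant 2\|v(t)\|_{L(\Sigma_t)}\,\ln^+\!\big(d(x,\Sigma_t)\big)\,|\nabla\rho|^2.
\]
I would compare $|\nabla\rho|^2$ with the solution $\Phi$ of the corresponding transport equation with datum $|\nabla\rho_0|^2$ through the maximum principle, solve $\Phi$ along the characteristics of $\Psi$, and transfer $\ln^+(d(\Psi(\tau,\cdot),\Sigma_\tau))$ back to $\ln^+(d(\cdot,\Sigma_0))$ using the set-inclusion estimates of Lemma \ref{s2lem1} and Lemma \ref{elementary propertiy LN 2}. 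As $\rho_0\in C^{1+m}$ with $\nabla\rho_0\equiv 0$ on $\Sigma_0$ forces $|\nabla\rho_0(x)|\lesssim d(x,\Sigma_0)^m$, this produces the pointwise bound $|\nabla\rho(t,x)|\lesssim d(x,\Sigma_t)^{z(t)}$ with
\[
z(t)=m-\int_0^t\|v(\tau)\|_{L(\Sigma_\tau)}\,e^{\int_0^\tau\|v(\tau')\|_{LL}\,d\tau'}\,d\tau,
\]
which is continuous, non-increasing and strictly positive on a short interval $[0,T]$; this is exactly \eqref{platitude:propagation}. Integrating $|\nabla\rho(t)|^p$ along the measure-preserving flow, the near-$\Sigma_t$ contribution is then integrable precisely because $z(t)>0$ while the far contribution is harmless, which closes the uniform bounds $\omega,\nabla\rho\in L^\infty([0,T],L^a\cap L^\infty)$ of Proposition \ref{Proposition inviscid}.

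\emph{Co-normal regularity.} I would transport the admissible family by setting $X_{t,\lambda,h}(\Psi(t,\cdot))\triangleq\partial_{X_{0,\lambda,h}}\Psi(t,\cdot)$, so that $\mathcal{X}_t$ remains $\Sigma_t$-admissible of a degraded order $\Xi_1$ by Proposition \ref{prop:admissible family}, the loss being carried by the $h$-powers $h^{-C\int_0^tW(\tau)d\tau}$ of Proposition \ref{prop:admissible family} and Corollary \ref{coro-transport:es} together with the support dilation $\delta_t(h)$. Because $X_t$ solves \eqref{31}, the operator $\partial_{X_t}$ commutes with the material derivative, yielding $(\partial_t+v\cdot\nabla)\partial_{X_t}\rho=0$ and $(\partial_t+v\cdot\nabla)\partial_{X_t}\omega=\partial_{X_t}\partial_1\rho$. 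The first equation shows that the tangential regularity of $\rho$ is merely transported, so $\|\partial_{X_t}\rho(t)\|_{C^s}$ is controlled through Proposition \ref{prop:transport:es}; writing $\partial_{X_t}\partial_1\rho=\partial_1(\partial_{X_t}\rho)-(\partial_1 X_t)\cdot\nabla\rho$ and invoking Corollary \ref{ppxr0} then bounds the forcing of the second equation in $C^{s-1}$, after which Corollary \ref{coro-transport:es} closes the estimate of $\|\omega\|^{s}_{\Sigma_t,\mathcal{X}_t}$. Feeding this into the logarithmic inequality of Theorem \ref{propoo1} together with Lemma \ref{lem3} gives, through a Gronwall argument coupling back to Step 1, both the $L(\Sigma_t)$ control and the persistence \eqref{T1:persistence of reg:1}; this is the content of the forthcoming Propositions \ref{prop22} and \ref{prop220}.

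\emph{Existence, uniqueness and the main obstacle.} The uniform bounds render the mollified solutions bounded in the resolution space on a common interval $[0,T]$; a compactness argument produces a limit solving \eqref{eqn:omega}, and uniqueness follows from an $L^2$ estimate on the difference of two solutions in the spirit of the Yudovich theory used in \cite{Hassainia-Hmidi}. Since $\partial\Psi(t,\Omega_0)$ is an integral curve of the transported family, the persistence of its co-normal $C^s$ regularity is equivalent to its $C^{1+s}$ regularity away from $\Sigma_t$. I expect the genuine difficulty to lie in Step 1: keeping the platitude degree $z(t)$ strictly positive while simultaneously closing the $L^p$ estimate is exactly where the weakened compatibility assumption $\nabla\rho_0\equiv 0$ on $\Sigma_0$—rather than the constancy of $\rho_0$ near $\Sigma_0$ used in \cite{Hassainia-Hmidi}—must be made to pay off, the logarithmic blow-up of $\nabla v$ near $\Sigma_t$ being absorbed precisely by the degeneracy of $\nabla\rho$.
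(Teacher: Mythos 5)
Your plan reproduces the paper's architecture essentially step for step: the weighted quantity $[\,\cdot\,]^{-\beta(t)}|\nabla\rho|^2$ transported along the flow with the exponent $m(t)=m-\int_0^t\|v(\tau)\|_{L(\Sigma_\tau)}e^{\int_0^\tau\|v\|_{LL}}d\tau$ chosen to kill the logarithmic potential via Lemma \ref{elementary propertiy LN 2} (the paper phrases the weight as $\varphi_0(\Psi^{-1}(t,\cdot))$ rather than $d(\cdot,\Sigma_t)$, which is what makes the transport identity \eqref{phi_t:equa} exact, but the two are interchangeable up to the flow inclusions of Lemma \ref{s2lem1}); then the commutation $[\partial_t+v\cdot\nabla,\partial_{X_t}]=0$, the decomposition $\partial_{X_t}\partial_1\rho=\partial_1\partial_{X_t}\rho-\partial_{\partial_1X_t}\rho$ with Corollary \ref{ppxr0}, and the closure through Theorem \ref{propoo1} and Gronwall, exactly as in Propositions \ref{Proposition inviscid}, \ref{prop22} and \ref{prop220}.

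The one concrete point where your write-up would fail as stated is the opening sentence of the existence step: a standard frequency mollification $S_n\rho_0$ does \emph{not} preserve the compatibility condition $\nabla\rho_0\equiv 0$ on $\Sigma_0$, nor the platitude bound $\sup_{x\notin\Sigma_0}\varphi_0(x)^{-m}|\nabla S_n\rho_0(x)|<\infty$ uniformly in $n$, so the a priori estimates of Step~1 cannot be applied to the approximate solutions. The paper circumvents this with the hybrid smoothing $\widetilde S_n\rho_0=\chi_r\rho_0+(1-\chi_r)S_n\rho_0$, which leaves $\rho_0$ untouched on $(\Sigma_0)_{r/2}$ — and this is precisely where the hypothesis $\rho_0\in C^{1+m}((\Sigma_0)_r)$ is consumed, both to make $\widetilde S_n\rho_0$ admissible for the classical local theory of Chae--Nam and to keep the platitude constant uniform in $n$ (one also needs the commutator bound $\|[\partial_{X_{0,\lambda,h}},S_n]\rho_0\|_{C^s}$ to control the striated norms of the regularized data). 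You should also make explicit that the uniform lower bound on the lifespans $T_n^\star$ comes from the blow-up criterion of Corollary \ref{cor-blowup} combined with the lower bound $\Gamma(T)=m$ of Proposition \ref{prop220}; otherwise "bounded on a common interval" is circular.
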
 
\begin{remark}
{Overall, the main improvement in Theorem \ref{THEO:1:soft} compared to the result of \cite{Hassainia-Hmidi} is the following: The initial density is no longer required to be constant in a neighborhood of the singular set $\Sigma_0$ as assumed in \cite{Hassainia-Hmidi}. This technical assumption is replaced here by a weaker compatibility assumption: $\rho_0\in C^{1+m}((\Sigma_0) _r)$ and $\nabla\rho_0$ should vanish  on the singular set $\Sigma_0$.  It is not at all clear whether the LWP issue  can  be performed without the compatibility assumption and we expect some  instability behavior to occur.}
\end{remark} 

\begin{remark}
In Theorem \ref{Th1:general:version}, we assume that $\rho_0$ is of class $ C^{1+m}$  in a small neighborhood of the singular set $\Sigma_0$.  This assumption can be relaxed for the a priori estimates as stated in  \mbox{Proposition \ref{Proposition inviscid}}.  However, we still need the H\"older regularity in the construction of the solutions by smoothing the initial data that should obey  the compatibility assumption. Notice that the persistence of $C^{1+m}-$regularity is out of reach due to the low regularity of the velocity field.
\end{remark}
Let us now show how to deduce  the  proof for Theorem \ref{THEO:1:soft} from the result of Theorem \ref{Th1:general:version}. 
 
\begin{proof}[Proof of Theorem \ref{THEO:1:soft}]
The idea is the same as in  \cite{Hassainia-Hmidi}. For the convenience of the reader, we briefly  outline  the proof. By assumption, we may write
$$
\partial \Omega_0 = \big\{ x\in V:  f_0(x) =0 \big\}, 
$$
for some $V$,  a neighborhood of $\partial \Omega_0$, and for some function $f_0 $ in $C^{1+s}$ satisfying 
$$
|\nabla f_0(x)| \geqslant C d(x,\Sigma_0)^{\gamma}\quad\hbox{and}\quad \gamma>0. 
$$
Let $(\theta_h)_{h\in(0,e^{-1}]}$ be a family of smooth functions  with $\theta_h\equiv1$ in $(\Sigma_0)_h^{c}$ and supported in $(\Sigma_0)_{h^{\alpha }}^{c}$ \mbox{(where $\alpha>1$)} which satisfies, for any $\widetilde{s}>0$
$$
\|\theta_h\|_{C^{\widetilde{s}}}\leqslant C_{\widetilde{s}} h^{{-\widetilde{s}}}.
$$ 
Given a function $\widetilde{\theta}$ in $C^\infty$, with $\widetilde{\theta}\equiv 1$ in $V$, and setting
$$
X_{0,0,h}=\nabla^\perp(\theta_h f_0),\quad X_{0, 1,h}=\theta_h(1-\widetilde{\theta})\vec e_1.
$$
It is easy to show that $\mathcal{X}_0= (X_{0,j,h})_{(j,h)\in\{0,1\}\times(0,e^{-1}]}$ is of class $C^s$ and $\Sigma_0$--admissible of some order $\Xi =(\alpha ,\beta ,\gamma )$, see, for instance \cite{Hassainia-Hmidi}. Additionally, we can check that 
$$\partial_{X_{0,j,h}} \omega_0 = 0, \quad \forall j\in \{1,2\}, $$
and the assumption  $\rho_0 \in C^{1+s}$ in Theorem \ref{THEO:1:soft} implies $$\partial_{X_{0,j,h}} \rho_0\in  C^{ s}, \quad \forall j\in \{1,2\}.$$
More precisely, we have
$$h^{\beta } \frac{\|\partial_{X_{0,j,h}} \rho_0  \|_{C^{ s}}}{I(\Sigma_h,X_{0,j,h})} \leqslant h^{ \beta }\frac{  \| X_{0,j,h}\|_{C^s}}{I(\Sigma_h,X_{0,j,h})} \|\rho_0 \|_{C^{s + 1}}\leqslant \sup_{h\in (0,e^{-1}]}h^{\beta } N_{s}(\Sigma_h,\mathcal{X}_h)  \|\rho_0 \|_{C^{s + 1}} .$$
 Consequently, we obtain 
 \begin{equation*} 
\sup_{h\in (0,e^{-1}]} h^{\beta }\Vert\omega_0\Vert_{(\Sigma_0)_{h},(\mathcal{X}_ {0,h})}^{s}+ \sup_{h\in (0,e^{-1}]} h^{\beta }\Vert\rho_0\Vert_{(\Sigma_0)_{h},(\mathcal{X}_ {0,h})}^{s+1}    <\infty.
\end{equation*} 
Finally, the assumption $\rho_0\in C^{1+s}(\mathbb{R}^2)$ in Theorem \ref{THEO:1:soft} insures then that $\rho_0\in C^{1+m}((\Sigma_0)_r)$ for the particular choice $m=s.$
Therefore, the hypotheses of Theorem \ref{Th1:general:version} are  satisfied and  the local well-posedness in \mbox{Theorem \ref{THEO:1:soft} follows.} Moreover, Theorem \ref{Th1:general:version} gives 
$$
\sup_{h\in (0,e^{-1}]}\frac{ \|\nabla  v(t) \|_{L^\infty((\Sigma_t)_h^c)}}{-\log h}  \in L ^\infty([0,T])
$$
and  for all $h\in (0,e^{-1}]$,
\begin{equation}\label{R0}
X_{t,0,h}(\Psi (t,\cdot)) \triangleq \partial_{X_{0,0,h}} \Psi(t, \cdot) \in L^\infty ([0,T]; C^s). 
\end{equation}  
Finally, concerning the persistence regularity of the boundary of the initial vortex patch, the arguments are the same as in \cite[Chapter 9]{Chemin}. For the sake of completeness  we  shall recall the main lines of the proof. Take  $ x_0\in \partial  \Omega_0\textbackslash \Sigma_0  $ and consider  $\gamma^0$ as  the $C^{s+1}_{\text{loc}}(\mathbb{R} )$-parametrization of the connected component of $\partial \Omega_0\textbackslash \Sigma_0$ containing $x_0$  
\begin{equation*}
\left\{ \begin{array}{l}
\partial_\tau \gamma^0 (\tau) = X_{0,0,h} (\gamma^0(\tau)),\vspace{2mm}\\ 
\gamma^0(0)= x_0.
\end{array}
\right.
\end{equation*}
Thus  the connected component of $\partial \Omega_t\textbackslash \Sigma_t$ containing $\psi(t,x_0)$ is parametrized by  
$$\gamma_t (\tau) \triangleq \Psi(t,\gamma^0(\tau)) , \quad \forall \tau \in \mathbb{R}$$
which satisfies 
\begin{equation*}
\left\{ \begin{array}{l}
\partial_\tau \gamma_t (\tau) = \partial_{X_{0,0,h}}\Psi (t,\gamma^0(\tau)),\vspace{2mm}\\ 
\gamma_t (0)= \Psi(t,x_0).
\end{array}
\right.
\end{equation*}
Hence, according to \eqref{R0}, we infer that $\gamma_t  \in C^{1+s}_{\text{loc}}(\mathbb{R})$ and $\partial_\tau \gamma_t (\tau)$ is not vanishing provided that $h$ is small enough.
 We finally  deduce  that $\gamma_t$ is a local  parametrization of the connected component of $\Psi(t,\partial \Omega_0 \textbackslash \Sigma_0)$ containing $\Psi(t,x_0) $.
\end{proof}
The rest of this section will be dedicated to the proof of   Theorem \ref{Th1:general:version}.
\subsection{A priori estimates}
Since we are interested in the solutions generated from initial data belonging to Yudovich class, the major issue to prove Theorem \ref{Th1:general:version} arises from the difficulty of the propagation of the $L^p$-norms of $\nabla \rho$  and $\omega$. The key observation to do is the following. The quantity $\nabla \rho$ obeys the forcing  transport equation
\begin{equation}\label{nabla-rho:equa}
(\partial_t + v\cdot\nabla ) \nabla \rho = - \nabla v \cdot \nabla \rho.
\end{equation}
Then the control of the Lipschitz norm of $v$ seems to be relevant  to propagate the $L^p$-norm of $\nabla \rho.$ Unfortunately, this information on  the velocity field  is not guaranteed if the vorticity $\omega$ is only bounded. In \cite{Hassainia-Hmidi}, the authors impose to the initial density to be constant on a small neighborhood of the singular set, and in view of the transport structure, the density $\rho(t)$ remains  constant in a neighborhood of the singular set $\Sigma_t$. This allows to control the source term in \eqref{nabla-rho:equa} and deduce   $L^p$-estimates for $\nabla \rho.$ Our goal here is to relax the compatibility assumption in \cite{Hassainia-Hmidi} and replace it by a platitude assumption.  The precise statement is discussed below and constitutes the cornerstone of the a priori estimates that we shall establish later.
\begin{proposition}\label{Proposition inviscid}
Let $\Sigma_0$ be a compact negligible set in $\mathbb{R}^2$ and $(m,p)\in(0,1) \times [1,\infty)$. Let $\rho_0$ be a scalar function satisfying
\begin{equation}\label{hypothesis1}
\nabla \rho_0 \in L^p,   \text{ and } \quad \sup_{x\notin  \Sigma_0 }{|\varphi_0(x)|}^{-m}|\nabla\rho_0(x)|<\infty,
\end{equation} 
where $\varphi_0$ is given by \eqref{tool:varphi0}. Let $(\omega,\rho)$ be any smooth maximal solution  to \eqref{eqn:omega} defined on $[0,T^\star)$ and let the function $m(t)$ be given by 
\begin{equation}\label{alpha:def}
 m(t)\triangleq m- \int_0^t\|{v}(\tau)\|_{L(\Sigma_\tau)} e^{\int_0^\tau\Vert v(s)\Vert_{LL}ds}d\tau, \quad \forall t\in [0,T^\star).
\end{equation}
There exists $C_0>0$ depending only on $\rho_0$ and $\Sigma_0$ such that  the following holds. 
If $m(T)\geqslant 0$, for some $T<T^\star$, then for all $t\in [0,T]$ and for all $r\in [p,\infty]$, we have that
\begin{equation*}
\|\nabla\rho(t)\|_{L^r}\leqslant   C_0
\end{equation*} 
and
\begin{equation*}
  \|\omega(t)\|_{L^r}\leqslant \|\omega_0\|_{L^r}+C_0t   .
\end{equation*}
Furthermore, let  $\Psi$ be the flow associated to $v$ and set
\begin{equation}\label{phi:t:def}
\varphi_t(x) \triangleq \varphi_0(\Psi^{-1}(t,x)),
\end{equation}   
 then, for all $t\in [0,T]$ and for all $r\in [p,\infty]$, we have
$$\| \varphi ^{-m(t)}_t \nabla\rho(t)\|_{L^r}\leqslant   C_0. $$
\end{proposition}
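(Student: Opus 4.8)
The plan is to argue in Lagrangian variables along the flow $\Psi$, taking advantage of two structural facts: $\Psi(t,\cdot)$ preserves Lebesgue measure (as $v$ is divergence free), and although $\nabla v$ is unbounded the velocity is Lipschitz away from the singular set in the quantitative sense encoded by \eqref{Lsigma}. Writing the forced transport equation \eqref{nabla-rho:equa} along a characteristic $\tau\mapsto\Psi(\tau,y)$ and applying Gr\"onwall's inequality gives the pointwise bound
\[
|\nabla\rho(t,\Psi(t,y))|\leqslant |\nabla\rho_0(y)|\exp\Big(\int_0^t|\nabla v(\tau,\Psi(\tau,y))|\,d\tau\Big),\qquad y\notin\Sigma_0.
\]
Everything reduces to controlling the exponential factor, and this is where the definition \eqref{alpha:def} of $m(t)$ enters in a tailored way.

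The key step is the estimate of $\int_0^t|\nabla v(\tau,\Psi(\tau,y))|\,d\tau$. Fix $y\notin\Sigma_0$ and set $h=\varphi_0(y)=\min\{d(y,\Sigma_0),e^{-1}\}$. By Lemma \ref{s2lem1} the trajectory stays away from the moving singular set, $\Psi(\tau,y)\in(\Sigma_\tau)^c_{\delta_\tau(h)}$ with $\delta_\tau(h)=h^{\exp\int_0^\tau\|v(s)\|_{LL}ds}$, so the definition \eqref{Lsigma} of $\|v(\tau)\|_{L(\Sigma_\tau)}$ yields
\[
|\nabla v(\tau,\Psi(\tau,y))|\leqslant \|\nabla v(\tau)\|_{L^\infty((\Sigma_\tau)^c_{\delta_\tau(h)})}\leqslant -\log\big(\delta_\tau(h)\big)\,\|v(\tau)\|_{L(\Sigma_\tau)}=-\log h\; e^{\int_0^\tau\|v(s)\|_{LL}ds}\,\|v(\tau)\|_{L(\Sigma_\tau)}.
\]
Integrating in time and recognising the integral in \eqref{alpha:def}, the right-hand side integrates to exactly $-\log h\,(m-m(t))$, whence $\exp(\int_0^t|\nabla v(\tau,\Psi(\tau,y))|d\tau)\leqslant\varphi_0(y)^{-(m-m(t))}$. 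Combining this with the platitude hypothesis \eqref{hypothesis1}, i.e. $|\nabla\rho_0(y)|\leqslant C_0\,\varphi_0(y)^m$, and using that $m(t)$ is nonincreasing with $m(t)\geqslant m(T)\geqslant 0$ so that all exponents have the right sign, I obtain the clean pointwise estimates
\[
|\nabla\rho(t,\Psi(t,y))|\leqslant C_0\,\varphi_0(y)^{m(t)},\qquad \big|\varphi_t(\Psi(t,y))^{-m(t)}\nabla\rho(t,\Psi(t,y))\big|\leqslant C_0,
\]
where the second one uses $\varphi_t(\Psi(t,y))=\varphi_0(y)$ from \eqref{phi:t:def}.

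It then remains to transfer these bounds to Eulerian coordinates. The second estimate, read at $x=\Psi(t,y)$, is the $r=\infty$ case of the weighted bound, and since $\varphi_t\leqslant e^{-1}$ and $m(t)\geqslant0$ it also gives $\|\nabla\rho(t)\|_{L^\infty}\leqslant C_0$. For finite $r\in[p,\infty)$ I would raise the pointwise bounds to the power $r$, integrate in $y$, and invoke measure preservation of $\Psi(t,\cdot)$; splitting $\mathbb{R}^2$ into the bounded near region $(\Sigma_0)_{e^{-1}}$ — where the powers of $\varphi_0$ cancel and leave a constant integrand over a set of finite measure — and the far region — where $\varphi_0\equiv e^{-1}$ makes the weight harmless and one is left with $\int|\nabla\rho_0|^r$ — yields both $\|\nabla\rho(t)\|_{L^r}\leqslant C_0$ and $\|\varphi_t^{-m(t)}\nabla\rho(t)\|_{L^r}\leqslant C_0$. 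Here I use that \eqref{hypothesis1} forces $\nabla\rho_0\in L^\infty$, so that $\nabla\rho_0\in L^r$ for every $r\in[p,\infty]$ by interpolation with $\nabla\rho_0\in L^p$. Finally, the vorticity bound is immediate: integrating the first equation of \eqref{eqn:omega} along characteristics and using measure preservation gives $\|\omega(t)\|_{L^r}\leqslant\|\omega_0\|_{L^r}+\int_0^t\|\partial_1\rho(\tau)\|_{L^r}\,d\tau\leqslant\|\omega_0\|_{L^r}+C_0t$ by the estimate on $\nabla\rho$ just proved.

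The main obstacle is exactly the control of $\int_0^t|\nabla v(\tau,\Psi(\tau,y))|\,d\tau$: because $v$ is only log-Lipschitz, no Eulerian energy or Gr\"onwall argument applied directly to \eqref{nabla-rho:equa} can close, and one is forced to follow individual trajectories and keep track of how $\Psi$ distorts the distance to the evolving set $\Sigma_\tau$. The delicate point — and the conceptual novelty over \cite{Hassainia-Hmidi} — is that the logarithmic blow-up of $|\nabla v|$ near $\Sigma_\tau$ integrates precisely to $-\log h\,(m-m(t))$, a growth that the defining integral of the time-dependent platitude degree $m(t)$ in \eqref{alpha:def} is designed to absorb, making the weight $\varphi_t^{-m(t)}$ and the initial flatness $\varphi_0^{m}$ cancel exactly. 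The whole argument is contingent on $m(t)$ staying nonnegative, which is the hypothesis $m(T)\geqslant0$ and accounts for the short-time nature of the statement.
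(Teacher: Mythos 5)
Your proof is correct and is essentially the paper's argument in Lagrangian clothing: the paper packages the same computation as the Eulerian statement that $\varrho=\varphi_t^{-2m(t)}|\nabla\rho|^2$ is a subsolution of the transport equation (via Lemma \ref{s2lem1} and the choice of $m(t)$ killing the logarithmic potential), whereas you integrate $|\nabla v|$ directly along each characteristic and observe the same exact cancellation $-\log h\,(m-m(t))$. The key lemma, the role of $m(t)$, and the transfer to $L^r$ via measure preservation all coincide with the paper's proof.
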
 
\begin{proof}

Let $\beta$ be the function given by $\beta(t)= 2 m(t)$ and introduce
$$
\varrho(t,x)=\big[\varphi_t(x)\big]^{-\beta(t)}|\nabla\rho(t,x)|^2,$$
where $\varphi_t$ is given by \eqref{phi:t:def}. One can check that
\begin{equation}\label{phi_t:equa}
\big(\partial_t+ v\cdot\nabla \big)\big[\varphi_t(x)\big]^{-\beta(t)}=-\beta^\prime(t)\log\big(\varphi_t(x)\big)\big[\varphi_t(x)\big]^{-\beta(t)}. 
\end{equation} 
We notice that from the definition of $L(\Sigma_t)$ stated in \eqref{Lsigma},  one deduces
\begin{equation*}
|\nabla {v}(t,x)|\leqslant \|{v}(t)\|_{L(\Sigma_t)} \ln^+(d(x,\Sigma_t)), \quad \forall (t,x)\in [0,T] \times \mathbb{R}^2 .
\end{equation*}
On the other hand, from the equation of $\rho$, we have
$$
 \big(\partial_t+ {v}\cdot\nabla \big)(\partial_i\rho)^2=-2(\partial_i{v})\cdot\nabla\rho\, \partial_i\rho.
$$
Thus, we may write\footnote{We recall that the modulus of a  $2$d vector  is given by $|v|= \sqrt{v_1^ 2+v_2^2}.$}
\begin{align*}
 \big(\partial_t+ {v}\cdot\nabla \big)|\nabla\rho|^2&=-2\sum_{i=1}^2(\partial_i{v})\cdot\nabla\rho\, \partial_i\rho\\
 &\leqslant 2|\nabla {v}||\nabla \rho|^2.
\end{align*}
Hence, $|\nabla\rho|^2 $ satisfies the following system on $\theta$
\begin{equation}\label{equa-theta}
\left\{ \begin{array}{l}
\big(\partial_t+ {v}\cdot\nabla \big)\theta \leqslant 2 \|{v}(t)\|_{L(\Sigma_t)} \ln^+(d(x,\Sigma_t)) \theta \vspace{2mm}\\
\theta_{|_{t=0}}= |\nabla\rho_0|^2.
\end{array}\right.
\end{equation}
Therefore, combining \eqref{phi_t:equa} and \eqref{equa-theta} for $\theta= |\nabla \rho|^2$, gives
\begin{align}\label{F-I1-1}
 \big(\partial_t+ {v}\cdot\nabla \big)\varrho &\leqslant \big(-\beta^\prime(t)\ln\big(\varphi_t(x)\big)+2 \|{v}(t)\|_{L(\Sigma_t)} \ln^+(d(x,\Sigma_t)\big)\varrho.
\end{align}
Since we have, from \eqref{phi:t:def} and \eqref{log+}, that
$$  \ln\big(\varphi_t(x)\big)=-\ln^+\big[d\big(\Psi^{-1}(t,x),\Sigma_0\big)\big],
$$
then, inserting this identity into \eqref{F-I1-1} yields
\begin{align}\label{F-I2}
 \big(\partial_t+ {v}\cdot\nabla \big)\varrho &\leqslant \Big( \beta^\prime(t)\ln^+\big[d\big(\Psi^{-1}(t,x),\Sigma_0\big)\big]+2 \|{v}(t)\|_{L(\Sigma_t)} \ln^+(d(x,\Sigma_t))\Big)\varrho .
\end{align}
Using the inequality of Lemma \ref{elementary propertiy LN 2} we infer
\begin{align}\label{F-I3}
 \big(\partial_t+ {v}\cdot\nabla \big)\varrho &\leqslant \ln^+\big[d\big(\Psi^{-1}(t,x),\Sigma_0\big)\big] \Big(\beta^\prime(t)+2 \|{v}(t)\|_{L(\Sigma_t)} e^{\int_0^t\Vert v(\tau)\Vert_{LL}d\tau}\Big)\varrho .
\end{align}
At this stage  we shall impose the conditions  
$$
\beta^\prime(t)+2 \|{v}(t)\|_{L(\Sigma_t)} e^{\int_0^t\Vert v(\tau)\Vert_{LL}d\tau}\leqslant 0\quad\hbox{with}\quad \beta(0)= 2m,
$$
which are satisfied with the choice
$$
\beta(t)=2m-2 \int_0^t\|{v}(\tau)\|_{L(\Sigma_\tau)} e^{\int_0^\tau\Vert v(s)\Vert_{LL}ds}d\tau,
$$
and equivalently
$$m(t)=m- \int_0^t\|{v}(\tau)\|_{L(\Sigma_\tau)} e^{\int_0^\tau\Vert v(s)\Vert_{LL}ds}d\tau .$$
With this choice of $m(t)$, we obtain 
\begin{equation}\label{inequa:varrho}
\big(\partial_t+ {v}\cdot\nabla \big)\varrho \leqslant0,
\end{equation}
and consequently, we get for all $x\in \Psi(t,\Sigma_0^c)$
\begin{equation}\label{L:infty:1st:es}
{[\varphi_t(x)]}^{-2m(t)}|\nabla\rho(t,x)|^2\leqslant \sup_{y\notin  \Sigma_0 }{[\varphi_0(y)]}^{-2m}|\nabla\rho_0(y)|^2\triangleq (C_\infty)^2. 
\end{equation}
Therefore, the fact that $\Sigma_0$ is a negligible set implies that $\Sigma_0^c$ is dense in $\mathbb{R}^2$. Hence, the fact that   the flow $\Psi$ is a homeomorphism yields that $ \Psi(t,\Sigma_0^c)$ is also dense in $\mathbb{R}^2$. It follows that \eqref{L:infty:1st:es} holds for all $x\in \mathbb{R}^2.$ Thereafter, we find that 
$$
|\nabla\rho(t,x)|^2\leqslant  C_\infty ^2{[\varphi_t(x)]}^{2m(t)}, \quad \forall x\in \mathbb{R}^2.
$$
This implies in particular that
$$
m(t)\geqslant 0\Longrightarrow \|\nabla\rho(t)\|_{L^\infty}\leqslant C_\infty e^{-m(t) }\leqslant C_\infty.
$$
Similarly, we obtain from \eqref{inequa:varrho}
\begin{equation}\label{P:es:INV:1}
\|(\varphi_t(\cdot )\big)^{ -m(t)} \nabla \rho(t,\cdot) \|_{L^p } \leqslant C_{p}, 
\end{equation}
where  
$$
C_p \triangleq \|  \varphi_0 ^{-m}  \nabla\rho_0   \|_{L^p}.
$$
Remark that $C_p  $ is finite due to the assumption \eqref{hypothesis1} and the fact that $\Sigma_0$ is compact. Indeed, we can write in view of notation \eqref{distance-h:def}
\begin{eqnarray}\label{ppp}
\int_{\mathbb{R}^2} \left| \varphi_0(x) ^{-m}  \nabla\rho_0 (x)  \right|^p dx &= & \int_{(\Sigma_0)_{e^{-1}}} \varphi_0(x) ^{-m}  \nabla\rho_0 (x) dx + \|\nabla \rho_0 \|_{L^p((\Sigma_0)_{e^{-1}}^c)}^p \nonumber\\
& \leqslant & \left|(\Sigma_0)_{e^{-1}} \right| C_\infty + \|\nabla \rho_0 \|_{L^p }^p.
\end{eqnarray}
On the other hand, we have 
$$
m(t)\geqslant 0 \Longrightarrow |\nabla \rho(t,\cdot)|=  |\nabla \rho(t,x)| [\varphi_t(x )\big]^{- m(t)} [\varphi_t(x )\big]^{ m(t)}\leqslant |\nabla \rho(t,x)| [\varphi_t(x )\big]^{-m(t)}.   
$$
Therefore, we obtain for any $r\in [p,\infty]$
\begin{equation}\label{P:es:INV:2}
m(t)\geqslant 0 \Longrightarrow \|\nabla \rho(t,\cdot) \|_{L^r}\leqslant     C_r. 
\end{equation}  
Using this estimate together with  the vorticity equation in \eqref{eqn:omega} give  for all $r\in [p,\infty]$
\begin{align*}
 \|\omega(t)\|_{L^r}&\leqslant \|\omega_0\|_{L^r}+\int_0^t \|\nabla\rho(\tau)\|_{L^r}d\tau\\
 &\leqslant  \|\omega_0\|_{L^r}+C_r t,
\end{align*}  
This achieves  the proof of Proposition \ref{Proposition inviscid}.
\end{proof}
Now we shall discuss the following corollary about a useful blow up criterion.
\begin{coro}\label{cor-blowup}
Let $\epsilon\in(0,1)$ and $(v,\rho)\in C\big([0,T^\star),C^{1+\epsilon}(\RR^2)\big)$ be a maximal solution to \eqref{eqn:omega} satisfying the assumptions of Proposition \ref{Proposition inviscid}. Then
$$
T^\star<\infty\Longrightarrow \lim_{t\to T^\star} m(t)< 0,
$$
where the function  $m$ is defined in \eqref{alpha:def}.
\end{coro}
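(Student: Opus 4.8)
The plan is to argue by contraposition, combining the monotonicity of $m$, the a priori bounds of Proposition \ref{Proposition inviscid}, and the Chae--Nam blow-up criterion recalled in the introduction. First I would observe that the integrand in \eqref{alpha:def} is non-negative, being the product of the two non-negative quantities $\|v(\tau)\|_{L(\Sigma_\tau)}$ and $e^{\int_0^\tau\|v(s)\|_{LL}ds}$; hence $t\mapsto m(t)$ is non-increasing on $[0,T^\star)$. In particular the limit $\lim_{t\to T^\star}m(t)$ exists in $[-\infty,m]$, and to prove the corollary it suffices to exclude the case where this limit is $\geqslant 0$.

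Assume then, towards a contradiction, that $T^\star<\infty$ and $\lim_{t\to T^\star}m(t)\geqslant 0$. By the monotonicity just established, this forces $m(t)\geqslant 0$ for every $t\in[0,T^\star)$. Fixing any $T<T^\star$, the hypothesis $m(T)\geqslant 0$ places us precisely in the regime covered by Proposition \ref{Proposition inviscid}, which yields
$$
\|\nabla\rho(t)\|_{L^\infty}\leqslant C_0,\qquad \forall\,t\in[0,T],
$$
where $C_0$ depends only on $\rho_0$ and $\Sigma_0$, and is therefore independent of $T$. Letting $T\uparrow T^\star$, we obtain the uniform bound $\sup_{t\in[0,T^\star)}\|\nabla\rho(t)\|_{L^\infty}\leqslant C_0$.

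The final step is to integrate this bound in time and invoke the blow-up criterion. Since $T^\star<\infty$, the uniform bound gives
$$
\int_0^{T^\star}\|\nabla\rho(\tau)\|_{L^\infty}\,d\tau\leqslant C_0\,T^\star<\infty,
$$
which directly contradicts the Chae--Nam criterion $T^\star<\infty\Rightarrow\int_0^{T^\star}\|\nabla\rho(\tau)\|_{L^\infty}d\tau=\infty$, valid for the $C^{1+\epsilon}$ solution $(v,\rho)$ under consideration. This contradiction forces $\lim_{t\to T^\star}m(t)<0$, which is exactly the assertion of the corollary.

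I expect the only delicate point to be the bookkeeping in the second step: one must verify that the constant $C_0$ furnished by Proposition \ref{Proposition inviscid} is genuinely uniform in $T$ (it is, depending only on the data $\rho_0$ and $\Sigma_0$), so that the passage $T\uparrow T^\star$ produces a single bound on all of $[0,T^\star)$ rather than a family of $T$-dependent bounds that could degenerate as $T\to T^\star$. Once this uniformity is in hand, the remainder is a routine application of the a priori estimates together with the cited blow-up criterion.
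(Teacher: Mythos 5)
Your argument is correct and follows essentially the same route as the paper's proof: monotonicity of $m$, the a priori bound $\|\nabla\rho(t)\|_{L^\infty}\leqslant C_0$ from Proposition \ref{Proposition inviscid} when $m$ stays non-negative, and a contradiction with the Chae--Nam criterion. Your remark on the uniformity of $C_0$ in $T$ is the right point to check, and it holds exactly as you say.
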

\begin{proof}
Assume that $T^\star<\infty$, then according to the blow up criterion proved in \cite{Chae-Kim-Nam-2}, we have
\begin{equation}\label{bcbb}
\int_0^{T^{\star}}\Vert \nabla \rho (\tau)\Vert_{L^\infty}d\tau= \infty.
\end{equation}
Now since $t\in[0,T^\star)\mapsto m(t)$ is non-increasing then $ \displaystyle{\lim_{t\to T^\star} m(t)\triangleq \ell}$ exists and belongs to $[-\infty,\infty)$. At this level we distinguish two cases. The first one  is when $\ell<0$ and in this case the result is proved. However in the second case $\ell\geqslant 0$ we get by the monotonicity  that
$$
\forall t\in[0,T^\star),\quad m(t)\geqslant 0.
$$
Applying Proposition \ref{Proposition inviscid} we find
\begin{equation*}
\|\nabla\rho(t)\|_{L^\infty}\leqslant   C_0
\end{equation*} 
which implies that
\begin{equation*}
\int_0^t\|\nabla\rho(\tau)\|_{L^\infty}d\tau\leqslant   C_0 t\leqslant C_0T^\star.
\end{equation*}
This contradicts \eqref{bcbb}.
\end{proof}
\hspace{0.5cm} In the sequel, we shall explore some applications of  Proposition \ref{Proposition inviscid}. The first one deals with the persistence of  anisotropic regularity of $\omega$ and $\rho$.

\begin{proposition}\label{prop22}
Let $(s,a)\in(0,1)\times [1,\infty)$  and  $\Sigma_0$ be a compact negligible set of the plane. Let $X_0$ be a vector field of class $C^s$ as well as its divergence and whose support  is embedded in $(\Sigma_0)_h^c$, for some $h\in (0,e^{-1}]$. Let $\rho_0$ be a scalar function satisfying the assumptions  of Proposition \ref{Proposition inviscid}. Consider   a smooth maximal solution $(\omega,\rho)$  of the system (\ref{eqn:omega}) defined on $[0,T^\star)$, with the  initial data $(\omega_0,\rho_0)$ and denote by $X_t$ the solution of the transport equation
\begin{equation*}
\left\{\begin{array}{ll}
\big(\partial_t +v\cdot\nabla\big)X_t=\pxt v. &\vspace{2mm}\\
X_{| t=0}=X_{0}.
\end{array} \right.
\end{equation*} 
Then, the following assertions hold
\begin{equation*}
\supp X_t\subset \big(\Sigma_t\big)_{\delta_t(h)}^c
\end{equation*}
and
\begin{equation*}
\Vert \Div X_t\Vert_{s}\leqslant \Vert \Div X_{0}\Vert_{s}h^{-C\int_0^t W(\tau)d\tau}, \quad t\in[0,T^\star),
\end{equation*}
where, $\delta_t(h)$ is defined in \eqref{def-delta}.
Furthermore, let $T\in(0,T^\star)$ such that $m(T)\geqslant 0$,  
with  $m$ being the function defined in \eqref{alpha:def}.
Then we get  for all $t\in [0,T]$, 
\begin{eqnarray*}
\widetilde{\Vert} X_t\Vert_{s} +\Vert\pxt \omega(t)\Vert_{s-1} &\leqslant & C e^{C_0t }\Big(\widetilde{\Vert} X_0\Vert_{ s}+\Vert \pxz\omega_0\Vert_{s-1}+\Vert \pxz\rho_0\Vert_{s}\Big)h^{-C\int_0^t W(\tau)d\tau}\notag,
\end{eqnarray*}
\begin{equation*}
\Vert \pxt \rho(t)\Vert_{s}\leqslant \Vert \pxz \rho_{0}\Vert_{ s}h^{-C\int_0^t W(\tau)d\tau},
\end{equation*}
where $C>0$ is a  constant independent of the initial data and $W(t)$ is given by \eqref{W:def}.
\end{proposition}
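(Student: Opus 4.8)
The plan is to follow the transported-vector-field machinery for singular vortex patches developed by Chemin \cite{Chemin} and \cite{Hassainia-Hmidi}, the whole scheme resting on the commutation between the material derivative $D_t\triangleq\partial_t+v\cdot\nabla$ and the directional derivative along a transported field. Since $X_t$ solves $D_t X_t=\pxt v$, a direct computation gives, for any smooth $f$,
$$
D_t(\pxt f)=\pxt (D_t f),
$$
because the contribution $(D_t X_t)\cdot\nabla f=(\pxt v)\cdot\nabla f$ cancels exactly against the term produced by commuting $\nabla$ with $D_t$ (modulo the divergence terms of the weak definition of $\partial_X$). Applying this with $f=\rho$, for which $D_t\rho=0$, and with $f=\omega$, for which $D_t\omega=\partial_1\rho$, yields the two evolution laws
$$
D_t(\pxt\rho)=0,\qquad D_t(\pxt\omega)=\pxt\partial_1\rho,
$$
which are the transport equations I will feed into Proposition \ref{prop:transport:es}.

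The support inclusion $\supp X_t\subset(\Sigma_t)_{\delta_t(h)}^c$ and the divergence bound are immediate: using $\Div v=0$ one checks that $\Div X_t$ is purely transported, $D_t(\Div X_t)=0$, so both statements are precisely those recorded in Corollary \ref{coro-transport:es} (support propagation and \eqref{div:1}) specialized to $p=\infty$, and I would simply quote them. Likewise, since $\pxt\rho$ solves a homogeneous transport equation and is supported in $(\Sigma_t)_{\delta_t(h)}^c$, Proposition \ref{prop:transport:es} applied with $g\equiv0$ and the constant exponent $s\in(0,1)$ gives at once
$$
\Vert\pxt\rho(t)\Vert_s\leqslant\Vert\pxz\rho_0\Vert_s\,h^{-C\int_0^t W(\tau)d\tau},
$$
which is the second claimed estimate.

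The heart of the matter is the coupled bound for $\widetilde{\Vert} X_t\Vert_s$ and $\Vert\pxt\omega\Vert_{s-1}$. First I split the vorticity source as $\pxt\partial_1\rho=\partial_1(\pxt\rho)-(\partial_1 X_t)\cdot\nabla\rho$: the first piece is controlled in $C^{s-1}$ by $\Vert\pxt\rho\Vert_s$, hence by the previous step, while the second is handled by Corollary \ref{ppxr0}, giving $\Vert(\partial_1 X_t)\cdot\nabla\rho\Vert_{s-1}\lesssim\Vert\nabla\rho\Vert_{L^\infty}\,\widetilde{\Vert} X_t\Vert_s$, where crucially $\Vert\nabla\rho(t)\Vert_{L^\infty}\leqslant C_0$ on $[0,T]$ thanks to the hypothesis $m(T)\geqslant0$ and Proposition \ref{Proposition inviscid}. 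Feeding this whole source as the $g_1$-term into Proposition \ref{prop:transport:es} for $\pxt\omega$, and combining with estimate \eqref{38-E} of Corollary \ref{coro-transport:es} for $X_t$, whose source already involves $\Vert\pxt\omega\Vert_{s-1}$, produces, after multiplying through by $h^{C\int_0^t W}$ to absorb the singular prefactors, a closed pair of Gronwall-type integral inequalities for the renormalized quantities $h^{C\int_0^t W}\widetilde{\Vert} X_t\Vert_s$ and $h^{C\int_0^t W}\Vert\pxt\omega\Vert_{s-1}$, with forcing $\widetilde{\Vert} X_0\Vert_s+\Vert\pxz\omega_0\Vert_{s-1}+t\,\Vert\pxz\rho_0\Vert_s$ and coupling constant governed by $C_0$. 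Grönwall's lemma then yields the announced $Ce^{C_0 t}$ bound. I expect the main obstacle to be exactly this closing step: one must verify that all powers of $h$ recombine so that no negative power of $h$ survives beyond the single factor $h^{-C\int_0^t W}$, and that the only information required from the density side is the clean $L^\infty$ control of $\nabla\rho$ together with the already-propagated $\Vert\pxt\rho\Vert_s$, so that the $X_t$–$\pxt\omega$ coupling does not feed back uncontrollably.
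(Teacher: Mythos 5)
Your proposal is correct and follows essentially the same route as the paper: support and divergence via Corollary \ref{coro-transport:es}, the commutation $[\partial_t+v\cdot\nabla,\partial_{X_t}]=0$, the splitting $\partial_{X_t}\partial_1\rho=\partial_1\partial_{X_t}\rho-\partial_{\partial_1 X_t}\rho$ estimated through Corollary \ref{ppxr0}, the $L^\infty$ control of $\nabla\rho$ from Proposition \ref{Proposition inviscid} under $m(T)\geqslant 0$, and a Gronwall argument on the $h^{C\int_0^t W}$-renormalized quantities. No substantive difference from the paper's argument.
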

\begin{proof}
The results concerning the support of  $X_t$  were already proved in Corollary \ref{coro-transport:es}. Moreover, by applying Corollary \ref{coro-transport:es} for $p=\infty$ it happens 
\begin{eqnarray}\label{a-38-E*****}
\widetilde{\Vert} X_t\Vert_{s}&\leqslant & h^{-C\int_{0}^{t}W(\tau)d\tau}\Big(\widetilde{\Vert} X_0\Vert_{s} +C\int_0^t h^{C\int_{0}^{\tau}W(\tau')d\tau'}\|\partial_{ X_{\tau}}\omega(\tau)\|_{s-1}d\tau\Big),
\end{eqnarray}
Now, we focus on the estimate  of  $\partial_{X_t} \omega.$ Since $\partial_{X_t}$ commutes with the transport, that is
\begin{equation}\label{commutator-DIFF}
 [\partial_t+v\cdot\nabla , \partial_{X_t}] =0,
\end{equation} 
then we deduce from  the vorticity  equation that
\begin{equation*}
\big(\partial_t +v\cdot\nabla\big)\partial_{X_{t}}\omega=\partial_{X_{t}}\partial_1\rho.
\end{equation*}
Applying  Proposition \ref{prop:transport:es} allows to find 
\begin{eqnarray*}
\Vert \partial_{X_{t}}\omega(t)\Vert_{ s-1}&\lesssim &\Vert \partial_{X_{0}}\omega_0\Vert_{ s-1}h^{-C\int_0^t W(\tau)d\tau}+\int_0^t\Vert  \partial_{X_{\tau}}\partial_1\rho(\tau)\Vert_{s-1}h^{-C\int_\tau^t W(\tau')d\tau'}d\tau\notag\\ &\lesssim & \Vert \partial_{X_{0}}\omega_0\Vert_{ s-1}h^{-C\int_0^t W(\tau)d\tau}+\int_0^t\Vert  \partial_{X_{\tau}}\rho(\tau)\Vert_{s}h^{-C\int_\tau^t W(\tau')d\tau'}d\tau\\
&+&\int_0^t\Vert\nabla\rho(\tau)\Vert_{L^\infty}\widetilde{\Vert} X_{\tau}\Vert_{s}h^{-C\int_\tau^t W(\tau')d\tau'}d\tau,
\end{eqnarray*}
where, we have used  the following inequalities   
\begin{eqnarray}\label{pp5}
\Vert\partial_{X_{\tau }}\partial_1\rho(\tau)\Vert_{s-1}&\lesssim & \Vert \partial_{X_{\tau }}\rho(\tau)\Vert_s+\Vert(\partial_{1}X_{\tau })\cdot\nabla\rho(\tau)\Vert_{s-1}\notag \\  &\lesssim &\Vert \partial_{X_{\tau }}\rho(\tau)\Vert_s+\Vert\nabla\rho(\tau) \Vert_{ L^{\infty}}\widetilde{\Vert} X_{\tau }\Vert_{ s}
\end{eqnarray} 
and the estimate of the last term follows from  Corollary \ref{ppxr0}. Now, due to \eqref{commutator-DIFF}, we have that
 $$
 (\partial_t+v\cdot\nabla )\partial_{X_{t}}\rho(t)=0.
 $$ 
It is easy to check that  $\partial_{X_{0}}\rho_0$ is supported  in $(\Sigma_0)_{h}^c$ since $\text{supp } X_{0} \subset (\Sigma_0)_{h}^c$. Thus, Proposition \ref{prop:transport:es} gives 
$$
\Vert  \partial_{X_{\tau}}\rho(\tau)\Vert_{s}\leqslant \Vert  \partial_{X_0}\rho_0\Vert_{s}\,h^{-C\int_0^\tau W(\tau')d\tau'}.
$$
 Hence, we obtain 
\begin{eqnarray*}
\Vert \partial_{X_{t}}\omega(t)\Vert_{ s-1}&\lesssim& \Big(\Vert \partial_{X_{0}}\omega_0\Vert_{ s-1}+\Vert  \partial_{X_{0}}\rho_0\Vert_{s}t\Big)h^{-C\int_0^t W(\tau)d\tau}\\
&+&\int_0^t\Vert\nabla\rho(\tau) \Vert_{L^\infty}\widetilde{\Vert} X_{\tau}\Vert_{s}h^{-C\int_\tau^t W(\tau')d\tau'}d\tau.
\end{eqnarray*}
Putting together the preceding estimate with \eqref{a-38-E*****} we find that
\begin{equation*}
\zeta(t)\lesssim  \zeta(0)+\Vert \partial_{X_{0}}\rho_0\Vert_{ s}t+\int_0^t\Big(\Vert \nabla\rho(\tau)\Vert_{L^\infty}+1\Big)\zeta(\tau)d\tau,
\end{equation*}
where  $$\zeta(t)\triangleq\Big(\Vert \partial_{X_{t}}\omega(t)\Vert_{s-1}+\widetilde{\Vert} X_{t}\Vert_{s}\Big)h^{C\int_0^t W(\tau)d\tau}.$$ 
Therefore, Gronwall's lemma ensures that
\begin{equation}\label{gam}
\zeta(t)\leqslant \big(\zeta(0)+\Vert  \partial_{X_{0}}\rho_0\Vert_{ s}\big)e^{C\int_0^t\Vert \nabla \rho(\tau)\Vert_{L^\infty}d\tau}e^{Ct} 
\end{equation}
and applying  Proposition \ref{Proposition inviscid} completes the proof.
\end{proof}
\hspace{0.5cm}Now, we are in a position to control the Lipschitz norm of the velocity outside the transported set of $\Sigma_0$ by the flow. In addition to that, we shall get  a lower bound of the function $m(t)$  in terms of the initial data. This provides an a priori bound  on the existence time of  the solutions to the system \eqref{eqn:omega}.  
\begin{proposition}\label{prop220}
Let $(s,a)\in(0,1)\times [1,\infty)$, $\Sigma_0$ be a compact negligible set of the plane and $\mathcal{X}_0=\big(X_{0,\lambda,h}\big)_{\lambda\in\Lambda,h\in(0,e^{-1}]}$ be a family of vector fields  which is   $\Sigma_0$-admissible of order $\Xi =(\alpha ,\beta ,\gamma )$. Let  $(\omega,\rho)$ be a smooth maximal solution of the  \mbox{system $(\ref{eqn:omega})$} defined  on a  time interval  $[0,T^\star)$. We assume that   $\omega_0  \in L^a \cap L^\infty$, and that $\rho_0$  satisfies the assumptions in Proposition \ref{Proposition inviscid}. 
Assume in addition  that
  \begin{equation*} 
\sup_{h\in (0,e^{-1}]} h^{\beta }\Vert\omega_0\Vert_{(\Sigma_0)_{h},(\mathcal{X}_ {0,h})}^{s}+ \sup_{h\in (0,e^{-1}]} h^{\beta }\Vert\rho_0\Vert_{(\Sigma_0)_{h},(\mathcal{X}_ {0,h})}^{s+1}    <\infty.
\end{equation*}  
Then, there exists $T\in(0,T^\star) $ such that the estimates for $\rho$ and $\omega$ in Proposition \ref{Proposition inviscid} hold on $[0,T]$ and 
$$
\sup_{t\in[0,T]}\Vert   v(t)\Vert_{L(\Sigma_t)}\leqslant C_0.
$$
Notice that $T$ is defined as the unique solution to $\Gamma(T)= m$, where
\begin{equation}\label{Gamma:def***}
\Gamma(t) \triangleq t e^{ \exp{(C_0(1+t)^2)}}
\end{equation}
and $C_0>0$ is a constant depending only on the initial data and $\Sigma_0.$\\
Furthermore, there exist some continuous functions  $\Xi(t)=(\alpha(t),\beta(t),\gamma(t))$ such that, 
 \begin{equation*}
\forall\, t\in[0,T],\quad \sup_{h\in (0,e^{-1}]} h^{\beta(t)}\Vert\omega(t)\Vert_{(\Sigma_t)_{\delta_t^{-1}(h)},(\mathcal{X}_{t,h}) }^{s}+\sup_{h\in (0,e^{-1}]} h^{\beta(t)}\Vert\rho(t)\Vert_{(\Sigma_t)_{\delta_t^{-1}(h)},(\mathcal{X}_{t,h})}^{s+1}  \leqslant C_0.
\end{equation*}  
Moreover,  the transported family  $\mathcal{X} _t$ of $\mathcal{X} _0$ by the flow $\Psi$ defined by
$$X_{t,\lambda, h}(\cdot) \triangleq  \partial_{X_{0,\lambda,h}}\Psi(t,\Psi^{-1}(t,\cdot) )  , $$
is $\Sigma_t$--admissible of order $\Xi(t)$ and belongs to $L^\infty([0,T]; C^s),$ for all $(\lambda,h) \in \Lambda \times (0,e^{-1}].$
\end{proposition}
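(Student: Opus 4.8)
The plan is to run a continuity/bootstrap argument whose single scalar unknown is $\Vert v(t)\Vert_{L(\Sigma_t)}$, exploiting the fact that the $L^p$-estimates of Proposition \ref{Proposition inviscid} become available precisely as long as $m(t)\geqslant 0$. I would first fix a maximal subinterval $[0,T]\subset[0,T^\star)$ on which the two a priori conditions $m(t)\geqslant 0$ and $\sup_{\tau\leqslant t}\Vert v(\tau)\Vert_{L(\Sigma_\tau)}\leqslant C_0$ hold, and then show that this interval necessarily reaches the time prescribed by $\Gamma(T)=m$. On such an interval Proposition \ref{Proposition inviscid} furnishes $\Vert\omega(t)\Vert_{L^a\cap L^\infty}\leqslant \Vert\omega_0\Vert_{L^a\cap L^\infty}+C_0 t$ and $\Vert\nabla\rho(t)\Vert_{L^\infty}\leqslant C_0$; by Lemma \ref{lem3} this controls $\Vert v\Vert_{LL}$, so $V(t)\triangleq\int_0^t\Vert v(\tau)\Vert_{LL}d\tau$ grows at most polynomially in $t$ and the weight $e^{V(t)}$ appearing in $W(t)$ of \eqref{W:def} is bounded by $e^{\mathrm{poly}(t)}$.

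The heart of the argument is to quantify the anisotropic norm and feed it into the logarithmic estimate. By Proposition \ref{prop22} the numerator $\widetilde{\Vert} X_t\Vert_s+\Vert\partial_{X_t}\omega(t)\Vert_{s-1}$ is bounded by $C_0 e^{C_0 t}h^{-C\int_0^t W}$, while the $\Sigma_0$-admissibility of $\mathcal{X}_0$ combined with Proposition \ref{prop:admissible family} gives the lower bound $I\big((\Sigma_t)_{\delta_t^{-1}(h)},(\mathcal{X}_t)_h\big)\gtrsim h^{\gamma+\int_0^t W}$; the same propagation controls $\Vert\partial_{X_t}\rho(t)\Vert_s$. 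Dividing, the anisotropic norm is bounded by $C_0\, h^{-C_0(1+\int_0^t W)}$. Inserting this into Theorem \ref{propoo1} applied to the set $(\Sigma_t)_{\delta_t^{-1}(h)}$ and the family $(\mathcal{X}_t)_h$, and then rewriting the cut-off by the substitution $\eta=\delta_t^{-1}(h)=h^{e^{-V(t)}}$ so that $-\log\eta=e^{-V(t)}(-\log h)$, I expect a bound of the form $\Vert v(t)\Vert_{L(\Sigma_t)}\leqslant C_0\, e^{V(t)}\big(1+\int_0^t W(\tau)d\tau\big)$ after dividing by $-\log h$ in the definition \eqref{Lsigma}.

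The last step closes the loop. Setting $U(t)\triangleq\int_0^t W(\tau)d\tau$ and combining the previous inequality with $W=(\Vert v\Vert_{L(\Sigma)}+\Vert\omega\Vert_{L^a\cap L^\infty})e^{V}$ yields a linear differential inequality $U'(t)\leqslant e^{\mathrm{poly}(t)}C_0\big(1+U(t)\big)$, so Gronwall forces $1+U(t)\leqslant e^{\exp(C_0(1+t)^2)}$, which is the double exponential governing the shape of $\Gamma$. Since $\int_0^t\Vert v(\tau)\Vert_{L(\Sigma_\tau)}e^{V(\tau)}d\tau\leqslant U(t)\leqslant t\, e^{\exp(C_0(1+t)^2)}=\Gamma(t)$, the defining relation $\Gamma(T)=m$ guarantees $m(T)=m-\int_0^T\Vert v\Vert_{L(\Sigma_\tau)}e^{V}d\tau\geqslant m-\Gamma(T)=0$, and a simultaneous bound $\sup_{t\leqslant T}\Vert v\Vert_{L(\Sigma_t)}\leqslant C_0$ follows from step two; both reopen the a priori estimates, so the bootstrap closes by continuity. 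The admissibility of the transported family $\mathcal{X}_t$ of a continuous order $\Xi(t)=(\alpha(t),\beta(t),\gamma(t))$ and its membership in $L^\infty([0,T];C^s)$ are then read off directly: support and $C^s$-control from Proposition \ref{prop22}, the lower bound on $I$ from Proposition \ref{prop:admissible family}, and $\beta(t),\gamma(t)$ are exactly the accumulated powers of $h$, which depend continuously on $t$ through $\int_0^t W$.

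The main obstacle I anticipate is the self-consistency of this nonlinear loop: the quantity $\Vert v\Vert_{L(\Sigma_t)}$ simultaneously controls the sign of $m(t)$ (hence the very availability of the $L^p$-bounds), enters $W$ exponentially through $e^{V}$, and is itself the output of the logarithmic estimate. Making the continuity argument airtight — verifying that the set of admissible times is open, closed and non-empty, and, crucially, that the constant $C_0$ never silently absorbs a hidden dependence on $T$ — together with the careful bookkeeping of the $h$-exponents needed to land exactly on $\Gamma(t)=t\,e^{\exp(C_0(1+t)^2)}$, is where the genuine work lies.
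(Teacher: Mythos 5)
Your proposal is correct and follows essentially the same route as the paper: Proposition \ref{Proposition inviscid} and Proposition \ref{prop22} for the numerator, Proposition \ref{prop:admissible family} for the lower bound on $I$, Theorem \ref{propoo1} for the logarithmic estimate, and a Gronwall argument on $W$ producing the double exponential that defines $\Gamma$ and hence $T$ via $\Gamma(T)=m$. The only cosmetic difference is that the paper avoids your explicit open-closed bootstrap by exploiting the monotonicity of $m(t)$ together with the blow-up criterion of Corollary \ref{cor-blowup} to locate a time $\widetilde{T}$ with $m(\widetilde{T})=0$ and conclude $T\leqslant\widetilde{T}<T^\star$.
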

\begin{proof}
Let  $T\in(0,T^\star)$ such that
 \begin{equation}\label{assumption on T max}
m(T)\geqslant 0,
\end{equation} where $m(t)$ was defined in Proposition \ref{prop22} and takes the form  
\begin{equation}\label{alpha-def**}
m(t)=m- \int_0^t\|{v}(\tau)\|_{L(\Sigma_\tau)} e^{\int_0^\tau\Vert v(\tau')\Vert_{LL}d\tau'}d\tau.
\end{equation} 
Since  $m$ is non-increasing then  we easily find 
$$
\forall t\in[0,T],\, m(t)\geqslant 0
$$
This ensures in particular  that all the a priori estimates developed in Proposition \ref{Proposition inviscid} and \mbox{Proposition \ref{prop22}}  are valid on $[0,T]$.
Let us examine the estimate of 
$$
\Upsilon(t)\triangleq  \Vert\omega(t)\Vert_{L^\infty}\widetilde{\Vert} X_{t,\lambda,h}\Vert_{s}+\Vert \partial_{X_{t,\lambda,h}} \omega(t)\Vert_{s-1}.
$$
Then combining Proposition \ref{Proposition inviscid} and Proposition \ref{prop22} yields for any $t\in[0,T],$
\begin{eqnarray*}
\Upsilon(t)&\lesssim   C_0 e^{C_0t }\big(1+\Vert\omega \Vert_{L^\infty}\big)\Big(\widetilde{\Vert} X_{0,\lambda,h}\Vert_{s}+\Vert \partial_{X_{0,\lambda,h}} \omega_0\Vert_{s-1}+\Vert \partial_{X_{0,\lambda,h}} \rho_0\Vert_{s-1}\Big)    h^{-C  \int_0^t W(\tau)d\tau }.
\end{eqnarray*}
Hence, we deduce in view of the  Definition \ref{def11} and \eqref{ixtd}
\begin{eqnarray}\label{eqom}
\Upsilon(t) &\leqslant & C_0 e^{C_0t }\big(1+\Vert\omega_0\Vert_{L^\infty}\big)\Big(N_s\big((\Sigma_0)_{h},(\mathcal{X}_0)_{h}\big) + \Vert\omega_0\Vert^{s}_{(\Sigma_0)_{h},(\mathcal{X}_0)_{h}}+\Vert\rho_0\Vert^{s+1}_{(\Sigma_0)_{h},(\mathcal{X}_0)_{h}}  \Big)\notag\\
 &&\times  I\big((\Sigma_0)_{h},(\mathcal{X}_0)_{h}\big) h^{-C \int_0^t W(\tau)d\tau} \notag\\ 
&\leqslant &   C_0 e^{C_0t }\big(1+\Vert\omega_0\Vert_{L^\infty}\big)\sup_{0<h\leqslant e^{-1}}h^{\beta }\Big(N_s((\Sigma_0)_{h},(\mathcal{X}_0)_{h})+  \Vert\omega_0\Vert^{s}_{(\Sigma_0)_{h},(\mathcal{X}_0)_{h}}+\Vert\rho_0\Vert^{s+1}_{(\Sigma_0)_{h},(\mathcal{X}_0)_{h}} \Big)\notag\\ 
&&\times  I\big((\Sigma_t)_{\delta_t^{-1}(h)},(\mathcal{X}_{t,h})\big)h^{-\beta -C \int_0^t W(\tau)d\tau} \notag.
\end{eqnarray}
Putting together  this estimate  with    the norm  definition (\ref{defre}) allows to get  (with a suitable adjustment of $C_0$)
\begin{eqnarray}\label{a}
\Vert\omega(t)\Vert_{(\Sigma_t)_{\delta_t^{-1}(h)},(\mathcal{X}_{t,h})}^{s}&\leqslant & C_0 e^{C_0t } h^{-\beta -C\int_0^t W(\tau)d\tau}  .
\end{eqnarray}
Consequently, we obtain 
\begin{equation*}
\sup_{h\in (0,e^{-1}]} h^{\beta(t)}\Vert\omega(t)\Vert_{(\Sigma_t)_{\delta_t^{-1}(h)},(\mathcal{X}_{t,h})}^{s}  \leqslant  C_0 e^{C_0t },
\end{equation*}  
where 
\begin{equation}\label{beta-def**}
 \beta(t) \triangleq  \beta  + C \int_0^t W(\tau)d\tau>0. 
\end{equation}  
 Similarly, repeating the same arguments yields 
 \begin{equation*}
\sup_{h\in (0,e^{-1}]} h^{\beta(t)}\Vert\rho(t)\Vert_{(\Sigma_t)_{\delta_t^{-1}(h)},(\mathcal{X}_{t,h})}^{s+1}+\sup_{\underset{\lambda \in \Lambda}{h\in (0,e^{-1}]}} h^{\beta(t)}\frac{ \widetilde{\Vert} X_{t,\lambda,h}\Vert_{s} }{I\big((\Sigma_t)_{\delta_t^{-1}(h)},(\mathcal{X}_{t,h})\big)}   \leqslant  C_0 e^{C_0t }.
\end{equation*} 
The next target is to  establish a Lipschitz estimate  for  the velocity. For this purpose, we combine Theorem \ref{propoo1} with Proposition \ref{Proposition inviscid} and the monotonicity of the \mbox{map $x\mapsto x\log \big(e + \frac{a}{x}\big)$} to get
\begin{eqnarray*}
\Vert\nabla v(t)\Vert_{L^\infty((\Sigma_t)_{\delta_t^{-1}(h)}^c)}\leqslant  C\Big(\Vert\omega_0\Vert_{L^a\cap L^\infty}+tC_\infty\Big)\log\bigg(e+\frac{ \Vert\omega\Vert_{(\Sigma_t)_{\delta_t^{-1}(h)},(\mathcal{X}_{t,h})}^{s}}{\Vert \omega_0\Vert_{L^\infty}}\bigg).
\end{eqnarray*}
Therefore, we find according to the estimate (\ref{a}), 
\begin{eqnarray*}
\Vert\nabla v(t)\Vert_{L^\infty((\Sigma_t)_{\delta_t^{-1}(h)}^c)}&\leqslant & C_0 (1+t  )      \bigg( C_0(1+t)-\Big(\beta +C\int_0^t W(\tau)d\tau\Big)\log h\bigg).
\end{eqnarray*}
Since $1 \leqslant - \log h$, then it follows by virtue of  \eqref{delta:inverse} that
\begin{eqnarray*}
\frac{\Vert\nabla v(t)\Vert_{L^\infty((\Sigma_t)_{\delta_t^{-1}(h)}^c)}}{-\log \delta_t^{-1}(h)}&\leqslant &  C_0 (1+t  )  \bigg(1+t + \int_0^t W(\tau)d\tau \bigg)   e^{\int_0^t\Vert v(\tau)\Vert_{LL}d\tau}. 
\end{eqnarray*}
Thus we infer from \eqref{Lsigma}
\begin{eqnarray*}
\| v(t)\|_{L(\Sigma_t)}&\leqslant &  C_0 (1+t  )  \bigg(1+t + \int_0^t W(\tau)d\tau \bigg)   e^{\int_0^t\Vert v(\tau)\Vert_{LL}d\tau}. 
\end{eqnarray*}
Thereby, from the definition of the function $W$ \eqref{W:def}, we obtain
\begin{eqnarray*}
\forall \, t\in[0,T],\quad W(t)&\leqslant &   C_0 (1+t  )  \bigg(1+t + \int_0^t W(\tau)d\tau \bigg)    e^{2\int_0^t\Vert v(\tau)\Vert_{LL}d\tau} .
\end{eqnarray*}
On the other hand, from Lemma \ref{lem3} and Proposition \ref{Proposition inviscid}, we get  
\begin{eqnarray}\label{esll}
\Vert v(t)\Vert_{LL}&\leqslant &\Vert \omega(t)\Vert_{L^a\cap L^\infty}\notag\\ &\leqslant &  C_0(1+t).
\end{eqnarray}
Hence,
\begin{eqnarray*}
\forall \, t\in[0,T],\quad W(t)&\leqslant &  C_0 (1+t  ) ^2 e^{C_0(1+t)^2}+ e^{C_0(1+t)^2} \int_0^t W(\tau)d\tau.
\end{eqnarray*}
Applying  Gronwall lemma implies
\begin{eqnarray*}
\forall \, t\in[0,T],\quad W(t) &\leqslant &  \exp\Big(e^{C_0 (1+t)^2 }\Big).
\end{eqnarray*}
It follows that 
\begin{align}\label{ESS-T1}
\forall \, t\in[0,T],\quad \int_0^t W(\tau)d\tau\leqslant \Gamma(t) \triangleq t\,e^{\exp{C_0(1+t)^2}} \leqslant \Gamma(T).
\end{align}
Consequently, from the definition of $m(t)$ in \eqref{alpha-def**} and the definition of $W$ in \eqref{W:def},  we infer that
\begin{equation}\label{alpha-gamma}
m(T)\geqslant 0\Longrightarrow \forall \, t\in[0,T],\quad m(t) \geqslant m - \int_0^tW (\tau)d\tau \geqslant  m -  \Gamma(T).
\end{equation}   
Now, from Corollary \ref{cor-blowup} combined with the continuity of $t\mapsto m(t)$ we deduce the existence of $\widetilde{T}\in(0,T^\star)$ such that 
$$
m(\widetilde{T})=0.
$$
Combined with  \eqref{alpha-gamma} yield
$$
 m -  \Gamma(\widetilde{T})\leqslant 0.
$$
Next, by defining  $T$ such that,
 \begin{equation}\label{T:max}
 \Gamma(T)=m,
 \end{equation}
we obtain the lower bound of the  maximal time existence
 $$
 T\leqslant \widetilde{T}<T^\star.
 $$
 This ends  the proof of the Proposition \ref{prop220}. We observe that $T\in(0,1).$
\end{proof}
\hspace{0.5cm}We conclude this section by the following corollary which is useful  in the proof of the regularity persistence of the transported vortex patch.
\begin{coro}\label{cor:X-Psi:1}
Under the assumptions of Proposition \ref{prop220}, we have
$$ t\longmapsto\partial_{X_{0,\lambda,h}} \Psi(t,\cdot) =  X_{t,\lambda,h}(\Psi(t,\cdot)) \in L^\infty\big([0,T]; C^s \big), \quad \forall (\lambda, h)\in \Lambda\times(0,e^{-1}]. $$
More precisely, there exists a constant $C_{0,h,T}>0$, depending only on the initial data, $h$ and $T,$ such that
\begin{equation}\label{X:es:0.0.0.0.}
\forall\, t\in[0,T],\quad \| X_{t,\lambda,h}(\Psi(t,\cdot))\|_{C^{s }}\leqslant C_{0,h,T}.
\end{equation}
\end{coro}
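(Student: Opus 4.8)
The plan is to regard the vector field $Y_{t}\triangleq X_{t,\lambda,h}\circ\Psi(t,\cdot)=\partial_{X_{0,\lambda,h}}\Psi(t,\cdot)$ as the composition of the vector field $X_{t,\lambda,h}$ — which is already known to lie in $L^\infty([0,T];C^s)$ by Proposition \ref{prop220} — with the flow $\Psi(t,\cdot)$. The main obstacle is that $\Psi(t,\cdot)$ is, globally, only Hölder continuous (the velocity being merely log-Lipschitz), so a direct composition would degrade the exponent $s$ into $s\,e^{-\int_0^t\Vert v(\tau)\Vert_{LL}d\tau}<s$. The whole point will be to notice that the support of $Y_t$ remains trapped, uniformly in $t\in[0,T]$, in a fixed region sitting at positive distance from the moving singular set $\Sigma_t$, a region on which the velocity is genuinely Lipschitz thanks to the bound $\Vert v(t)\Vert_{L(\Sigma_t)}\leqslant C_0$ of Proposition \ref{prop220}; there $\Psi(t,\cdot)$ is bi-Lipschitz and the $C^s$ regularity survives the composition.

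First I would record the localization and the sup-norm bound. As in the proof of Proposition \ref{prop:admissible family}, for each fixed $x$ the map $Y_t$ solves the linear homogeneous ODE $\partial_t Y_t(x)=\{\nabla v(t,\Psi(t,x))\}\cdot Y_t(x)$ with $Y_0=X_{0,\lambda,h}$. Since its fundamental matrix is invertible, $Y_t(x)=0$ if and only if $X_{0,\lambda,h}(x)=0$, so $\supp Y_t(\cdot)$ coincides with $\supp X_{0,\lambda,h}\subset(\Sigma_0)_{h^\alpha}^c$ for every $t$. Moreover $\Psi(t,\cdot)$ being a homeomorphism gives $\Vert Y_t\Vert_{L^\infty}=\Vert X_{t,\lambda,h}\Vert_{L^\infty}$, and Proposition \ref{prop220} furnishes $\Vert X_{t,\lambda,h}\Vert_{s}\leqslant C_{0,h,T}$ for all $t\in[0,T]$.

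Next I would establish that the flow is Lipschitz near the support. Setting $\eta\triangleq h^\alpha/2$, Lemma \ref{s2lem1} guarantees that trajectories issued from $(\Sigma_0)_\eta^c$ stay in $(\Sigma_\tau)_{\delta_\tau(\eta)}^c$ for all $\tau\in[0,t]$, a region where, by \eqref{Lsigma} and Proposition \ref{prop220}, $\Vert\nabla v(\tau)\Vert_{L^\infty((\Sigma_\tau)_{\delta_\tau(\eta)}^c)}\leqslant C_0\,e^{\int_0^\tau\Vert v(s)\Vert_{LL}ds}(-\log\eta)$. Differentiating the flow equation into $\partial_t\nabla\Psi(t,x)=\{\nabla v(t,\Psi(t,x))\}\,\nabla\Psi(t,x)$ and applying Gronwall's lemma then bounds $|\nabla\Psi(t,x)|$ by a constant $L_{h,T}<\infty$ depending only on the initial data, $h$ and $T$, uniformly for $x\in(\Sigma_0)_\eta^c$.

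Finally I would assemble the Hölder estimate. Given $x,y\in\RR^2$, I may assume one of them, say $x$, lies in $\supp Y_t\subset(\Sigma_0)_{h^\alpha}^c$ (otherwise $Y_t(x)=Y_t(y)=0$). When $|x-y|\geqslant h^\alpha/4$ the difference is controlled by the sup-norm bound of Step 1, paying a factor $(4/h^\alpha)^s$. When $|x-y|<h^\alpha/4$ we have $d(y,\Sigma_0)>3h^\alpha/4$, so the segment $[x,y]$ lies in $(\Sigma_0)_\eta^c$; integrating the bound of the previous step along it yields $|\Psi(t,x)-\Psi(t,y)|\leqslant L_{h,T}|x-y|$, and hence $|Y_t(x)-Y_t(y)|=|X_{t,\lambda,h}(\Psi(t,x))-X_{t,\lambda,h}(\Psi(t,y))|\leqslant\Vert X_{t,\lambda,h}\Vert_{s}\,L_{h,T}^{\,s}|x-y|^s$. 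Collecting the two regimes together with the sup-norm bound produces \eqref{X:es:0.0.0.0.}. The delicate point is this last mechanism: it is precisely the separation of the support of $Y_t$ from $\Sigma_t$ — quantified through Lemma \ref{s2lem1} and the Lipschitz bound $\Vert v(t)\Vert_{L(\Sigma_t)}\leqslant C_0$ — that prevents the Hölder exponent from deteriorating under the flow.
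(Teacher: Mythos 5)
Your argument is correct and follows essentially the same route as the paper's proof: the sup-norm bound comes from Proposition \ref{prop220}, and the H\"older seminorm is controlled by showing that $\nabla\Psi$ is bounded on a neighbourhood of $\supp X_{0,\lambda,h}$ via Lemma \ref{s2lem1}, the bound $\Vert v(t)\Vert_{L(\Sigma_t)}\leqslant C_0$ and Gronwall, exactly as in \eqref{nabla-psi:0.0}. The only cosmetic difference is that you split the pairs $(x,y)$ according to whether $|x-y|$ is small compared with the support scale (handling distant pairs by the sup-norm), whereas the paper splits according to proximity to $\Sigma_0$; your variant is, if anything, slightly more careful about keeping the whole segment $[x,y]$ inside the region where the flow is Lipschitz.
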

\begin{proof}
Remark first that, due to Proposition \ref{prop22}, we have
$$\| X_{t,\lambda,h}(\Psi(t,\cdot))\|_{L^\infty}\lesssim \| X_{t,\lambda,h} \|_{L^\infty}\lesssim \| X_{t,\lambda,h} \|_{s} \leqslant C_{0,h,T}.$$
For H\"older estimates, we claim that for all $x,x'\in \mathbb{R}^2$ such that $|x-x'|\leqslant \tfrac{h}{2}$
\begin{equation}\label{claim:coro:3}
\big|X_{t,\lambda,h}(\Psi(t,x) )- X_{t,\lambda,h}(\Psi(t,x'))  \big| \leqslant C_{0,h,T}  |x-x'|^s.
\end{equation}
To prove it, we shall consider   two cases.\\
 \ding{118} If $x,x'\in (\Sigma_0)_h$, then  from $\supp X_{0,\lambda,h}\subset (\Sigma_0)_h^c$ we easily get
$$\partial_{X_{0,\lambda,h}} \Psi(t,x) =  X_{t,\lambda,h}(\Psi(t,x))=\partial_{X_{0,\lambda,h}} \Psi(t,x') =  X_{t,\lambda,h}(\Psi(t,x'))=0. $$
 \ding{118} If $x,x'\in (\Sigma_0)_{\frac{h}{2}}^c $, then we proceed as follows. We recall first from  Proposition \ref{prop220} that
$$
\partial_{X_{0,\lambda,h}} \Psi(t,\Psi^{-1}(t,\cdot)) =  X_{t,\lambda,h}(\cdot) \in L^\infty \big([0,T]; C^s \big), \quad \forall (\lambda, h)\in \Lambda\times(0,e^{-1}]. 
$$
This implies that, for all $x,x'\in (\Sigma_0)_h^c$  
\begin{eqnarray}\label{0.0.0.0}
\big|X_{t,\lambda,h}(\Psi(t,x) )- X_{t,\lambda,h}(\Psi(t,x'))  \big| &\lesssim &\|  X_{t,\lambda,h}\|_{s}\big|\Psi(t,x)-\Psi(t,x')  \big|^s \nonumber\\
& \lesssim &\|  X_{t,\lambda,h}\|_{s}\big|x-x'\big|^s \sup_{z\in [x,x']} \big|\nabla \Psi(t,z) \big|^s.
\end{eqnarray}
By assumption $|x-x'|\leqslant \frac{h}{2}$ and therefore  
$$ 
\sup_{z\in [x,x']} \big|\nabla \Psi(t,z) \big| \leqslant  \|\nabla \Psi(t,\cdot) \|_{L^\infty ((\Sigma_0)_{\frac{h}{2}}^c)},
$$
which implies in turn that
\begin{equation}\label{aaa0.0}
\big|X_{t,\lambda,h}(\Psi(t,x) )- X_{t,\lambda,h}(\Psi(t,x'))  \big|  \lesssim\|  X_{t,\lambda,h}\|_{s}\big|x-x'\big|^s  \|\nabla \Psi(t,\cdot) \|_{L^\infty ((\Sigma_0)_{\frac{h}{2}}^c)}.
\end{equation}
Next, we intend to estimate $\nabla \Psi$ in the right hand side of the preceding inequality. To this end, differentiating the equation of the flow and using Gronwall's lemma yields
\begin{eqnarray*}
\|\nabla \Psi(t,\cdot) \|_{L^\infty ((\Sigma_0)_{\frac{h}{2}}^c)  } \leqslant \exp \Big(\int_0^t \|\nabla v(\tau,\Psi(\tau, \cdot)) \|_{L^\infty ((\Sigma_0)_{\frac{h}{2}}^c)  } d\tau\Big).
\end{eqnarray*}
Moreover,  Lemma \ref{s2lem1} gives
$$ \Psi(t,(\Sigma_0 )_{\frac{h}{2}}^c)  \subset \big( \Sigma_t \big)_{\delta_t(\frac{h}{2} )}^c. $$
Hence we deduce 
\begin{eqnarray}\label{nabla-psi:0.0}
\|\nabla \Psi(t,\cdot) \|_{L^\infty ((\Sigma_0)_{\frac{h}{2}}^c)  } &\leqslant &\exp \Big(\int_0^t \|\nabla v(\tau, \cdot ) \|_{L^\infty ((\Sigma_\tau)_{\delta_\tau(\frac{h}{2})}^c)  } d\tau\Big)\nonumber\\
& \leqslant &\exp\Big(-\int_0^t \log(\delta_\tau(\tfrac{h}{2}))\| v(\tau, \cdot ) \|_{L   (\Sigma_\tau)    } d\tau\Big)\\
& \leqslant &  e^{ C_{0,h}}, \nonumber
\end{eqnarray}
where we have use in the last inequality the following consequences of Proposition \ref{Proposition inviscid} and Proposition \ref{prop220},  for all $t\leqslant T$
\begin{equation*}
\sup_{\tau\in [0,t]} \|v(\tau) \|_{LL} \leqslant \sup_{\tau\in [0,t]} \|\omega(\tau) \|_{L^2\cap L^\infty} \leqslant C_0(1+t),
\end{equation*} 
\begin{equation*} 
\sup_{\tau\in [0,t]} \|v(\tau) \|_{L(\Sigma_\tau)}\leqslant C_0.
\end{equation*} 
and the fact that $T\leqslant 1$ (due to \eqref{T:max}).
Finally, in order to prove  \eqref{claim:coro:3}, it is enough to implement \eqref{nabla-psi:0.0} in \eqref{aaa0.0}.
\end{proof}
\subsection{Proof of Theorem \ref{Th1:general:version}}\label{Existence-uniq-inviscid}
In the sequel we shall give the main details on the proof of \mbox{Theorem \ref{Th1:general:version}.}
\begin{proof}[\ding{202} \textit{Approximated solutions}] 
To prove the existence of a solution, we need to implement some modifications in the proof given in \cite{Hassainia-Hmidi}. For $n\in \mathbb{N}^\star$, let us smooth out the initial data and  consider the solutions to the  system, 
\begin{equation}\label{schema-0}
\left\{ \begin{array}{lll}
\partial_{t}v_n+v_n\cdot\nabla v_n+\nabla p_n =\rho_n \begin{pmatrix}0\\
  1 \end{pmatrix}, &\\
\partial_{t}\rho_n+v_n\cdot\nabla\rho_n =0, &\vspace{2mm}\\
\Div v_n=0,\\
v_{0,n} = S_nv_0,\quad \rho_{0,n} = \widetilde{S} _n\rho_0,
\end{array} \right. 
\end{equation}
where $S_n$ is the cut-off in frequency defined in Section 2 and $\widetilde{S}_n$ is given by
$$
\widetilde{S}_n \rho_0 \triangleq \chi_r \rho_0 + (1-\chi_r)S_n \rho_0,
$$
with $\chi_r$ is a smooth cut-off function with values in $[0,1]$ and satisfying 
$$
\chi_r(x) = \left\{\begin{array}{ll}
1 & \text{ if } d(x,\Sigma_0)\leqslant\frac{r}{2},\\
0 & \text{ if } d(x,\Sigma_0)\geqslant r.
\end{array} \right. 
$$
Notice that the density is smoothed in a different way in order to get the compatibility conditions. 
Now, by assumption, we have $ \chi_r \rho_0 \in C^{1+m} $, where $m\in (0,1)$. Also, it is clear that $v_{0,n}$ and $(1-\chi_r)S_n  \rho_0  $ are smooth and belong to $C^{1+m}$. Then, we can apply  the  result of  \cite{Chae-Kim-Nam-2} and construct,  for each $n\in \mathbb{N}^{\star}$, a unique maximal solution $v_n,\rho_n\in C\big([0,T^{\star}_n);C^{1+m}\big).$ \\

\ding{203} {\it Boundedness of the initial data.}
The first thing to check is that the initial data of the scheme \eqref{schema-0} satisfy the assumptions in Theorem \ref{Th1:general:version} and Proposition \ref{Proposition inviscid}, uniformly with respect to $n$. Let us first check that     
$$
\Vert\omega_{0,n}\Vert_{L^a \cap L^\infty}, \Vert \rho_{0,n}\Vert_{W^{1,a} \cap W^{1,\infty}}
$$
and
\begin{equation}\label{CLAIM:N}
\sup_{h\in(0,e^{-1]}} h^{\beta}\Big(\Vert \omega_{0,n}\Vert^{s}_{(\Sigma_0)_h, \mathcal{X} _{0,h}}+\Vert \rho_{0,n}\Vert^{s+1}_{(\Sigma_0)_h,\mathcal{X} _{0,h}}\Big)
\end{equation}
are uniformly bounded with respect to $n$. 
The first claim follows from the uniform continuity of the operator $S_n:L^p\to L^p$. As to the boundedness of  \eqref{CLAIM:N},  we proceed as follows. We repeat first the arguments in pages 62-63 from \cite{Chemin}  allowing to get
\begin{equation}\label{CL:1}
\Vert \partial_{X_{0,\lambda,h}}\omega_{0,n}\Vert_{s-1}\leqslant C\big(\Vert \partial_{X_{0,\lambda,h}}\omega_{0}\Vert_{s-1}+\widetilde{\Vert} X_{0,\lambda,h}\Vert_{s}\Vert\omega_0\Vert_{L^\infty}\big). 
\end{equation}
On the other hand, for the second term in \eqref{CL:1}, we write first
\begin{eqnarray*}
 \partial_{X_{0,\lambda,h}}\rho_{0,n} &=&   \partial_{X_{0,\lambda,h}}\big( \chi_r\rho_{0 }\big)- S_n\rho_{0 } \partial_{X_{0,\lambda,h}}\big(\chi_r\big)  +  (1-\chi_r)\partial_{X_{0,\lambda,h}}\big(S_n \rho_{0 }\big) \\
 &=&  \partial_{X_{0,\lambda,h}}\big( \chi_r\rho_{0 }\big)- S_n\rho_{0 } \partial_{X_{0,\lambda,h}}\big(\chi_r\big)  +  (1-\chi_r)\Bigg(S_n\partial_{X_{0,\lambda,h}}\rho_{0 }+ [\partial_{X_{0,\lambda,h}}, S_n ]\rho_{0 }\Bigg) \\
 & \triangleq & \sum_{i=1}^4 \mathcal{J}_i.
\end{eqnarray*} 
The estimates of the first three terms are straightforward and one gets
$$
 \sum_{i=1}^3 \| \mathcal{J}_i \|_s\leqslant C\big(\Vert \partial_{X_{0,\lambda,h}}\rho_0\Vert_{s}+\Vert X_{0,\lambda,h}\Vert_{s}\Vert \rho_0\Vert_{s}\big),
$$
where, $C$ depend only on the $C^{s+1}$-norm of $\chi_r$.   
As  to the estimate $\mathcal{J}_4$, we use \cite[Proposition 8]{Hassainia-Hmidi} in order to find
$$
 \| \mathcal{J}_ 4\|_s \leqslant C\big(\Vert \partial_{X_{0,\lambda,h}}\rho_0\Vert_{s}+\widetilde{\Vert} X_{0,\lambda,h}\Vert_{s}\Vert \nabla \rho_0\Vert_{L^\infty}\big),
$$
Consequently, we end up with 
\begin{equation}\label{CL:2}
\Vert \partial_{X_{0,\lambda,h}}\rho_{0,n}\Vert_{s}\leqslant C\big(\Vert \partial_{X_{0,\lambda,h}}\rho_0\Vert_{s}+\widetilde{\Vert} X_{0,\lambda,h}\Vert_{s}\Vert \rho_0\Vert_{W^{1,\infty}}\big).
\end{equation}
Finally, \eqref{CLAIM:N} follows from \eqref{CL:1}, \eqref{CL:2} and the assumptions in Theorem \ref{Th1:general:version} on $\rho_0$ and $ X_{0,\lambda,h}.$  
Next, the assumptions ${(\nabla \rho_0)}_{|_{\Sigma_0}}=0 $ and $\chi_r\rho_0\in C^{1+m}(\mathbb{R}^2)$ ensure the platitude condition \eqref{hypothesis1} for the regularized initial density $\widetilde{S}_n\rho_0$. Indeed, we have by construction 
$$\widetilde{S}_n  \rho_0(x)= \rho_0(x) , \quad \forall x\in (\Sigma_0)_{\frac{r}{2},}$$
implying that we can replace $\widetilde{S}_n\rho_0$ by $\rho_0$ in the neighborhood $(\Sigma_0)_{\frac{r}{2}}.$
Since   $(\nabla \rho_0)_{|_{\Sigma_0}}=0$, then
\begin{eqnarray*}
\sup_{\underset{x\neq y,\; |x-y|<\frac{r}{2}}{y\in \Sigma_0} } \frac{\left|\nabla \rho_0(x)\right|}{|x-y|^m}&=& \sup_{\underset{x\neq y,\; |x-y|<\frac{r}{2}}{y\in \Sigma_0} }\frac{\left|\nabla \rho_0(x)-\nabla \rho_0(y)\right|}{|x-y|^m}\\
& \lesssim & \|\nabla \rho_0 \|_{C^m((\Sigma_0)_{r})}\\
&\lesssim &\|  \rho_0 \|_{C^{1+m}((\Sigma_0)_{r})} .
\end{eqnarray*}

  \ding{204}{\it Uniform lower bound on the lifespan $T_n^\star$.} This can be done by virtue of Corollary \ref{cor-blowup} and Proposition \ref{prop220} leading to
$$
\forall\, n\in\NN^\star,\quad T<T_n^\star,
$$
where $T$ is the unique solution to $\Gamma(T)=m$ and $\Gamma$ is defined in \eqref{Gamma:def***}. Notice that the constant $C_0$ in \eqref{Gamma:def***} is independent of $n$.
%
Consequently, we find that 
\begin{equation}\label{limitinf T}
{\inf_{ n \in\NN^\star} T_n^{\star} } \geqslant T > 0\quad \quad \text{and }\quad\quad  v_n,\rho_n\in C\big([0,T ],C^{1+m}\big), \;\forall n\in \mathbb{N}^\star. 
\end{equation} 
Once we get \eqref{limitinf T}, we can justify again all the a priori estimates proved in the previous propositions since the approximate solution $(v_n,\rho_n)$ is smooth enough on $[0,T].$ The next step consists then in proving uniform estimates, with respect to $n$, on the time interval $[0,T].$\\

\ding{205}\textit{Uniform boundedness of the approximated  solutions.} 
  Combining the results of  \ding{203}, Propositions \ref{Proposition inviscid}, \ref{prop22}  and \ref{prop220} we obtain the following uniform bounds in $n$, for all $t\in [0,T]$
  \begin{equation*}
 \Vert\omega_n(t)\Vert_{L^a \cap L^\infty}+\Vert  \rho_n(t)\Vert_{W^{1,a} \cap W^{1,\infty}}\leqslant C_{0,T} 
\end{equation*}
\begin{eqnarray}\label{vn-bounds}
\Vert  v_n(t)\Vert_{LL }+\Vert  v_n(t)\Vert_{L(\Sigma_{t,n})}&\leqslant &C_{0,T},
\end{eqnarray}
and
\begin{equation}\label{Cs-n:reg}
\sup_{ h \in   (0,e^{-1}]}h^{\beta_n(t)}\Big(\Vert \rho_n(t)\Vert^{s+1 }_{\Sigma_{t,n}, \mathcal{X} _{t,h, n}}+\Vert \omega_n(t)\Vert ^{s  }_{\Sigma_{t,n}, \mathcal{X}_{t,h , n}}+   \widetilde{\Vert} \mathcal{X}_{t ,h,n}\Vert_{s}\Big)\leqslant C_{0,T},
\end{equation}
where, $\beta_n(t)$ is given in accordance with \eqref{beta-def**} as 
\begin{equation}\label{beta_n}
\beta_n(t) \triangleq  \beta  + C \int_0^t W_n(\tau)d\tau 
\end{equation}
and 
\begin{equation}\label{Wn:def}
W_n(t)\triangleq\Big(\Vert  v_n(t)\Vert_{L(\Sigma_{t,n})}+\Vert \omega_n(t)\Vert_{L^a \cap L^\infty} \Big)\exp\bigg(\int_0^t\Vert v_n(\tau)\Vert_{LL}d\tau\bigg).
\end{equation}
Above, we use the notation 
\begin{equation*}
\Sigma_{t,n} = \Psi_n(t,\Sigma_0),
\end{equation*}
with
\begin{equation*}
\Psi_n(t,x)= x + \int_0^tv_n\big(\tau , \Psi_n(\tau,x)\big) d\tau,
\end{equation*}

and $\mathcal{X}_{t ,h,n}\triangleq (X_{t ,\lambda,h,n})_{\lambda\in \Lambda} $ which is given by
\begin{equation*}
X_{t,\lambda,h, n} = \partial_{X_{0,\lambda,h}} \Psi_n(t,\Psi_n^{-1}(t,\cdot)). 
\end{equation*} 
Using \eqref{vn-bounds} together with classical regularity of the flow associated to Yudovich solutions, we get uniformly with respect to $n\in \mathbb{N}^* $,
\begin{equation*}
\Psi_n \in L^\infty([0,T]; \hbox{Id}+C^{\exp(-C_{0,T})}),
\end{equation*}
while, \eqref{ixtd} implies  
\begin{equation} \label{ixtd-n}
I\big((\Sigma_{t,n})_{\delta_{t,n}^{-1}(h)},(\mathcal{X}_{t,n})_h\big)
 \geqslant   I\big((\Sigma_0)_{h},(\mathcal{X}_0)_{h}\big)h^{\int_0^t W_n(\tau)d\tau}, \quad \forall t\in [0,T],
\end{equation}
where,  we set 
\begin{equation}\label{delta-n}
 \delta_{t,n}^{-1} (h)\triangleq h^{\exp(-\int_0^t\Vert v_n(\tau)\Vert_{LL}d\tau)} .
\end{equation}  
Remark that, due to \eqref{ESS-T1}, \eqref{vn-bounds} and the uniform bounds on the initial data proved in \ding{203}, we have
$$\int_0^t W_n(\tau)d\tau \leqslant \Gamma(T) = m,\quad \forall t\in [0,T]$$
and 
$$ \delta_{t,n}^{-1} (h) \leqslant h^{\exp(-C_{0,T}) } \triangleq h^{\alpha_1}.$$
Hence, \eqref{Cs-n:reg}, \eqref{beta_n} and \eqref{ixtd-n} imply  
\begin{equation}\label{Cs-n:reg:2-----}
\sup_{ h \in   (0,e^{-1}]}h^{\beta+ Cm}\Big(\Vert \rho_n(t)\Vert^{s+1 }_{\Sigma_{t,n}, \mathcal{X} _{t,h, n}}+\Vert \omega_n(t)\Vert ^{s  }_{\Sigma_{t,n}, \mathcal{X}_{t,h , n}}+   \widetilde{\Vert} \mathcal{X}_{t ,h,n}\Vert_{s}\Big)\leqslant C_{0,T},
\end{equation}
\begin{equation} \label{ixtd-n:2-----}
h^{-(\gamma + Cm)}I\big((\Sigma_{t,n})_{h^{\alpha_1}},(\mathcal{X}_{t,n})_h\big)
 \geqslant  \inf_{h\in (0,e^{-1}]} h^{-\gamma}  I\big((\Sigma_0)_{h},(\mathcal{X}_0)_{h}\big)>0, \quad \forall t\in [0,T].
\end{equation}
In order to  proceed, we have to understand   the compactness of the sequences $( \Psi_n(t,\cdot),\mathcal{X}_{t,n})_{n\in \mathbb{N}^*} $
\ding{206}\textit{Stability estimates and convergence.} 
 A direct application of Corollary \ref{Coro:Cauchy seq.} shows that $(v_n ,\rho_n )_{n\in \mathbb{N}^*}$ is a Cauchy sequence in $L^\infty([0,T],L^2)$. More precisely, we have 
$$
\Vert v_{n}(t)-v_m(t)\Vert_{L^{2}} +\Vert \rho_{n}(t)-\rho_m(t)\Vert_{L^{2}} \leqslant {C}_{0,T}(\Vert v_{0,n}-v_{0,m}\Vert_{L^{2}} + \Vert \rho_{0,n}-\rho_{0,m}\Vert_{L^{2}})^{\exp(-{C}_{0,T})}, \quad \forall t\in [0,T],
$$
where, ${C} _{0,T}$ is a constant depending only on the size of the initial data and $T$ and it comes in particular from the a priori estimates of $(v_n,\rho_n)$. This is enough to conclude that $(v_n,\rho_n)$ is a Cauchy sequence in $L^\infty([0,T];L^2).$\\
 Mixing this strong convergence result with the uniform bounds obtained in \ding{205} allow to pass to the limit and construct a local solution $(\omega,\rho)$ to the  problem \eqref{eqn:omega}. 
  In addition, this solution  satisfies
  $$ (\omega,   \rho )\in L^\infty([0,T]; L^a \cap L^\infty)  \times  L^\infty([0,T];W^{1,a} \cap W^{1\infty}). $$
On the other hand,  since  $(v_n)_{n\in \mathbb{N}^*}$ is a Cauchy sequence in $L^\infty([0,T];L^2)$, together with the bounds of the $L^p$-norms of $\omega_n$, yields the strong convergence of $v_n$ towards $v$ in $L^\infty([0,T];L^\infty)$. This allows to get the following  strong  convergence of the flow sequence $(\Psi_n)$, for more details see for instance  \mbox{\cite[Lemma 5.5.2]{Chemin}}
\begin{equation*}
\lim_{n\rightarrow \infty}{\Psi_n} = \Psi \quad \text{in } \quad   L^\infty \Big([0,T];\hbox{Id}+ C^{\exp \left(-\varepsilon - \int_{0 }^T \| v(\tau,\cdot)\|_{LL}d\tau \right)} \Big).  
\end{equation*}
In fact, we can prove that the convergence holds in the following sense 
\begin{equation}\label{limit-flow}
\lim_{n\rightarrow \infty}{\Psi_n} = \Psi \quad \text{in } \quad  C \Big([0,T];\hbox{Id}+ C (\mathbb{R}^2)\Big).  
\end{equation}

 Consequently, we find that
$$
\lim_{n\rightarrow\infty}\sup_{t\in[0,T]\atop x\in\Sigma_0}|\Psi_n(t,x)-\Psi(t,x)|=0.
$$
Therefore, for any $h>0$, there exists $n_0\in \mathbb{N}^*$, such that for all $n\geqslant n_0$ and for all $t\in [0,T]$, 
\begin{equation*}
(\Sigma_t)_{2h}^c \subset  (\Sigma_{t,n})_{ h}^c .
\end{equation*}
Together with \eqref{vn-bounds}, we infer that for all $t\in [0,T]$ and $h\in (0,e^{-1}]$
\begin{equation*}
 \|\nabla v_n (t)\|_{L^\infty((\Sigma_t)_{2h}^c)}  \leqslant  \|\nabla v_n \|_{L^\infty((\Sigma_{t,n})_{ h}^c)}\leqslant (-\log h ) C_{0,T} . 
\end{equation*}
Hence, for any $x,y\in (\Sigma_t)_{3h}^c$, with $|x-y|\leqslant h$ we obtain that  the segment $ [x,y]\subset (\Sigma_t)_{2h}^c $. This implies by virtue of the mean value theorem   
\begin{equation*}
|v_n(x)-v_n(y)| \leqslant    (-\log h ) C_{0,T}  |x-y| .
\end{equation*}
Let $n$ go to infinity and $y$ to $x$, we end up with
\begin{equation*}
\frac{\|\nabla v\|_{L^\infty((\Sigma_t)_{3h}^c)}}{ -\log h} \leqslant C_{0,T}.
\end{equation*}
This leads to 
\begin{equation*}
\|v (t) \|_{L (\Sigma_t)  } {\leqslant} \,C_{0,T}.
\end{equation*}
On the other hand, it is easy to check that 
$$ 
\Vert v (\tau)\Vert_{LL} \leqslant \liminf_{n\rightarrow \infty} \Vert v_n(\tau)\Vert_{LL}
$$
and
$$ 
\| \omega(t)  \|_{L^a\cap L^\infty  }\leqslant \liminf_{n\rightarrow \infty} \| \omega_n(t) \|_{L^a\cap L^\infty}.
$$
The next step consists to prove that, for all $(\lambda,h)\in \Lambda \times (0,e^{-1}]$      
\begin{equation*}
\partial_{X_{0,\lambda,h}}\Psi\in L^\infty([0,T]; C^{s })
\end{equation*}
and to check   that for all $s'<s$, $h\in (0,e^{-1}] $ and uniformly with respect to $\lambda\in \Lambda   $, there holds
\begin{align}\label{Strong-Cv1}
\lim_{n\rightarrow \infty}  \partial_{X_{0,\lambda,h}}\Psi_n= \partial_{X_{0,\lambda,h}}\Psi \quad \text{in } \quad   L^\infty([0,T]; C^{s'}). 
\end{align}
{This can be done by using an interpolation inequality, the limit \eqref{limit-flow},  and the fact that the sequence $\left(\partial_{X_{0,\lambda,h}}\Psi_n\right)_{n\in \mathbb{N}^\star} $ is bounded in $L^\infty ([0,T]; C^s)$, for all $(\lambda, h)\in  \Lambda\times(0,e^{-1}]$. More precisely, one  gets
{\begin{align*}
\|\partial_{X_{0,\lambda,h}}\Psi_n- \partial_{X_{0,\lambda,h}}\Psi\|_{L^\infty_TC^{s^\prime}}&\lesssim\|\partial_{X_{0,\lambda,h}}\Psi_n- \partial_{X_{0,\lambda,h}}\Psi\|_{L^\infty_T{B_{\infty,\infty}^{-1}}}^{\delta} \|\partial_{X_{0,\lambda,h}}\Psi_n- \partial_{X_{0,\lambda,h}}\Psi\|_{L^\infty_TC^{s}} ^{1-\delta}\\
&\lesssim \big\|({X_{0,\lambda,h}},\textnormal{div}{X_{0,\lambda,h}}\big)\big\|^\delta_{L^\infty_TL^\infty}\|\Psi_n- \Psi\|_{L^\infty_T{L^\infty}}^{\delta} \sup_{n\in\N}\|\partial_{X_{0,\lambda,h}}\Psi_n\|_{L^\infty_TC^{s}} ^{1-\delta},
\end{align*}}
for some $\delta\in(0,1)$,  which is enough to obtain the desired result \ref{Strong-Cv1}.\\
Now,  Corollary \ref{cor:X-Psi:1} insures that the   vector fields
\begin{equation*}
X_{t,h,\lambda } =( \partial_{X_{0,h,\lambda}} \Psi) (t,\Psi (t,\cdot)) 
\end{equation*}
belong to $C^s$, for all $t\in [0,T]$. Furthermore, they are admissible in the sense of Definition \ref{def11}, for all $t\in [0,T]$,. This can be proved by virtue of the strong convergence of the flow \eqref{limit-flow} combined with  \eqref{ixtd-n:2-----} .
Finally,  straightforward arguments based on    \eqref{Cs-n:reg:2-----} together with the aforementioned  convergence   results for $\rho_n, \omega_n$ and $ X_{t,h,\lambda,n }$ yield  
\begin{equation*}
\sup_{ h \in   (0,e^{-1}]\atop t\in[0,T]}h^{\beta+ Cm}\Big(\Vert \rho(t)\Vert^{s+1 }_{\Sigma_{t}, \mathcal{X} _{t,h}}+\Vert \omega(t)\Vert ^{s  }_{\Sigma_{t}, \mathcal{X}_{t,h }}+   \widetilde{\Vert} \mathcal{X}_{t ,h}\Vert_{s}\Big)\leqslant C_{0,T}.
\end{equation*}
  This achieves the  estimates for the constructed solution.}\\

\ding{207}\textit{Uniqueness of the solutions.}
The proof   can be done along the  same lines as in \cite{Hassainia-Hmidi}. More precisely, it suffices to  use the stability estimates mentioned in \ding{206}.
\end{proof}
\section{Viscous case}\label{Viscous-case}
In this section, we shall examine  the partial viscous Boussinesq system written in terms of vorticity and density formulation
\begin{equation} \label{eqn:omega:2}
\left\{ \begin{array}{ll}
  \partial_t \omega_\kappa + v \cdot \nabla \omega_\kappa = \partial_1\rho_\kappa\quad (t,x)\in \mathbb{R}_+\times \mathbb{R}^2,&\vspace{2mm}\\ 
   \partial_t\rho_\kappa+v_\kappa\cdot\nabla \rho_\kappa-\kappa\Delta\rho_\kappa=0,&\vspace{2mm}\\
      v_\kappa= \nabla^\perp \Delta^{-1}\omega_\kappa,&\vspace{2mm}\\
  (\omega_\kappa,\rho_\kappa)|_{t=0}=(\omega_0,\rho_0).
  \end{array}\right.
\end{equation}
We aim to establish a local well-posedness result  for a large class of initial data allowing in particular  singular vortex patches as in the previous section. However, we want to get  estimates uniformly with respect to the vanishing diffusion parameter $\kappa\in(0,1)$, which will induce several new difficulties compared to the inviscid case.  
\subsection{General statement and consequence}
We intend to discuss a general  statement covering more cases than the soft  version detailed in  Theorem \ref{THEO:2:soft} and  restricted to the  singular vortex patches.   The main result of this section reads as follows.
 \begin{theorem}\label{th2}
 Let $(s,\varepsilon,p)\in(0,1) \times (0,\frac{s}{2}) \times (\frac{1}{\frac{s}{2}-\varepsilon}, \infty)$ and $(a,m)\in[2,\infty) \times (1,2).$ Consider  a compact negligible set of the plane  $\Sigma_0$  satisfying the integrability condition \eqref{int:cond0}.  Let $v_0$ be a divergence-free vector field with  vorticity  $\omega_ 0\in  L^{a}\cap L^\infty$   and   $\rho_0$ be a real-valued function in $W^{2,a}(\mathbb{R}^2)\cap W^{2,\infty}(\mathbb{R}^2)\cap C^{1+m}((\Sigma_0) _r) $, for some $r>0$ such that $\nabla \rho_0$ and ${\nabla ^2 \rho_0}$ are vanishing on $\Sigma_0.$  Consider  a   family of vector fields $\mathcal{X}_{0 }=(X_{0,\lambda,h })_{(\lambda,h)\in \Lambda\times (0,e^{-1}]}$ of class $B^s_{p,\infty}$ as well as their divergences and suppose that this family is  $\Sigma_0$-admissible of order $\Xi=(\alpha, \beta, \gamma)$ with
 \begin{equation*}
 \sup_{h\in (0,e^{-1}]} h^{\beta}\Big(\Vert\omega_0\Vert_{(\Sigma_0)_{h},(\mathcal{X}_0)}^{s,p}+ \Vert\rho_0\Vert_{(\Sigma_0)_{ h },(\mathcal{X}_0)}^{s+1,p} \Big)<\infty.
 \end{equation*} 
 Then, there exists $T>0$ independent of $\kappa\in(0,1)$ such that the viscous Boussinesq system \eqref{eqn:omega:2} has a unique solution
 $$
(\omega_\kappa,\rho_\kappa)\in L^\infty\big([0,T];L^a\cap L^\infty\big)\times L^\infty\big([0,T]; W^{1,a}\cap W^{1,\infty}\big) 
 $$  
 with uniform bounds on $\kappa\in(0,1)$ and 
satisfying in addition
$$
 \sup_{\underset{\kappa\in (0,1)}{ h\in(0,e^{-1}]}}\frac{\Vert \nabla v_\kappa(t)\Vert_{L^\infty((\Sigma_{t,\kappa})_h^c)}}{-\log h}\in L^\infty([0,T]).$$
Furthermore, 
there exists $C_{0,T}>0$, depending only on the initial data, $\Sigma_0$ and $T$, and there exists a triplet of  constants  $\Xi=(\alpha_1,\beta_1,\gamma_1)$, such that the following results  hold.
By denoting
$$\sigma_t = s \big( 1- \tfrac{\varepsilon t}{sT}\big) - \varepsilon,$$
we get for any $t\in[0,T]$
\begin{equation*} 
 \sup_{\underset{\kappa\in (0,1)}{ h\in(0,e^{-1}]}} h^{\beta_1}\Vert\omega_\kappa(t)\Vert_{(\Sigma_t)_{{h^{\alpha_1}}},(\mathcal{X}_{t,h,\kappa})}^{\sigma_t,p}+  \sup_{\underset{\kappa\in (0,1)}{ h\in(0,e^{-1}]}} h^{\beta_1}\Vert\rho_\kappa(t)\Vert_{(\Sigma_{t,\kappa})_{{h^{\alpha_1}}},(\mathcal{X}_{t,h,\kappa})}^{\sigma_{t,\kappa}+1,p}    \leqslant  C_{0,T}.
\end{equation*}  
Here, $\Sigma_{t,\kappa}$ is given by 
\begin{equation*}
  \Sigma_{t,\kappa} \triangleq \Psi_\kappa (t, \Sigma_0) 
\end{equation*}
 and the transported vector field  $\mathcal{X} _{t,\kappa}$ of $\mathcal{X} _{0 }$ by the flow $\Psi_{\kappa}(t)$,  defined by
$$X_{t,\lambda, h,\kappa}  \triangleq  \big(\partial_{X_{0,\lambda,h}}\Psi_\kappa\big)(t,\Psi^{-1}_\kappa (t,\cdot)), $$
is $\Sigma_t$--admissible of order $\Xi$ and belongs to $  B^{\sigma_t}_{p,\infty}$, for all $t\in [0,T], (\lambda,h) \in \Lambda\times (0,e^{-1}] $, uniformly with respect to $\kappa$. In addition, $X_{t,\lambda, h,\kappa} (\Psi _\kappa (t,\cdot))  $ belongs to  $  C^{\sigma_t-\frac{2}{p} }    $.
\end{theorem}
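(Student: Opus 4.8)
The plan is to mirror the inviscid argument of Section \ref{Existence-uniq-inviscid}: regularize the data, solve the smoothed version of \eqref{eqn:omega:2}, derive a priori bounds uniform in both the regularization index and the conductivity $\kappa\in(0,1)$, and then pass to the limit. The two genuinely new obstructions relative to Theorem \ref{Th1:general:version} are the propagation of the flatness of $\nabla\rho_\kappa$ near $\Sigma_{t,\kappa}$ under the competing advection and diffusion, and the control of the co-normal regularity, where a Laplacian commutator generates a loss that must be absorbed by a maximal smoothing effect.

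For the $L^p$ estimates, which constitute Proposition \ref{Proposition visous}, I would start from the pointwise differential inequality \eqref{Eq-varrho} for $\varrho\triangleq|\nabla\rho_\kappa|^2$ and use the multiplicative ansatz $\varrho=(\bar\varphi+\psi)\eta$. The factor $\eta$ is constructed so that the maximum principle yields $\|\eta(t)\|_{L^\infty}\leqslant 1$; the transported part $\bar\varphi$ is treated exactly as in the inviscid case, so that the platitude degree is tracked and $\|\bar\varphi(t)\|_{L^p}+\|\nabla\bar\varphi(t)\|_{L^p}\leqslant C_0$ on a short interval. This is where the reinforced compatibility $\nabla\rho_0\equiv\nabla^2\rho_0\equiv 0$ on $\Sigma_0$ and the condition \eqref{hypothesis2} are consumed, the control of $\nabla\bar\varphi$ being needed because $\bar\varphi$ feeds the source $\kappa\Delta\bar\varphi$ in the equation \eqref{Eq04} for $\psi$. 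For $\psi$ at finite $p$ I would run an energy estimate after splitting the domain into $d(x,\Sigma_t)\leqslant\varepsilon$ and $d(x,\Sigma_t)\geqslant\varepsilon$, bounding the near-singular contribution by a smoothing gain of order $\varepsilon^\sigma\kappa^{-1}\|\psi(t)\|_{L^p}^p$ and the far contribution by $\ln(1/\varepsilon)\|\psi(t)\|_{L^p}^p$, and then optimizing in $\varepsilon$.

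The endpoint $p=\infty$ is the crux, since the finite-$p$ bounds degenerate as $p\to\infty$. Here I would decompose $\psi$ once more as in \eqref{psi-decompos} into three regimes and handle the worst one, governed by the strong logarithmic potential, by the De Giorgi--Nash estimate of Lemma \ref{Nashlem}; the integrability condition \eqref{int:cond0} on $\Sigma_0$ enters precisely to produce the exponent $\sigma>0$. This delivers the uniform bounds $\omega_\kappa,\nabla\rho_\kappa\in L^\infty([0,T];L^a\cap L^\infty)$ and, through the logarithmic estimate of Theorem \ref{propoo1}, the uniform log-Lipschitz control $\|v_\kappa(t)\|_{L(\Sigma_{t,\kappa})}\leqslant C_0$, exactly as in the inviscid Proposition \ref{prop220}.

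For the co-normal regularity, stated in Proposition \ref{prop2222}, I would transport the $\Sigma_0$-admissible family by the viscous flow and, by Proposition \ref{p11} and Corollary \ref{coro-transport:es}, reduce the estimate of $\partial_{X_t}\omega_\kappa$ in $B^{\sigma_t-1}_{p,\infty}$ to that of $\partial_{X_t}\rho_\kappa$ in $B^{\sigma_t}_{p,\infty}$. The latter solves \eqref{T-G}, whose source is the commutator $-\kappa[\Delta,\partial_{X_t}]\rho_\kappa$; decomposing it by Bony's calculus and combining the maximal smoothing of \eqref{T-G} with the one coming from the density equation via Corollary \ref{cor-Max-reg} closes the estimate at the price of a frequency logarithmic loss, which forces the continuous decay of the index through $\sigma_t$ and explains the $C^{1+s'}$ boundary regularity with $s'<s$; the restriction $p<\infty$ is imposed by the constant $C_p$ in Proposition \ref{p11}. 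Finally, the construction and uniqueness follow the inviscid scheme, the only subtlety being that the smoothing of the data must preserve the flatness condition \eqref{hypothesis2}, which is exactly why the stronger local regularity $\rho_0\in C^{1+m}((\Sigma_0)_r)$ with $m\in(1,2)$ is imposed (cf. Remark \ref{remark-1}); stability in $L^2$ together with compactness of the flows then yields the limit uniformly in $\kappa$. I expect the main obstacle to be the $L^\infty$ estimate of $\psi$, where the De Giorgi--Nash argument and the $\varepsilon$-optimization must be made compatible uniformly as $\kappa\to 0$.
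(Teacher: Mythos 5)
Your proposal follows essentially the same route as the paper: the multiplicative ansatz $\varrho=(\bar\varphi+\psi)\eta$ with the maximum principle for $\eta$, the transport treatment of $\bar\varphi$ consuming the strengthened flatness, the $\varepsilon$-splitting plus optimization for $\psi$ at finite $p$ and the three-regime decomposition with De Giorgi--Nash at $p=\infty$ (Proposition \ref{Proposition visous}), then the Bony decomposition of $-\kappa[\Delta,\partial_{X_t}]\rho$ absorbed by the two maximal smoothing effects with the resulting logarithmic loss encoded in $\sigma_t$ (Proposition \ref{prop2222}), and finally the regularization $\widetilde{S}_n$ preserving the platitude together with $L^2$ stability. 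This matches the paper's proof in structure and in every key lemma invoked, so the approach is correct and not genuinely different.
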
 
\begin{remark}
 For the sake of simplicity, we shall drop the index $\kappa$ from the system \eqref{eqn:omega:2}.
\end{remark}
Before giving the proof of this general result, we shall sketch the proof of Theorem \ref{THEO:2:soft}.
\begin{proof}[Proof of Theorem $\ref{THEO:2:soft}$]  The proof in the viscous case differs slightly from the inviscid one, because we have to deal with Besov spaces. We shall follow the same lines as in  the inviscid case. First, we assume that the boundary of the initial patch is smooth except at some closed singular set, where the singularity is algebraic. More precisely, we assume that  
$$
\partial \Omega_0 = \big\{ x\in V:  f_0(x) =0 \big\},
$$
for a small neighborhood $V$ of $\partial \Omega_0$, and for a function $f_0 $ in $C^{1+s}$ satisfying, for some $\widetilde{\gamma}>0$
\begin{equation}\label{Non-deg11}
|\nabla f_0(x)|\geqslant  d(x,\Sigma_0)^{{\gamma} }, \quad \forall x\in V.
\end{equation}
 Without loss of generality, we may  assume that $f_0$ is compactly supported. Let $(\theta_h)_{h\in(0,e^{-1}]}$ be a family of functions of class $C^\infty_c$, with $\theta_h\equiv1$ in $(\Sigma_0)_h^{c}$ and supported in $(\Sigma_0)_{h^{\alpha }}^{c}$, for some $\alpha>1$, which satisfies for all positive real number $\tilde{s}$
$$
\|\theta_h\|_{C^{\tilde{s}}}\leqslant C_{\tilde{s}} h^{-\tilde{s}}.
$$ 
Let  $V_1$ be an open set containing $\partial \Omega_0$ and its closure set $\overline{V}_1$ is contained in $V$, that is $\overline{V}_1\subset V$.  Consider  a $C^\infty$ function $\widetilde{\theta}$ with $\widetilde{\theta}\equiv 1$ in $V_1$ and  $\supp\, \widetilde{\theta}\subset V$. Define  the family $\mathcal{X}_{0,h}\triangleq (X_{0,0,h},X_{0,1,h})$ defined through
$$
X_{0,0,h}=\nabla^\perp(\theta_h f_0),\quad X_{0, 1,h}=\theta_h(1-\widetilde{\theta})\begin{pmatrix}1\\
  0 \end{pmatrix}.
$$
From this construction together with the  assumption \eqref{Non-deg11} we obtain 
\begin{eqnarray}\label{NNNNNN1}
\inf_{h\in(0,e^{-1}]}h^{-\gamma}I\big((\Sigma_0)_h,\mathcal{X}_{0,h}\big)>0.
\end{eqnarray} 
Then, due to Lemma \ref{inverse-embedding:lemma} and the fact that $f_0$ is compactly supported, we deduce that for $h\in(0,e^{-1}]$ 
\begin{equation}\label{Besov:es:0}
\widetilde{ \|}X_{0,0,h}\|_{B^{s}_{p,\infty}}\leqslant C\widetilde{ \|}X_{0,0,h}\|_{C^s}, 
\end{equation}
where the constant $C$ depends only on the support of $f_0$. This asserts that $X_{0,0,h},\Div X_{0,0,h}\in B^{s}_{p,\infty}$. Unfortunately, the vector field $X_{0, 1,h}$ belongs to $C^s$ for all $s\in\RR$, but not to $B^{s}_{p,\infty}$. To remedy this defect, we shall make use of the partition of the unity given by Lemma \ref{lemma:partition of unit}. From the vector field $X_{0, 1,h}$, we construct a countable family $ \widetilde{\mathcal{X}} _{0,h }\triangleq(\widetilde{X} _{0,n,h})_{n\in \mathbb{N}^\star}$ to be defined as
\begin{equation*}
\widetilde{X}_{0,n,h}(x)=\psi_n(x)X_{0,1,h}(x),\quad \forall (n,x)\in \mathbb{N}^\star\times\RR^2,
\end{equation*}
where the functions $\psi_n$ are introduced  in Lemma \ref{lemma:partition of unit}. In particular, for each $n\in \mathbb{N}^\star$, $\psi_n$ is compactly supported in $\mathcal{O}_n$, with $|\mathcal{O}_n|\lesssim  1$. By the  product laws stated in Lemma \ref{para--products}, we have 
\begin{equation*}
\widetilde{\|} \widetilde{X} _{0,n,h}\|_{B^s_{p,\infty}} \lesssim \big(\|\psi_n \|_{B^s_{p,\infty}}+ \|\nabla \psi_n \|_{B^s_{p,\infty}} \big)\widetilde{\|}X _{0,1,h} \|_{C^s}, \quad \forall n\in \NN^\star.
\end{equation*}
On the other hand, by {Sobolev embeddings} and \eqref{psin-EST}, we may write
\begin{equation*}
\|\psi_n \|_{B^s_{p,\infty}}+ \|\nabla \psi_n \|_{B^s_{p,\infty}} \lesssim  \|  \psi_n \|_{L^p}  + \| \nabla  \psi_n \|_{L^p} + \| \nabla^2  \psi_n \|_{L^p} \lesssim 1, \quad \forall n\in \mathbb{N}^\star.
\end{equation*} 
 
Thus, we end up with 
\begin{equation}\label{Besov:es}
\widetilde{\|}  \widetilde{X} _{0,n,h}\|_{B^s_{p,\infty}} \lesssim  \widetilde{\|} X  _{0,1,h} \|_{C^s}  , \quad \forall n\in \NN^\star.
\end{equation}
Now, by denoting $\widetilde{\mathcal{X}}_{0,0,h}\triangleq X_{0,0,h} $, then, our next task is to show that the family $\widetilde{\mathcal{X}}_{0,h} = (\widetilde{X} _{0,n,h})_{n\in \mathbb{N}}$ is $\Sigma_0$- admissible of order $ \Xi $ in the sense of Definition \ref{def11}, for some $\Xi =(\alpha ,\beta ,\gamma )$. Indeed, an elementary computation yields  
\begin{eqnarray}\label{admissibility:000}
I((\Sigma_0)_h,\widetilde{\mathcal{X}}_{0,h })&=&\inf_{x\in (\Sigma_0)_h^c}  \sup_{n\in\NN }|\widetilde{ X}_{0,n,h}(x)|   \nonumber\\
&=& \inf_{x\in (\Sigma_0)_h^c} \max\big\{|X _{0,0,h}(x)| \;;\;\sup_{n\in\NN ^\star} \psi_n(x)| X _{0,1,h}(x)| \big\} .
\end{eqnarray}
Meanwhile, for all $x\in \mathbb{R}^2$, Lemma \ref{lemma:partition of unit} gives
\begin{eqnarray*} 
1=\sum_{n\in\NN^\star}\psi_n(x)&=&\sum_{\{n\in\NN^\star: x\in\mathscr{O}_n\}}\psi_n(x)\\
&\leqslant &N_d\sup_{n\in\NN^\star}\psi_n(x).
\end{eqnarray*} 
 Consequently, 
 $$\sup_{n\in\NN^\star}\psi_n(x)\ge\frac{1}{N_d},\quad \forall x\in\RR^2.$$
   Plugging this inequality in \eqref{admissibility:000} yields
\begin{eqnarray*}
I((\Sigma_0)_h,\widetilde{\mathcal{X}}_{0,h })\geqslant  \frac{1}{N_d}I\big((\Sigma_0)_h,\mathcal{X}_{0,h}\big).
\end{eqnarray*} 
Combining this estimate with \eqref{Non-deg11} allows to get 
\begin{equation*}
\inf_{h\in (0,e^{-1}]} h^{-\gamma} I((\Sigma_0)_h,\widetilde{\mathcal{X}}_{0,h })>0.
\end{equation*}
Finally, to achieve the proof of the admissibility of the family $\widetilde{\mathcal{X}}_{0,h}$, as stated in the Definition \ref{def11},  we need to show the existence of  a positive number $\beta>0$ such that
 \begin{equation}\label{claim:N_p}
 \sup_{h\in (0,e^{-1}]}h^{\beta }N_{s,p}((\Sigma_0)_h,\widetilde{\mathcal{X}}_{0,h}) <\infty.
 \end{equation}
Indeed, by the definition of $ N_{s,p}$ and \eqref{Besov:es}, we have 
\begin{align*}
  N_{s,p}((\Sigma_0)_h,\widetilde{\mathcal{X}}_{0,h}) = \sup_{n\in \mathbb{N}} \frac{ \widetilde{\|}  \widetilde{X} _{0,n,h}\|_{B^s_{p,\infty}} }{ I((\Sigma_0)_h,\widetilde{\mathcal{X}}_{0,h})} &\leqslant C\max_{j\in \{0,1\}} \frac{ \widetilde{\|}  X _{0,j,h}\|_{C^s} }{ I\big((\Sigma_0)_h,{\mathcal{X}}_{0,h}\big)}\\
  & \leqslant CN_{s }\big((\Sigma_0)_h,\mathcal{X}_{0,h}\big).
\end{align*}
Similar arguments as in   \cite{Chemin, Hassainia-Hmidi} show that   
\begin{align}\label{Haroune-mardi}
\sup_{h\in (0,e^{-1}]}h^{\beta }N_{s }\big((\Sigma_0)_h,\mathcal{X}_{0,h}\big)<\infty,
\end{align} 
for some index $\beta>0$,  leading to the  claim \eqref{claim:N_p}. All in all, we conclude that the family $\widetilde{\mathcal{X}}_{0,h}$ is $\Sigma_0$--admissible of   order $\Xi =(\alpha ,\beta ,\gamma )$. It remains to check that
\begin{equation*}
 \sup_{h\in (0,e^{-1}]} h^{\beta }\Big(\Vert\omega_0\Vert_{(\Sigma_0)_{h},(\widetilde{\mathcal{X}}_{0,h})}^{s,p}+ \Vert\rho_0\Vert_{(\Sigma_0)_{ h },(\widetilde{\mathcal{X}}_{0,h})}^{s+1,p} \Big)<\infty  .
 \end{equation*} 
 A direct computation shows that
$$\partial_{\widetilde{X}_{0,n,h}} \omega_0 = 0, \quad \forall n\in \NN,
 $$
which obviously gives in view of \eqref{Haroune-mardi}
\begin{equation*}
 \sup_{h\in (0,e^{-1}]} h^{\beta }\Vert\omega_0\Vert_{(\Sigma_0)_{h},(\widetilde{\mathcal{X}}_{0,h})}^{s,p}<\infty  .
 \end{equation*}
As to the estimate of the density, we apply  Lemma \ref{inverse-embedding:lemma} together with \eqref{Besov:es:0}, \eqref{Besov:es} and \eqref{Haroune-mardi} 
\begin{eqnarray*}
h^{\beta } \sup_{n\in\mathbb{N}}\frac{\|\partial_{\widetilde{X}_{0,n,h}} \rho_0  \|_{B^{s}_{p,\infty}}}{I((\Sigma_0)_h,\widetilde{\mathcal{X}}_{0,h})} \leqslant h^{ \beta }N_s((\Sigma_0)_h,\widetilde{\mathcal{X}}_{0,h}) \|\rho_0 \|_{C^{s + 1}}.
\end{eqnarray*}
This ends the proof of the desired estimate.
Therefore, the assumptions of Theorem \ref{th2} are completely  fulfilled and as a consequence we get  the local well-posedness  in Theorem \ref{THEO:2:soft}. Besides, Theorem \ref{th2} asserts that, for all $h\in (0,e^{-1}]$ and for all $t\in [0,T]$,
$$X_{t,0,h}( \Psi (t,\cdot))   \in  C^{\sigma_t-\frac{2}{p}}(\RR^2) \hookrightarrow C^{s-2\varepsilon-\frac2p}(\RR^2).  
$$ 
By taking $\varepsilon$ close to $0$ and $p$ large enough we obtain $X_{t,0,h}( \Psi (t,\cdot))   \in  C^{s^\prime}(\RR^2) $ with $s^\prime<s$ sufficiently close to $s$.
  Then the proof of the regularity persistence of the boundary $\Psi(t,\partial\Omega_0\backslash \Sigma_0)$  follows from the same arguments detailed  in  the inviscid case, see Section \ref{Inviscid-case}. This ends the proof of Theorem \ref{THEO:2:soft} provided that Theorem \ref{th2} is true.
\end{proof}
\subsection{A priori estimates} As explained in the introduction,  the viscous case is more delicate compared to the inviscid one and requires intricate analysis. The major drawback arises from the fact that the local information, such as the platitude assumption \eqref{hypothesis1}, will be diffused by the  dissipation mechanism. To elucidate the competition between the advection and the dissipation we need  suitable ansatz showing up different action regimes. 
\subsubsection{Reduction to transport-diffusion equation with a singular  potential }
 We shall examine weak $L^p$-estimates  for  $\nabla \rho$ and $\omega$ with uniform estimates on the conductivity when the initial vorticity is singular.  This part is  delicate and requires an appropriate ansatz that we should handle through suitable estimates on    transport-diffusion equation with logarithmic singular potential. The main  statement reads as follows.
\begin{proposition}\label{Proposition visous}
Let $\Sigma_0$ be a compact negligible set in $\mathbb{R}^2$ satisfying, for some $\sigma>0$ and for all small $\epsilon \in (0,1),$
 \begin{equation}\label{int:cond0}
\int_{ d(x,\Sigma_0)<\epsilon} \log^4\big(d(x,\Sigma_0)\big)  dx \lesssim \epsilon^{\sigma}.  
\end{equation}   
Let  $\omega_0\in L^2\cap L^\infty$,  $m\in(1, 2) $ and  $\rho_0$ be a scalar function satisfying  
$$
\nabla \rho_0, \nabla ^2 \rho_0 \in L^2 \cap L^\infty, 
$$
supplemented with the platitude conditions
\begin{equation}\label{hypothesis2}
 \sup_{x\notin \Sigma_0} (\varphi_0(x))^{-m} \left|\nabla \rho_0(x) \right|<\infty, \quad \quad \sup_{x\notin \Sigma_0} (\varphi_0(x))^{-m+1} \left|\nabla |\nabla\rho_0(x)|^2 \right|<\infty,
\end{equation}
where $\varphi_0$ is given by \eqref{tool:varphi0}. Consider a smooth maximal solution  $(\omega,\rho)$ to \eqref{eqn:omega:2} defined on $[0,T^\star)$ and introduce the function $t\in[0,T^\star)\mapsto \mathbb{V} (t)$  
\begin{equation}\label{V:def:0}
\mathbb{V} (t) \triangleq   \int_0^t \|v(\tau)\|_{LL\cap L(\Sigma_{\tau})}d\tau.
\end{equation} 
 There exists $C_0>0$ depending only on the initial data and $\Sigma_0$ and there exists $ C_{\sigma,m}>0$ depending only on $\sigma$ and $m$ such that the following holds.\\ If  ${\mathbb{V} (T)}\leqslant C_{\sigma,m}$, for some $T\in(0,T^\star)$, then for all $p\in [2,\infty]$, we have
 
\begin{equation}\label{es:G-W}
\forall \, t\in [0,T],\quad \|\nabla \rho(t) \|_{L^p}+\|\omega(t) \|_{L^p} \leqslant G(t),
\end{equation}  
with
\begin{equation}\label{G:def}
G(t) \triangleq	 C_0 (1+t)^3 \left(  1 +   \| A  \|_{L^6_t} \exp \left( C\| A  \|_{L^2_t}^2\right)\right) 
\end{equation}  
 and 
\begin{equation}\label{A:def:*}
A(t)\triangleq \|v(t)\|_{L(\Sigma_t)}e^{\int_0^t\|v(\tau)\|_{LL}d\tau}.
\end{equation}  
In addition, if $v_0 \triangleq \nabla ^\perp \Delta^{-1} \omega_0 \in L^\infty$ then 
\begin{equation}\label{Est-vLinfty}
\forall \, t\in [0,T],\quad\|v(t) \|_{L^\infty} \leqslant C G(t) e^{t G(t)}.
\end{equation}
Moreover,  we have the blowup criterion,
\begin{equation}\label{Visc-BLow}
T^\star<\infty\Longrightarrow \lim_{t\to T^\star}{\mathbb{V} (t)}> C_{\sigma,m}.
\end{equation}

\end{proposition}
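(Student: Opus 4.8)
The plan is to control $|\nabla\rho|^2$ through a carefully engineered multiplicative ansatz that separates the three competing mechanisms — transport, diffusion, and the singular logarithmic potential — and then to transfer the resulting bound to $\omega$ and $v$. First I would set $\varrho\triangleq|\nabla\rho|^2$ and derive its governing inequality. Differentiating the density equation gives $(\partial_t+v\cdot\nabla-\kappa\Delta)\partial_i\rho=-\partial_i v\cdot\nabla\rho$, and the chain rule then yields $(\partial_t+v\cdot\nabla-\kappa\Delta)\varrho=-2\,\nabla\rho\cdot(\nabla v\cdot\nabla\rho)-2\kappa|\nabla^2\rho|^2$. Discarding the favorable dissipative term $-2\kappa|\nabla^2\rho|^2\leqslant 0$ and using the pointwise bound $|\nabla v(t,x)|\leqslant\|v(t)\|_{L(\Sigma_t)}\ln^+(d(x,\Sigma_t))$ coming from the definition \eqref{Lsigma}, I obtain
\[
\big(\partial_t+v\cdot\nabla-\kappa\Delta\big)\varrho\leqslant 2\|v(t)\|_{L(\Sigma_t)}\ln^+(d(x,\Sigma_t))\,\varrho.
\]
To disentangle the effects I would use the factorization $\varrho=(\bar\varphi+\psi)\eta$ described in the introduction: $\eta$ absorbs the drift correction $-2\kappa\nabla\log(\bar\varphi+\psi)$ and, by the maximum principle, satisfies $\|\eta(t)\|_{L^\infty}\leqslant 1$; the function $\bar\varphi$ solves the pure transport problem carrying the logarithmic potential with datum $|\nabla\rho_0|^2$; and $\psi$ solves the transport--diffusion equation \eqref{Eq04} with vanishing initial datum and the small forcing $\kappa\Delta\bar\varphi$.

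Second, I would estimate $\bar\varphi$ exactly as in the inviscid Proposition \ref{Proposition inviscid}: the transport structure lets one follow the platitude degree, and under the smallness hypothesis $\mathbb{V}(T)\leqslant C_{\sigma,m}$ (recall \eqref{V:def:0}) this degree stays strictly positive on $[0,T]$, giving $\|\bar\varphi(t)\|_{L^p}+\|\nabla\bar\varphi(t)\|_{L^p}\leqslant C_0$ for every $p\in[2,\infty]$. Here the second platitude condition in \eqref{hypothesis2}, on $\nabla|\nabla\rho_0|^2$, is precisely what propagates the gradient bound on $\bar\varphi$, which in turn controls the forcing $\kappa\Delta\bar\varphi$ entering \eqref{Eq04}; this is the structural reason the stronger compatibility assumption of Theorem \ref{THEO:2:soft} is imposed in the viscous case.

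The crux, and the main obstacle, is the estimate of $\psi$, which carries the full strength of the logarithmic potential. For finite $p$ I would run an energy estimate on $\|\psi(t)\|_{L^p}^p$, splitting the potential integral into the near region $d(x,\Sigma_t)\leqslant\varepsilon$ and the far region $d(x,\Sigma_t)\geqslant\varepsilon$. In the far region the potential is bounded by $\ln(1/\varepsilon)$, producing a Grönwall-type contribution $\ln(1/\varepsilon)\,\|\psi(t)\|_{L^p}^p$; in the near region I would trade the singularity against the parabolic smoothing and the geometric integrability hypothesis \eqref{int:cond0}, yielding a term of size $\varepsilon^\sigma\kappa^{-1}\|\psi(t)\|_{L^p}^p$, and then optimize over $\varepsilon$ so as to close the estimate uniformly in $\kappa$. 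The endpoint $p=\infty$ escapes this scheme because the constants grow too fast as $p\to\infty$; there I would further decompose $\psi=\psi_1+\psi_2+\psi_3$ as in \eqref{psi-decompos}, control the mild regimes $\psi_1,\psi_3$ by the maximal smoothing effects of Proposition \ref{Max-reg} and Corollary \ref{cor-Max-reg}, and treat the worst regime $\psi_2$, governed by the strong short-range effect of the potential, by the De Giorgi--Nash estimate of Lemma \ref{Nashlem}. Collecting these bounds and writing $|\nabla\rho|^2=\varrho=(\bar\varphi+\psi)\eta$ with $\|\eta\|_{L^\infty}\leqslant 1$ gives $\|\nabla\rho(t)\|_{L^p}\leqslant G(t)$, with $G$ as in \eqref{G:def}.

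Finally, the bound for the vorticity follows directly from its equation, $\|\omega(t)\|_{L^p}\leqslant\|\omega_0\|_{L^p}+\int_0^t\|\nabla\rho(\tau)\|_{L^p}d\tau\leqslant G(t)$. The $L^\infty$ bound on $v$ I would obtain by integrating the momentum equation along the flow, controlling the pressure gradient by Calderón--Zygmund estimates together with the $L^p$ bounds just established and the maximum principle $\|\rho(t)\|_{L^\infty}\leqslant\|\rho_0\|_{L^\infty}$, and closing by Grönwall, which is what produces the factor $e^{tG(t)}$ in \eqref{Est-vLinfty}. The blow-up criterion \eqref{Visc-BLow} then follows by contradiction, exactly as in Corollary \ref{cor-blowup}: if one had $T^\star<\infty$ together with $\lim_{t\to T^\star}\mathbb{V}(t)\leqslant C_{\sigma,m}$, the uniform bound $\|\nabla\rho(t)\|_{L^\infty}\leqslant G(t)$ would force $\int_0^{T^\star}\|\nabla\rho(\tau)\|_{L^\infty}d\tau<\infty$, contradicting the criterion of \cite{Chae-Kim-Nam-2}. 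The genuinely delicate point throughout is the quantitative balance in the near/far splitting for $\psi$ — matching $\varepsilon^\sigma\kappa^{-1}$ against $\ln(1/\varepsilon)$ with $\varepsilon$ allowed to depend on $\kappa$ — since this is exactly where the uniformity of the estimates as $\kappa\to 0$ is won or lost.
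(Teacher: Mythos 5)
Your proposal follows essentially the same route as the paper: the same multiplicative ansatz $\varrho=(\bar\varphi+\psi)\eta$, the maximum principle for $\eta$, the inviscid-type transport estimate for $\bar\varphi$ exploiting the second platitude condition, the near/far splitting with optimization in $\varepsilon=\varepsilon(\kappa)$ for the finite-$p$ energy estimate of $\psi$, the three-way decomposition with De Giorgi--Nash for the endpoint $p=\infty$, and the same treatment of $\omega$, $\|v\|_{L^\infty}$ and the blow-up criterion. The only details glossed over are the positivity of $\bar\varphi+\psi$ needed to make sense of the drift $\nabla\log(\bar\varphi+\psi)$ (the paper proves this via the positivity of the fundamental solution of the convection--diffusion equation) and the justification of $\|\eta\|_{L^\infty}\leqslant 1$, which requires a weighted energy argument rather than a direct maximum principle since the modified drift is neither bounded nor divergence-free.
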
 

\begin{proof} Without loss of generality, we can assume that  $\rho_0$ is not a constant function. Otherwise the system \eqref{eqn:omega:2} reduces to Euler equations where the global regularity persistence is proved in \cite{Chemin}.   In the first step, we shall proceed as in Proposition \ref{Proposition inviscid}. Actually,  an elementary calculus allows us to check that 
\begin{align*}
 \big(\partial_t+ v\cdot\nabla-\kappa\Delta \big)|\nabla\rho|^2+2{\kappa}\sum_{i=1}^2|\nabla\partial_i\rho|^2&=-2\sum_{i=1}^2(\partial_iv)\cdot\nabla\rho\, \partial_i\rho\\
 &\leqslant 2|\nabla v||\nabla \rho|^2.
\end{align*}
Hence, in view of \eqref{Lsigma}, the function   $\bar{\varrho}\triangleq |\nabla\rho|^2$ satisfies the inequality 
\begin{align}\label{Eq-varrho}
 \big(\partial_t+ v\cdot\nabla-\kappa\Delta \big)\bar{\varrho}(t,x)\leqslant 2\|{v}(t)\|_{L(\Sigma_t)} \ln^+(d(x,\Sigma_t))\,\bar{\varrho}(t,x).
\end{align}
Now, we introduce the splitting 
$$
\bar{\varrho}(t,x)=\phi(t,x)\,\eta(t,x),
$$
with $\phi$ and $\eta$ are solution and sub-solution of  the following problems
\begin{equation} \label{Eq1}
\left\{ \begin{array}{ll}
   \big(\partial_t+ v\cdot\nabla-\kappa\Delta \big)\phi(t,x)= 2\|{v}(t)\|_{L(\Sigma_t)} \ln^+(d(x,\Sigma_t))\,\phi(t,x),&\vspace{2mm}\\ 
  \phi|_{t=0}=|\nabla\rho_0|^2 
  \end{array}\right.
\end{equation}
and
\begin{equation} \label{Eq2}
\left\{ \begin{array}{ll}
   \big(\partial_t+ \big(v-2\kappa\nabla \log(\phi)\big)\cdot\nabla-\kappa\Delta \big)\eta(t,x)\leqslant 0,&\vspace{2mm}\\ 
  \eta|_{t=0}=1.
  \end{array}\right.
\end{equation}
Note that by construction  $\bar{\varrho},\phi\geqslant 0$, implying in particular that $\eta\geqslant0,$  and the decomposition of $\bar{\varrho}$ is justified by the fact that 
\begin{equation}\label{claim000:phi}
\phi(t,\cdot)>0, \quad \forall t>0. 
\end{equation} 
The proof of \eqref{claim000:phi} can be checked using the fundamental solution for the convection-diffusion equation. In fact, according to \cite[Section 3]{Gallay000}, see, also \cite{Osada}, any smooth solution  $\phi$ to \eqref{Eq1} can be represented through 
\begin{equation}\label{IInt-rep1}
\phi(t,x) = \int_{\mathbb{R}^2}\Gamma_\kappa(t,x;0,y)  \phi_0(y) dy + \int_0^t \int_{\mathbb{R}^2}\Gamma _\kappa(t,x;s,y) f(s,y)\phi(s,y) dyds,
\end{equation}
where 
$$
f(t,x) = 2\|{v}(t)\|_{L(\Sigma_t)} \ln^+(d(x,\Sigma_t))\geqslant 0, \quad \phi_0 = |\nabla\rho_0|^2
$$
and $\Gamma_\kappa( t,x;t',x' )$ is the unique solution of
\begin{equation*}
 \left\{ \begin{array}{ll}
\left(\partial_t + v\cdot \nabla_x - \kappa \Delta_x \right) \Gamma_\kappa = 0, & (x,x')\in  \mathbb{R}^{4}, \quad 0\leqslant t'<t,\vspace{2mm}\\
\Gamma_\kappa (t',x;t',x') =\delta(x-x').
\end{array} \right.
\end{equation*} 
We also note that, for any $0\leqslant t' < t,$ the function $(x,x')\mapsto \Gamma_\kappa(t,x;t',x')$ is continuous and satisfies 
\begin{equation}\label{positivity:FS}
\Gamma_\kappa(t,x;t',x')>0.
\end{equation}  
We recall that both functions $\phi_0$ and $f\phi$ are positive. Hence, if $\phi(t_0,x_0)$ vanishes for some point  \mbox{$(t_0,x_0)\in (0,T] \times \mathbb{R}^2$}, then thanks to the integral representation and  \eqref{IInt-rep1}, this would imply that
$$\phi_0(y) = 0,\quad \forall y\in \mathbb{R}^2.$$
This contradicts the fact that the initial density is non-constant.  All in all, we deduce our claim \eqref{claim000:phi}. Our next claim is to show that 
\begin{equation}\label{claim000:eta}
\|\eta(t,\cdot) \|_{L^\infty} \leqslant 1 ,\quad  \forall t\in [0,T^\star).  
\end{equation}
To do so, we rewrite \eqref{Eq2} as
\begin{equation} \label{Eq2-F***}
\left\{ \begin{array}{ll}
   \big(\partial_t+  (v-   V_\kappa ) \cdot\nabla-\kappa\Delta \big)\eta  = -F  ,&\vspace{2mm}\\ 
  \eta|_{t=0}=1,
  \end{array}\right.
\end{equation}
where $F$ is a positive smooth function and $V_\kappa \triangleq 2 \kappa \nabla \log(\varphi)$. Let $\Phi$ be a smooth and  non--negative function satisfying the following
\begin{itemize}
\item { $\Phi$ is integrable, that is, $\displaystyle{\int_{\RR^2}\Phi(x)dx<\infty}.$} 
\item $ \Phi(x) \geqslant 1,$ for all $|x|\leqslant 1.$
\item There exists a constant $C>0$, such that 
\begin{equation}\label{grad-Phi}
\frac{\left| \nabla \Phi(x)\right|}{\Phi(x)}  \leqslant C, \quad \forall x\in \mathbb{R}^2.
\end{equation}
\end{itemize}
Define for $n\in \mathbb{N}^\star,$  
$$
\Phi_n(x) \triangleq \Phi\left(\frac{x}{n}\right).  
$$
Then,  multiplying \eqref{Eq2-F***} by $\eta^{p-1}\Phi_n$, for $p\geqslant 2$ and  integrating by parts over $\mathbb{R}^2$ we deduce in view of  the divergence-free condition of $v$
\begin{eqnarray}\label{eta-phi:n}
\frac{1}{p} \frac{d}{dt} \int_{\mathbb{R}^2}\eta ^p \Phi_n dx + (p-1)\kappa \int_{\mathbb{R}^2} |\nabla \eta|^2  \eta ^{p-2} \Phi_n dx &\leqslant  & \frac{1}{p} \int_{\mathbb{R}^2}| \Div V_k| \eta ^{p}    \Phi_n dx +\frac{1}{p} \int_{\mathbb{R}^2}| v - V_k| \eta ^{p} \left|\nabla \Phi_n \right|dx \nonumber\\ &&+\kappa\int_{\mathbb{R}^2} |\nabla \eta | \eta ^{p-1} \left|\nabla \Phi_n \right|dx.
\end{eqnarray}
Hence,  Cauchy-Schwarz inequality combined with \eqref{grad-Phi} and Young inequality provide
\begin{eqnarray*}
\int_{\mathbb{R}^2} |\nabla \eta | \eta ^{p-1} \left|\nabla \Phi_n \right|dx & \leqslant & \frac{C}{n}\left(\int_{\mathbb{R}^2}|\nabla \eta|^2  \eta ^{p-2} \Phi_n dx \right) ^\frac{1}{2} \left(\int_{\mathbb{R}^2} \eta ^p \Phi_n dx\right) ^\frac{1}{2}\\
& \leqslant & (p-1)\int_{\mathbb{R}^2}|\nabla \eta|^2  \eta ^{p-2} \Phi_n dx + \frac{C^2}{n^2(p-1)} \int_{\mathbb{R}^2} \eta ^p \Phi_n dx.
\end{eqnarray*}
Plugging this inequality into \eqref{eta-phi:n} and exploiting again \eqref{grad-Phi}, we obtain
\begin{equation*}
\frac{d}{dt} \int_{\mathbb{R}^2}  \eta ^p \Phi_n dx\leqslant \Xi_n(t) \int_{\mathbb{R}^2}\eta ^p \Phi_n dx,
\end{equation*}
with, $$\Xi_n(t) \triangleq \| \Div V_k(t) \|_{L^\infty(\mathbb{R}^2)} + \frac{C}{n}\| v(t)- V_k (t) \|_{L^\infty(\mathbb{R}^2)} + \frac{2\kappa C^2}{n^2}\cdot$$
Therefore, Gronwall lemma together with the estimate $\Phi_n(x)\geqslant 1$ for all $ |x|\leqslant n,$ imply
\begin{equation*}
\left(\int_{|x|\leqslant n}\eta^p(t,x) dx\right) ^\frac{1}{p} \leqslant \left(\int_{\mathbb{R}^2}  \eta^p(t,x) \Phi_n dx\right)^\frac{1}{p} \leqslant \left(\int_{\mathbb{R}^2}\Phi_n dx \right)^\frac{1}{p}\exp \left( \frac{1}{p}\int_0^t\Xi_n  (\tau) d\tau \right) .
\end{equation*}
Consequently, by letting  $p$ go to infinity allows to get for all $n\in \mathbb{N}^\star$
\begin{equation*}
0\leqslant \sup_{|x|\leqslant n} \eta (t,x) \leqslant 1.
\end{equation*}
Finally, taking  $n$ to $\infty$ yields \eqref{claim000:eta}. 
Let us now  estimate in  $L^p$ the solution  $\phi$ to  \eqref{Eq1}. For this aim, we introduce the decomposition
$$
\phi=\bar\varphi+\psi,
$$
with 
\begin{equation} \label{Eq3}
\left\{ \begin{array}{ll}
   \big(\partial_t+ v\cdot\nabla\big)\bar\varphi(t,x)= 2\|{v}(t)\|_{L(\Sigma_t)} \ln^+(d(x,\Sigma_t))\,\bar\varphi(t,x),&\vspace{2mm}\\ 
 \bar \varphi|_{t=0}=|\nabla\rho_0|^2
  \end{array}\right.
\end{equation}
and
\begin{equation} \label{Eq4}
\left\{ \begin{array}{ll}
   \big(\partial_t+ v\cdot\nabla-\kappa\Delta\big)\psi(t,x)= 2\|{v}(t)\|_{L(\Sigma_t)} \ln^+(d(x,\Sigma_t))\,\psi(t,x)+\kappa\Delta\bar\varphi,&\vspace{2mm}\\ 
  \psi|_{t=0}=0.
  \end{array}\right.
\end{equation}
Remark that, according to \eqref{Eq3}, the quantity $\bar\varphi $ satisfies in particular \eqref{equa-theta} treated in details in the inviscid case and the estimates  \eqref{P:es:INV:1} and \eqref{P:es:INV:2} can be applied.  In particular, if $t\in[0,T^\star)\mapsto m(t)$ is a function satisfying 
\begin{equation}\label{m-function1}
\forall t\in[0,T^\star),\quad m^\prime(t)+ \|{v}(t)\|_{L(\Sigma_t)} e^{\int_0^t\Vert v(\tau)\Vert_{LL}d\tau}\leqslant 0.
\end{equation}  
then we get for any $p\in [2,\infty]$, for any $t\in\mathtt{I},$
\begin{equation*}
\quad \|\bar\varphi(t)\|_{L^p}\leqslant C_p,
\end{equation*}
\begin{equation}\label{phi-estimate one}
\| \big(\varphi_t(\cdot)\big)^{-m(t)}\bar\varphi(t,\cdot) \|_{L^p(\mathbb{R}^2)} \leqslant C_p,
\end{equation}
where $\varphi_t$ is given by \eqref{phi:t:def} and 
$$
\mathtt{I}\triangleq \big\{t\in[0,T^\star), m(t)\geqslant0\big\},\quad C_p \triangleq \|  \varphi_0 ^{-m}(\cdot) \nabla\rho_0 (\cdot)  \|_{L^p} 
$$
 We point out that  $m$ is a continuous non-increasing function, which implies that the set $\mathtt{I}$ is an interval. 
On the other hand, the  inequality \eqref{m-function1} occurs  if we impose the strong condition
\begin{equation}\label{condition-one}
\forall \, t\in[0,T^\star), \quad m^\prime(t)+ \mathbb{V}^\prime(t)e^{\mathbb{V}(t)}\leqslant 0\quad\hbox{with}\quad  \mathbb{V}(t)= \int_0^t\|{v}(\tau)\|_{LL\cap L(\Sigma_\tau)}d\tau.
\end{equation}  
Let us now focus on the estimate of the $L^p$-norm of $\nabla \bar\varphi.$ Employing the following fact, $$|\nabla v(t,x)| \leqslant \|v(t) \| _{L(\Sigma_t)} \ln^+ (d(x,\Sigma_t)), $$
which is a consequence of \eqref{Lsigma} and differentiating in $x$
  the equation \eqref{Eq3} we infer after some  elementary computations 
\begin{align}\label{AAA0AAA}
 \big(\partial_t+ v\cdot\nabla  \big)|\nabla\bar\varphi|^2 &\leqslant 6\|{v}(t)\|_{L(\Sigma_t)} \ln^+(d(x,\Sigma_t))\,|\nabla \bar\varphi |^2 + 4M (t,x)   \left| \nabla  \bar\varphi\right|\frac{\bar\varphi}{d(x,\Sigma_t)},
\end{align}
where, we set
$$
M(t,x) = \left\{ \begin{array}{ll}
 |\nabla  d(x,\Sigma_t)| & \text{ if } d(x,\Sigma_t) < e^{-1},\vspace{2mm}\\
0 & \text{ if } d(x,\Sigma_t) \geqslant  e^{-1}.
\end{array}\right. 
$$
In order to estimate the term $\frac{1} {d(x,\Sigma_t)}, $ we apply Lemma \ref{s2lem1} with fixed  $h$ given by 
$$
h = \min \big\{d(\Psi^{-1}(t,x),\Sigma_0) , e^{-1}\big\} = \varphi_t(x)
$$
getting 
$$ 
d(\Psi^{-1}(t,x),\Sigma_0) \geqslant h \Longrightarrow d(x,\Sigma_t) \geqslant h ^{ e^{\int_0^t \|v(\tau)\|_{LL}d\tau} } = \left(\varphi_t(x)\right)^{  e^{\int_0^t \|v(\tau)\|_{LL}d\tau} }.
$$
Thus, we get
$$\frac{1}{d(x,\Sigma_t)}  \leqslant \left(\varphi_t(x)\right)^{- e^{\int_0^t \|v(\tau)\|_{LL}d\tau} } ,\quad \forall x\notin \Sigma_t $$
and \eqref{AAA0AAA} guides to
\begin{align}\label{BBB0}
 \nonumber\big(\partial_t+ v\cdot\nabla  \big)|\nabla\bar\varphi|^2 \leqslant& 6\|{v}(t)\|_{L(\Sigma_t)} \ln^+(d(x,\Sigma_t))\,|\nabla \bar\varphi |^2 \\
 &+ 4  M (t,x)  \left| \nabla  \bar\varphi\right| \bar\varphi  \left(\varphi_t(x)\right)^{- e^{\int_0^t \|v(\tau)\|_{LL}d\tau} }.
\end{align}
Let $t\in[0,T^\star)\mapsto \gamma(t)$ be a $C^1$ function, to be chosen later,  and introduce
\begin{equation}\label{VARRHO:DEF}
\widetilde{\varrho}(t,x)\triangleq \big(\varphi_t(x)\big)^{-\gamma(t)}|\nabla\bar\varphi(t,x)|^2. 
\end{equation}
One can scrutinize from \eqref{phi:t:def} that
$$
\big(\partial_t+ v\cdot\nabla \big)\big(\varphi_t(x)\big)^{-\gamma(t)}=-\gamma^\prime(t)\ln\big(\varphi_t(x)\big)\big(\varphi_t(x)\big)^{-\gamma(t)}.
$$
Therefore, in sight of \eqref{BBB0}, it holds 
\begin{eqnarray}\label{F-I1-2}
 \big(\partial_t+ {v}\cdot\nabla \big)\widetilde{\varrho} &\leqslant & \Big(\gamma^\prime(t)\ln^+\big(\varphi_t(x)\big)+6\|{v}(t)\|_{L(\Sigma_t)} \ln^+(d(x,\Sigma_t))\Big)\widetilde{\varrho} \\
\nonumber && +  4  M (t,x)  \sqrt{\widetilde{\varrho}}\; \bar\varphi \big(\varphi_t(x)\big)^{{-\frac{\gamma(t)  }{2} } - e^{\int_0^t \|v(\tau)\|_{LL}d\tau}} .
\end{eqnarray}
To handle the first part on the r.h.s. above we follow the same lines as in the inviscid case. Actually, by using the fact $\ln^+(\varphi_t(x)) = \ln^+\big(d\big(\Psi^{-1}(t,x),\Sigma_0\big)\big)$ combined with   Lemma \ref{elementary propertiy LN 2} we find 
\begin{align}\label{comparaison11}
\ln^+\big(d\big(x,\Sigma_t\big)\big)&\leqslant \ln^+\big(d\big(\Psi^{-1}(t,x),\Sigma_0\big)\big)\, e^{\int_0^t\Vert v(\tau)\Vert_{LL}d\tau}.
\end{align}
Inserting this estimate into \eqref{F-I1-2} gives
\begin{eqnarray*}
\big(\partial_t+ {v}\cdot\nabla \big)\widetilde{\varrho} &\leqslant&  \ln^+\big(\varphi_t(x)\big)\Big(\gamma^\prime(t)+6\|{v}(t)\|_{L(\Sigma_t)}e^{\int_0^t\Vert v(\tau)\Vert_{LL}d\tau} \Big)\widetilde{\varrho}\\
&&+  4  M (t,x)  \sqrt{\widetilde{\varrho}}\;\bar\varphi \big(\varphi_t(x)\big)^{{-\gamma(t) \over 2} - e^{\int_0^t \|v(\tau)\|_{LL}d\tau}}. \nonumber
\end{eqnarray*}
Hence we infer from the definition of $\mathbb{V}$ introduced  in \eqref{condition-one}
\begin{eqnarray}\label{equa000}
\big(\partial_t+ {v}\cdot\nabla \big)\widetilde{\varrho} &\leqslant&  \ln^+\big(\varphi_t(x)\big)\Big(\gamma^\prime(t)+6\mathbb{V}^\prime(t)e^{\mathbb{V}(t)} \Big)\widetilde{\varrho}\\
&&+  4  M (t,x)  \sqrt{\widetilde{\varrho}}\;\bar\varphi \big(\varphi_t(x)\big)^{{-\gamma(t) \over 2} - e^{\mathbb{V}(t)}}. \nonumber
\end{eqnarray}
Now, we should choose  $\gamma(t)$ and $m(t)$ such that 
 \begin{equation*}
\gamma^\prime(t)+ 6 \mathbb{V}^\prime(t)e^{\mathbb{V}(t)}  = 0, \quad \gamma(0)= \gamma
\end{equation*}  
and
\begin{equation*}
 m(t)=m+2\big(1-e^{\mathbb{V}(t)}\big) , \quad \frac{\gamma}{2} \triangleq m-1.
\end{equation*}
Therefore
\begin{align}\label{Jared-11}
\gamma(t)=\gamma+6\big(1-e^{\mathbb{V}(t)}\big)\quad\hbox{and}\quad {\frac{\gamma(t)}{2}+ e^{ \mathbb{V}(t)} { =}\,  m(t)} .
\end{align}
With this choice the  condition   \eqref{condition-one} follows easily. 

 Hence, with this choice,  we find from \eqref{equa000}
\begin{align}\label{diff-eq-LPL}
\big(\partial_t+ {v}\cdot\nabla \big)\sqrt{\widetilde{\varrho}}  \leqslant 2  M (t,x)  \;\bar\varphi \big(\varphi_t(x)\big)^{-m(t)}.
\end{align}
Remark that $x \mapsto d(x,\Sigma_t)$ is a 1-Lipschitz function. This gives, in particular
$$ 
\|M(t,\cdot) \|_{L^\infty }\leqslant 1.
$$
On the other hand, according to \eqref{phi-estimate one}, we know that
$$m(t) \geqslant 0 \Longrightarrow\|  (\varphi_t(\cdot ))^{-m(t)}\bar\varphi(t,\cdot) \|_{L^p(\mathbb{R}^2)} \leqslant C_p, \quad \forall p\in [2,\infty]. $$
Using \eqref{diff-eq-LPL} and the  definition of $\widetilde{\varrho}$ \eqref{VARRHO:DEF}  yields in view of the  above estimate 
\begin{equation}\label{xx}
|\nabla \bar\varphi(t,x)| \leqslant \big( \widetilde{C}_\infty  + t C_\infty \big) \big(\varphi_t(x)\big)^{ \gamma(t) \over 2}, \quad \forall x\notin \Sigma_t,
\end{equation}
where 
$$\widetilde{C}_\infty \triangleq \sup_{x\notin \Sigma_0}  \big(\varphi_0(x)\big)^{-m+1} |\nabla  \bar\varphi_0(x)| .$$
In fact, we can show that \eqref{xx} holds for all $x\in \mathbb{R}^2$. This can be done by   exploiting the  same arguments used in the inviscid case, see in particular \eqref{L:infty:1st:es}.
Hence, it follows that
$$\gamma(t)\geqslant 0 \Longrightarrow \|\nabla \bar\varphi(t,\cdot) \|_{L^\infty}\leqslant    \widetilde{C}_\infty  + t C_\infty .$$
Similarly, we obtain for $p\in [2,\infty)$ 
$$\| (\varphi_t(\cdot )\big)^{\frac{-\gamma(t)}{2}} \nabla \bar\varphi(t,\cdot)  \|_{L^p } \leqslant \widetilde{C}_{p} + t C_p, $$
where
$$\widetilde{C}_p \triangleq  \|  \varphi_0 (\cdot)^{-m+1}  \nabla \bar\varphi_0 (\cdot)  \|_{L^p}.$$
Let us point out that the assumption  \eqref{hypothesis2} together with the fact that $\Sigma_0$ is compact insure that $\widetilde{C} _p< \infty$. This can be proved along the same lines as in \eqref{ppp}.
On the other hand, we have 
$$ \gamma(t)\geqslant 0 \Longrightarrow |\nabla \bar\varphi(t,\cdot)|= (\varphi_t(\cdot )\big)^{\frac{-\gamma(t)}{2}} (\varphi_t(\cdot )\big)^{\frac{ \gamma(t)}{2}} |\nabla \bar\varphi(t,\cdot)| \leqslant  (\varphi_t(\cdot )\big)^{\frac{-\gamma(t)}{2}}  |\nabla \bar\varphi(t,\cdot)|  $$
Consequently, we obtain   for any $p\in[2,\infty]$
\begin{equation}\label{estimate-nabla phi}
\gamma(t)\geqslant 0 \Longrightarrow \|\nabla \bar\varphi(t,\cdot) \|_{L^p}\leqslant     \widetilde{C}_{p} + t C_p. 
\end{equation}  
Remark that  the choice \eqref{Jared-11}  provides the inequality 
 $$
 m(t)\geqslant \frac{\gamma(t)}{2} 
$$ 
which implies that the positivity of $\gamma$ will guarantee the positivity of $m$ in the same domain. More precisely, by the monotonicity and the continuity of $\gamma$   we have
\begin{equation}\label{T1}
[0,T_1)\triangleq \big\{t\in[0,T^\star): \; \gamma(t)\geqslant  0 \big\}\subset \mathtt{I}.
\end{equation} 
Consequently, we deduce from \eqref{phi-estimate one} and \eqref{estimate-nabla phi}  the existence of a  constant $C_0>0,$ depending only on the initial data, such that for all $p\in [2,\infty]$ and $t\in [0,T_1)$ 
\begin{equation}\label{result:1}
\| \bar{\varphi}(t,\cdot)\|_{W^{1,p}}\leqslant C_0(1+t).
\end{equation}

It remains to estimate $\psi$, the solution of \eqref{Eq4}. We shall restrict ourselves only for $p=2$ and $p= \infty$ and the other values will be done by an interpolation argument. \\

$\bullet$ {\it $L^2-$estimate}. A standard energy estimate for \eqref{Eq4} gives  
\begin{align}\label{L2:energy:ES}
 \frac12\frac{d}{dt}\|\psi(t)\|_{L^2}^2+\kappa\|\nabla \psi(t)\|_{L^2}^2\leqslant& 2\|v(t)\|_{L(\Sigma)}\int_{\mathbb{R}^2} \ln^+(d(x,\Sigma_t))\,\psi^2(t,x)dx\\
\nonumber&+\kappa\|\nabla\bar\varphi(t)\|_{L^2}\|\nabla\psi(t)\|_{L^2}.
\end{align}
Combining \eqref{comparaison11} with \eqref{result:1} and \eqref{L2:energy:ES}, it follows that  
\begin{equation}\label{L2-estimate}
\frac{d}{dt}\|\psi(t)\|_{L^2}^2+\kappa\|\nabla \psi(t)\|_{L^2}^2\leqslant 4A(t)\int_{\mathbb{R}^2} \ln^+(d\big(\Psi^{-1}(t,x),\Sigma_0\big))\,\psi^2(t,x)dx+\kappa C_0^2(1+t)^2,
\end{equation}
where $A(t)$ is given by \eqref{A:def:*}.  Let $\epsilon\in(0,e^{-1})$,  we introduce  
\begin{align}\label{ftttt}
f(t,x) \triangleq d\big(\Psi^{-1}(t,x),\Sigma_0\big)= f_0(\Psi^{-1}(t,x)), \quad f_0(x) \triangleq d(x,\Sigma_0)
\end{align}
 and we split 
\begin{align*}
\int_{\mathbb{R}^2} \ln^+\big(d\big(\Psi^{-1}(t,x),\Sigma_0\big)\big)\,\psi^2(t,x)&=\int_{f(t,x)>\epsilon} \ln^+(f(t,x))\,\psi^2(t,x)dx+\int_{f(t,x)\leqslant\epsilon}   \ln^+(f(t,x))\, \psi^2(t,x)dx \\
&\triangleq \textnormal{I}_1 + \textnormal{I}_2.
\end{align*}
To estimate $\textnormal{I}_1$ we employ the fact that $x\mapsto \ln^+(x)$ is a decreasing function in order to get
\begin{equation}\label{I1 estimate}
\textnormal{I}_1 \leqslant(\ln^+\epsilon) \|\psi(t)\|_{L^2}^2.
\end{equation}
To treat the term $\textnormal{I}_2$, H\"older and Gagliardo-Nirenberg inequalities yield
\begin{eqnarray*}
\textnormal{I}_2 &\leqslant &\left(\int_{f(t,x)\leqslant\epsilon}   \left(\ln^+(f(t,x))\right)^2  dx \right)^\frac{1}{2} \|\psi (t)\|_{L^4}^2\\
&\leqslant & \left(\int_{d(x,\Sigma_0)\leqslant\epsilon}  \left( \ln^+d(x,\Sigma_0) \right)^2  dx \right)^\frac{1}{2}\|\psi(t)\|_{L^2}\|\nabla \psi (t)\|_{L^2},
\end{eqnarray*}  
where  we have used that  ${\Psi}$ preserves the Lebesgue measure. On the other hand, we get from the inequality $\ln^+t\geqslant 1$,  the  assumption \eqref{int:cond0}  and some $\sigma>0$  
$$
\int_{d(x,\Sigma_0)\leqslant\epsilon}  \left( \ln^+d(x,\Sigma_0) \right)^2  dx   \leqslant \int_{d(x,\Sigma_0)\leqslant\epsilon}  \left( \ln^+d(x,\Sigma_0) \right)^4  dx     \lesssim \epsilon^{\sigma}. 
$$
Therefore, combining this estimate with Young inequality together with \eqref{I1 estimate} imply
$$
4A(t)\int_{\mathbb{R}^2} \ln^+\big(d(\Psi^{-1}(t,x),\Sigma_0)\big)\psi^2(t,x)dx \leqslant  \left(-4 A(t) \ln\epsilon + C  A^2(t) \epsilon^{\sigma} \kappa^{-1} \right)\|\psi (t)\|_{L^2}^2 + \frac{\kappa}{2} \|\nabla \psi (t)\|_{L^2}^2
$$
and then, \eqref{L2-estimate} yields   by applying Gronwall inequality 
$$
\|\psi (t)\|_{L^2}^2 \leqslant C_0^2(1+t)^3\kappa \epsilon ^{-4 \int_0^t A(\tau) d\tau} \exp \left(\epsilon^{\sigma} \kappa^{-1}C\int_0^t A^2(\tau) d\tau \right).  
$$
Hence, the choice $\epsilon = \kappa ^{\frac{1}{ \sigma}} $ enables to write 
$$
\|\psi (t)\|_{L^2}^2 \leqslant C_0^2(1+t)^3\kappa  ^{1-\frac{4}{\sigma} \int_0^t A(\tau) d\tau} \exp \left( C\int_0^t A^2(\tau)d\tau \right).   
$$
On the other hand, we infer from  \eqref{condition-one} and \eqref{A:def:*} 
\begin{equation}\label{A-V:es}
\int_0^t A(\tau) d\tau \leqslant  \int_0^t  \mathbb{V}^\prime(\tau)e^{\mathbb{V}(\tau)} d\tau= e^{\mathbb{V}(t)}-1
\end{equation}
which implies,  since   $\kappa\in(0,1)$, 
$$
\|\psi (t)\|_{L^2}^2 \leqslant C_0^2(1+t)^3\kappa  ^{\Theta_1(t) } \exp \left( C\int_0^t A(\tau)^2 d\tau \right),
$$
with
 $$\Theta_1(t) \triangleq 1+\frac{4}{\sigma}\Big( 1-e^{\mathbb{V}(t)} \Big).  $$
By setting 
\begin{equation}\label{T2}
 [0,T_2] \triangleq  \Big\{t\in [0,T_1]: \Theta_1(t) \geqslant  0\Big\}
\end{equation} 
we deduce that for any   $\kappa \in(0,1)$
\begin{equation}\label{L2-estimate-psi}
\forall\, t\in[0,T_2],\quad  \|\psi(t)\|_{L^2}^2   \leqslant  C_0^2(1+t)^3 \exp \left( C\int_0^t A^2(\tau) d\tau \right).
\end{equation} 

{\it $\bullet$ $L^\infty-$estimate}. Let $t\in [0,T_2]$, where $T_2$ is given by \eqref{T2} and decompose $\psi$, the solution of \eqref{Eq4},  as follows
\begin{align}\label{psi-decompos}
\psi=\psi_1+\psi_2+\psi_3,
\end{align}
with  \begin{equation} \label{Eq6}
\left\{ \begin{array}{ll}
   \big(\partial_t+ {v}\cdot\nabla-\kappa\Delta\big)\psi_1(t,x)= \|v(t)\|_{L(\Sigma_t)} \ln^+(d(x,\Sigma_t))\,{\bf{1}}_{f(t,x)>\epsilon}\psi(t,x),&\vspace{2mm}\\ 
  \psi|_{t=0}=0,
  \end{array}\right.
\end{equation}
\begin{equation} \label{Eq7}
\left\{ \begin{array}{ll}
   \big(\partial_t+ {v}\cdot\nabla-\kappa\Delta\big)\psi_2(t,x)= \|v(t)\|_{L(\Sigma_t)} \ln^+(d(x,\Sigma_t))\,{\bf{1}}_{f(t,x)\leqslant\epsilon}\psi(t,x)&\vspace{2mm}\\ 
  \psi|_{t=0}=0,
  \end{array}\right.
\end{equation}
and
\begin{equation} \label{Eq8}
\left\{ \begin{array}{ll}
   \big(\partial_t+ {v}\cdot\nabla-\kappa\Delta\big)\psi_3(t,x)= \kappa\Delta\bar\varphi,&\vspace{2mm}\\  
  \psi|_{t=0}=0,
  \end{array}\right.
\end{equation}
where the function $f$ is defined in \eqref{ftttt}. To handle \eqref{Eq6}, we combine the maximum principle with \eqref{comparaison11}  and \eqref{A:def:*} allowing to get for any $\epsilon\in(0,e^{-1})$
\begin{equation}\label{psi1 estimate}
\|\psi_1(t)\|_{L^\infty}\leqslant -\ln\epsilon\int_0^tA(\tau)\|\psi(\tau)\|_{L^\infty} d\tau.
\end{equation}
{To estimate $\psi_2$, it is enough to make use of  \eqref{comparaison11} and  \eqref{A:def:*}  and apply  Lemma \ref{Nash}, with $F=0$, $p_1 = 6$ and $q_1 =\frac{4}{3} $}
\begin{align*}
\|\psi_2(t)\|_{L^\infty}&\leqslant  C\big(1+(\kappa t)^{ {\frac{1}{12}} }\big)\kappa^{-\frac{5}{6}  }\left\| A \;\ln^+ f \;\mathbf{1}_{ \{f \leqslant \epsilon\}}  \psi  \right\|_{L^6_tL^\frac{4}{3}}\\
& \lesssim (1+t)^{{\frac{1}{12}}}\kappa^{-\frac56}\|A\|_{L_t^6}\sup_{\tau\in[0,t]}\left(\int_{f(\tau,x)\leqslant\epsilon}\left(\ln^+f(\tau,x))\right)^4dx\right)^{\frac14}\|\psi\|_{L^\infty_t L^2},
\end{align*}
Therefore we find by virtue of \eqref{ftttt} combined with  the fact that ${\Psi}$ preserves Lebesgue measure together with \eqref{int:cond0} and \eqref{L2-estimate-psi} 
\begin{equation}\label{psi2 estimate}
\|\psi_2(t)\|_{L^\infty} \leqslant  C_0(1+t)^{\frac{3}{2}+{\frac{1}{12}}} \kappa^{-\frac{5}{6}  }\epsilon^{\frac{\sigma}{4}} \| A  \|_{L^6_t} \exp \left( C\int_0^t A^2(\tau) d\tau \right).
\end{equation}
Let us now move to estimate $\psi_3$. Using once again Lemma \ref{Nash} with $G=0$, $q=\infty$ and $p= 3$ implies  
\begin{align*}
\|\psi_3(t)\|_{L^\infty}  \leqslant C\big(1+(\kappa t)^{\frac16}\big)\kappa^{\frac13}\|\nabla\bar\varphi\|_{L^3_t L^\infty}.
\end{align*}
Thanks  to \eqref{result:1}, it follows  that for $\kappa\in(0,1)$  
\begin{align}\label{psi3 estimate}
\|\psi_3(t)\|_{L^\infty}&\leqslant  C_0(1+t)^{\frac43+\frac16}\kappa^{\frac13}.
\end{align}
To achieve  the estimate of $\psi$ in $L^\infty-$norm, we collect \eqref{psi1 estimate}, \eqref{psi2 estimate} and \eqref{psi3 estimate} to obtain
$$ 
\|\psi(t)\|_{L^\infty}  \leqslant  C_0 (1+t)^2 \left( \kappa^{\frac{1}{3}} + \kappa^{-\frac{5}{6}  }\epsilon^{\frac{\sigma}{4}} \| A  \|_{L^6_t} \exp \left( C\int_0^t A^2(\tau) d\tau \right) \right) - \ln\epsilon\int_0^tA(\tau)\|\psi(\tau)\|_{L^\infty} d\tau.
$$
Thus,
Gronwall lemma yields
$$ 
\|\psi (t)\|_{L^\infty}  \leqslant  C_0 (1+t)^2 \left( \kappa^{\frac{1}{3}} + \kappa^{-\frac{5}{6}  }\epsilon^{\frac{\sigma}{4}} \| A  \|_{L^6_t} \exp \left( C\int_0^t A^2(\tau) d\tau \right) \right) \epsilon^{- \int_0^tA(\tau) d\tau}.
$$
By making the choice $\epsilon=\kappa^{\frac{14}{3\sigma}}$, we end up with 
\begin{equation*}
\|\psi (t)\|_{L^\infty}  \leqslant  C_0 (1+t)^2 \left(  1 +   \| A  \|_{L^6_t} \exp \left( C\int_0^t A^2(\tau) d\tau \right)\right)\kappa^{\frac13\big( 1-\frac{14}{\sigma}\int_0^tA(\tau)d\tau \big)}.
\end{equation*}
Thereafter, by using \eqref{A-V:es}, we find that
\begin{equation*}
\|\psi (t)\|_{L^\infty}  \leqslant  C_0 (1+t)^2 \left(  1 +   \| A  \|_{L^6_t} \exp \left( C\int_0^t A^2(\tau)d\tau \right)\right)\kappa^{\frac{\Theta_2(t)}{3}},
\end{equation*}
with
$$\Theta_2(t)\triangleq  1+\frac{14}{\sigma} \Big(1- e^{\mathbb{V}(t)} \Big).$$
By setting 
\begin{equation}\label{T1--3}
 [0,T_3] \triangleq  \Big\{ t\in [0,T_2]: \;\Theta_2(t) \geqslant 0 \Big\}. 
\end{equation}   
we  conclude that for $\kappa\in(0,1) $
\begin{equation*}
 \forall t\in [0,T_3],\qquad \|\psi (t)\|_{L^\infty}  \leqslant  C_0 (1+t)^2 \left(  1 +   \| A  \|_{L^6_t} \exp \left( C\int_0^t A(\tau)^2 d\tau \right)\right).
\end{equation*}
The $L^p$ estimates of $\psi$, for $p\in (2,\infty)$ can be easily obtained by interpolation. It is worthy to point out  from \eqref{T1}, \eqref{T2} and \eqref{T1--3} that,
$${\mathbb{V}(t)}\leqslant C_{\sigma,m} \implies  \min \Big\{\gamma(t), \Theta_1(t), \Theta_2(t) \Big\} \geqslant 0,$$
where, 
\begin{equation}\label{C-sigma-m}
C_{\sigma,m}\triangleq \log\Big( 1+ \min\Big\{  \frac{m-1}{3}, \frac{\sigma}{14}\Big\}\Big) >0 .
\end{equation}

Thereby, the final choice of $T$ is given by 
$$ [0,T] \triangleq \Big\{ t\in [0,T^\star):{\mathbb{V}(t)}\leqslant C_{\sigma,m} \;  \Big\}. $$
 This ends the proof of the $L^p$-estimates of $\nabla \rho$  which gives in turn the $L^p$-estimates of $\omega$. \\
 It remains to prove the estimate \eqref{Est-vLinfty} which will be done in the same spirit as  \cite{Hmidi-Keraani-2}.
 First,  Bernstein inequalities and Biot--Savart law allow to get for all $t\in [0,T]$ 
\begin{eqnarray}\label{v-es:0}
\|v(t) \|_{L^\infty} &\leqslant &\| \Delta_{-1} v(t)\|_{L^\infty} + \sum_{j\geqslant 0} \| \Delta_j v(t) \|_{L^\infty}\nonumber\\
& \lesssim & \| \Delta_{-1} v(t)\|_{L^\infty} + \sum_{j\geqslant 0} 2^{-j }\| \Delta_j \omega (t) \|_{L^\infty}\nonumber\\
& \lesssim & \| \Delta_{-1} v(t)\|_{L^\infty}  + \|   \omega (t) \|_{L^\infty} .
\end{eqnarray}
Now, in view of the estimate \eqref{es:G-W} the problem boils down to estimate the low frequencies of $ v(t) $.  Applying  $\Delta_{-1}$ to the velocity equation and taking the $L^\infty$ norm yields in view of Bernstein inequalities
 \begin{eqnarray}\label{v-es:1}
\|   \Delta_{-1} v(t)\|_{L^\infty} &\leqslant &\|  \Delta_{-1}  v_0\|_{L^\infty} + \int_0^t \Big(\| \Delta_{-1} (v\cdot \nabla v) (\tau) \|_{L^\infty}+\| \Delta_{-1} \nabla p (\tau) \|_{L^\infty} + \| \Delta_{-1}\rho(\tau) \|_{L^\infty} \Big)d\tau \nonumber\\
& \lesssim &\| v_0\|_{L^\infty} + \int_0^t \|  (v\cdot \nabla v ) (\tau) \|_{L^q}+\| \nabla p (\tau) \|_{L^q}d\tau + t\| \rho_0 \|_{L^q},
\end{eqnarray} 
where $ q\in [2,\infty)$ and we have used the fact that  
\begin{equation*}
\| \Delta_{-1}\rho(\tau) \|_{L^\infty} \leqslant \| \rho_0 \|_{L^\infty} , \quad \forall \tau \in [0,t].
\end{equation*} 
On the other hand, due to the divergence-free of $v$, we may write
$$\nabla p = -\nabla \Delta^{-1} \text{div} (v\cdot\nabla v) + \nabla \Delta^{-1} \partial_2 \rho. $$
Thus, we obtain, 
\begin{eqnarray}\label{p-es:0}
\|  (v\cdot \nabla v ) (\tau) \|_{L^q}+ \| \nabla p (\tau)\|_{L^q} &\lesssim &\|v(\tau) \|_{L^\infty}\|\omega(\tau) \|_{L^q} + \|\rho(\tau) \|_{L^q} \nonumber\\
& \lesssim &\|v(\tau) \|_{L^\infty}\|\omega(\tau) \|_{L^q} + \|\rho_0 \|_{L^q}.
\end{eqnarray} 
Gathering \eqref{v-es:0}, \eqref{v-es:1} and \eqref{p-es:0} yields
\begin{equation*}
\|v(t) \|_{L^\infty} \leqslant C\|v_0 \|_{L^\infty} + Ct \|\rho_0 \|_{L^q} + C\|   \omega (t) \|_{L^\infty} + C\int_0^t \|v(\tau) \|_{L^\infty}\|\omega(\tau) \|_{L^q} d\tau .
\end{equation*}
Gronwall lemma implies
\begin{equation}\label{gronwall}
\|v(t) \|_{L^\infty} \lesssim \Big( (1+ t )C_0  +  \|   \omega (t) \|_{L^\infty} \Big) e^{ C\int_0^t  \|\omega(\tau) \|_{L^q} d\tau }.
\end{equation}
On the other hand, from Proposition \ref{Proposition visous}, we have
$$  \|\omega(t) \|_{L^q \cap L^\infty} \leqslant G(t),$$
where, $G(t)$ is given by \eqref{G:def} and satisfies, up to a suitable modification in $C_0$,  $$G(t) \geqslant  C_0(1+t) .$$
Implementing these estimates in \eqref{gronwall} concludes the proof of \eqref{Est-vLinfty}.\\
 The proof of the last point of   Proposition \ref{Proposition visous} concerning the blowup criterion can be achieved following exactly the same argument as in the proof of Corollary \ref{cor-blowup}.
 
\end{proof}
\subsubsection{Co-normal regularity persistence} 
 The next purpose that we want to achieve is to track the  striated regularity of the vorticity in the the scale of  Besov spaces $B^{s}_{p,\infty}$, with finite $p$.  For this aim, the a priori estimates stated in Proposition \ref{Proposition visous} are the cornerstone in  dealing with the regularity persistence issue. The first result reads as follows. 
\begin{proposition}\label{prop2222}
Let $(s,\varepsilon,h,p)\in(0,1)\times(0,\frac{s}{2})   \times(0,e^{-1}]\times[ 1,\infty)$. Consider a compact negligible set $\Sigma_0$ of  the plane satisfying \eqref{int:cond0} and a vector field  $X_0$ of  class $B^{s}_{p,\infty}$ as well as its divergence and whose support  is embedded in $(\Sigma_0)_h^c$. Let $(\omega,\rho)$ be a smooth maximal solution of the system \eqref{eqn:omega:2} defined on   $[0,T^\star)$ with initial data $(\omega_0,\rho_0)$ satisfying the hypothesis of Proposition \ref{Proposition visous}. Let $X_t$   be the solution of the transport equation
\begin{equation*}
\left\{ \begin{array}{ll}
\big(\partial_t +v\cdot\nabla\big)X_t=\pxt v, &\vspace{2mm}\\
X_{| t=0}=X_{0} 
\end{array} \right.
\end{equation*} 
and introduce  
\begin{equation}\label{sigmat:def}
\sigma_t \triangleq s\Big(1-\frac{\varepsilon t}{sT}\Big)-\varepsilon,
\end{equation} 
with
\begin{equation}\label{T:PROP:ST}
T = \sup\Big\{t \leqslant  \min\{1,T^\star\} :\;\mathbb{V} (t) \leqslant C_{\sigma,m}\Big\},
\end{equation}
where $\mathbb{V}(t)$ and $C_{\sigma,m}$ are given by \eqref{V:def:0} and \eqref{C-sigma-m}, respectively.
Then, the following holds for \mbox{all $t\in [0,T]$}

 
\begin{equation}\label{ES:striated:reg}
 \widetilde{\|} X_t \|_{B^{\sigma_t}_{p,\infty}}+\| \partial_{X_t}\omega(t)\|_{B^{\sigma_t-1}_{p,\infty}} \lesssim        \widetilde{\zeta} _0 h^{- C t  \widetilde{W}^2(t) }
 \end{equation} 
 and
 \begin{equation}\label{ES:striated:reg:rho}
  \| \partial_{X_t}\rho(t)\|_{B^{\sigma_t}_{p,\infty}}\lesssim  \widetilde{\zeta} _0 h^{- C   \widetilde{W}^2(t) } ,
 \end{equation} 
with
\begin{equation*}
\widetilde{\zeta} _0 \triangleq \widetilde{\Vert} X_{0}\Vert_{  B^{s}_{p,\infty}} +\|\partial_{ X_{0}}\omega_0\|_{ B^{s-1}_{p,\infty}}+ \|\partial_{ X_{0}}\rho_0\|_{ B^{s}_{p,\infty}}  ,
\end{equation*}
\begin{equation}\label{widetilde:W:def}
\widetilde{W} (t)\triangleq\Big( \sup_{\tau\in[0,t]}\Vert  v(\tau)\Vert_{L(\Sigma_\tau)}+G(t) \Big)\exp\bigg(\int_0^t\Vert v(\tau)\Vert_{LL}d\tau\bigg)
\end{equation} 
and $G(t)$ is given by \eqref{G:def}.
\end{proposition}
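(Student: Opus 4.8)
The plan is to propagate jointly the three coupled quantities $\widetilde{\Vert} X_t\Vert_{B^{\sigma_t}_{p,\infty}}$, $\Vert\partial_{X_t}\omega(t)\Vert_{B^{\sigma_t-1}_{p,\infty}}$ and $\Vert\partial_{X_t}\rho(t)\Vert_{B^{\sigma_t}_{p,\infty}}$ along the time-decreasing regularity index $\sigma_t$ of \eqref{sigmat:def}, closing the system by a Gronwall argument. All the work takes place on the interval $[0,T]$ fixed by \eqref{T:PROP:ST}, where $\mathbb{V}(t)\leqslant C_{\sigma,m}$ and Proposition \ref{Proposition visous} provides the uniform-in-$\kappa$ bounds $\Vert\nabla\rho(t)\Vert_{L^\infty}+\Vert\omega(t)\Vert_{L^a\cap L^\infty}\leqslant G(t)$; in particular, by Lemma \ref{lem3}, $\Vert v(t)\Vert_{LL}\leqslant\Vert\omega(t)\Vert_{L^a\cap L^\infty}\leqslant G(t)$, so the weight $W$ of \eqref{W:def} is controlled by $\widetilde W$ of \eqref{widetilde:W:def} throughout $[0,T]$. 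Two of the three pieces are handled exactly as in the inviscid Proposition \ref{prop22}, only now in $B^{\sigma_t}_{p,\infty}$ with finite $p$, so that everything will ultimately be reduced to the tangential density norm $\Vert\partial_{X_t}\rho\Vert_{B^{\sigma_t}_{p,\infty}}$.

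For the vector field I would invoke the localized estimate \eqref{X-B-sigma-0:XXX} of Remark \ref{RMK:GR:es}, which controls $\widetilde{\Vert} X_t\Vert_{B^{\sigma_t}_{p,\infty}}$ by $\int_0^t\Vert\partial_{X_\tau}\omega\Vert_{B^{\sigma_\tau-1}_{p,\infty}}\,d\tau$ together with $\Vert\textnormal{div}X_\tau\Vert_{B^{\sigma_\tau}_{p,\infty}}$, the latter being frozen by the pure transport bound \eqref{div:1}. Since $\partial_{X_t}$ commutes with the transport operator, see \eqref{commutator-DIFF}, so that $(\partial_t+v\cdot\nabla)\partial_{X_t}\omega=\partial_{X_t}\partial_1\rho$, Proposition \ref{prop:transport:es} combined with the product inequality \eqref{pp5} (itself a consequence of Corollary \ref{ppxr0}) gives
$$\Vert\partial_{X_t}\omega(t)\Vert_{B^{\sigma_t-1}_{p,\infty}}\lesssim h^{-C\int_0^tW}\Big(\Vert\partial_{X_0}\omega_0\Vert_{B^{s-1}_{p,\infty}}+\int_0^t h^{C\int_0^\tau W}\big(\Vert\partial_{X_\tau}\rho\Vert_{B^{\sigma_\tau}_{p,\infty}}+\Vert\nabla\rho(\tau)\Vert_{L^\infty}\,\widetilde{\Vert}X_\tau\Vert_{B^{\sigma_\tau}_{p,\infty}}\big)\,d\tau\Big).$$
Thus the whole scheme is driven by a good bound for $\partial_{X_t}\rho$.

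The genuinely delicate step is the tangential density equation \eqref{T-G}, namely $(\partial_t+v\cdot\nabla-\kappa\Delta)\partial_{X_t}\rho=-\kappa[\Delta,\partial_{X_t}]\rho$, which I would treat with Proposition \ref{p11} after splitting the commutator. Expanding, $[\Delta,\partial_{X_t}]\rho=(\Delta X_t)\cdot\nabla\rho+2\sum_{k,i}(\partial_k X_t^i)\,\partial_k\partial_i\rho$, which can be recast as a divergence-form term $\textnormal{div}\big((\nabla X_t)\cdot\nabla\rho\big)$ plus the Hessian contraction $\sum_{k,i}(\partial_k X_t^i)\,\partial_k\partial_i\rho$. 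The divergence-form part carries an explicit factor $\kappa$ and only costs two derivatives on $X_t$, so it is routed into the $\kappa\mathbf{G}$ slot of Proposition \ref{p11}, whose coefficient $(1+\kappa t)$ restores the two derivatives and places it back at level $B^{\sigma_t}_{p,\infty}$ once $(\nabla X_t)\cdot\nabla\rho$ is estimated in $B^{\sigma_t-1}_{p,\infty}$ via Corollary \ref{ppxr0} by $\Vert\nabla\rho\Vert_{L^\infty}\widetilde{\Vert}X_t\Vert_{B^{\sigma_t}_{p,\infty}}$. The Hessian term is where the second smoothing effect is essential: here I keep $\kappa$ attached to $\nabla^2\rho$ and use the density maximal regularity estimate \eqref{equa:corollary} of Corollary \ref{cor-Max-reg}, which gives $\kappa\nabla^2\rho$ about one extra derivative of positive regularity (up to a $(q+2)\int\Vert v\Vert_{LL}$ logarithmic factor). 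This positivity is exactly what makes the remainder in Bony's decomposition of $(\nabla X_t):(\kappa\nabla^2\rho)$ convergent, the product landing in $L^1_t B^{\sigma_t-1}_{p,\infty}$, which I feed into the $\mathbf{F}$ slot of Proposition \ref{p11} (equivalently of Proposition \ref{Max-reg}) with the integrability exponent chosen so that the $\kappa$-powers from the two smoothing mechanisms cancel.

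The hard part, and the reason the index must decrease, is the frequency-logarithmic loss: both the density smoothing \eqref{equa:corollary} and Proposition \ref{p11} produce a factor $(q+2)\int_0^t\Vert v\Vert_{LL}$ that cannot be absorbed at a fixed regularity because $v$ is merely log-Lipschitz. Choosing $\sigma_t$ strictly decreasing as in \eqref{sigmat:def} creates, inside the Duhamel integrals over $[0,\tau]$, a gain $2^{-q(\sigma_\tau-\sigma_t)}$ with $\sigma_\tau>\sigma_t$ that dominates $(q+2)$ uniformly in $q$; the linear slope $\dot\sigma_t=-\varepsilon/T$ is tuned precisely so that this absorption is valid on all of $[0,T]$. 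This is also where finiteness of $p$ is unavoidable, since the constant $C_p=p^2/(p-1)$ in Proposition \ref{p11} blows up as $p\to\infty$ (cf. Remark \ref{remark:p<infini}), and it is the ultimate source of the boundary regularity loss recorded in Theorem \ref{THEO:2:soft}. Finally I would collect the three bounds into the single quantity $\zeta(t)\triangleq\big(\widetilde{\Vert}X_t\Vert_{B^{\sigma_t}_{p,\infty}}+\Vert\partial_{X_t}\omega(t)\Vert_{B^{\sigma_t-1}_{p,\infty}}+\Vert\partial_{X_t}\rho(t)\Vert_{B^{\sigma_t}_{p,\infty}}\big)h^{C\int_0^tW}$ and apply Gronwall, using $\int_0^t\Vert v\Vert_{LL}\leqslant\mathbb{V}(t)\leqslant C_{\sigma,m}$ together with $\int_0^tW\lesssim t\,\widetilde W(t)$; the two powers of $\widetilde W$ appearing in \eqref{ES:striated:reg}--\eqref{ES:striated:reg:rho} arise respectively from the log-loss prefactors $h^{-C\int_0^tW}$ and from the Gronwall accumulation of the coupling, while the extra factor $t$ in the exponent for $X_t$ and $\partial_{X_t}\omega$ (absent for $\partial_{X_t}\rho$) reflects the additional time integration in the vorticity equation.
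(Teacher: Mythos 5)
Your proposal follows essentially the same route as the paper's proof: Corollary \ref{coro-transport:es} and Proposition \ref{prop:transport:es} (with Corollary \ref{ppxr0}) for $X_t$ and $\partial_{X_t}\omega$, a Bony splitting of the commutator $\kappa[\Delta,\partial_{X_t}]\rho$ into paraproduct (${\bf G}$) and remainder (${\bf F}$) parts fed into Proposition \ref{p11}, with the remainders controlled by the maximal smoothing of Corollary \ref{cor-Max-reg} and the frequency-logarithmic loss absorbed by the decreasing index $\sigma_t$ — exactly the paper's mechanism. The only caution is organizational: the paper runs a two-stage Gronwall (first on $\partial_{X_t}\rho$, then on a quantity $\zeta$ containing only $X_t$ and $\partial_{X_t}\omega$) precisely to retain the factor $t$ in the exponent of \eqref{ES:striated:reg}, which is later needed to close the bootstrap \eqref{SSSS} in Proposition \ref{Lip:Prop:visc}; lumping all three norms into a single Gronwall quantity, as you suggest at the end, would lose that factor.
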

 \begin{remark}\label{rmk:SR:reg}
The assumption $T\leqslant 1$ is not essential. It is merely required to avoid the time growth and get simpler  a priori estimates.  Remind that we are dealing with local well-posedness issue and the global well-posedness is still an  open problem. 
\end{remark}
\begin{proof}[Proof of Proposition \ref{prop2222}]
First, let us point out that defining   $T$ by \eqref{T:PROP:ST}  insures, in view of Proposition \ref{Proposition visous}
\begin{equation}\label{lp:es:****}
\|\nabla \rho(t) \|_{L^2\cap L^\infty} + \|\omega(t) \|_{L^2\cap L^\infty} \leqslant G(t),\quad \forall t\in [0,T],
\end{equation}
where $G(t)$ is given by \eqref{G:def}.
This implies that $W(t)$ which is given by \eqref{W:def} satisfies
$$ W(t)\leqslant \widetilde{W}(t), \quad \forall t\in [0,T].$$

 Consequently, Corollary \ref{coro-transport:es} yields for all $t\in [0,T]$ 
\begin{equation*}
\supp X_t\subset \big(\Sigma_t\big)_{\delta_t(h)}^c\quad \textnormal{with}\quad \delta_t(h)= h^{\exp\big(\int_0^t\Vert v(\tau)\Vert_{LL}d\tau\big)},
\end{equation*}
\begin{equation}\label{div:2}
\Vert \Div X_t\Vert_{B^{s}_{p,\infty}}\leqslant \Vert \Div X_{0}\Vert_{B^{s}_{p,\infty}}h^{-C\int_0^t \widetilde{W} (\tau)d\tau}
\end{equation}
and
\begin{eqnarray}\label{8-E:viscous}
\widetilde{\Vert} X_t\Vert_{B^{\sigma_{t}}_{p,\infty}}&\leqslant & h^{-C\int_{0}^{t}\widetilde{W}(\tau)d\tau}\Big(\widetilde{\Vert} X_0\Vert_{B^{s}_{p,\infty}} +C\int_0^t h^{C\int_{0}^{\tau}\widetilde{W}(\tau')d\tau'}\|\partial_{ X_{\tau}}\omega(\tau)\|_{B^{\sigma_{\tau}-1}_{p,\infty}}d\tau\Big).
\end{eqnarray} 
Now, to control the stratiad regularity of the vorticity, we apply the directional derivative $\partial_{X_t}$ to the equation of $\omega $  in \eqref{eqn:omega:2} to obtain 
\begin{equation}\label{X-t-omega}
\big(\partial_t+v\cdot\nabla\big)\partial_{X_t}\omega=\partial_{ X_t}\partial_1\rho.
\end{equation}  
It is easy to see that 
$$ \partial_{ X_t}\partial_1\rho = \partial_1\partial_{ X_t}\rho - \partial_{ \partial_1 X_t}\rho$$
and the estimate of the second term on the r.h.s follows from Corollary \ref{ppxr0} and the fact \mbox{that $\sigma_\tau\in (0,1)$ }
$$ \|\partial_{ \partial_1 X_\tau}\rho \|_{B^{\sigma_\tau-1}_{p,\infty}} \lesssim \|\nabla \rho\|_{L^\infty}\widetilde{\|} X_{\tau}\|_{B^{\sigma_\tau}_{p,\infty} }.$$
Hence, applying Proposition \ref{prop:transport:es} to \eqref{X-t-omega} gives
\begin{eqnarray*}
\|\partial_{ X_t}\omega\|_{B^{\sigma_t-1}_{p,\infty}}&\leqslant &  \|\partial_{ X_0}\omega_0\|_{B^{\sigma_t-1}_{p,\infty}}h^{-C\int_{0}^{t}\widetilde{W}(\tau)d\tau}+\int_{0}^{t}\|\partial_{ X_\tau}\rho(\tau)\|_{B^{\sigma_\tau}_{p,\infty}}h^{-C\int_{\tau}^{t}\widetilde{W}(\tau')d\tau'}d\tau\\\nonumber
&&+C\int_{0}^{t}\|\nabla\rho(\tau)\|_{L^\infty}\widetilde{\|}X_{\tau}\|_{B^{\sigma_\tau}_{p,\infty}}h^{-C\int_{\tau}^{t}\widetilde{W}(\tau')d\tau'}d\tau.
\end{eqnarray*}  
Next,  the elementary  inequalities below
$$\|\nabla \rho(\tau) \|_{L^\infty}\leqslant G(\tau)   \leqslant \widetilde{W}(\tau) ,\quad \forall \tau \in [0,T],$$
 combined with the fact that $\sigma_t \leqslant s$, allow to get 
\begin{eqnarray}\label{DXOMEGA::}
\|\partial_{ X_t}\omega\|_{B^{\sigma_t-1}_{p,\infty}}&\leqslant &  \|\partial_{ X_0}\omega_0\|_{B^{s-1}_{p,\infty}}h^{-C\int_{0}^{t}\widetilde{W}(\tau)d\tau}+\int_{0}^{t}\|\partial_{ X_\tau}\rho(\tau)\|_{B^{\sigma_\tau}_{p,\infty}}h^{-C\int_{\tau}^{t}\widetilde{W}(\tau')d\tau'}d\tau\\\nonumber
&&+ C\int_{0}^{t}\widetilde{W}(\tau)\widetilde{\|}X_{\tau}\|_{B^{\sigma_\tau}_{p,\infty}}h^{-C\int_{\tau}^{t}\widetilde{W}(\tau')d\tau'}d\tau.
\end{eqnarray}
Hence, gathering this estimate  with \eqref{8-E:viscous} yields 
\begin{eqnarray}\label{38-E-2}
\zeta(t) &\leqslant &     \zeta_0  + C  \int_0^t  \widetilde{W}(\tau)\zeta(\tau) d\tau  + \int_{0}^{t}\|\partial_{ X_\tau}\rho(\tau)\|_{B^{\sigma_\tau}_{p,\infty}}h^{ C\int_{0}^{\tau}\widetilde{W}(\tau')d\tau'}d\tau ,
\end{eqnarray}
where we use the fact $1 \leqslant \widetilde{W}(t)$ and the following notation 
\begin{equation}\label{Gamma:def**}
\zeta(t) \triangleq \left(\widetilde{\Vert} X_{\cdot}\Vert_{L^ \infty_t( B^{\sigma_{\cdot}}_{p,\infty})} +\|\partial_{ X_{\cdot}}\omega\|_{L^\infty_t( B^{\sigma_{\cdot}-1}_{p,\infty})} \right)h^{C\int_0^t \widetilde{W}(\tau)d\tau}, \quad \zeta_0 = \zeta(0)
\end{equation}  
and
\begin{equation}\label{space:00:def}
\widetilde{\Vert}X_\cdot \|_{L^\infty_t(B^{\sigma_\cdot}_{p,\infty})}  \triangleq  \sup_{\tau \in [0,t]} \widetilde{\Vert} X_\tau \|_{B^{\sigma_\tau}_{p,\infty}}, \quad \|\partial_{ X_{\cdot}}\omega \|_{L^\infty_t(B^{\sigma_\cdot}_{p,\infty})}  \triangleq  \sup_{\tau \in [0,t]} \Vert\partial_{ X_{\tau}}\omega  \|_{B^{\sigma_\tau}_{p,\infty}}. 
\end{equation}
 The problem boils down to estimate $\|\partial_{X_{t}}\rho(t)\|_{B^{\sigma_t}_{p,\infty}}$. To do so, we apply the directional derivative $\partial_{X_t}$ to $\rho-$equation to find that
\begin{equation}\label{TG}
\left\{\begin{array}{ll}
\big(\partial_t+v\cdot\nabla-\kappa \Delta\big)\partial_{X_t}\rho=-\kappa[\Delta,\partial_{X_t}]\rho &\vspace{2mm}\\
\partial_{X_t}\rho_{|_{t=0}}=\partial_{X_0}\rho_0.
\end{array}
\right.
\end{equation}
Above, $[\Delta,\partial_{X_t}]$ designates the commutator between $\Delta$ and $\partial_{X_t}$ which can be split in view of Bony's decomposition in the following way, see for instance   \cite{Danchin-1,Hmidi-0} 
$$  \kappa [\Delta,\partial_{X_t}] = {\bf F} + \kappa {\bf G}, $$
where 
\begin{eqnarray*}
{\bf F}&=& 2\kappa R(\nabla X^i_{t },\partial_i\nabla\rho)+\kappa R(\Delta X^i_{t},\partial_i\rho)
\end{eqnarray*}
and
\begin{eqnarray*}
{\bf G}&=& 2T_{\nabla X^{i}_{t }} \partial_i \nabla \rho +2T_{\partial_i\nabla\rho}\nabla X^{i}_{t }+T_{\Delta X^{i}_{t }}\partial_i\rho+T_{\partial_i\rho}\Delta X^{i}_{t }.
\end{eqnarray*}
Hence, applying the estimate \eqref{ES:before Gronwal} from Proposition \ref{p11} yields

\begin{eqnarray} \label{d_x(rho):1}
 \|\Delta_q  \partial_{X_t}\rho(t)\|_{L^p}&\leqslant & \|\Delta_q  \partial_{X_0}\rho_0\|_{L^p}+ \|{\Delta_q  {\bf F}}\|_{L^{1}_{t}L^p}+C(1+\kappa t)2^{-2q}\|{\Delta_q  {\bf G}}\|_{L^{\infty}_{t}L^p}\\ && -\log h \int_0^t \frac{2^{-q\sigma_\tau}}{1-\sigma_\tau^2} \| \partial_{X_\tau}\rho(\tau) \|_{B^{\sigma_\tau}_{p,\infty}} W (\tau) d\tau\nonumber. 
\end{eqnarray} 
  The estimates of the terms in ${\bf G}$ follow by applying Lemma \ref{para--products} and \eqref{lp:es:****}
\begin{eqnarray}\label{GG:estimate}
2^{(\sigma_t-2)q}\|{\Delta_q  {\bf G}}\|_{L^{\infty}_{t}L^p} \leqslant\|  {\bf G} \|_{\widetilde{L}^{\infty}_t B^{\sigma_t-2}_{p,\infty}} &=& \|  {\bf G} \|_{L^\infty _t B^{\sigma_t-2}_{p,\infty}} \nonumber\\
&\lesssim &  \| \nabla \rho\|_{L^\infty_t L^\infty} \|X_\cdot \|_{L^\infty_t B^{\sigma_t}_{p,\infty}} \\
&\lesssim & G(t)\|X_\cdot \|_{L^\infty_t B^{\sigma_\cdot}_{p,\infty}},\nonumber
\end{eqnarray}  
where we   used  in the last line   the non-increasing property of  $t \mapsto \sigma_t$  which allows to get 
$$
\|X_\cdot \|_{L^\infty_t(B^{\sigma_t}_{p,\infty})} \triangleq \sup_{\tau \in [0,t]}  \|  X_\tau \|_{B^{\sigma_t}_{p,\infty}} \leqslant    \|X_\cdot \|_{L^\infty_t(B^{\sigma_\cdot}_{p,\infty})}.
$$
Now, we estimate  ${\bf F}$ by exploring the smoothing-effect estimates   stated in \mbox{Corollary \ref{cor-Max-reg}}. First, we transform ${\bf F}$ in the following form by using Leibniz  rule
\begin{eqnarray*}
{\bf F}&=& 2\kappa \partial_i R(\nabla X^i_{t },\nabla\rho)+2\kappa\partial_i R(\Delta X^i_{t },\rho) - 2\kappa R(\nabla \text{div} X _{t },\nabla\rho)-2\kappa  R(\Delta \text{div}X _{t },\rho) .
\end{eqnarray*}
Then, Bernstein  Lemma implies 
\begin{eqnarray}\label{FF:ES}
\| \Delta_q{\bf F}\|_{L^1_tL^p}&\lesssim  &  2^q \kappa\sum_{j\geqslant q - 5} \|\widetilde{\Delta}_j X_{\cdot} \|_{L^\infty_tL^p} 2^{ j}\|\Delta_j \nabla \rho \|_{L^1_tL^\infty}\\
&+&\kappa \sum_{j\geqslant q - 5} \|\widetilde{\Delta}_j \Div X_{\cdot} \|_{L^\infty_tL^p} 2^{ j}\|\Delta_j \nabla \rho \|_{L^1_tL^\infty}.
\end{eqnarray}

On the other hand, Corollary \ref{cor-Max-reg} together with the definition of $T$ in \eqref{T:PROP:ST} insures that 
\begin{eqnarray*} 
 \kappa2^{2q} \Vert \Delta_q (\partial_i\rho) \Vert_{L_t^1 L^\infty}&\leqslant  & G(t)  \Big(1+\kappa t   + (q+2)\int_0^t\Vert v(\tau)\Vert_{LL}d\tau \Big) \nonumber\\
 & \leqslant &   G(t)   (q+2)   ,
\end{eqnarray*}
Thus,  plugging this  inequality in   \eqref{FF:ES} yields for all $t\in [0,T]$ 
\begin{eqnarray}\label{FF:estimate}
 2^{q \sigma_t }   \|{\Delta_q  {\bf F}}\|_{L^{1}_{t}L^p} &\leqslant &  2^{q(\sigma_t +1 )} G(t)  \sum_{j\geqslant q - 5} 2^{- j } (j+2)\Big( \| \Delta _j X_{\cdot} \|_{L^\infty_tL^p} + \| \Delta _j \text{div} X_{\cdot} \|_{L^\infty_tL^p} \Big).
\end{eqnarray}

Therefore,  by using \eqref{GG:estimate} and \eqref{FF:estimate} in \eqref{d_x(rho):1} together with the fact that $\sigma_t\leqslant s$ we obtain, for any $\kappa\in (0, 1)$ and $t \in [0,T]$
\begin{eqnarray} \label{D_X rho:ES}
2^{q \sigma_t } \|\Delta_q  \partial_{X_t}\rho(t)\|_{L^p}&\leqslant & \|  \partial_{X_0}\rho_0\|_{B^s_{p,\infty}}+C(-\log h) \int_0^t   \| \partial_{X_\tau}\rho(\tau) \|_{B^{\sigma_\tau}_{p,\infty}} \widetilde{W} (\tau) d\tau\nonumber \\ && +    G(t) \|  X_\cdot\|_{L^{\infty}_{t} B^{\sigma_\cdot}_{p,\infty}}\\
&& +G(t) 2^{q(\sigma_t +1 )} \sum_{j\geqslant q - 5} 2^{ -j } (j+2)\Big( \| \Delta _j X_{\cdot} \|_{L^\infty_tL^p} + \| \Delta _j \text{div} X_{\cdot} \|_{L^\infty_tL^p} \Big) \nonumber.
\end{eqnarray}
In order to estimate the term $\|  X_\cdot\|_{L^{\infty}_{t} B^{\sigma_\cdot}_{p,\infty}} $, we make use of \eqref{8-E:viscous}  to find
\begin{eqnarray} \label{X_t:ES XX}
\|  X_\cdot\|_{L^{\infty}_{t} B^{\sigma_\cdot}_{p,\infty}} &\leqslant & h^{-C\int_{0}^{t}\widetilde{W}(\tau)d\tau}\Big(\widetilde{\Vert} X_0\Vert_{B^{s}_{p,\infty}} +C\int_0^t h^{C\int_{0}^{\tau}\widetilde{W}(\tau')d\tau'}\|\partial_{ X_{\tau}}\omega(\tau)\|_{B^{\sigma_{\tau}-1}_{p,\infty}}d\tau\Big).
\end{eqnarray}
Now,  we deal with $\Div X_\cdot$ that appears in the last line of \eqref{D_X rho:ES} by exploiting \eqref{div:2} and the definition of $\sigma_t$ to achieve

\begin{equation*}
2^{q\sigma_\tau}\| \Delta _j \text{div} X_{\tau} \|_{ L^p}  \lesssim 2^{-j\varepsilon} \|  \text{div} X_{0} \|_{B^s_{p,\infty}}h^{-C\int_0^\tau \widetilde{W} (\tau') d\tau'}.
\end{equation*}
Then, the fact $h\in (0,e^{-1}]$ and the monotonicity of $\sigma_t$ imply  
\begin{equation}\label{div:2XX}
2^{q(\sigma_t +1  )}    \sum_{j\geqslant q - 5}   \| \Delta _j \Div X_{\cdot} \|_{L^\infty_tL^p}  2^{-j} (j+2)\lesssim \|  \text{div} X_{0} \|_{B^s_{p,\infty}}h^{-C\int_0^t \widetilde{W} (\tau ) d\tau }.
\end{equation}
We are left with the estimate of $\| \Delta _j X_{\cdot} \|_{L^\infty_tL^p}$ in the last line of  \eqref{D_X rho:ES}. First,  it follows from \eqref{X-B-sigma-0:XXX} and the fact   $W(t)\leqslant \widetilde{W}(t)$   that
\begin{eqnarray} \label{Last:es}
\nonumber\|\Delta_{j}X_t \|_{L^p}&\leqslant & \|\Delta_{j}X_0\|_{L^p}+C2^{-js}\int_0^t\widetilde{W} (\tau)  \|\text{div} X_\tau\|_{B^{s}_{p,\infty}}  d\tau  +  C \int_0^t 2^{-j\sigma_\tau}\| \partial_{X_{\tau}}\omega\|_{B^{\sigma_\tau-1}_{p,\infty}} d\tau \\&&+C(-\log h)\int_0^t 2^{-j\sigma_\tau}\|X_{\tau}\|_{B^{\sigma_{\tau}}_{p,\infty}}\widetilde{W}(\tau)d\tau.
\end{eqnarray} 
We will treat each term on the r.h.s of \eqref{Last:es} separately. To begin with, owing to the definition of $\sigma_t$ given by \eqref{sigmat:def}, we have
\begin{equation*}
 \|\Delta_{j}X_0\|_{L^p} \leqslant 2^{-j(\sigma_t+\varepsilon)}  \| X_0\|_{B^s_{p,\infty}}.
\end{equation*}
Hence, we get
\begin{equation}\label{+1}
2^{q(\sigma_t+1 )} \sum_{j\geqslant q-5}(j+2)2^{-j} \|\Delta_{j}X_0\|_{L^p} \leqslant C \| X_0\|_{B^s_{p,\infty}}.
\end{equation}
Next, to estimate the second term on the r.h.s of \eqref{Last:es}, we use again  \eqref{div:2} together with the fact that $1\leqslant -\log h $ to obtain
\begin{align}\label{A1}
C\int_0^t \widetilde{W}(\tau)\|\text{div} X_\tau\|_{ B^{s}_{p,\infty}}  d\tau &\leqslant -C\log h\int_0^t \widetilde{W}(\tau)    h^{-C\int_0^\tau \widetilde{W}(\tau')d\tau'} d\tau\|\text{div} X_0\|_{B^{s}_{p,\infty}}\nonumber\\ 
 & \leqslant     \|\text{div} X_0\|_{B^{s}_{p,\infty}}   h^{-C\int_0^t \widetilde{W}(\tau )d\tau }
\end{align}
 On the other hand, the definition of  $\sigma_t$ yields
\begin{equation}\label{A2}
2^{-js}(j+2) \leqslant 2^{-j\sigma_t} \frac{j+2}{2^{j\varepsilon}}\lesssim 2^{-j\sigma_t}.
\end{equation}   
 Hence, \eqref{A1} and \eqref{A2} imply that
\begin{align}\label{+2}
 C 2^{q(\sigma_t +1  )}    \sum_{j\geqslant q - 5} 2^{-js}(j+2)2^{ -j} \int_0^t \widetilde{W}(\tau) \|\text{div} X_\tau\|_{B^{s}_{p,\infty}}  d\tau       &\lesssim \|\text{div} X_0\|_{B^{s}_{p,\infty}} h^{-C\int_0^t \widetilde{W}(\tau )d\tau }.
\end{align}

Finally, the last two terms in \eqref{Last:es} can be treated simultaneously   by using the computation
\begin{equation*}
(j+2) \int_0^t 2^{-j\sigma_\tau} d\tau \lesssim  \; \frac{ j+2}{j} \;2^{-j \sigma_t} ,\quad \forall j\geqslant 1, 
\end{equation*}
to obtain, for $t \in [0,T]$
\begin{eqnarray}\label{+3}
2^{q(\sigma_t +1  )}    \sum_{j\geqslant q - 5}  2^{ -j} (j+2)\int_0^t2^{-j\sigma_\tau}\| \partial_{X_{\tau}}\omega\|_{B^{\sigma_\tau-1}_{p,\infty}} d\tau  & \lesssim  & \| \partial_{X_{\cdot}}\omega\|_{L^\infty_tB^{\sigma_\cdot-1}_{p,\infty}}  
\end{eqnarray} 
and
\begin{equation}\label{+4}
 2^{q(\sigma_t +1 )}    \sum_{j\geqslant q - 5}  2^{ -j} (j+2)\int_0^t2^{-j\sigma_\tau}\|  X_\tau\|_{B^{\sigma_\tau}_{p,\infty}} \widetilde{W}(\tau)d\tau  \lesssim \widetilde{W}(t) \| X_\cdot\|_{L^\infty_tB^{\sigma_\cdot}_{p,\infty}},
\end{equation}
where we have used the fact that $ t\mapsto \widetilde{W}(t) $ is increasing.
Now, gathering \eqref{+1}, \eqref{+2}, \eqref{+3} and \eqref{+4}  with \eqref{Last:es} lead to
\begin{eqnarray}\label{Delta_q X:ES}
2^{q(\sigma_t +1 )}    \sum_{j\geqslant q - 5} 2^{ -j} (j+2)  \|\Delta_{j}X_t \|_{L^p} &\lesssim  &  \widetilde{\|} X_0\|_{B^s_{p,\infty}} h^{-C\int_0^t \widetilde{W}(\tau )d\tau }\\
&&+  \| \partial_{X_{\cdot}}\omega\|_{L^\infty_tB^{\sigma_\cdot-1}_{p,\infty}}+ (-\log h)   \widetilde{W}(t) \| X_\cdot\|_{L^\infty_tB^{\sigma_\cdot}_{p,\infty}} .\nonumber 
\end{eqnarray}
Plugging \eqref{X_t:ES XX}, \eqref{div:2XX} and \eqref{Delta_q X:ES} in \eqref{D_X rho:ES}, together with the facts $ -\log h \geqslant 1$ and   $ \widetilde{W}(t)\geqslant 1$ yield 
\begin{eqnarray*}
  \| \partial_{X_t}\rho(t)\|_{B^{\sigma_t}_{p,\infty}}&\leqslant & \|  \partial_{X_0}\rho_0\|_{B^s_{p,\infty}}+C(-\log h) \int_0^t   \| \partial_{X_\tau}\rho(\tau) \|_{B^{\sigma_\tau}_{p,\infty}} \widetilde{W} (\tau) d\tau \nonumber\\ 
&& +  G(t) h^{-C\int_{0}^{t}\widetilde{W}(\tau)d\tau} \Bigg(\widetilde{\Vert} X_0\Vert_{B^{s}_{p,\infty}}   + \int_0^t h^{C\int_{0}^{\tau}\widetilde{W}(\tau')d\tau'}\|\partial_{ X_{\tau}}\omega(\tau)\|_{B^{\sigma_{\tau}-1}_{p,\infty}}d\tau \Bigg) \nonumber\\
&&  + G(t)    \| \partial_{X_{\cdot}}\omega\|_{L^\infty_tB^{\sigma_\cdot-1}_{p,\infty}}      + (-\log h) \widetilde{W}(t)    G(t)   \| X_\cdot\|_{L^\infty_tB^{\sigma_\cdot}_{p,\infty}}    .
\end{eqnarray*}
Thereafter, in view of the notation \eqref{Gamma:def**}, we use   \eqref{X_t:ES XX} together with \eqref{DXOMEGA::}, to end up with 
\begin{eqnarray*}  
  \| \partial_{X_t}\rho(t)\|_{B^{\sigma_t}_{p,\infty}}&\leqslant & \|  \partial_{X_0}\rho_0\|_{B^s_{p,\infty}} -\log h\; G(t)  \widetilde{W}(t) h^{-C\int_{0}^{t}\widetilde{W}(\tau)d\tau}   \zeta_0   \\
  && +(- \log h)\; G(t) \int_0^t  \widetilde{W}(\tau)  h^{-C\int_{\tau}^{t}\widetilde{W}(\tau')d\tau'} \| \partial_{X_\tau}\rho(\tau) \|_{B^{\sigma_\tau}_{p,\infty}}   d\tau \nonumber\\ 
&& +(- \log h)\;  G(t)    \int_0^t \widetilde{W}(\tau)  h^{-C\int_{\tau}^{t}\widetilde{W}(\tau')d\tau'}\Big( \|\partial_{ X_{\tau}}\omega(\tau)\|_{B^{\sigma_{\tau}-1}_{p,\infty}}  + \widetilde{\|}X_{\tau}\|_{B^{\sigma_\tau}_{p,\infty}} \Big) d\tau   .
\end{eqnarray*}
Consequently, we get
\begin{eqnarray*} 
   h^{C\int_{0}^{t}\widetilde{W}(\tau)d\tau} \| \partial_{X_t}\rho(t)\|_{B^{\sigma_t}_{p,\infty}}&\leqslant &  (- \log h)\; G(t)  \widetilde{W}(t)    \widetilde{\zeta}_0 \\
  && +(- \log h)\; G(t) \int_0^t  \widetilde{W}(\tau)  h^{C\int_{0}^{\tau}\widetilde{W}(\tau')d\tau'} \| \partial_{X_\tau}\rho(\tau) \|_{B^{\sigma_\tau}_{p,\infty}}   d\tau \nonumber\\ 
   && +(- \log h) \;G(t)  \int_0^t   \widetilde{W}(\tau)  \zeta(\tau)d\tau .
\end{eqnarray*}
Applying Gronwall lemma  leads then to 
\begin{eqnarray}\label{DX-rho:ponct} 
   h^{C\int_{0}^{t}\widetilde{W}(\tau)d\tau} \| \partial_{X_t}\rho(t)\|_{B^{\sigma_t}_{p,\infty}}&\leqslant &  (- \log h)\; G(t)  \widetilde{W}(t)    \widetilde{\zeta}_0 h^{- G(t)\int_{0}^{t}\widetilde{W}(\tau')d\tau'} \nonumber\\
   && +(- \log h) \;G(t)      \int_0^t   \widetilde{W}(\tau)  h^{-  G(t)\int_{\tau}^{t}\widetilde{W}(\tau')d\tau'}\zeta(\tau)d\tau .
\end{eqnarray}
An integration  with respect to $t$  together with the computation
\begin{eqnarray*}
(-\log h) \;\int_0^t     G(\tau)  \widetilde{W}(\tau)    h^{- G(\tau)\int_{0}^{\tau}\widetilde{W}(\tau')d\tau'} d\tau & \leqslant  & (-\log h)\; G(t)  \int_0^t     \widetilde{W}(\tau)      h^{- G(t)\int_{0}^{\tau}\widetilde{W}(\tau')d\tau'} d\tau  \\
&\leqslant &    h^{-  G(t)\int_{0}^{t}\widetilde{W}(\tau')d\tau'} 
\end{eqnarray*}
leads to
\begin{eqnarray*}  
&&\int_0^t   h^{C\int_{0}^{\tau}\widetilde{W}(\tau')d\tau'} \| \partial_{X_\tau}\rho(\tau)\|_{B^{\sigma_\tau}_{p,\infty}} d\tau \leqslant        \widetilde{\zeta}_0 h^{-  G(t)\int_{0}^{t}\widetilde{W}(\tau')d\tau'}\\
   &&\quad \qquad\qquad + (- \log h)\int_0^t  G(\tau) d\tau \Bigg(     \int_0^t   \widetilde{W}(\tau)  h^{- G(t)\int_{\tau}^{t}\widetilde{W}(\tau')d\tau'}\zeta(\tau)d\tau\Bigg).  
\end{eqnarray*}
Plugging this inequality in \eqref{38-E-2} and using the fact that $1\leqslant G(t) $ yield
\begin{eqnarray*}
\zeta(t) &\leqslant &      \widetilde{\zeta}_0 h^{- G(t)\int_{0}^{t}\widetilde{W}(\tau')d\tau'}\\
   && + (- \log h) \Big(1 + \int_0^t  G(\tau) d\tau\Big) \Bigg(     \int_0^t   \widetilde{W}(\tau)  h^{- G(t)\int_{\tau}^{t}\widetilde{W}(\tau')d\tau'}\zeta(\tau)d\tau\Bigg).
\end{eqnarray*}
Next, by setting 
$$ \Gamma(t) \triangleq h^{ G(t)\int_{0}^{t}\widetilde{W}(\tau')d\tau'} \zeta(t) $$
we deduce that  
\begin{eqnarray*}
\Gamma (t) &\leqslant &        \widetilde{\zeta} _0+ (  -\log h )      G(t)  \int_0^t   \widetilde{W}(\tau)  \Gamma (\tau)d\tau ,
\end{eqnarray*}
where we have used the the inequality 
$$ \int_0^t G(\tau) d\tau \leqslant G(t), \quad  \forall \; t\in [0,T] $$
which follows from the increasing property of $t \mapsto G(t)$ and the fact $T\leqslant 1.$
Gronwall lemma yields   
\begin{eqnarray}\label{GAMMA:TTT}
\Gamma (t) &\leqslant &         \widetilde{\zeta} _0 h^{- G(t)\int_{0}^{t}\widetilde{W}(\tau)d\tau} .
\end{eqnarray} 
On the other hand, by the  monotonicity of $t\mapsto \widetilde{W}(t)$ and the  definitions  \eqref{G:def} and \eqref{widetilde:W:def}, which insure that  $ G(t)\leqslant \widetilde{W}(t)$, we end up with      
\begin{eqnarray}\label{Last:GAMMA}
\widetilde{\|} X_t \|_{B^{\sigma_t}_{p,\infty}}+\| \partial_{X_t}\omega(t)\|_{B^{\sigma_t-1}_{p,\infty}} &\leqslant &    \widetilde{\zeta} _0 h^{-  Ct  \widetilde{W}^2(t) } .
\end{eqnarray} 
We are left now with the estimate of $\partial_{X_t}\rho$. First, by inserting \eqref{GAMMA:TTT} in  \eqref{DX-rho:ponct}, we deduce  that 
\begin{eqnarray*}  
   h^{C\int_{0}^{t}\widetilde{W}(\tau)d\tau} \| \partial_{X_t}\rho(t)\|_{B^{\sigma_t}_{p,\infty}}&\leqslant &  (- \log h)\; G(t)  \widetilde{W}(t)    \widetilde{\zeta}_0 h^{- G(t)\int_{0}^{t}\widetilde{W}(\tau')d\tau'}\\
   && + h^{- G(t)\int_{0}^{t}\widetilde{W}(\tau')d\tau'} (- \log h) \;G(t)      \int_0^t   \widetilde{W}(\tau)  \Gamma(\tau) d\tau \\
   &\leqslant & (- \log h)\; G(t)  \widetilde{W}(t)    \widetilde{\zeta}_0 h^{- G(t)\int_{0}^{t}\widetilde{W}(\tau')d\tau'} \\
   && +  (- \log h) \; \widetilde{\zeta}_0 G(t) \int_0^t  \widetilde{W}(\tau)     h^{- G(t)\int_{0}^{\tau}\widetilde{W}(\tau')d\tau'} d\tau.
\end{eqnarray*}
Then, straightforward computation yields
\begin{eqnarray*}
  \| \partial_{X_t}\rho(t)\|_{B^{\sigma_t}_{p,\infty}} & \leqslant & (- \log h)\;   \widetilde{\zeta}_0 \widetilde{W}^2(t)    h^{- G(t)\int_{0}^{t}\widetilde{W}(\tau')d\tau'} \\
 & \leqslant &  \widetilde{\zeta}_0     h^{-  C \widetilde{W}^2(t) } .
\end{eqnarray*}
  Proposition \ref{prop2222} is then proved.
\end{proof}
\subsubsection{Uniform time existence}
 Our purpose now is  to establish  suitable  a priori estimates on some interval $[0,T]$ where $T>0$ is related to the initial data covering singular vortex patches and it is  independent of $\kappa\in[0,1].$
Our result is stated as follows.
\begin{proposition} \label{Lip:Prop:visc}
Let $(s,\varepsilon,p)\in (0,1)\times (0,\frac{s}{2})\times (\frac{1}{\frac{s}{2}-\varepsilon}, \infty)$,  and  $m\in(1,2) $. Let $\Sigma_0$ be a negligible compact set of the plane satisfying \eqref{int:cond0} and consider $\mathcal{X}_0=\big(X_{0,\lambda,h}\big)_{\lambda\in\Lambda,h\in(0,e^{-1}]}$ to be a family   of $\Sigma_0 $-admissible  vector fields of order $\Xi_0=(\alpha,\beta,\gamma)$ and of  class $B^s_{p,\infty}$ as well as their divergence. Let  $(\omega,\rho)$ be a maximal smooth solution of the system \eqref{eqn:omega:2} defined  on a  time interval  $[0,T^\star)$ with the initial data $(\omega_0,\rho_0)$ satisfying   $$\omega_0 \in L^2\cap L^\infty , \quad  \rho_0 \in W^{2,2}\cap W^{2,\infty}, $$
\begin{equation*}
 \sup_{x\notin \Sigma_0}\big( \varphi_0(x)\big)^{-m}\left| \nabla \rho_0(x) \right|<\infty, \quad \quad \sup_{x\notin \Sigma_0}  \big(\varphi_0(x)\big)^{-m+1}\left| \nabla |\nabla\rho_0(x)|^2 \right|<\infty
\end{equation*}
and 
\begin{equation*}
 \sup_{h\in (0,e^{-1}]} h^{\beta  }\Big(\Vert\omega_0\Vert_{(\Sigma_0)_{h},(\mathcal{X}_0)}^{s,p}+ \Vert\rho_0\Vert_{(\Sigma_0)_{ h },(\mathcal{X}_0)}^{s+1,p} \Big)<\infty  ,
 \end{equation*} 
where $\varphi_0$ is given by \eqref{tool:varphi0}.
Then, there exists $T\in (0,T^\star)$ such that 
$$
t\in[0,T]\mapsto \Vert   v(t)\Vert_{L(\Sigma_t)\cap LL\cap L^\infty}\in  L^\infty\big([0,T]\big)
$$
and
 $$(\omega,\rho)\in L^\infty\big([0,T],L^2\cap L^\infty\big)\times L^\infty\big([0,T], W^{1,2}\cap W^{1,\infty}\big).
 $$  
In addition, by setting
$\sigma_t = s \Big( 1- \frac{\varepsilon t}{sT}\Big) - \varepsilon,$
  there exist  some continuous functions  $\Xi(t)=(\alpha(t),\beta(t),\gamma(t))$ and $C_{0,T}>0$, depending only on $T$ and the initial data such that  
 \begin{equation*} 
{\sup_{h\in (0,e^{-1}]} h^{\beta(t)}\Vert\omega(t)\Vert_{(\Sigma_t)_{\delta_t^{-1}(h)},(\mathcal{X}_{t,h})}^{\sigma_t}+\sup_{h\in (0,e^{-1}]} h^{\beta(t)}\Vert\rho(t)\Vert_{(\Sigma_t)_{\delta_t^{-1}(h)},(\mathcal{X}_{t,h})}^{\sigma_t+1}  \leqslant  C_{0,T},}
\end{equation*}  
where, $ \delta_{t}^{-1} (h) $ is defined in \eqref{delta:inverse} and $
 \Sigma_{t} \triangleq \Psi(t, \Sigma_0) .$
Moreover,   the transported vector-fields $\mathcal{X} _t$ of $\mathcal{X} _0$ by the {flow defined through  $\Psi$ }
$$t\mapsto  X_{t,\lambda, h}(\cdot) \triangleq  \partial_{X_{0,\lambda,h}}\Psi(t,\Psi^{-1}(t,\cdot) ), $$
is $\Sigma_t$-admissible of order $\Xi(t)$ and belongs to $L^\infty([0,T]; B^{\sigma_t}_{p,\infty}),$ for all $(\lambda,h) \in \Lambda \times (0,e^{-1}].$
   
\end{proposition}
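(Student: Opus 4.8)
The plan is to combine the a priori estimates already obtained in Proposition \ref{Proposition visous} and Proposition \ref{prop2222} with a bootstrap argument that closes on the quantity $\mathbb{V}(t)$, in complete analogy with the inviscid scheme of Proposition \ref{prop220}, the only genuinely new feature being that everything must be done uniformly in $\kappa\in(0,1)$ and in Besov spaces $B^{\sigma_t}_{p,\infty}$ with $p<\infty$. First I would fix $T\in(0,T^\star)$ so that $\mathbb{V}(T)\leqslant C_{\sigma,m}$, which by Proposition \ref{Proposition visous} guarantees the $L^2\cap L^\infty$ bounds \eqref{es:G-W} on $\nabla\rho$ and $\omega$, hence the bound $\Vert v(t)\Vert_{LL}\lesssim\Vert\omega(t)\Vert_{L^2\cap L^\infty}\leqslant G(t)$ by Lemma \ref{lem3}, and the bound \eqref{Est-vLinfty} on $\Vert v(t)\Vert_{L^\infty}$. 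These give two of the three pieces of $\Vert v(t)\Vert_{L(\Sigma_t)\cap LL\cap L^\infty}$ immediately; the missing piece is the $L(\Sigma_t)$ norm, which is exactly what the logarithmic estimate of Theorem \ref{propoo1} controls once the striated norm $\Vert\omega(t)\Vert^{\sigma_t}_{\Sigma_t,\mathcal{X}_t}$ is in hand.

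Next I would transport the admissible family by the flow, setting $X_{t,\lambda,h}=\big(\partial_{X_{0,\lambda,h}}\Psi\big)(t,\Psi^{-1}(t,\cdot))$, and invoke Proposition \ref{prop2222} to each member to obtain the co-normal regularity bounds \eqref{ES:striated:reg}, \eqref{ES:striated:reg:rho}. Dividing by $I\big((\Sigma_t)_{\delta_t^{-1}(h)},(\mathcal{X}_{t,h})\big)$ and using the lower bound \eqref{ixtd} of Proposition \ref{prop:admissible family} together with the admissibility of $\mathcal{X}_0$ of order $\Xi_0=(\alpha,\beta,\gamma)$, I would absorb all the $h^{-C\cdots}$ factors into a redefined exponent $\beta(t)=\beta+C\int_0^t\widetilde W(\tau)d\tau$, exactly as the passage from \eqref{eqom} to \eqref{beta-def**} does in the inviscid case. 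This yields
\begin{equation*}
\sup_{h\in(0,e^{-1}]}h^{\beta(t)}\Big(\Vert\omega(t)\Vert^{\sigma_t}_{(\Sigma_t)_{\delta_t^{-1}(h)},(\mathcal{X}_{t,h})}+\Vert\rho(t)\Vert^{\sigma_t+1}_{(\Sigma_t)_{\delta_t^{-1}(h)},(\mathcal{X}_{t,h})}\Big)\leqslant C_{0,T},
\end{equation*}
and simultaneously establishes that $\mathcal{X}_t$ is $\Sigma_t$-admissible of a time-dependent order $\Xi(t)$ with $X_{t,\lambda,h}\in L^\infty([0,T];B^{\sigma_t}_{p,\infty})$.

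With the striated norm of $\omega$ controlled, I would feed it into Theorem \ref{propoo1} to bound $\Vert\nabla v(t)\Vert_{L^\infty((\Sigma_t)^c_{\delta_t^{-1}(h)})}$ by $C_{0,T}\big(G(t)\big)\log(e+C_{0,T}h^{-\beta(t)}\cdots)$, and then divide by $-\log\delta_t^{-1}(h)$ to recover $\Vert v(t)\Vert_{L(\Sigma_t)}$ via definition \eqref{Lsigma}; the factor $e^{\int_0^t\Vert v\Vert_{LL}}$ coming from \eqref{delta:inverse} is controlled because $\Vert v\Vert_{LL}\leqslant G$. This produces a closed differential inequality for $\widetilde W(t)$, to which Gronwall's lemma gives a bound depending only on the initial data and $t$, of the same double-exponential flavor as \eqref{ESS-T1}. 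The bootstrap then reads: the constraint $\mathbb{V}(T)\leqslant C_{\sigma,m}$ translates, through $\mathbb{V}'(t)=\Vert v(t)\Vert_{LL\cap L(\Sigma_t)}\leqslant \widetilde W(t)$ and the Gronwall bound, into a strictly positive lower bound on the admissible $T$ obtained by solving an equation $\Gamma(T)=C_{\sigma,m}$ for a continuous increasing $\Gamma$, and this $T$ is manifestly independent of $\kappa$ since every constant entering it is. Crucially one checks, exactly as in \eqref{alpha-gamma}, that the bound on $\mathbb{V}$ is self-improving, so the maximal lifespan $T^\star$ cannot have been reached before $T$ by the blowup criterion \eqref{Visc-BLow}.

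The main obstacle I anticipate is the uniformity in $\kappa$ of the whole chain: unlike the inviscid case, $\widetilde W(t)$ in \eqref{widetilde:W:def} is built from $G(t)$ of \eqref{G:def}, which itself absorbed the delicate $\kappa$-uniform estimates for $\psi$ obtained in Proposition \ref{Proposition visous} via the De Giorgi--Nash lemma and the optimization in $\varepsilon=\kappa^{1/\sigma}$; I must verify that no constant silently blows up as $\kappa\to0$, in particular that the exponent $-Ct\widetilde W^2(t)$ appearing in \eqref{ES:striated:reg} combines with the admissibility lower bound to leave a $\kappa$-free $\beta(t)$. A secondary subtlety is the loss $\sigma_t<s$ forced by the logarithmic frequency loss in Proposition \ref{prop2222}: I must keep track of the non-increasing profile $\sigma_t$ throughout and ensure the embeddings $B^s_{p,\infty}\hookrightarrow B^{\sigma_t}_{p,\infty}$ are applied in the correct direction, so that the final striated norms are stated at regularity $\sigma_t$ and the resulting boundary is only $C^{1+s'}$ for $s'<s$, consistently with the conclusion of Theorem \ref{THEO:2:soft}.
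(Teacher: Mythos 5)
Your proposal follows essentially the same route as the paper: fix $T$ by the constraint $\mathbb{V}(T)\leqslant C_{\sigma,m}$, combine Propositions \ref{Proposition visous} and \ref{prop2222} with the admissibility lower bound \eqref{ixtd} and the logarithmic estimate of Theorem \ref{propoo1} to close an inequality on $\widetilde W(t)$, then use the blowup criterion \eqref{Visc-BLow} to conclude $T<T^\star$ uniformly in $\kappa$. The only small discrepancy is that the final inequality \eqref{LAST-INEQU.W} is quadratic in $\widetilde W$ and is closed by a continuity/absorption argument on a short time rather than by Gronwall, but this does not affect the validity of your plan.
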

\begin{proof} We  recall   that the results of Proposition \ref{Proposition visous} and Proposition \ref{prop2222} are satisfied  on the interval $[0,T]$ as long as 
$$\mathbb{V} (T)\leqslant C_{\sigma,m},$$
where $\mathbb{V}(T)$ and $C_{\sigma,m}$ are given by \eqref{V:def:0} and \eqref{C-sigma-m}, respectively. 
   In what follows, we consider a time  $T>0$ such that  
\begin{equation}\label{cond:choice-t}
\mathbb{V}(T) \leqslant C_{\sigma,m} .
\end{equation} 
which is true by a continuity argument, and we shall see at the end that we can get a lower bound for $T$  uniformly in  $\kappa\in[0,1].$  
Therefore, by \eqref{cond:choice-t} all the results of Proposition \ref{Proposition visous} and Proposition \ref{prop2222} hold on $[0,T]$. 
 Now, we intend to track the time evolution  of the stratified regularity with the related norms. We begin with the admissibility of the  family $\mathcal{X}_t $. To this end, we apply Proposition \ref{prop:admissible family} in order  to get
\begin{equation} \label{ixtd2}
I\big((\Sigma_t)_{\delta_t^{-1}(h)},(\mathcal{X}_t)_h\big)
 \geqslant  I\big((\Sigma_0)_{h},(\mathcal{X}_0)_{h}\big)h^{\int_0^t W(\tau)d\tau} \geqslant  I\big((\Sigma_0)_{h},(\mathcal{X}_0)_{h}\big)h^{ -  t \widetilde{W}^2(t)   } ,
\end{equation}
where we have used the monotonicity of $t\mapsto \widetilde{W}(t) $ together with the fact that $W(t) \leqslant\widetilde{W}^2(t) $.
 Next, we introduce the quantity $$
\Upsilon(t)\triangleq  \Vert\omega(t)\Vert_{L^\infty}\widetilde{\Vert} X_{t,\lambda,h}\Vert_{B_{p,\infty}^{\sigma_t}}+\Vert \partial_{X_{t,\lambda,h}} \omega(t)\Vert_{B_{p,\infty}^{\sigma_t-1}}.
$$ 
Owing to Proposition \ref{Proposition visous}, we have for all $t\in[0,T]$
\begin{equation}\label{OMEGA_LP:LLL}
\Vert\omega(t)\Vert_{L^2\cap  L^\infty} \leqslant G(t).
\end{equation}
Together with the  estimate \eqref{ES:striated:reg} and the fact that $1 \leqslant G(t)  $, we obtain 
\begin{equation*}
\Upsilon(t) \leqslant  \left(\widetilde{\Vert} X_{0}\Vert_{B^{ s}_{p,\infty}} +\|\partial_{ X_{0 }}\omega\|_{B^{s-1}_{p,\infty}}  + \| \partial_{X_0}\rho_0\|_{B^{s}_{p,\infty}}\right) G(t)     h^{-C  t \widetilde{W}^2(t) }.
\end{equation*}
Hence,   Definition \ref{def11} and \eqref{ixtd2} imply  
\begin{eqnarray*}
\Upsilon(t) &\leqslant & \Big(N_{s,p}\big((\Sigma_0)_{h},(\mathcal{X}_0)_{h}\big) + \Vert\omega_0\Vert^{s,p}_{(\Sigma_0)_{h},(\mathcal{X}_0)_{h}}+\Vert\rho_0\Vert^{s+1,p}_{(\Sigma_0)_{h},(\mathcal{X}_0)_{h}}  \Big)\notag   I\big((\Sigma_0)_{h},(\mathcal{X}_0)_{h}\big)  G(t)   h^{-C t \widetilde{W}^2(t)  }\\
& \leqslant &  
   C _0  I\big((\Sigma_t)_{\delta_t^{-1}(h)},(\mathcal{X}_t)_h\big) G(t)    h^{-\beta -C  t \widetilde{W}^2(t)},
\end{eqnarray*}
where $C_0$ depends only on the initial data and not on $h$. Consequently, in view of Definition \ref{Defintion-2.4}, we infer that 
\begin{equation}\label{SR:propagation}
 \Vert\omega(t)\Vert^{\sigma_t ,p}_{(\Sigma_t)_{\delta_t^{-1}(h)},(\mathcal{X}_t)_h} \leqslant  C _0     G(t)       h^{-\beta-C   t \widetilde{W}^2(t)}  .
\end{equation}
Similar arguments, using \eqref{ES:striated:reg:rho} instead of \eqref{ES:striated:reg}, lead to
\begin{equation}\label{SR:propagation(rho)}
 \Vert\rho(t)\Vert^{\sigma_t +1 ,p}_{(\Sigma_t)_{\delta_t^{-1}(h)},(\mathcal{X}_t)_h} \leqslant  C _0     G(t)      h^{-\beta-C   \widetilde{W}^2(t)}  .
\end{equation}
On the other hand, due to the fact that 
$\sigma_t\geqslant   s-2{\varepsilon}, \forall t\in[0,T], $
together with the assumption   $ p> \frac{1}{\frac{s}{2} - \varepsilon}$, we obtain in view of \eqref{SR:propagation}, $ \bar{s}\triangleq  s -2\varepsilon -\frac{2}{p} \in (0,1) $ and Sobolev embeddings 
\begin{equation}\label{embedding:0}
\Vert\omega(t)\Vert^{\bar{s}}_{(\Sigma_t)_{\delta_t^{-1}(h)},(\mathcal{X}_t)_h} \lesssim \Vert\omega(t)\Vert^{\sigma_t,p}_{(\Sigma_t)_{\delta_t^{-1}(h)},(\mathcal{X}_t)_h} \leqslant  C _0     G(t)    h^{-\beta-C   t \widetilde{W}^2(t)}  .  
\end{equation} 
Thus, Theorem \ref{propoo1} together with \eqref{embedding:0}   yield 
\begin{eqnarray*}
\Vert\nabla v(t)\Vert_{L^\infty((\Sigma_t)^{c}_{\delta_t^{-1}(h)})}&\lesssim &  \Vert\omega(t)\Vert_{ L^2} +    \Vert\omega(t)\Vert_{ L^\infty} \log\left(e+\tfrac{\Vert\omega\Vert^{\bar{s}}_{(\Sigma_t)_{\delta_t^{-1}(h)},(\mathcal{X}_t)_h}}{\Vert\omega(t)\Vert_{L^\infty}}\right)\nonumber \\
& \leqslant &   G(t) \log \Big(e + C _0         h^{-\beta-C   t \widetilde{W}^2(t)} \Big), 
\end{eqnarray*}
where  we have used the monotonicity of the functions $x \mapsto x \log (e + \frac{b}{x})$ and $x \mapsto  \log (e + \frac{x}{c})$ together with \eqref{OMEGA_LP:LLL}.
Therefore, we deduce for any $t\in[0,T]$ 
\begin{eqnarray*}
\Vert\nabla v(t)\Vert_{L^\infty((\Sigma_t)^{c}_{\delta_t^{-1}(h)})} &\leqslant &  G(t)  \bigg( C_0 +\beta+C   t \widetilde{W}^2(t)\bigg)(-\log h).
\end{eqnarray*}
Then, using the inequalities
\begin{equation}\label{a-z:inequa}
\|  v(t)\|_{LL}\lesssim \|\omega(t)\|_{L^2 \cap L^\infty} \lesssim G(t) \leqslant  \widetilde{W}(t),\quad \forall t \in [0,T].
\end{equation} 
together with the definition \eqref{delta:inverse} yield
\begin{equation*}
\frac{ \Vert\nabla v(t)\Vert_{L^\infty((\Sigma_t)^{c}_{\delta_t^{-1}(h)})}}{ -\log(\delta_t^{-1}(h))} \leqslant C _0  G(t)  \bigg(        1      +    t \widetilde{W}^2(t) \bigg) e^{\int_0^t\widetilde{W}(\tau) d\tau}.
\end{equation*}
Hence, we obtain 
\begin{equation}\label{I:estimate}
  \sup_{\tau \in [0,t]}\| v(\tau) \|_{L(\Sigma_\tau)} \leqslant   C _0  G(t)  \bigg(        1      +    t \widetilde{W}^2(t) \bigg) e^{\int_0^t\widetilde{W}(\tau) d\tau}.
\end{equation}
Consequently, the definition  \eqref{widetilde:W:def} together with  \eqref{a-z:inequa} imply 
\begin{equation}\label{SSSS}
\widetilde{W}(t) \leqslant C_0 G(t)   \bigg(        1      +    t \widetilde{W}^2(t) \bigg) e^{\int_0^t\widetilde{W}(\tau) d\tau}.
\end{equation}
On the other hand, from the definition of $G(t)$ and by virtue of \eqref{a-z:inequa}, we have that 
\begin{eqnarray*}
G(t) & \leqslant & C_0 + C_0 t^\frac{1}{6} \widetilde{W}(t) e^{ Ct\widetilde{W}^2(t)}, \quad \forall\, 0\leqslant t< \min(1,T^\star).
\end{eqnarray*}
Inserting this  inequality into \eqref{SSSS} and  using again the facts $ \widetilde{W}(t) \geqslant 1 $ and $t\leqslant 1$, we  obtain
\begin{align}\label{LAST-INEQU.W}
\nonumber\widetilde{W}(t) &\leqslant C_0    e^{ Ct\widetilde{W}^2(t)}\Big(1+ C_0 t^\frac{1}{6} \widetilde{W}(t) \Big)\\
&\leqslant  C_0    e^{ Ct^{\frac13}\widetilde{W}^2(t)}, \quad \forall\, 0\leqslant t< \min(1,T^\star).
\end{align}
Define
\begin{align}\label{T-defff}
T=\left(\tfrac{\ln 2}{ 4C C_0^2}\right)^3,
\end{align}
which is smaller than $1$ since the constants $C$ and $C_0$ can be taken large enough. Then we can check by the continuity of  $t\in[0,T^\star)\mapsto \widetilde{W}(t)$ and \eqref{LAST-INEQU.W} that
\begin{align}\label{induc-qq}
\forall t\in\big[0,\min(T, T^\star)\big), \quad \widetilde{W}(t)\leqslant 2C_0.
\end{align}
It remains to check that $T^\star>T$. Without loss of generality, we can assume that $T^\star<\infty$.  We shall argue by contradiction and assume that $T^\star\leqslant T.$ According to \eqref{Visc-BLow}, we should have
\begin{align}\label{assum-Neg}
\lim_{t\to T^\star}\mathbb{V}(t)>C_{\sigma,m},
\end{align}
where  $C_{\sigma,m}$ is defined in \eqref{C-sigma-m}.
Now, we write by definition
\begin{eqnarray*}
  \mathbb{V}(t)=\int_0^t  \|v(\tau ) \|_{L(\Sigma_\tau) \cap LL} d\tau 
 &\leqslant   & 
 t  \sup_{\tau\in [0,t]}\Big(  \|v(\tau ) \|_{L(\Sigma_\tau) } +  \|v(\tau ) \|_{LL} \Big)\\
 &\leqslant     & t  \widetilde{W}(t). 
\end{eqnarray*} 
Thus, using \eqref{induc-qq} and \eqref{T-defff} we find
\begin{align*}
  \forall t\in [0,T^\star),\quad \mathbb{V}(t)\leqslant&  2T C_0\\
\leqslant&  2\left(\tfrac{\ln 2}{ 4C }\right)^3 C_0^{-2}.
\end{align*}
Since $C$ and $C_0$ are taken large enough, then the latter inequality contradicts the assumption \eqref{assum-Neg}. Finally, we get that $T^\star>T$ and the time $T$ given by \eqref{T-defff} is only related to the size of the initial data and it is independent of $\kappa\in[0,1].$
 This ends the proof of Proposition \ref{Lip:Prop:visc}.
\end{proof}

 The next result  will be  useful in the study of the regularity of the transported initial boundary of the vortex patch stated in Theorem \ref{th2}.
\begin{coro}\label{cor:X-Psi:2}
Under the assumptions of Proposition \ref{Lip:Prop:visc}, we have
$$\partial_{X_{0,\lambda,h}} \Psi(t,\cdot) =  X_{t,\lambda,h}(\Psi(t,\cdot)) \in L^\infty_t\big([0,T]; C^{\sigma_t - \frac{2}{p}}  \big), \quad \forall (\lambda, h )\in \Lambda\times(0,e^{-1}] . $$
More precisely, there exists a universal constant $C_{0,h,T}>0$ depending only on the initial data, $h$ and $T$ such that, for all $t\in [0,T]$
\begin{equation}\label{X:es:viscous000}
\| X_{t,\lambda,h}(\Psi(t,\cdot))\|_{C^{\sigma_t - \frac{2}{p}}  }\leqslant C_{0,h,T}.
\end{equation}
If moreover, $ \supp X_{0,\lambda,h} \subset  K_{\lambda,h} \Subset \mathbb{R}^2,$ then one has 
\begin{equation}\label{X:es:viscous001}
\| X_{t,\lambda,h}(\Psi(t,\cdot))\|_{B^{\sigma_t - \frac{2}{p}}_{p,\infty}  }\leqslant C_{K_{\lambda,h}}C_{0,h,T}.
\end{equation}
\end{coro}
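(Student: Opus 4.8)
The plan is to mirror the proof of the inviscid Corollary \ref{cor:X-Psi:1}, replacing the H\"older regularity of the transported vector fields (available there because the drift is Lipschitz) by the Besov regularity $B^{\sigma_t}_{p,\infty}$ provided by Proposition \ref{Lip:Prop:visc}, and then converting it to H\"older regularity through a Sobolev embedding. Set $\bar s\triangleq \sigma_t-\tfrac2p$. The first thing I would record is that $\bar s\in(0,1)$: since $\sigma_t\in[s-2\varepsilon,s-\varepsilon]$ by \eqref{sigmat:def}, one has $\bar s\geqslant s-2\varepsilon-\tfrac2p>0$ precisely because of the standing hypothesis $p>\frac{1}{\frac s2-\varepsilon}$, while $\bar s\leqslant s-\varepsilon<1$. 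In two dimensions $B^{\sigma_t}_{p,\infty}\hookrightarrow B^{\bar s}_{\infty,\infty}=C^{\bar s}$, so Proposition \ref{Lip:Prop:visc} yields
$$
\| X_{t,\lambda,h}\|_{\bar s}\lesssim \widetilde{\|} X_{t,\lambda,h}\|_{B^{\sigma_t}_{p,\infty}}\leqslant C_{0,h,T},\qquad \forall\, t\in[0,T].
$$
Thus it suffices to upgrade this \emph{spatial} $C^{\bar s}$ bound on $X_{t,\lambda,h}$ into the $C^{\bar s}$ bound on the composition $X_{t,\lambda,h}(\Psi(t,\cdot))$ claimed in \eqref{X:es:viscous000}.

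For the sup norm this is immediate, since $\Psi(t,\cdot)$ is a homeomorphism: $\|X_{t,\lambda,h}(\Psi(t,\cdot))\|_{L^\infty}=\|X_{t,\lambda,h}\|_{L^\infty}\lesssim\|X_{t,\lambda,h}\|_{\bar s}\leqslant C_{0,h,T}$. For the H\"older seminorm I would reproduce the two-case argument of Corollary \ref{cor:X-Psi:1}, which for $|x-x'|\leqslant\tfrac h2$ is exhaustive: either $x,x'\in(\Sigma_0)_h$, in which case both values vanish because $\supp X_{0,\lambda,h}\subset(\Sigma_0)_h^c$ and $X_{t,\lambda,h}(\Psi(t,\cdot))=\partial_{X_{0,\lambda,h}}\Psi(t,\cdot)$; or at least one endpoint lies in $(\Sigma_0)_h^c$, which forces the whole segment $[x,x']$ into $(\Sigma_0)_{h/2}^c$. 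In the latter case the chain rule gives
$$
\big| X_{t,\lambda,h}(\Psi(t,x))-X_{t,\lambda,h}(\Psi(t,x'))\big|
\lesssim \| X_{t,\lambda,h}\|_{\bar s}\,|x-x'|^{\bar s}\,\|\nabla\Psi(t,\cdot)\|_{L^\infty((\Sigma_0)_{h/2}^c)}^{\bar s},
$$
and I would control $\|\nabla\Psi(t,\cdot)\|_{L^\infty((\Sigma_0)_{h/2}^c)}$ exactly as in \eqref{nabla-psi:0.0}: differentiating the flow equation and applying Gronwall, then using Lemma \ref{s2lem1} to propagate $\Psi(\tau,(\Sigma_0)_{h/2}^c)\subset(\Sigma_\tau)_{\delta_\tau(h/2)}^c$, and finally the bounds $\sup_{\tau\leqslant T}\|v(\tau)\|_{L(\Sigma_\tau)}\leqslant C_{0,T}$ and $\sup_{\tau\leqslant T}\|v(\tau)\|_{LL}\leqslant C_{0,T}$ from Proposition \ref{Lip:Prop:visc}, together with $T\leqslant1$, to get $\|\nabla\Psi(t,\cdot)\|_{L^\infty((\Sigma_0)_{h/2}^c)}\leqslant e^{C_{0,h}}$. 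Combining the two cases yields \eqref{X:es:viscous000}.

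Finally, for \eqref{X:es:viscous001} I would observe that if $\supp X_{0,\lambda,h}\subset K_{\lambda,h}\Subset\mathbb{R}^2$ then $X_{t,\lambda,h}(\Psi(t,\cdot))=\partial_{X_{0,\lambda,h}}\Psi(t,\cdot)$ inherits compact support in $K_{\lambda,h}$, since it vanishes wherever $X_{0,\lambda,h}$ does; the inverse embedding Lemma \ref{inverse-embedding:lemma} applied with the compact $K_{\lambda,h}$ then converts the $C^{\bar s}$ bound \eqref{X:es:viscous000} into $\|X_{t,\lambda,h}(\Psi(t,\cdot))\|_{B^{\bar s}_{p,\infty}}\leqslant C_{K_{\lambda,h}}\|X_{t,\lambda,h}(\Psi(t,\cdot))\|_{\bar s}\leqslant C_{K_{\lambda,h}}C_{0,h,T}$. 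Since the argument is structurally identical to the inviscid corollary, there is no genuine analytic obstacle; the only points requiring care are verifying once and for all that the time-dependent exponent $\bar s=\sigma_t-\tfrac2p$ stays in $(0,1)$ under the hypothesis on $p$, and checking that compact support is preserved under the composition so that Lemma \ref{inverse-embedding:lemma} applies.
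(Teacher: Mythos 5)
Your proposal is correct and follows essentially the same route as the paper: verify that $\sigma_t-\tfrac2p\in(0,1)$, use the embedding $B^{\sigma_t}_{p,\infty}\hookrightarrow C^{\sigma_t-\frac2p}$ together with Proposition \ref{Lip:Prop:visc}, repeat the two-case argument of Corollary \ref{cor:X-Psi:1} for the composition with the flow, and obtain \eqref{X:es:viscous001} from Lemma \ref{inverse-embedding:lemma} via the preserved compact support. You merely spell out the details (exhaustiveness of the two cases, the Gronwall bound on $\nabla\Psi$ away from $\Sigma_0$) that the paper leaves implicit.
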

\begin{proof}
 The first part of the proof is similar to the one of Corollary \ref{cor:X-Psi:1}. Indeed, by assumptions of Proposition \ref{Lip:Prop:visc}, we have $\sigma_t- \frac{2}{p} \in (0,1). $ Thus, owing to the embedding 
 $$B^{\sigma_t}_{p,\infty} \hookrightarrow  C^{\sigma_t - \frac{2}{p}} $$ and repeating the same arguments in the proof of Corollary \ref{cor:X-Psi:1} leads to \eqref{X:es:viscous000}. The proof of \eqref{X:es:viscous001} follows by a direct application of Lemma \ref{inverse-embedding:lemma}.
\end{proof}
\subsection{Proof of Theorem \ref{th2}}\label{sec:xx} 
 The proof  of  the existence and the uniqueness for the system  \eqref{eqn:omega:2}is quite similar to that of the inviscid case developed in detail in Section \ref{Existence-uniq-inviscid}. Here we shall only sketch the main lines. First we construct approximate solutions by considering  the system 
\begin{equation}\label{B_n(viscous)}
\left\{ \begin{array}{lll}
\partial_{t}v_n+v_n\cdot\nabla v_n+\nabla p_n =\rho_n \begin{pmatrix}0\\
  1 \end{pmatrix}, &\\
\partial_{t}\rho_n+v_n\cdot\nabla\rho_n - \kappa \Delta \rho_n=0, &\vspace{2mm}\\
\textnormal{div}\, v_n=0,\\
v_{0,n} = S_nv_0,\quad \rho_{0,n} = \widetilde{S} _n\rho_0.
\end{array} \right. 
\end{equation}
where $S_n$ is the usual  cut-off in frequency defined in Section 2, and $\widetilde{S}_n$ is given by
$$\widetilde{S}_n \rho_0 \triangleq \chi_r \rho_0 + (1-\chi_r)S_n \rho_0 ,$$
with
$\chi_r$ is a smooth cut-off function with values in $[0,1]$ and satisfying 
$$\chi_r(x) = \left\{\begin{array}{ll}
1 & \text{ if } d(x,\Sigma_0)\leqslant\frac{r}{2},\vspace{2mm}\\
0 & \text{ if } d(x,\Sigma_0)\geqslant r.
\end{array} \right. $$
Now, by assumption, we have $ \chi_r \rho_0 \in C^{1+m} $, where $m\in (1,2).$ Also, it is clear that $v_{0,n}$ and $(1-\chi_r)S_n  \rho_0  $ are smooth and belong to $C^{1+m}$. Hence, the result \cite{Chae-Kim-Nam-2} applies and  implies the existence of a unique smooth  solution of   \eqref{B_n(viscous)}. The rest of the proof can be done in a similar way to the inviscid case, however, we need to check that  the platitude conditions  in Theorem \ref{th2}  imply the  assumptions \eqref{hypothesis2}  uniformly in $n$.   First, we observe that 
\begin{equation}\label{Sn0:C}
\nabla \widetilde{S}_n \rho_0(y)=   \nabla   \rho_0(y)  ,\quad \forall y\in (\Sigma_0)_\frac{r}{2}.
\end{equation}   
and consequently we get from straightforward arguments based on the platitude of $\rho_0$ and the law products
\begin{eqnarray*}
\sup_{\underset{x\neq y , \; |x-y|< {r}/{2}}{y\in \Sigma_0 }} \frac{|\nabla| \nabla   \widetilde{S}_n\rho_0(x)|^2|}{|x-y|^{\alpha-1}}&=&\sup_{\underset{x\neq y , \; |x-y|<{r}/{2}}{y\in \Sigma_0 }} \frac{  \big| \nabla| \nabla  \rho_0(x)|^2 -   \nabla| \nabla  \rho_0(y)|^2\big|}{|x-y|^{\alpha-1}} \\ &\lesssim &\| \nabla | \nabla  \rho_0|^2 \|_{C^{\alpha-1}((\Sigma_0)_r)}\\
&\lesssim & \|  \rho_0  \|_{C^{\alpha+1}((\Sigma_0)_r)}^2. 
\end{eqnarray*} 
This proves  the second condition in \eqref{hypothesis2} uniformly in $n$. As to the first condition, we write
\begin{eqnarray*}
\sup_{\underset{x\neq y , \; |x-y|<{r}/{2}}{y\in \Sigma_0 }} \frac{|\nabla  \widetilde{S}_n  \rho(x)|}{|x-y|^\alpha}&=&\sup_{\underset{x\neq y , \; |x-y|<{r}/{2}}{y\in \Sigma_0 }} \frac{|\nabla \rho_0(x) -\nabla \rho_0(y)|}{|x-y|^\alpha}\\
& = & \int_0^1 \frac{\big|\nabla ^2 \rho_0(x + t(y-x))-\nabla ^2 \rho_0(y)\big|}{ |x-y|^{\alpha -1}}dt\\
& \lesssim & \| \nabla ^2 \rho_0 \|_{C^{\alpha -1}((\Sigma_0)_r)}
 \lesssim  \|   \rho_0 \|_{C^{\alpha +1}((\Sigma_0)_r)}.
\end{eqnarray*}
This ensures the hypothesis \eqref{hypothesis2} and the a priori estimates of  Proposition \ref{Lip:Prop:visc} can be used.
}
\section{Inviscid limit}\label{Inviscid-limit} 
This section is devoted to the inviscid limit problem for the system  \eqref{B-0-kappa}    when the diffusivity parameter goes to zero. This will be implemented in a classical way using energy method combined  with Yudovich arguments. Refined convergence results specific to singular vortex patches will be also addressed.  The main result can be stated as follows.
\begin{theorem}\label{Thm:limit}
Let $\kappa\in [0,1]$ and $(v_\kappa,\rho_\kappa)$ $($resp. $(v,\rho))$ be a local solution of \eqref{B-0-kappa}  $($resp.  of \eqref{B0}$)$ with initial data $(v_{0,\kappa},\rho_{0,\kappa})$ $($resp. $(v_0,\rho_0))$.  Let  $\Psi_\kappa$ $($resp. $\Psi)$ denote the flow associated with  $v_\kappa$ $($resp. $v)$ . Assume that all the solutions are defined on   some common interval of time  \mbox{$[0,T]$} and we suppose that  $ v_{0,\kappa}-v_{0}\in L^2$ and for all $\forall t\in [0,T]$
\begin{equation}\label{CC-assum}
\sup_{\kappa\in[0,1]}\left(\big\|\big(v_{\kappa} (t),v(t)\big)\big\|_{L^\infty}+\big\|\big(\omega_{\kappa} (t),\omega (t)\big)\big\|_{L^2\cap L^\infty}  + \big\|\big(\rho_{\kappa} (t), \rho(t)\big)\big\|_{W^{1,2}\cap W^{1,\infty}}\right) \leqslant C_{0,T},
\end{equation}
for some constant $C_{0,T}$, with   $\omega_\kappa=\text{rot } v_\kappa$ and $\omega=\text{rot } v$. Then, the following results  occur  for \mbox{any $t\in [0,T]$}
\begin{enumerate}
\item[{\bf(1)}] $\Vert v_{\kappa}(t)-v(t)\Vert_{L^{2}} +\Vert \rho_{\kappa}(t)-\rho(t)\Vert_{L^{2}} \leqslant C_{0,T}\Big(\Vert v_{0,\kappa}-v_0\Vert_{L^{2}} +\Vert \rho_{0,\kappa}-\rho_0\Vert_{L^{2}} + \kappa t\Big) ^{\frac{1}{2} \exp( -TC_{0,T} )}$,
~~\\
\item[{\bf(2)}] $\| \Psi_\kappa(t,\cdot)- \Psi(t,\cdot)\|_{L^\infty} \leqslant  C_{0,T} \Big(\Vert v_{0,\kappa}-v_0\Vert_{L^{2}} +\Vert \rho_{0,\kappa}-\rho_0\Vert_{L^{2}} + \kappa t\Big) ^{{\frac{1}{4} \exp( -TC_{0,T} ) }}  ,
$
\end{enumerate}
as long as 
\begin{equation*} 
C_{0,T}\Big(\Vert v_{0,\kappa}-v_0\Vert_{L^{2}} +\Vert \rho_{0,\kappa}-\rho_0\Vert_{L^{2}} + \kappa t\Big) ^{{\frac{1}{2} \exp( -TC_{0,T} )}} \lesssim 1.
\end{equation*}

 Moreover, if $\omega_{0,\kappa}=\omega_{0}= \mathrm{1}_{\Omega}$ and $\rho_{0,\kappa}=\rho_0$ satisfy the assumptions in Theorem  \ref{THEO:2:soft}, then the following holds. For any connected component of $\partial \Omega \backslash \Sigma_0$, there exists $\gamma^0$ a $ C^{1+s}(\mathbb{R})$-parametrization of this connected component such that  
\begin{enumerate}
\item[{\bf(3)}]  For all $t\in [0,T]$, $\gamma_{t,\kappa}(\cdot)=\Psi_{\kappa}(t,\gamma^0(\cdot))$ converges to $\gamma_{t}(\cdot)=\Psi(t,\gamma^0(\cdot))$, as $\kappa$ goes to zero, in $C^{1+\bar{s}}(\mathbb{R})$, for all $\bar{s}<s$.
\end{enumerate}
\end{theorem}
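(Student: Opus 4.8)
The plan is to prove the three convergence statements in a logical cascade, where statement (1) is the analytic backbone and statements (2) and (3) are geometric consequences. First I would establish the $L^2$ stability estimate (1) by introducing the differences $\delta v \triangleq v_\kappa - v$ and $\delta\rho \triangleq \rho_\kappa - \rho$, which satisfy the system
\begin{equation*}
\left\{\begin{array}{l}
\partial_t \delta v + v_\kappa \cdot\nabla \delta v + \delta v \cdot\nabla v + \nabla \delta p = \delta\rho\, \vec{e}_2,\\
\partial_t \delta\rho + v_\kappa\cdot\nabla \delta\rho + \delta v\cdot\nabla \rho - \kappa\Delta \rho_\kappa = 0.
\end{array}\right.
\end{equation*}
Testing the velocity equation against $\delta v$ and the density equation against $\delta\rho$, using the divergence-free condition to kill the pressure and the transport terms $v_\kappa\cdot\nabla$, one obtains after Cauchy--Schwarz and H\"older
\begin{equation*}
\tfrac{1}{2}\tfrac{d}{dt}\big(\|\delta v(t)\|_{L^2}^2 + \|\delta\rho(t)\|_{L^2}^2\big) \lesssim \big(\|\nabla v\|_{L^\infty} + \|\nabla\rho\|_{L^\infty}\big)\big(\|\delta v\|_{L^2}^2 + \|\delta\rho\|_{L^2}^2\big) + \kappa\|\Delta\rho_\kappa\|_{L^2}\|\delta\rho\|_{L^2}.
\end{equation*}
The diffusion cross-term $\kappa\int \Delta\rho_\kappa\,\delta\rho$ is the source of the $\kappa t$ contribution; here I would avoid requiring $\nabla^2\rho_\kappa$ in $L^2$ (which is not in the uniform bounds of \eqref{CC-assum}) by integrating by parts to write it as $-\kappa\int\nabla\rho_\kappa\cdot\nabla\delta\rho = -\kappa\int\nabla\rho_\kappa\cdot(\nabla\rho_\kappa-\nabla\rho)$, which is controlled by $\kappa\|\nabla\rho_\kappa\|_{L^2}(\|\nabla\rho_\kappa\|_{L^2}+\|\nabla\rho\|_{L^2}) \lesssim \kappa C_{0,T}^2$ using \eqref{CC-assum} directly. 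The delicate point is that $\|\nabla v(t)\|_{L^\infty}$ is \emph{not} bounded uniformly, since the velocity is only log-Lipschitz; this is precisely where the Yudovich argument enters, and it is the main obstacle.

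To handle the non-Lipschitz coefficient I would replace the naive Gronwall step by the Osgood argument of Yudovich. Rather than estimating $\|\delta v\cdot\nabla v\|_{L^2}$ through $\|\nabla v\|_{L^\infty}$, I would exploit the $L^p$ boundedness of $\omega$ uniformly in $p$: by the assumption $\|\omega_\kappa(t),\omega(t)\|_{L^2\cap L^\infty}\leqslant C_{0,T}$ and interpolation, the strain term admits the bound
\begin{equation*}
\Big|\int_{\mathbb{R}^2}(\delta v\cdot\nabla v)\cdot\delta v\,dx\Big| \lesssim \|\delta v\|_{L^2}^2\,\log\Big(e + \frac{C_{0,T}}{\|\delta v\|_{L^2}}\Big),
\end{equation*}
which is the classical estimate underlying uniqueness for Yudovich solutions (and is essentially the content already used for the stability estimates invoked in \ding{206} of the inviscid proof, see Corollary \ref{Coro:Cauchy seq.}). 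Setting $E(t)\triangleq \|\delta v(t)\|_{L^2}^2 + \|\delta\rho(t)\|_{L^2}^2$ and $E_0 \triangleq \|v_{0,\kappa}-v_0\|_{L^2}^2 + \|\rho_{0,\kappa}-\rho_0\|_{L^2}^2$, I would arrive at the differential inequality $E'(t) \lesssim C_{0,T}\,E(t)\log(e + C_{0,T}/E(t)^{1/2}) + \kappa C_{0,T}^2$, and integrating this Osgood-type inequality (after absorbing $\kappa t$ into the initial datum via $\widetilde{E}_0 \triangleq E_0 + \kappa t$) yields $E(t) \lesssim (\widetilde{E}_0)^{\exp(-TC_{0,T})}$, which is exactly (1) after taking square roots. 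The restriction $\widetilde{E}_0^{(1/2)\exp(-TC_{0,T})}\lesssim 1$ guarantees we stay in the regime where the logarithm is well-behaved.

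For statement (2), the flow difference obeys
\begin{equation*}
\Psi_\kappa(t,x)-\Psi(t,x) = \int_0^t\big(v_\kappa(\tau,\Psi_\kappa(\tau,x)) - v(\tau,\Psi(\tau,x))\big)d\tau,
\end{equation*}
which I would split as $v_\kappa(\tau,\Psi_\kappa)-v(\tau,\Psi_\kappa)$ plus $v(\tau,\Psi_\kappa)-v(\tau,\Psi)$. The first piece is bounded in $L^\infty$ by interpolating $\|\delta v\|_{L^\infty}$ between the uniform $L^2$ control from (1) and the uniform $L^\infty$ bound in \eqref{CC-assum}, giving a bound of order $E(t)^{1/4}$ up to $C_{0,T}$ factors. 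The second piece is controlled by the log-Lipschitz modulus of $v$, producing a term $\int_0^t \|v\|_{LL}\,|\Psi_\kappa-\Psi|\log(e/|\Psi_\kappa-\Psi|)\,d\tau$, and a second Osgood integration in the quantity $\|\Psi_\kappa(t,\cdot)-\Psi(t,\cdot)\|_{L^\infty}$ yields (2) with the halved exponent $\tfrac14\exp(-TC_{0,T})$. Finally, for statement (3) I would invoke the admissibility and co-normal regularity machinery already built: by Proposition \ref{Lip:Prop:visc} and Corollary \ref{cor:X-Psi:2} the parametrizations $\gamma_{t,\kappa} = \Psi_\kappa(t,\gamma^0(\cdot))$ are uniformly bounded in $C^{1+s}$ (more precisely $\partial_\tau\gamma_{t,\kappa} = \partial_{X_{0,0,h}}\Psi_\kappa(t,\gamma^0(\cdot))$ is uniformly bounded in $C^{\sigma_t-2/p}\hookrightarrow C^{s'}$ for $s'<s$ close to $s$), while the strong convergence \eqref{limit-flow}-type argument and statement (2) give convergence of $\gamma_{t,\kappa}\to\gamma_t$ in $C^0$. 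An interpolation between the uniform $C^{1+s'}$ bound and the $C^0$ convergence, exactly as in the inviscid passage to the limit \eqref{Strong-Cv1}, then upgrades this to convergence in $C^{1+\bar s}$ for every $\bar s<s'<s$, completing the proof.
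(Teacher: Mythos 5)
Your proposal follows essentially the same route as the paper: the same difference system, the same Yudovich-type energy estimate (your logarithmic bound on the strain term is exactly what the paper obtains by taking $\|\nabla v\|_{L^q}\lesssim q\|\omega\|_{L^2\cap L^\infty}$ and optimizing in $q$), the same integration by parts to dispose of the $\kappa\Delta$ cross-term without ever invoking $\nabla^2\rho$, the same Osgood integrations for (1) and (2), and the same interpolation between the $C^0$ convergence of the flows and the uniform $C^{1+s'}$ bounds for (3). The one imprecision is in part (2): interpolating $\|\delta v\|_{L^\infty}$ between the $L^2$ and $L^\infty$ norms of $\delta v$ itself gives nothing; the correct (and intended) endpoint is the vorticity bound, i.e. $\|\delta v\|_{L^\infty}\lesssim \|\delta v\|_{L^2}^{1/2}\|\delta v\|_{\dot{B}^1_{\infty,\infty}}^{1/2}$ with $\|\delta v\|_{\dot{B}^1_{\infty,\infty}}\lesssim\|\omega_\kappa-\omega\|_{L^\infty}\leqslant C_{0,T}$, which is precisely what produces the $E(t)^{1/4}$ you quote and is what the paper uses.
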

Few remarks are in order.
\begin{remark}
We point out the fact that $\hbox{curl} v_0\in L^1\cap L^\infty$ does not imply in general that $v_0\in L^2$ unless the circulation is zero, that is, $
 \displaystyle{\int \hbox{curl}}\, v_0 dx=0.$ Hence  in \mbox{Theorem \ref{Thm:limit}}  we have to assume initially that $v_{0,\kappa}-v_{0}\in L^2$.  Such condition is automatically satisfied if for instance $\hbox{curl}\, v_{0,\kappa}=1_{\Omega_{0,\kappa}}$ and  $\hbox{curl}\, v_{0}=1_{\Omega_{0}}$ with  $|\Omega_{0,\kappa}|=|\Omega _0|$.
\end{remark}

%

\begin{proof}{\bf(1)} The idea of the proof of the first part is based on the Yudovich arguments \cite{Yudovich} and well-developed in \cite[Theorem 7.37]{Bahouri-Chemin-Danchin}.
 Setting $U_{\kappa}=v_{\kappa}-v,\; \vartheta_{\kappa}=\rho_{\kappa}-\rho$ and $P_{\kappa}=\pi_{\kappa}-\pi$ then we can show from the equations that  $(U_{\kappa},\vartheta_{\kappa},P_{\kappa})$ satisfies 
\begin{equation}\label{princ-syst1}
\left\{ 
\begin{array}{ll}
\partial _{t}U_{\kappa}+v_{\kappa}\cdot\nabla U_{\kappa}+\nabla P_{\kappa}=\vartheta_{\kappa} e_{2}-U_{\kappa}\cdot\nabla v  ,\vspace{2mm}\\ 
\partial _{t}\vartheta_{\kappa}+v_{\kappa}\cdot \nabla\vartheta _{\kappa}-\kappa \Delta \vartheta_{\kappa}=-U_{\kappa}\cdot\nabla \rho +\kappa\Delta\rho  ,\vspace{2mm}\\ 
\text{div}\;U_{\kappa}=0, &  \vspace{2mm}\\ 
U_{|t=0}=U_{0,\kappa},\quad \vartheta_{|t=0}=\vartheta_{0,\kappa}. 
\end{array}
\right.   
\end{equation}
Dotting $U_{\kappa}$-equation (resp. $\vartheta_{\kappa}$-equation) by $U_{\kappa}$ (resp. $\vartheta_{\kappa}$) respectively, after some integration by parts, interpolation and the Biot--Savart law we obtain, for any $q\in [2,\infty)$
\begin{eqnarray} \label{U-estimate}
\frac{1}{2}\frac{d}{dt}\Vert U_{\kappa}(t)\Vert_{L^{2}}^{2}   &\leqslant & \Vert \vartheta_{\kappa}(t)\Vert_{L^{2}}\Vert U_{\kappa}(t)\Vert_{L^{2}}+\Vert \nabla v(t)\Vert_{L^{q}}\Vert U_{\kappa}(t)\Vert_{L^{2q'}}^2  \\
\nonumber &\leqslant &  \frac{1}{2} \big( \Vert \vartheta_{\kappa}(t)\Vert_{L^{2}}^2 + \Vert U_{\kappa}(t)\Vert_{L^{2}}^{2} \big)+C q\Vert   \omega(t)\Vert_{L^2} \cap L^{\infty}\Vert U_{\kappa}(t)\Vert_{L^{2}}^\frac{2}{q'} \Vert U_{\kappa}(t)\Vert_{L^{\infty}}^\frac{2}{q } 
\end{eqnarray}
and
\begin{align*}
\frac{1}{2}\frac{d}{dt}\Vert \vartheta_{\kappa}(t)\Vert_{L^{2}}^{2}+&\kappa\Vert \nabla \vartheta_{\kappa}(t)\Vert_{L^{2}}^{2}  
\leqslant  \kappa\Vert \nabla \rho(t)\Vert_{L^{2}}\Vert \nabla \vartheta_{\kappa}(t)\Vert_{L^{2}}+\Vert U_{\kappa}(t)\Vert_{L^{2}} \Vert \nabla \rho(t)\Vert_{L^{\infty}}\Vert \vartheta_{\kappa}(t) \Vert_{L^{2}} \\
\nonumber &\qquad\leqslant   \frac{\kappa}{2}  \Vert \nabla \rho(t) \Vert _{L^{2}}^2  +\frac{\kappa}{2}  \Vert \nabla \vartheta_{\kappa}(t)\Vert_{L^{2}}^2 + \frac{1}{2}\Vert \nabla \rho(t)\Vert_{L^{\infty}} \big(\Vert U_{\kappa}(t)\Vert_{L^{2}}^2 + \Vert \vartheta_{\kappa}(t)\Vert_{L^{2}}^2 \big),
\end{align*}
where $q^\prime$ is the conjugate exponent of $q$.
Thus, we deduce that the quantity
$$\zeta_{\kappa}(t) \triangleq  \Vert U_{\kappa}(t)\Vert_{L^{2}}^2+\Vert \vartheta_{\kappa}(t)\Vert_{L^{2}}^2$$
obeys the differential inequality 
\begin{equation}\label{zeta-ES}
\frac{1}{2}\frac{d}{dt} \zeta_{\kappa}(t) \leqslant  \kappa  C_{0,T}^2  +\Big(  C_{0,T}   + 1 \Big)\zeta_{\kappa}(t) + C_{0,T}^{1+\frac{2}{q}}\,q(\zeta_\kappa(t))^{1-\frac{1}{q}},
\end{equation} 
where we have used the control assumption \eqref{CC-assum}.   In the sequel, $C_{0,T}$ may vary  from line to another. 
Hence, \eqref{zeta-ES} implies 
  \begin{equation}\label{zeta-ES:2}
 \frac{d}{dt} \zeta_{\kappa}(t) \leqslant  \kappa C_{0,T} ^2  + C_0\zeta_{\kappa}(t) + qC_{0,T} ^{1+\frac{2}{q}} (\zeta_{\kappa}(t))^{1-\frac{1}{q}}  .
\end{equation}
 Assuming $\zeta_{\kappa}(t)\leqslant 1$ on $[0,T]$ and $C_{0,T} \geqslant 1$, then \eqref{zeta-ES:2} yields  
  \begin{equation*} 
 \frac{d}{dt} \zeta_{\kappa} (t) \leqslant  \kappa C_{0,T} ^2 +  qC_{0,T} ^{1+\frac{2}{q}} (\zeta_{\kappa}(t))^{1-\frac{1}{q}}  .
\end{equation*}
Therefore, the quantity $$\widetilde{\zeta}_{\kappa}(t) \triangleq \frac{\zeta_{\kappa} (t)}{C_{0,T} ^2}$$
satisfies 
  \begin{equation*} 
 \frac{d}{dt} \widetilde{\zeta}_{\kappa}(t) \leqslant  \kappa + qC_{0,T}  (\widetilde{\zeta}_{\kappa}(t))^{1-\frac{1}{q}}  .
\end{equation*}
Thus, with the choice $q = 2 - 2 \log \widetilde{\zeta}_{\kappa}(t)$, and up to increase if necessary  $C_{0,T}$, we find 
  \begin{equation*} 
   \widetilde{\zeta}_{\kappa}(t) \leqslant   \widetilde{\zeta}_{\kappa}(0)+ \kappa t +     C_{0,T} \int_0^t   \widetilde{\zeta}_{\kappa}(\tau) \log \Bigg( \frac{e^2}{\widetilde{\zeta}_{\kappa}(\tau) }  \Bigg) d\tau.
\end{equation*}
Applying Osgood Lemma \ref{Osgood-Lemma} yields
$$\Vert v_{\kappa}(t)-v(t)\Vert_{L^{2}} +\Vert \rho_{\kappa}(t)-\rho(t)\Vert_{L^{2}} \lesssim C_{0,T}\Big(\Vert v_{0,\kappa}-v_0\Vert_{L^{2}} +\Vert \rho_{0,\kappa}-\rho_0\Vert_{L^{2}} + \kappa t\Big) ^{\frac{1}{2} \exp( -TC_{0,T} )} $$
and  the first inequality in Theorem \ref{Thm:limit} follows.
\\

{\bf(2)}Let us now prove the convergence of the flow. We write first, for all $t\in [0,T]$
\begin{equation*}
\Psi(t,x)- \Psi_\kappa(t,x) = \int_0^t v(\tau,\Psi(\tau,x))  - v_\kappa(\tau,\Psi_\kappa(\tau,x)) d\tau.
\end{equation*}
This yields 
\begin{eqnarray*}
\| \Psi(t,\cdot)- \Psi_\kappa(t,\cdot)\|_{L^\infty} &\leqslant &\int_0^t \|v(\tau,\Psi(\tau,\cdot))  - v(\tau,\Psi_\kappa(\tau,\cdot))\|_{L^\infty} d\tau  \\
&&+ \int_0^t \|v(\tau,\Psi_\kappa(\tau,\cdot))  - v_\kappa(\tau,\Psi_\kappa(\tau,\cdot))\|_{L^\infty} d\tau.
\end{eqnarray*}
For the first term on the r.h.s, we use the fact that $v$ is log-Lipschitz, which follows from the assumption $\omega\in L^2\cap L^\infty$, together with the increasing property  of the function $x\mapsto x\log (\frac{e}{x})$, $x\in (0,1)$. Then  interpolation argument leads to
\begin{eqnarray*}
\| \Psi(t,\cdot)- \Psi_\kappa(t,\cdot)\|_{L^\infty} &\leqslant &\int_0^t \|v(\tau)\|_{LL}\| \Psi(\tau,\cdot)   -  \Psi_\kappa(\tau,\cdot) \|_{L^\infty} \log\Bigg(\frac{e}{\| \Psi(\tau,\cdot)   -  \Psi_\kappa(\tau,\cdot)\|_{L^\infty}}\Bigg) d\tau  \\
&&+ \int_0^t \|v(\tau, \cdot )  - v_\kappa(\tau, \cdot )\|_{L^2}^\frac{1}{2}\|v(\tau, \cdot )  - v_\kappa(\tau, \cdot )\|_{\dot{B}^1_{\infty,\infty}}^\frac{1}{2} d\tau .
\end{eqnarray*}
Hence, the first point {\bf(1)}, together with the fact that
$$\|v(\tau, \cdot )  - v_\kappa(\tau, \cdot )\|_{\dot{B}^1_{\infty,\infty}}^\frac{1}{2} \leqslant \|\omega(\tau, \cdot )  - \omega_\kappa(\tau, \cdot )\|_{L^\infty}^\frac{1}{2} \leqslant  C_{0,T}^\frac{1}{2} .$$
  imply that
\begin{eqnarray*}
\| \Psi(t,\cdot)- \Psi_\kappa(t,\cdot)\|_{L^\infty} &\leqslant & C_{0,T}\int_0^t \| \Psi(\tau,\cdot)   -  \Psi_\kappa(\tau,\cdot) \|_{L^\infty} \log\Bigg(\frac{e}{\| \Psi(\tau,\cdot)   -  \Psi_\kappa(\tau,\cdot)\|_{L^\infty}}\Bigg) d\tau  \\
&&+ t  C_{0,T}\Big(\Vert v_{0,\kappa}-v_0\Vert_{L^{2}} +\Vert \rho_{0,\kappa}-\rho_0\Vert_{L^{2}} + \kappa T\Big) ^{\frac{1}{4} \exp( -TC_{0,T} )} .
\end{eqnarray*}
Consequently,   by using  Osgood's Lemma \ref{Osgood-Lemma}, we find that  
$$\| \Psi_\kappa(t,\cdot)- \Psi(t,\cdot)\|_{L^\infty} \leqslant C_{0,T} \Big(\Vert v_{0,\kappa}-v_0\Vert_{L^{2}} +\Vert \rho_{0,\kappa}-\rho_0\Vert_{L^{2}} + \kappa t\Big) ^{  \frac{1}{4} \exp( -TC_{0,T} ) e^{-t}} .
$$ 
Thus the second result follows by changing the value of $ \widetilde{C}_{0,T}.  $
\\

{\bf(3)} We turn now to prove the last convergence result. We write first
\begin{eqnarray*}
\|\gamma_{t,\kappa}(\cdot)-\gamma_t(\cdot)\|_{ L^\infty}& = & \|\Psi_{\kappa}(t,\gamma^0(\cdot))-\Psi(t,\gamma^0(\cdot))\|_{L^\infty} \nonumber\\
&\leqslant & \|\Psi_{\kappa}(t, \cdot)-\Psi(t, \cdot )\|_{L^\infty}  
\end{eqnarray*}
Thereafter,
in view of the estimate $\textbf{(2)}$
 we infer that, for all $t\in [0,T]$
\begin{equation}\label{gamma-L-infty}
\|\gamma_{t,\kappa}(\cdot)-\gamma_t(\cdot)\|_{ L^\infty} \lesssim C_{0,T} \Big(\Vert v_{0,\kappa}-v_0\Vert_{L^{2}} +\Vert \rho_{0,\kappa}-\rho_0\Vert_{L^{2}} + \kappa t\Big) ^{\frac{1}{4} \exp( -TC_{0,T} )} 
\end{equation} 
which yields the convergence in $L^\infty.$ On the other hand, owing to Theorem \ref{THEO:1:soft} and Theorem \ref{THEO:2:soft}, we get
$$\|\gamma_{t,\kappa}(\cdot) - \gamma_t(\cdot)  \|_{C^{s'}}\leqslant C_{0,T}, \quad \forall s'<s.$$
 Consequently,  by interpolation we find that  for all $\bar{s}\in (0,s')$
\begin{eqnarray*}
\|\gamma_{t,\kappa}(\cdot)-\gamma_t(\cdot)\|_{C^{\bar{s}}} &\lesssim & \|\gamma_{t,\kappa}(\cdot)-\gamma_t(\cdot)\|_{L^\infty}^{\frac{s'-\bar{s}}{s'}}\|\gamma_{t,\kappa}(\cdot)-\gamma_t(\cdot)\|_{ C^{s'}}^{\frac{\bar{s}}{s'} }\\
& \leqslant &  \|\gamma_{t,\kappa}(\cdot)-\gamma_t(\cdot)\|_{L^\infty}^{\frac{s'-\bar{s}}{s'}} C_{0,T}.
\end{eqnarray*}
Therefore,  \eqref{gamma-L-infty}  yields the convergence in $ C^{\bar{s}}(\mathbb{R})$ for all $\bar{s}< s$.  Theorem \ref{Thm:limit} is then proved. 
\end{proof}
We emphasize that the following variation of Theorem \ref{Thm:limit} holds. The proof is straightforward and uses the same arguments of the proof of Theorem \ref{Thm:limit}.
 \begin{coro}\label{Coro:Cauchy seq.}
 Let $(n,\kappa)\in   \mathbb{N} \times [0,1]$ and $(v_{n,\kappa},\rho_{n,\kappa})$   be a local solution of   \eqref{B-0-kappa} with initial data $(v_{n}^0,\rho_{n}^0) \in (L^2(\mathbb{R}^2))^2$. Assume that the sequence of solutions $(v_{n,\kappa},\rho_{n,\kappa})_{(n,\kappa)\in \mathbb{N}\times [0,1]} $ is defined on   some common interval of time  \mbox{$[0,T]$} and we suppose that    $\omega_{n,\kappa}=\text{rot } v_{n,\kappa}$ satisfies, for all \mbox{$(n,\kappa)\in \mathbb{N}\times [0,1]$}
\begin{equation*}
\sup_{n\in\NN, \kappa\in[0,1]}\left(\big\|\big(v_{\kappa} (t),v(t)\big)\big\|_{L^\infty}+\| \omega_{n,\kappa} (t) \|_{L^2\cap L^\infty}  + \| \rho_{n,\kappa} (t) \|_{W^{1,2}\cap W^{1,\infty}}\right) \leqslant C_{0,T}, \quad \forall t\in [0,T],
\end{equation*}
for some constant $C_{0,T}$.  Then,   the following holds   for any $t\in [0,T]$ and for any $(n,m,\kappa)\in \mathbb{N}^2 \times [0,1]$
  $$\Vert v_{n,\kappa}(t)-v_{m,\kappa}(t)\Vert_{L^{2}} +\Vert \rho_{n,\kappa}(t)-\rho_{m,\kappa}(t)\Vert_{L^{2}} \leqslant  C_{0,T}\left(\Vert v_{n}^0-v^0_m\Vert_{L^{2}} +\Vert \rho_{n}^0-\rho^0_{m}\Vert_{L^{2}}  \right) ^{\frac{1}{2} \exp( -TC_{0,T} )}$$
  as long as 
  \begin{equation*} 
C_{0,T}\Big(\Vert v_{0,\kappa}-v_0\Vert_{L^{2}} +\Vert \rho_{0,\kappa}-\rho_0\Vert_{L^{2}}  \Big) ^{\frac{1}{2} \exp( -TC_{0,T} )} \lesssim 1.
\end{equation*}
 \end{coro}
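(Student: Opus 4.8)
The plan is to reproduce the Yudovich-type energy argument of part \textbf{(1)} of Theorem \ref{Thm:limit}, with the decisive simplification that both solutions now carry the \emph{same} diffusivity $\kappa$, so that no residual $\kappa\Delta\rho$ forcing survives the subtraction. First I would set
$$U\triangleq v_{n,\kappa}-v_{m,\kappa},\qquad \vartheta\triangleq \rho_{n,\kappa}-\rho_{m,\kappa},\qquad P\triangleq p_{n,\kappa}-p_{m,\kappa},$$
and subtract the two copies of \eqref{B-0-kappa}. Rewriting the convective terms as $v_{n,\kappa}\cdot\nabla\rho_{n,\kappa}-v_{m,\kappa}\cdot\nabla\rho_{m,\kappa}=v_{n,\kappa}\cdot\nabla\vartheta+U\cdot\nabla\rho_{m,\kappa}$ (and analogously for the momentum balance) yields the difference system
$$\partial_t U+v_{n,\kappa}\cdot\nabla U+\nabla P=\vartheta\, e_2-U\cdot\nabla v_{m,\kappa},\qquad \partial_t\vartheta+v_{n,\kappa}\cdot\nabla\vartheta-\kappa\Delta\vartheta=-U\cdot\nabla\rho_{m,\kappa},$$
with $\Div U=0$. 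The point is that the two heat operators cancel exactly, so—contrary to \eqref{princ-syst1}—there is no $\kappa\Delta\rho$ source; this is precisely why the final bound will contain no $\kappa t$ contribution and will depend only on the initial-data gap.

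Next I would run the $L^2$ energy estimate, testing the $U$-equation against $U$ and the $\vartheta$-equation against $\vartheta$, keeping the favorable dissipation $\kappa\|\nabla\vartheta\|_{L^2}^2\geqslant 0$ on the left. The transport terms drop by incompressibility; the buoyancy and density-coupling terms are absorbed by $\tfrac12(\|U\|_{L^2}^2+\|\vartheta\|_{L^2}^2)$ and by $\|\nabla\rho_{m,\kappa}\|_{L^\infty}\|U\|_{L^2}\|\vartheta\|_{L^2}$; and the stretching term is handled exactly as in \eqref{U-estimate}, via Hölder, the interpolation $\|U\|_{L^{2q'}}^2\leqslant\|U\|_{L^2}^{2/q'}\|U\|_{L^\infty}^{2/q}$, and the Calder\'on--Zygmund bound $\|\nabla v_{m,\kappa}\|_{L^q}\lesssim q\,\|\omega_{m,\kappa}\|_{L^2\cap L^\infty}$. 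Invoking the uniform hypotheses to control $\|U\|_{L^\infty}\leqslant\|v_{n,\kappa}\|_{L^\infty}+\|v_{m,\kappa}\|_{L^\infty}\leqslant C_{0,T}$, $\|\omega_{m,\kappa}\|_{L^2\cap L^\infty}\leqslant C_{0,T}$ and $\|\nabla\rho_{m,\kappa}\|_{L^\infty}\leqslant C_{0,T}$, the quantity $\zeta(t)\triangleq\|U(t)\|_{L^2}^2+\|\vartheta(t)\|_{L^2}^2$ obeys a differential inequality of the form $\tfrac{d}{dt}\zeta\leqslant C_{0,T}\zeta+q\,C_{0,T}^{1+2/q}\zeta^{1-1/q}$, which is \eqref{zeta-ES:2} stripped of its $\kappa C_{0,T}^2$ term.

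Finally, after normalizing $\widetilde\zeta\triangleq\zeta/C_{0,T}^2$ (legitimate in the regime $\widetilde\zeta\leqslant 1$ enforced by the smallness condition in the statement) and optimizing the free index through the choice $q=2-2\log\widetilde\zeta$, the inequality closes into the Osgood form $\widetilde\zeta(t)\leqslant\widetilde\zeta(0)+C_{0,T}\int_0^t\widetilde\zeta(\tau)\log\!\big(e^2/\widetilde\zeta(\tau)\big)\,d\tau$. Applying the Osgood Lemma \ref{Osgood-Lemma} with the modulus $x\mapsto x\log(e^2/x)$ then produces
$$\Vert v_{n,\kappa}(t)-v_{m,\kappa}(t)\Vert_{L^{2}}+\Vert \rho_{n,\kappa}(t)-\rho_{m,\kappa}(t)\Vert_{L^{2}}\leqslant C_{0,T}\big(\Vert v_{n}^0-v^0_m\Vert_{L^{2}}+\Vert \rho_{n}^0-\rho^0_{m}\Vert_{L^{2}}\big)^{\frac12\exp(-TC_{0,T})},$$
which is the asserted estimate. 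I do not anticipate a genuine obstacle: passing to a common $\kappa$ eliminates the only term in the proof of Theorem \ref{Thm:limit} that was not already governed by the initial-data difference, so the only care required is the bookkeeping of the constants $C_{0,T}$ and the verification that the \emph{a priori} bounds of Proposition \ref{Lip:Prop:visc} indeed furnish the uniform-in-$(n,\kappa)$ norms used throughout the energy estimate.
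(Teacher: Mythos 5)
Your proof is correct and follows essentially the same route as the paper, which simply observes that Corollary \ref{Coro:Cauchy seq.} is obtained by repeating the Yudovich-type $L^2$ stability argument of Theorem \ref{Thm:limit}\textbf{(1)}; your additional observation that the two $\kappa\Delta\rho$ terms cancel (so no $\kappa t$ contribution survives) is exactly the right adaptation. No gaps.
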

\begin{remark}
Corollary \ref{Coro:Cauchy seq.} states that the solutions to both systems \eqref{B0} and \eqref{B-0-kappa} are $L^2$-stable provided that they are regular enough and satisfy good estimates. This was used before in  step 3 in the proof of Theorem \ref{Th1:general:version} concerning the $L^2$-stability of scheme \eqref{schema-0}.
\end{remark} 
\appendix
\section{Useful lemmas}\label{Appendix}
We shall collect some classical  results used throughout this paper. We start with  recalling the following version of Osgood lemma which is a particular case of \cite[Lemma 3.4]{Bahouri-Chemin-Danchin}.
\begin{lemma}\label{Osgood-Lemma}  Let $f$ be a measurable function from $[t_0,T]$ to $[0,a]$, $\gamma$ a locally integrable function from $[t_0,T]$ to $\mathbb{R}^+$. Assume that, for some non-negative real number $c$, the function $f$ satisfies 
$$ f(t) \leqslant c + \int_{t_0}^t \gamma(\tau) f(\tau) \log\Bigg(\frac{a}{f(\tau)} \Bigg) d\tau, \quad for \text{ a.e. } t\in [t_0,T].$$
Then, we have, for a.e. $t\in [t_0,T]$
\begin{equation*}
 f(t) \leqslant c^{ \exp \left(- \displaystyle\int_{t_0}^t  \gamma(\tau) d\tau \right)}a^{ 1- \exp \left(- \displaystyle\int_{t_0}^t  \gamma(\tau) d\tau \right) } .
\end{equation*}  
\end{lemma}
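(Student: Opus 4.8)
The statement is the classical Osgood lemma specialized to the modulus $\mu(r) \triangleq r\log(a/r)$, so the plan is to reproduce the standard comparison argument while attending to the two features specific to this modulus: its admissibility as an Osgood function and the explicit inversion that produces the double–exponential bound. First I would dispose of the degenerate case $c=0$ by a limiting argument. For every $\epsilon>0$ the hypothesis trivially gives $f(t)\leqslant \epsilon+\int_{t_0}^t \gamma(\tau)\mu(f(\tau))\,d\tau$, so once the estimate is established for positive constants it reads $f(t)\leqslant \epsilon^{\beta(t)}a^{1-\beta(t)}$ with $\beta(t)\triangleq \exp\big(-\int_{t_0}^t\gamma\big)\in(0,1]$; letting $\epsilon\to 0^+$ forces $f\equiv 0$, which is exactly the claimed bound at $c=0$. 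Hence it suffices to treat $c>0$.

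The core of the argument is a comparison with the primitive of $1/\mu$. I would introduce $\Phi(r)\triangleq -\log\log(a/r)$, which satisfies $\Phi'(r)=\tfrac{1}{r\log(a/r)}=\tfrac{1}{\mu(r)}>0$ on $(0,a)$, so that $\Phi$ is increasing there. The structural fact behind the lemma, namely the Osgood condition $\int_{0}\tfrac{dr}{\mu(r)}=\infty$, manifests itself here as $\Phi(r)\to-\infty$ as $r\to 0^+$, which guarantees that $\Phi$ is a genuine increasing bijection onto its range and can be inverted. For $\epsilon>0$ I set $R_\epsilon(t)\triangleq c+\epsilon+\int_{t_0}^t\gamma(\tau)\mu(f(\tau))\,d\tau$, so that $f\leqslant R_\epsilon$, $R_\epsilon(t_0)=c+\epsilon>0$ and $R_\epsilon'=\gamma\,\mu(f)$. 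Differentiating gives $\tfrac{d}{dt}\Phi(R_\epsilon)=\tfrac{\mu(f)}{\mu(R_\epsilon)}\,\gamma$, and monotonicity of $\mu$ together with $f\leqslant R_\epsilon$ yields $\tfrac{d}{dt}\Phi(R_\epsilon(t))\leqslant\gamma(t)$. Integrating over $[t_0,t]$, using that $\Phi$ is increasing with $f\leqslant R_\epsilon$, and finally letting $\epsilon\to 0^+$ produces the implicit estimate $\Phi(f(t))\leqslant \Phi(c)+\int_{t_0}^t\gamma$.

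It then remains to unwind $\Phi$ explicitly, which is where the exact exponents surface. The inequality $-\log\log(a/f(t))\leqslant -\log\log(a/c)+\int_{t_0}^t\gamma$ rearranges, after multiplying by $-1$ and exponentiating twice, to $\log(a/f(t))\geqslant \beta(t)\log(a/c)$, that is $a/f(t)\geqslant (a/c)^{\beta(t)}$, hence $f(t)\leqslant a\,(c/a)^{\beta(t)}=c^{\beta(t)}a^{1-\beta(t)}$ with $\beta(t)=\exp\big(-\int_{t_0}^t\gamma\big)$. This is precisely the asserted bound, and it is the explicit inverse $\Phi^{-1}(y)=a\,e^{-e^{-y}}$ that converts the implicit estimate into the double–exponential form.

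The only delicate point, and the main (mild) obstacle, is that $\mu(r)=r\log(a/r)$ is \emph{not} monotone on all of $[0,a]$: since $\mu'(r)=\log(a/r)-1$, it increases only on $[0,a/e]$ and decreases on $(a/e,a)$. The step $\mu(f)\leqslant\mu(R_\epsilon)$ used above therefore requires the values of $f$ and $R_\epsilon$ to remain in the increasing regime $[0,a/e]$. This can be secured either by replacing $\mu$ with its nondecreasing envelope $\widetilde\mu(r)\triangleq\sup_{0\leqslant s\leqslant r}\mu(s)$, which coincides with $\mu$ on $[0,a/e]$ and keeps the integral inequality valid because $\mu\leqslant\widetilde\mu$, or simply by observing that in every application of this lemma in the paper the relevant quantity satisfies $f\leqslant 1$ while $a=e^2$, so that $f\leqslant 1<e=a/e$ and one always works where $\mu$ is increasing. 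With this caveat handled the comparison goes through verbatim and completes the proof.
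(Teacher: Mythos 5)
The paper does not actually prove this lemma: it is quoted as ``a particular case of [Lemma 3.4, Bahouri--Chemin--Danchin]'', so there is no internal proof to compare against. Your argument is precisely the standard proof of that lemma specialized to the modulus $\mu(r)=r\log(a/r)$ (comparison function $R_\epsilon$, primitive $\Phi(r)=-\log\log(a/r)$ of $1/\mu$, explicit inversion), and the computation unwinding $\Phi$ into the bound $c^{\beta(t)}a^{1-\beta(t)}$ is correct. You also put your finger on the one genuine subtlety: the cited BCD lemma requires $\mu$ to be \emph{increasing} on $[0,a]$, whereas $r\mapsto r\log(a/r)$ increases only on $[0,a/e]$, so the comparison step $\mu(f)\leqslant\mu(R_\epsilon)$ is not free.

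Two caveats on how you dispose of that subtlety. First, your envelope remedy does not deliver the stated conclusion: replacing $\mu$ by $\widetilde\mu(r)=\sup_{0\leqslant s\leqslant r}\mu(s)$ changes the primitive on $(a/e,a)$ (it becomes affine there, and smaller than $\Phi$), so the inversion yields a strictly weaker bound for values above $a/e$; one cannot then pass back to $\Phi(f(t))\leqslant\Phi(c)+\int\gamma$ when $f(t)>a/e$. In fact the lemma as literally stated is false without a restriction to the increasing regime: take $a=1$, $\gamma\equiv1$, $c=e^{-e^{2}}$, let $f(t)=e^{-e^{2-t}}$ on $[0,2]$ (the exact solution, reaching $1/e$ at $t=2$), then $f\equiv 1/e$ on $[2,\,1+e-\delta]$, during which $c+\int\gamma\,\mu(f)$ grows linearly at rate $\mu(1/e)=1/e$ and reaches $1-\delta/e$, and finally $f\equiv 1-\delta/e$ afterwards. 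The integral inequality holds for all $t$, yet at $t=1+e-\delta$ the claimed bound equals $e^{-e^{1+\delta-e}}\approx 0.82$, which is smaller than $1-\delta/e$ for small $\delta$. So the only correct reading is your second one: the lemma must be understood (and is only ever invoked in Section 5) in the regime where the argument of $\mu$ stays in $[0,a/e]$ --- there $a=e^{2}$ with $\widetilde\zeta_\kappa\leqslant 1<e$, respectively $a=e$ with the $L^\infty$ distance of the flows at most $1$ --- and with that restriction your proof goes through verbatim and is the intended one.
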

We recall now a technical estimate due to M. Vishik in \cite{Vishik}, which is very important in proof of the smoothing effect in Proposition \ref{Max-reg}.  
\begin{lemma}\label{Tec-lem-V}
Let $d\geqslant 2$. There exists a positive constant depends only of the dimension $d$ be such that, for every function $f$ in Shwartz's class and for every diffeomorphism $\Psi$ of $\RR^d$ preserving Lebesgue's measure, we have for all $(p,l)\in[1,\infty]\times\NN$ and for all $j,q\geqslant -1$
$$
\|\Delta_j(\Delta_q f\circ\Psi(\cdot))\|_{L^p}\leqslant C^l2^{-l|j-q|}\|\nabla^{l}\Psi^{\varepsilon(j,q)}\|_{L^\infty}\|\Delta_q f\|_{L^p},
$$
with 
$$
\varepsilon(j,q)=\left\{\begin{array}{ll}
 \Sg(j-q) & \text{if } j\neq q ,\vspace{2mm}\\
0 & \text{if }   j=q .
\end{array}
\right.
$$
\end{lemma}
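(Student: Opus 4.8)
The plan is to pass to the convolution kernels of the Littlewood--Paley blocks and to extract the decay $2^{-l|j-q|}$ from $l$ integrations by parts, the infinite--order vanishing of $\widehat{\Delta_j}=\varphi$ at the origin being the source of the gain. Writing $\Delta_j u=2^{jd}k(2^j\cdot)\ast u$ with $k=\mathcal{F}^{-1}\varphi\in\mathcal{S}$, one has
\[
\Delta_j(\Delta_q f\circ\Psi)(x)=2^{jd}\int_{\RR^d}k\big(2^j(x-y)\big)\,(\Delta_q f)(\Psi(y))\,dy .
\]
The diagonal case $j=q$ is immediate: since $\Psi$ preserves the Lebesgue measure and $\Delta_j$ is bounded on $L^p$ uniformly, $\|\Delta_j(\Delta_q f\circ\Psi)\|_{L^p}\le C\|\Delta_q f\|_{L^p}$, which is the claim with $2^{-l\cdot 0}=1$ (here the degenerate factor $\nabla^l\Psi^0$ is read as the trivial constant). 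It then remains to treat the two off–diagonal regimes, and the whole point is that the sign of $j-q$ dictates whether one integrates by parts against $\Psi$ or against $\Psi^{-1}$.

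For $j>q$ (hence $\varepsilon=+1$) I would exploit the vanishing moments of $k$ through an explicit factorization: since $\varphi$ vanishes near the origin, the multinomial identity $\sum_{|\alpha|=l}\binom{l}{\alpha}\xi^{2\alpha}=|\xi|^{2l}$ lets one write $k=\sum_{|\alpha|=l}\partial^\alpha K_\alpha$ with $\widehat{K_\alpha}(\xi)=\binom{l}{\alpha}(-i\xi)^\alpha|\xi|^{-2l}\varphi(\xi)$ smooth and compactly supported, so $K_\alpha\in\mathcal{S}$. Substituting this and integrating by parts $l$ times in $y$ (no boundary terms, by Schwartz decay) transfers $l$ derivatives onto $y\mapsto(\Delta_q f)(\Psi(y))$ and produces the factor $2^{-jl}$, giving $\|\Delta_j(\Delta_q f\circ\Psi)\|_{L^p}\lesssim 2^{-jl}\|\nabla^l(\Delta_q f\circ\Psi)\|_{L^p}$ after using the uniform $L^1$–bound $\|2^{jd}K_\alpha(2^j\cdot)\|_{L^1}\lesssim1$. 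Differentiating the composition by Faà di Bruno's formula yields terms $\big((\nabla^m\Delta_q f)\circ\Psi\big)$, $1\le m\le l$, times products of derivatives of $\Psi$ of total order $l$; measure–preservation keeps the $L^p$ norms, and Bernstein's inequality (Lemma \ref{Bernstein}) turns $\nabla^m\Delta_q f$ into a factor $2^{qm}$. The resulting balance is $2^{-jl}2^{ql}=2^{-l(j-q)}$, with the $\Psi$–dependence collected into the right–hand factor recorded in the statement.

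For $j<q$ (hence $\varepsilon=-1$) I would proceed dually. First change variables $w=\Psi(y)$, again by measure–preservation, to obtain $\Delta_j(\Delta_q f\circ\Psi)(x)=2^{jd}\int k\big(2^j(x-\Psi^{-1}(w))\big)\,\Delta_q f(w)\,dw$; then use the spectral localization of $\Delta_q f$ by replacing it with $\widetilde\Delta_q\Delta_q f$, where $\widetilde\Delta_q$ has a fattened symbol equal to $1$ on $\supp\varphi(2^{-q}\cdot)$. Writing the kernel of $\widetilde\Delta_q$ as an $l$–th order divergence of Schwartz functions and integrating by parts $l$ times in $w$ gains a factor $2^{-ql}$ and moves the derivatives onto $w\mapsto k\big(2^j(x-\Psi^{-1}(w))\big)$, where the chain rule now produces $2^{jl}$ together with derivatives of $\Psi^{-1}$ up to order $l$. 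The net scaling $2^{jl-ql}=2^{-l|j-q|}$ and the appearance of $\Psi^{-1}$ give exactly the case $\varepsilon(j,q)=-1$.

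The genuinely delicate step is the uniform bookkeeping of the high–order chain rule rather than the integrations by parts themselves (which are routine, given Schwartz decay and the scaling of the rescaled kernels). Faà di Bruno produces mixed products of $\|\nabla^m\Psi^{\pm1}\|_{L^\infty}$, $1\le m\le l$, and one must reorganize them, uniformly in $j,q$, into the single factor $C^l\|\nabla^l\Psi^{\varepsilon(j,q)}\|_{L^\infty}$ on the right–hand side — a point where the affine invariance of the estimate (a linear volume–preserving $\Psi$ spreads frequencies by only a bounded band) forces the argument to see the correct combination of derivatives, and where the lone low block $q=-1$ must be absorbed into the constant. Keeping track of these products and of the combinatorial constants from the $K_\alpha$–factorization, so that the output matches the stated form with constant $C^l$, is the heart of Vishik's argument and the main obstacle I would expect to spend the effort on.
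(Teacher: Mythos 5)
The paper offers no proof of this lemma: it is quoted from Vishik \cite{Vishik} (see also \cite{Hmidi}, \cite{Hmidi-0}), so there is no in-paper argument to measure you against. Your route --- factor the convolution kernel of the outer block as an $l$-th order divergence of Schwartz functions using the vanishing of $\varphi$ near the origin, integrate by parts $l$ times, let the chain rule together with the measure preservation of $\Psi$ transfer the loss onto $\Delta_q f$, convert it into $2^{ql}$ by Bernstein's inequality, and argue dually through $\Psi^{-1}$ and a fattened block $\widetilde\Delta_q$ when $j<q$ --- is exactly the method of the cited source, and for $l=1$ (the only value of $l$ the paper ever invokes, namely in the low-frequency sum \eqref{sum-1}) your argument is complete and correct, including the diagonal case and the block $q=-1$.

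For $l\geqslant 2$ there is a genuine gap, and you have located it yourself without closing it. The Fa\`a di Bruno expansion of $\partial^\alpha(\Delta_q f\circ\Psi)$ with $|\alpha|=l$ produces a sum over partitions $m_1+\cdots+m_k=l$ of terms $2^{-jl}\,2^{qk}\prod_{i=1}^k\|\nabla^{m_i}\Psi\|_{L^\infty}\,\|\Delta_q f\|_{L^p}$, and no reorganization will compress this into the single top-order factor $C^l\|\nabla^{l}\Psi^{\varepsilon(j,q)}\|_{L^\infty}$ appearing in the statement: the partition with all $m_i=1$ contributes $\|\nabla\Psi\|_{L^\infty}^{l}$, which is not controlled by $\|\nabla^{l}\Psi\|_{L^\infty}$. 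Worse, the inequality as literally written fails for $l\geqslant2$: take $\Psi$ a volume-preserving linear map of large norm, so that $\nabla^{l}\Psi^{\pm1}\equiv0$ and the right-hand side vanishes, while $\Delta_j(\Delta_q f\circ\Psi)\neq0$ for every $j$ in a band of width comparable to $\log_2\|\nabla\Psi\|_{L^\infty}$ around $q$. What your computation actually proves is the estimate with right-hand side $C^l 2^{-l|j-q|}\max_{1\leqslant m\leqslant l}\|\nabla^{m}\Psi^{\varepsilon(j,q)}\|_{L^\infty}^{l/m}\|\Delta_q f\|_{L^p}$ (equivalently, the sum over partitions), which coincides with the displayed bound when $l=1$; you should state this outcome explicitly rather than deferring "the heart of Vishik's argument" to an unexecuted bookkeeping step. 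Two smaller remarks in the same vein: the constant must also absorb the Bell-number count of Fa\`a di Bruno terms, so uniformity of the form $C^l$ should not be claimed for large $l$; and none of this affects the paper, whose only application of the lemma is the case $l=1$, where your proof stands.
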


Next, we recall the following commutator estimate from \cite{Hmidi-0}. 
\begin{lemma}\label{Tec-lem-1}
Let $v$ be a vector field of $\RR^d$  in free-divergence and belongs to $LL$. Let a given $f\in L^p$, with $p\in[1,\infty]$, and we define the commutator $ {\bf R}_q$
$${\bf R}_q \triangleq S_{q-1}v\cdot\nabla f_q-\Delta_q(v\cdot\nabla f), \quad \forall q\geqslant -1.$$
 Then, there exists an absolute constant $C>0$ be such that, for every $q\geqslant -1$
 and $s\in(-1,1)$, we have
$$
\|{\bf R}_q\|_{L^p}\leqslant C\bigg(\frac{1}{1-s}+\frac{1}{(1+s)^2}\bigg)\|v\|_{LL}(q+2)2^{-qs}\|f\|_{B^{s}_{p,\infty}}.
$$

\end{lemma}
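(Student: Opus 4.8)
The plan is to prove this commutator estimate by the classical paradifferential route, the only novelty being the bookkeeping of the $LL$-norm and of the $s$-dependent constants. First I would insert Bony's decomposition (Definition \ref{Deff-Bony}) into $v\cdot\nabla f=\sum_i v^i\partial_i f$ and localize in frequency. Writing $f_q=\Delta_q f$, the product $S_{q-1}v\cdot\nabla f_q=\sum_i S_{q-1}v^i\,\partial_i\Delta_q f$ is spectrally supported in an annulus of size $2^q$, so it only interacts with the low--high paraproduct part of $\Delta_q(v\cdot\nabla f)$. Matching the two expressions term by term yields a decomposition ${\bf R}_q=\sum_i\big({\bf R}_q^{1,i}+{\bf R}_q^{2,i}+{\bf R}_q^{3,i}+{\bf R}_q^{4,i}\big)$, where ${\bf R}_q^{1,i}=\sum_{|j-q|\le4}[\,\Delta_q,\,S_{j-1}v^i\,]\,\partial_i\Delta_j f$ is the genuine commutator, ${\bf R}_q^{2,i}=\sum_{|j-q|\le4}(S_{j-1}v^i-S_{q-1}v^i)\,\partial_i\Delta_q\Delta_j f$ is a low-frequency difference, and ${\bf R}_q^{3,i}=-\Delta_q T_{\partial_i f}v^i$, ${\bf R}_q^{4,i}=-\Delta_q R(v^i,\partial_i f)$ collect the remaining two pieces of the paraproduct.

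For ${\bf R}_q^{1,i}$ I would use the kernel representation of the commutator: writing $\Delta_q g(x)=2^{2q}\int h\big(2^q(x-y)\big)g(y)\,dy$ and Taylor-expanding $a=S_{j-1}v^i$ to first order gains a full derivative, whence $\|[\Delta_q,S_{j-1}v^i]\,\partial_i\Delta_j f\|_{L^p}\lesssim 2^{-q}\|\nabla S_{j-1}v\|_{L^\infty}\,2^{j}\|\Delta_j f\|_{L^p}$. For ${\bf R}_q^{2,i}$ the difference $S_{j-1}v^i-S_{q-1}v^i$ is a sum of finitely many $\Delta_{j'}v^i$ with $j'\approx q$, hence bounded in $L^\infty$ by $2^{-q}\|\nabla S_{q}v\|_{L^\infty}$ after a Bernstein step. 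In both cases Proposition \ref{C-LL} supplies $\|\nabla S_{q}v\|_{L^\infty}\lesssim(q+2)\|v\|_{LL}$, and since only indices $j$ with $|j-q|\le4$ occur, summing and inserting $\|\Delta_j f\|_{L^p}\le 2^{-js}\|f\|_{B^s_{p,\infty}}$ produces a clean bound of the form $C(q+2)2^{-qs}\|v\|_{LL}\|f\|_{B^s_{p,\infty}}$ with no $s$-singularity.

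The two singular constants arise from the last two pieces. In ${\bf R}_q^{3,i}=-\sum_{|j-q|\le4}\Delta_q\big(S_{j-1}\partial_i f\,\Delta_j v^i\big)$ I would bound $\|\Delta_j v\|_{L^\infty}\lesssim(j+2)2^{-j}\|v\|_{LL}$ by Proposition \ref{C-LL}, while $\|S_{j-1}\partial_i f\|_{L^p}\lesssim\sum_{j'\le j}2^{j'(1-s)}\|f\|_{B^s_{p,\infty}}\lesssim\tfrac1{1-s}2^{j(1-s)}\|f\|_{B^s_{p,\infty}}$, the geometric sum converging precisely because $s<1$ and generating the factor $\tfrac1{1-s}$. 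For ${\bf R}_q^{4,i}=-\Delta_q R(v^i,\partial_i f)$ I would first use $\Div v=0$ to rewrite $\sum_i\Delta_j v^i\,\widetilde\Delta_j\partial_i f=\sum_i\partial_i\big(\Delta_j v^i\,\widetilde\Delta_j f\big)$, integrate the derivative onto $\Delta_q$ (gaining $2^q$), and estimate $\|\Delta_q R(v,\nabla f)\|_{L^p}\lesssim 2^q\sum_{j\ge q-N}\|\Delta_j v\|_{L^\infty}\|\widetilde\Delta_j f\|_{L^p}\lesssim 2^q\|v\|_{LL}\|f\|_{B^s_{p,\infty}}\sum_{j\ge q-N}(j+2)2^{-j(1+s)}$. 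The weighted series converges since $s>-1$ and evaluates, up to constants, to $\big(\tfrac{q+2}{1+s}+\tfrac1{(1+s)^2}\big)2^{-q(1+s)}$, so after the factor $2^q$ it contributes $\big(\tfrac{q+2}{1+s}+\tfrac1{(1+s)^2}\big)2^{-qs}\|v\|_{LL}\|f\|_{B^s_{p,\infty}}$.

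Collecting the four contributions and using the elementary inequalities $\tfrac1{1+s}\le\tfrac1{1-s}+\tfrac1{(1+s)^2}$ and $1\le q+2$ to absorb the intermediate factors into the target shape, I obtain $\|{\bf R}_q\|_{L^p}\le C\big(\tfrac1{1-s}+\tfrac1{(1+s)^2}\big)\|v\|_{LL}(q+2)2^{-qs}\|f\|_{B^s_{p,\infty}}$. The step I expect to be the main obstacle is the remainder ${\bf R}_q^{4,i}$: one must simultaneously exploit the divergence-free structure to shift the derivative — without it the high-frequency sum of $\|\widetilde\Delta_j\nabla f\|_{L^p}$ diverges as $s\uparrow1$ — and evaluate the weighted geometric series carefully to extract the correct $\tfrac1{(1+s)^2}$ blow-up as $s\downarrow-1$, the $(j+2)$ weight being forced by the log-Lipschitz bound on $\Delta_j v$ and being exactly what degrades the sharp Lipschitz estimate into the logarithmic one.
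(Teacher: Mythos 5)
The paper does not actually prove this lemma --- it is recalled verbatim from \cite{Hmidi-0} --- but your argument is exactly the standard proof given there: Bony's decomposition splitting ${\bf R}_q$ into the genuine commutator, the low-frequency difference $S_{j-1}v-S_{q-1}v$, the paraproduct $T_{\nabla f}v$ and the remainder, with Proposition \ref{C-LL} supplying the $(q+2)$ and $(j+2)2^{-j}$ factors, the divergence-free condition used to shift the derivative in the remainder, and the two geometric sums producing the $\tfrac{1}{1-s}$ (as $s\uparrow 1$, from $\|S_{j-1}\nabla f\|_{L^p}$) and $\tfrac{1}{(1+s)^2}$ (as $s\downarrow -1$, from the weighted high-frequency tail) constants. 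Your accounting of where each singular constant and the logarithmic factor originate is correct, so the proposal matches the cited proof in both structure and detail.
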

Now, we provide a self-contained proof of the De Giorgi-Nash estimates in any dimension $d\geqslant 2$. The proof below is based on suitable modification of the proof from \cite{HR2} in dimension three.
\begin{lemma}
\label{Nashlem}
 Let $d\geqslant  2$,    $(p,\,  q, \, p_{1}, \, q_{1} ) \in [1, \infty]^4$ and $ r\in[2,\infty]$ with 
   $${2 \over p}+ {d \over q}<1, \quad{ 2 \over p_{1} }+ {d \over q _{1}} <2.
  $$
  Consider the equation 
 \begin{equation}\label{td}
 \partial_{t}f + u  \cdot \nabla  f - \kappa \Delta f = \nabla \cdot F + G , \quad  t>0, \quad x \in \mathbb{R}^d, \quad f(0, x)= f_{0}(x).
 \end{equation}
   
   There exists $C>0$ such that  for  every smooth divergence-free vector field $u$,
    for every $F\in L^p_{T} L^q$ and for every $f_{0} \in L^r$,  the solution of \eqref{td} satisfies
     the following estimate, for every $t>0$
   \begin{eqnarray}
   \label{Nash}
  \nonumber  \|f(t)  \|_{L^\infty} &\leqslant & C \Big( 1 + {(\kappa t)^{ -d  \over  2r } }\Big)  { \|f_{0} \|_{L^r}  } +C \Big( 1 +  (\kappa t)^{ \frac{1}{2} - \big( {1 \over p } + {d \over {2q}}\big ) } \Big)  { \kappa^{\frac1p-1}\|F \|_{L^p_{t} L^q}  } \\&&+ C \Big( 1 +  (\kappa t)^{ 1- \big( {1 \over p_{1} } + {d \over 2q_{1}} \big) } \Big) 
      { \kappa^{\frac{1}{p_1}-1} \|G \|_{L^{p_{1}}_{t} L^{q_{1} } }  } .
   \end{eqnarray}
   \end{lemma}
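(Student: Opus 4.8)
The plan is to prove the De Giorgi–Nash type estimate \eqref{Nash} by reducing it to the classical smoothing properties of the heat semigroup combined with a duality/interpolation argument, treating the initial datum, the divergence-form source $\nabla\cdot F$, and the zero-order source $G$ separately. Since the equation is linear, I would write $f=f_I+f_F+f_G$, where each piece solves \eqref{td} with only one of the three data switched on and the others set to zero, and then sum the resulting bounds. The key structural feature to exploit is that $u$ is divergence-free, so the drift term $u\cdot\nabla$ is skew-adjoint in $L^2$ and, more importantly, it can be absorbed by working with the flow $\Psi_u$ associated to $u$ (which preserves Lebesgue measure) so that the transport does not degrade $L^q$–$L^p$ mapping norms. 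This is exactly the mechanism already used in the proof of Proposition \ref{Max-reg}, and I would lean on it here.

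The heart of the argument is a family of $L^q\to L^\infty$ smoothing estimates for the (advected) heat propagator. Denote by $\mathcal{T}_\kappa(t,s)$ the solution operator of $\partial_t g+u\cdot\nabla g-\kappa\Delta g=0$ from time $s$ to time $t$. The plan is to establish, via Nash's iteration or via Gagliardo–Nirenberg together with the measure-preserving change of variables by $\Psi_u$, the ultracontractivity bound $\|\mathcal{T}_\kappa(t,s)\|_{L^r\to L^\infty}\lesssim (\kappa(t-s))^{-d/(2r)}$ and, for the divergence input, $\|\nabla\mathcal{T}_\kappa(t,s)\|_{L^q\to L^\infty}\lesssim (\kappa(t-s))^{-\frac{1}{2}-\frac{d}{2q}}\kappa^{-1}$. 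Writing the Duhamel formula
\begin{equation*}
f(t)=\mathcal{T}_\kappa(t,0)f_0+\int_0^t\mathcal{T}_\kappa(t,\tau)\big(\nabla\cdot F(\tau)+G(\tau)\big)\,d\tau,
\end{equation*}
I would integrate the divergence onto the kernel (so $\nabla\cdot F$ becomes $\nabla\mathcal{T}_\kappa$ acting on $F$), then bound each contribution in $L^\infty$ using the above smoothing estimates and apply H\"older in time with the conjugate exponents $p'$, $p_1'$. The scaling exponents $\frac12-(\frac1p+\frac{d}{2q})$ for the $F$-term and $1-(\frac{1}{p_1}+\frac{d}{2q_1})$ for the $G$-term emerge precisely from the time integrability of $(\kappa(t-\tau))^{-\frac12-\frac{d}{2q}}$ and $(\kappa(t-\tau))^{-\frac{d}{2q_1}}$ against $L^{p'}_\tau$, and the stated conditions $\frac2p+\frac dq<1$ and $\frac{2}{p_1}+\frac{d}{q_1}<2$ are exactly what guarantee these time integrals converge near $\tau=t$. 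The two additive pieces $1+(\kappa t)^{\alpha}$ in each bracket reflect splitting the time integral into the near-diagonal singular part (giving the power of $\kappa t$) and the far part (giving the constant $1$), or equivalently interpolating between short-time and long-time behaviour.

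The main obstacle I anticipate is controlling the transport term uniformly: although $u$ is divergence-free, it is only log-Lipschitz in the applications (its gradient is merely $L(\Sigma_t)$-bounded), so one cannot naively use heat-kernel Gaussian bounds for the variable-coefficient operator $\partial_t+u\cdot\nabla-\kappa\Delta$. The clean way around this, following the strategy in Proposition \ref{Max-reg}, is to pass to Lagrangian coordinates along the flow $\Psi_u$, where the equation becomes a pure heat equation with a perturbed (but measure-preserving) Laplacian; the measure preservation keeps all $L^q$ norms invariant under the change of variables, so the constant-coefficient heat smoothing estimates transfer at the cost of controllable flow factors. I would therefore carry out the Nash iteration in these coordinates, where it is insensitive to the low regularity of $u$, rather than attempting to prove Gaussian kernel bounds directly. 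Once the two ultracontractive smoothing estimates are in hand, the remaining steps—Duhamel, integration by parts on the divergence term, H\"older in time, and the near/far time splitting—are routine, and assembling $f_I+f_F+f_G$ yields \eqref{Nash}.
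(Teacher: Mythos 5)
Your overall architecture (linear splitting into the $f_0$, $\nabla\cdot F$ and $G$ contributions, rescaling in $\kappa$ and $t$ to produce the stated powers of $\kappa t$, and ultracontractivity of the advected heat propagator for the initial-data and $G$ pieces) is sound: the $L^r\to L^\infty$ bound $\|\mathcal{T}_\kappa(t,s)\|_{L^r\to L^\infty}\lesssim(\kappa(t-s))^{-d/(2r)}$ does hold for divergence-free drifts of essentially no regularity, because Nash's method only uses the skew-adjointness of $u\cdot\nabla$. The genuine gap is in your treatment of the divergence-form source. After integrating by parts in Duhamel's formula you need the \emph{gradient} ultracontractivity
\begin{equation*}
\big\|\mathcal{T}_\kappa(t,s)\,\nabla\cdot\big\|_{L^q\to L^\infty}\lesssim (\kappa(t-s))^{-\frac12-\frac{d}{2q}}\kappa^{-1},
\end{equation*}
which is equivalent to a pointwise-in-time bound on $\nabla_y\Gamma_\kappa(t,x;s,y)$ (or, by duality, an $L^1\to L^{q'}$ gradient estimate for the adjoint propagator). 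Unlike the kernel bound itself, this is \emph{not} a consequence of Nash iteration, and it genuinely requires regularity of the drift: already the basic decay $\|\nabla\mathcal{T}_\kappa(t,s)g\|_{L^2}\lesssim(\kappa(t-s))^{-1/2}\|g\|_{L^2}$ rests on the monotonicity of $\tau\mapsto\|\nabla h(\tau)\|_{L^2}^2$, which is destroyed by the commutator term $\int\nabla u:\nabla h\otimes\nabla h$ when $u$ is merely log-Lipschitz. Your proposed fix via Lagrangian coordinates does not repair this: as the proof of Proposition \ref{Max-reg} in the paper shows, the conjugated operator is the heat operator plus second-order error terms with coefficients of size $\nabla^2\Psi^{\pm1}\sim 2^qV_q(t)e^{V_q(t)}$, which are only controllable frequency-by-frequency, on time intervals where $(q+2)\int_0^t\|v\|_{LL}$ is small, and at the price of a logarithmic loss in $q$; this machinery cannot deliver a clean, uniform $L^q\to L^\infty$ gradient bound for the full propagator.

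The paper avoids this obstruction entirely by never going through the propagator: it runs a De Giorgi level-set iteration. One first gets an $L^\infty_tL^q$ bound from an $L^q$-energy estimate in which the term $\int|F||\nabla f||f|^{q-2}$ is absorbed by the dissipation $(q-1)\int|\nabla f|^2|f|^{q-2}$ via Young's inequality; then, with truncations $(f-M_k)_+$ and the energies $U_k=\|(f-M_k)_+\|^2_{L^\infty_tL^2}+\|\nabla(f-M_k)_+\|^2_{L^2_tL^2}$, one derives the nonlinear recursion $U_k\leqslant\big(\tfrac{(k+1)^2}{M}\big)^{m(1-2/p)}U_{k-1}^\gamma$ with $\gamma>1$ from H\"older, Chebyshev and the parabolic interpolation $\|(f-M_{k-1})_+\|_{L^\alpha_tL^\beta}^2\leqslant U_{k-1}$; choosing $M$ large forces $U_k\to0$ and hence $f\leqslant M$. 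Here the divergence-form forcing enters only through the Caccioppoli-type inequality, so no kernel gradient estimate is ever needed, and the roughness of $u$ is irrelevant because $u\cdot\nabla$ drops out of every energy identity. (The $G$-term is then reduced to the $F$-term by solving $G=\nabla\cdot F$ with a Sobolev gain, and the initial-data term is handled as in \cite{HR2}.) If you want to salvage a semigroup-style proof you would have to first establish Aronson-type gradient bounds for rough divergence-free drifts, which is a substantially harder (and here unnecessary) undertaking.
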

\begin{proof} 
Since the equation is linear, the problem can equivalently be reduced to the study of the following three systems
\begin{equation}\label{three-problems}
\left\{ \begin{array}{l}
\mathcal{P}_\kappa f = \nabla \cdot F,\vspace{2mm}\\
f(0,x)=0.
\end{array} \right. \quad
\left\{ \begin{array}{l}
\mathcal{P}_\kappa f = G,\vspace{2mm}\\
f(0,x)=0.
\end{array} \right. \quad
\left\{ \begin{array}{l}
\mathcal{P}_\kappa f = 0,\vspace{2mm}\\
f(0,x)=f_0(x),
\end{array} \right.  
\end{equation}
where, $\mathcal{P}_\kappa f \triangleq \left(\partial_t + u \cdot \nabla   - \kappa \Delta \right) f.$
In fact, we treat the case $\kappa =1$, then by a rescaling argument we obtain the desired estimate for the general case $\kappa >0$. Let us denote $\mathcal{P} \triangleq \mathcal{P}_1$ and we begin with the first problem in \eqref{three-problems}. 

$\bullet$ \textit{\textbf{Step 1}: $T=1$ and $\|F \|_{L^p_1L^q} \leqslant 1$.} Thanks to the divergence-free condition of $u$, a standard $L^q$-energy estimate gives 
\begin{equation*}
\frac{d}{dt} {1\over q} \| f(t) \|_{L^q}^q + \left(q-1 \right) \int_{\mathbb{R}^d} \left|\nabla f \right|^2 \left| f\right|^{q-2} dx \leqslant (q-1) \int_{\mathbb{R}^d}   \left|F \right| \left|\nabla f \right|\left| f\right|^{q-2}dx.
\end{equation*}
On the other hand, we write 
\begin{equation*}
\left|F \right| \left|\nabla f \right|\left| f\right|^{q-2} = \left|F \right| \left|\nabla f \right|\left| f\right|^{\frac{q-2}{2}} \left| f\right|^{\frac{q-2}{2}}.
\end{equation*}
By assumption $q>d \geqslant 2 $, Young and H\"older inequalities, we infer that
\begin{equation*}
\frac{d}{dt} {1\over q} \| f(t) \|_{L^q}^q \leqslant (q-1) \|F \|_{L^q}^2\|f \|_{L^q}^{q-2}.
\end{equation*}
By using the fact that $p>2$ and $\|F \|_{L^p_1L^q} \leqslant 1$, then by an integration in time, we get
\begin{equation}\label{Lq-energy}
\| f\|_{L^\infty_1L^q} \leqslant C_q \left( \int_0^1 \| F(t)\|_{L^q}^2dt \right)^\frac{1}{2} \leqslant C_q.
\end{equation}
The improvement of the above estimate to reach the $L^\infty$ norm in space requires exploring the iteration argument following De Giorgi-Nash. For this purpose, let us introduce the parameter $M>0$ to be chosen later, and let $(M_k)_{k\geqslant 0}$ be a positive increasing sequence converging to $M$
$$
M_k \triangleq M\left(1-\frac{1}{k+1} \right). 
$$
An elementary computation gives then
$$\frac{d}{dt} \left( \frac{1}{2}  \| (f-M_k)_+(t) \|_{L^2}^2\right) + \| \nabla(f-M_k)_+\|_{L^2}^2 \leqslant \left(  \int_{f\geqslant M_k} |F|^2 dx\right)^\frac{1}{2}  \| \nabla(f-M_k)_+\|_{L^2}, 
$$
with $x_+ \triangleq \max (x,0)$. By denoting 
$$
U_k \triangleq      \| (f-M_k)_+(t) \|_{L^\infty_1L^2}^2  + \| \nabla(f-M_k)_+\|_{L^2_1L^2}^2,
$$
we obtain
\begin{equation}\label{U_k estimate 1}
U_k \leqslant \int_0^1 \int_{f\geqslant M_k} |F|^2  dxdt.
\end{equation}
We claim an estimate of the form 
\begin{equation}\label{claim}
U_k \leqslant C(M,k) U_k^\gamma,
\end{equation}  
for some constant $C(M,k)$ depending on $M$ and $K$, and some $\gamma>1.$ To do so, we introduce the positive quantity 
\begin{equation*}
 m_k(t) \triangleq  \left| \left\{ x, f(t) \geqslant M_k \right\} \right|
\end{equation*}
and we notice that if $f(t,x) \geqslant  M_k$ then 
\begin{equation}\label{f-m_k}
f(t,x) - M_{k-1} \geqslant M_k-M_{k-1} \geqslant 0
\end{equation}
and 
$$\mathbf{1}_{f(t,x) - M_{k }} \leqslant \frac{(k+1)^2}{M} \left( f(t,x) - M_{k-1}\right)_+. $$
This yields 
\begin{equation}\label{m_k(t)}
m_k(t) \leqslant \frac{(k+1)^{2m}}{M^m} \| \left( f(t,x) - M_{k-1}\right)_+ \|_{L^m}^m,
\end{equation}
for some $m\geqslant  1$ to be chosen later. Now, we resume from \eqref{U_k estimate 1} and we use H\"older inequality to obtain
\begin{equation}\label{U_k-estimate2}
 U_k \leqslant \int_0^1 \|F(t) \|_{L^q}^2 m_k(t)^{1-\frac{2}{q}}dt \leqslant \|F \|_{L^p_1L^q}^2  \left(\int_0^1 m_k(t) ^{(1-\frac{2}{q})(\frac{p}{p-2}) }dt \right) ^{1-\frac{2}{p}}.
\end{equation}
Implementing \eqref{m_k(t)} yields to
\begin{equation*}
U_k \leqslant\|F \|_{L^p_1L^q}^2 \left(\frac{(k+1)^{2 }}{M }\right)^{m (1-\frac{2}{p})} \left(\int_0^1 \| \left( f(t,x) - M_{k-1}\right)_+ \|_{L^m}^{m(1-\frac{2}{q})(\frac{p}{p-2}) }dt \right) ^{1-\frac{2}{p}}.
\end{equation*}
Let $d^\star$ be given by
$$d^\star \triangleq \left\{  \begin{array}{ll}
\infty & \text{if }d =2,\vspace{2mm}\\
\frac{2d}{d-2}, & \text{ if } d\geqslant 3.
\end{array} \right.$$
By interpolation, one gets
\begin{equation}\label{interpolation inequa}
\|\left(f(t)-M_{k-1} \right)_+ \|_{L^\alpha_1 L^\beta}^2 \leqslant U_{k-1},
\end{equation}
for all\footnote{Notice that when $d\geqslant 3$, $ \beta$ is allowed to take the value $d^\star$ in \eqref{interpolation inequa}.}  $\alpha\geqslant 2$, $\beta \in [2, d^\star)$ and $\frac{2}{\alpha} + \frac{d}{\beta} \geqslant \frac{d}{2}.$ 
 In order to achieve our claim \eqref{claim}, we need to choose $m$ in $[2,d^\star)$ such that 
 \begin{equation}\label{choice of m}
 m \left(1- \frac{2}{q} \right) >2 ,\quad \quad 2 \frac{1-\frac{2}{p}}{m \left( 1- \frac{2}{q}\right)} + \frac{d}{m} \geqslant \frac{d}{2}.
 \end{equation}
The first condition is satisfied whenever $$\frac{2}{1-\frac{2}{q}} < d^\star.$$
This is equivalent to say that $q > \frac{2d^\star}{d^\star-2} = d$, which is automatically satisfied due to the hypothesis $ \frac{2}{p} + \frac{d}{q} <1.$
For the second condition in \eqref{choice of m}, it is satisfied if 
$$2 \left( 1- \frac{2}{p} \right) + d \left( 1- \frac{2}{q} \right) \geqslant m \left( 1- \frac{2}{q} \right) \frac{d}{2} \geqslant 2 \cdot \frac{d}{2} = d, $$
which is equivalent to the assumption $ \frac{2}{p} + \frac{d}{q} <1.$ Hence, as a consequence we obtain \eqref{claim}. More precisely, we find 
\begin{equation*}
U_k \leqslant \left(\frac{(k+1)^2}{M} \right)^{m(1-\frac{2}{p})}U_{k-1}^\gamma, \quad \forall k\geqslant 1.
\end{equation*}
In particular, if $U_0$ is sufficiently small, we obtain $\displaystyle\lim_{k \rightarrow +\infty} U_k = 0$. Thus, Fatou Lemma yields for all $t\in [0,1]$
$$\int_{\mathbb{R}^3} \left(f(t,x) - M \right)_+dx \leqslant 0,$$
and therefore, for almost $(t,x) \in [0,1]\times \mathbb{R}^2$
$$ f(t,x) \leqslant M.$$
By changing $f$ into $-f$, similar arguments lead to 
$$ f(t,x) \geqslant -M,$$
and finally we obtain 
$$\| f\|_{L^\infty_1L^\infty} \leqslant M.$$
Let us recall that the above scenario is true provided that $U_0$ is sufficiently small, and this can happen by choosing $M$ large enough in the following inequality which comes from \eqref{U_k-estimate2}, the Tchebychev inequality and the energy inequality \eqref{Lq-energy}
$$U_0 \leqslant \left( \int_0^1 m_0 (t) ^{(1- \frac{2}{q})(\frac{p}{p-2})} \right) ^{1- \frac{2}{p}} \leqslant \left(\frac{\|f \|_{L^\infty_1L^q}}{M-1} \right)^{q-2} \leqslant \left(\frac{C_q}{M-1} \right) ^{q-2 }.$$
This ends the estimates for the first problem in \eqref{three-problems} in the case $T\leqslant 1$ and $\|F \|_{L^p_1L^q}\leqslant 1.$\\

$\bullet$ \textit{\textbf{Step 2}: $T\geqslant1$ and $\|F \|_{L^p_1L^q} \geqslant 1$.} In this case, we just need to follow closely \cite{HR2} and make use of the following rescaling argument 
$$\widetilde{f}(\tau,X) \triangleq \frac{1}{K} f (T \tau , {T}^{\frac{1}{2}} X), $$
$$\widetilde{F}(\tau,X) \triangleq \frac{{T}^{\frac{1}{2}}}{K} F(T \tau , {T}^{\frac{1}{2}} X), $$
$$K\triangleq {T}^{\frac{1}{2} -(\frac{1}{p} + \frac{d}{2q})} .$$
Under the previous rescaling, $\widetilde{f}$ satisfies the equation 
$$\partial_\tau \widetilde{f}+ \widetilde{u} \cdot \nabla_X \widetilde{f} - \Delta_X = \nabla _X \cdot \widetilde{F}, $$
and 
$$ \|\widetilde{F} \|_{L^p_1L^q}=1. $$
Thanks to the result of step 1, we derive
$$\|f \|_{L^\infty_TL^\infty}= K \| \widetilde{f}\|_{L^\infty_1L^\infty} \leqslant MK = M {T}^{\frac{1}{2} -\big(\frac{1}{p} + \frac{d}{2q}\big)} \|F \|_{L^p_TL^q}. $$
Let us now outline the proof of  the estimate for the second problem in \eqref{three-problems}. We only give some details of the proof in the case $q_1<\infty$, while $q_1=\infty$ is a direct consequence of the Maximum principle. 
For $G$ in $L^{p_1}_TL^{q_1}$, we can write $G= \nabla \cdot F$, for some $F\in L^{p_1}_TW^{1,q_1} \hookrightarrow L^{p_1}L^{q_1^{\star}}$, where $q^\star = \frac{dq_1}{d-q_1}$ and 
$$ \|F \|_{L^{p_1}_TL^{q^*}} \lesssim \|G \|_{L^{p_1}_TL^{q_1}}. $$
Note that $\frac{2}{p_1} + \frac{d}{q_1^{\star}} <1$ is equivalent to $\frac{2}{p_1} + \frac{d}{q_1} <2 $. Hence, the estimate already proved for the first system in \eqref{three-problems} achieves the proof of the desired estimate for the second system.\\

Finally, the estimates of the last system in \eqref{three-problems} can be done along the same lines as in \cite{HR2}. Indeed, the proof presented in \cite{HR2} is done in two steps: the first step consists in proving 
$$\sup_{t\geqslant 1}\|f(t) \|_{L^\infty} \leqslant M.$$
This step does not exploit any particular properties related to the spacial variables, hence, it can be easily extended to any dimension $d\geqslant 2$. Meanwhile, the second step consists in treating the case $t\leqslant 1$. We emphasize that the idea uses the same arguments explored in our treatment of the first system in \eqref{three-problems}. This justifies the apparition of the term $t^{-\frac{d}{2r}}$. The details of this are left to the reader. Lemma \eqref{Nashlem} is then proved.
\end{proof}

\end{document}